\newcommand\DMO[2]{\DeclareMathOperator{#1}{#2}}
\newcounter{enumitemp}
\newenvironment{enumeratecontinue}{
  \setcounter{enumitemp}{\value{enumi}}
  \begin{enumerate}
  \setcounter{enumi}{\value{enumitemp}}
}
{
  \end{enumerate}
}
\numberwithin{equation}{section}
\newcommand\pref[1]{(\ref{#1})}
\newcommand\ds\displaystyle
\theoremstyle{plain}
\newtheorem*{TheoremA}{Theorem A}
\newtheorem*{TheoremB}{Theorem B}
\newtheorem*{theorem*}{Theorem}
\newtheorem{theorem}{Theorem}[section]
\newtheorem{proposition}[theorem]{Proposition}
\newtheorem{lemma}[theorem]{Lemma}
\newtheorem{corollary}[theorem]{Corollary}
\newtheorem*{conjecture*}{Conjecture}
\newtheorem{conjecture}[theorem]{Conjecture}
\newtheorem{fact}[theorem]{Fact}
\newtheorem{LemmaAndDefinition}[theorem]{Lemma/Definition}
\newtheorem{reduction}[theorem]{Reduction}
\theoremstyle{definition}
\newtheorem{definition}[theorem]{Definition}
\DeclareMathOperator{\Out}{Out}
\DeclareMathOperator{\Aut}{Aut}
\DeclareMathOperator\Inn{Inn}
\DeclareMathOperator{\rank}{rank}
\DeclareMathOperator{\Length}{Length}
\DeclareMathOperator{\Stab}{Stab}
\DeclareMathOperator\diam{diam}
\DeclareMathOperator\image{image}
\DeclareMathOperator{\MCG}{\mathsf{MCG}}
\DeclareMathOperator\corank{corank}
\DeclareMathOperator\Krank{KR}
\DeclareMathOperator\KR{KR}
\newcommand\Id{\mathrm{Id}}
\newcommand\reals{{\mathbf R}}
\newcommand\Z{{\mathbf Z}}
\newcommand{\bdy}{\partial}
\newcommand\bdyinf{\bdy_\infty}
\newcommand{\from}{\colon}
\newcommand\suchthat{\bigm|}
\newcommand\inv{{-1}}
\newcommand\union{\cup}
\newcommand\abs[1]{\left| #1 \right|}
\newcommand\intersect{\cap}
\newcommand\meet{\wedge}
\newcommand\restrict{\bigm|}
\newcommand\subgroup{\leqslant}
\newcommand\A{\mathscr A}
\newcommand\B{\mathscr B}
\newcommand\F{\mathscr F}
\newcommand\FFS[1]{{\mathscr F}_{\!ell} #1}
\newcommand\FFSS{{\FFS  S}}
\newcommand\FST{{\FFS \, T}}
\newcommand\wtB{\wt\B}
\newcommand\wtBinf{\wt\B_\infty}
\newcommand\CFFS{\mathcal{F\!F}}
\newcommand\C{\mathcal C}
\renewcommand\H{{\mathcal H}}
\renewcommand\L{\mathcal L}
\newcommand\M{\mathcal M}
\newcommand\V{\mathcal V}
\newcommand\E{\mathcal E}
\newcommand\<\langle
\renewcommand\>\rangle
\newcommand\wh\widehat
\newcommand\disjunion\sqcup
\newcommand\act\curvearrowright
\newcommand\X{\mathcal{X}}
\newcommand\CV\X
\newcommand\LymanRTTTag{Lyman:RTT}
\newcommand\LymanCTTag{Lyman:CT}
\newcommand\BHTag{BestvinaHandel:tt}
\newcommand\BookOneTag{BFH:TitsOne}
\newcommand\BookOne{\cite{\BookOneTag}}
\newcommand\recognitionTag{FeighnHandel:recognition}
\newcommand\SubgroupsTag{HandelMosher:Subgroups}
\newcommand\Subgroups{\cite{\SubgroupsTag}}
\newcommand\FSHypTag{HandelMosher:FreeSplittingHyperbolic}
\newcommand\FSHyp{\cite{\FSHypTag}}
\newcommand\FSOneTag{HandelMosher:FreeSplittingHyperbolic}
\newcommand\RelFSHypTag{HandelMosher:RelComplexHyp}
\newcommand\FSRelHypTag\RelFSHypTag
\newcommand\RelFSHyp{\cite{\RelFSHypTag}}
\newcommand\RelFSHypTwoTag{HandelMosher:RelComplexHypII}
\newcommand\RelFSHypTwo{\cite{\RelFSHypTwoTag}}
\newcommand\RelFSTwoTag\RelFSHypTwoTag
\newcommand\RelFSTwo\RelFSHypTwo
\newcommand\RelFSHypThreeTag{HandelMosher:RelComplexHypIII}
\newcommand\RelFSHypThree{\cite{\RelFSHypThreeTag}}
\newcommand\RelFSThreeTag\RelFSHypThreeTag
\newcommand\RelFSThree\RelFSHypThree
\newcommand\ti {\tilde}
\DMO\Core{Core}
\DMO\ACore{{\hat{\mathcal{C}}}}
\DMO\truss{truss}
\newcommand\wt\widetilde
\renewcommand\O{{\mathscr O}}
\newcommand\FS{\mathcal{FS}}
\newcommand\collapsesto\succ
\newcommand\collapse\collapsesto
\newcommand\collapses\collapsesto
\newcommand\expandsto\prec
\newcommand\expand\expandsto
\newcommand\expands\expandsto
\newcommand\Vertices{{\mathcal V}}
\newcommand\Edges{{\mathcal E}}
\newcommand\TOAT{\emph{Two Over All Theorem}}
\newcommand\STOAT{\emph{Strong Two Over All Theorem}}
\newcommand\dt{{DT}}
\newcommand\DT\dt
\newcommand\SetTwo{{\{2\}}}
\newcommand\Over{{\textrm{o}}}
\newcommand\Fill{{\textrm{f}}}
\newcommand\CU{\Upsilon}
\newcommand\Fm{F_{\text{\!\tiny min}}}
\newcommand\Ana{\A_{\text{na}}}
\newcommand\relA{\emph{rel}~$\A$}
\title{Relative free splitting and free factor complexes III: \\
Stable translation lengths and filling paths
}
\author{Michael Handel and Lee Mosher}
\begin{document}

\maketitle

\begin{abstract}
Given $\phi$ in the relative outer automorphism group $\text{Out}(\Gamma;\mathscr{A})$, we describe quantitative relations between the stable translation length $\tau_\phi$ of $\phi$ acting on the relative free splitting complex $\mathcal{FS}(\Gamma,\mathscr{A})$ and the relative train track dynamics of~$\phi$. If $\phi$ has an orbit with a certain lower diameter bound $\Omega(\Gamma;\mathscr{A}) \ge 1$ then $\phi$ has a filling attracting lamination. Also, there is a positive lower bound $\tau_\phi \ge A(\Gamma;\mathscr{A}) > 0$ amongst all $\phi$ which have a filling attracting lamination. Both proofs rely on a study of \emph{filling paths} in a free splitting. These results are all new even for $\text{Out}(F_n)$. We also formulate conjectures and partial results concerning the action of $\phi$ on the complex of relative free factor systems $\CFFS(\Gamma;\mathscr{A})$.  
\end{abstract}

\section{Introduction to Part III}
\label{SectionIntro}
Given a group $\Gamma$ and a free factor system~$\A$, the relative outer automorphism group $\Out(\Gamma;\A)$ acts by isometries on the relative free splitting complex $\FS(\Gamma;\A)$. That complex is hyperbolic (see Part~I~\FSHyp), and so $\phi \in \Out(\Gamma;\A)$ has positive stable translation length $\tau_\phi$ if and only if each orbit map $n \mapsto \phi^n(T)$ is a quasigeodesic ($T \in \FS(\Gamma;\A)$), in which case $\phi$ is \emph{loxodromic}. One also says that $\phi$ is \emph{parabolic} if it has unbounded orbits but~$\tau_\phi=0$, and \emph{elliptic} if all orbits are bounded (hence~$\tau_\phi=0$). Our main results correlate the dynamics of $\phi$ acting on $\FS(\Gamma;\A)$ with the ``relative train track'' dynamics of $\phi$, particularly its attracting laminations:

\begin{TheoremA}
There exist \hbox{$A = A(\Gamma;\A)>0$} and $B=B(\Gamma;\A) >0$ such that for any $\phi \in \Out(\Gamma;\A)$, if $\phi$ has an attracting lamination which fills $\Gamma$ \relA, with associated expansion factor~$\lambda_\phi$, then $A \le \tau_\phi \le B \log(\lambda_\phi)$.
\end{TheoremA}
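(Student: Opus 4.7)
The plan is to prove the two bounds by independent arguments, with the upper bound following a standard Lipschitz--to--distance pattern and the lower bound requiring the new filling-paths technology advertised in the introduction.

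For the upper bound, I would take a relative train track representative $f \from G \to G$ of $\phi$ realizing the filling attracting lamination with stretch factor $\lambda_\phi$. Its Bass--Serre tree $T_G$ is a vertex of $\FS(\Gamma;\A)$, and the lift of $f$ is an equivariant Lipschitz map $T_G \to T_G$ representing $\phi^{\inv}$ with Lipschitz constant bounded by a fixed multiple of $\lambda_\phi$. Iterating, there is an equivariant map $T_G \to \phi^n T_G$ with Lipschitz constant $\le C_1 \lambda_\phi^n$, where $C_1$ depends only on $G$. The hyperbolicity framework for $\FS(\Gamma;\A)$ from \RelFSHyp{} (and the folding/distortion lemmas developed in Parts I and II) provides a comparison of the form $d_{\FS}(T, T') \le C \log \Lip(T, T') + C$. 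Applying this to $T = T_G$, $T' = \phi^n T_G$ yields $d_{\FS}(T_G, \phi^n T_G) \le Bn \log \lambda_\phi + O(1)$; dividing by $n$ and letting $n \to \infty$ gives $\tau_\phi \le B \log \lambda_\phi$.

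For the lower bound, the plan is to exploit filling paths in a free splitting $T \in \FS(\Gamma;\A)$. The key ingredients I would set up are:

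\begin{enumerate}
\item A quantitative stability lemma: there is $N = N(\Gamma;\A)$ such that if $d_{\FS}(T, T') \le N$ then every filling path $\beta$ in $T$, when straightened in $T'$, remains ``almost filling'' in a controlled sense (bounded collapse/fold moves cannot destroy the filling property once the path is long enough).
\item A realization lemma derived from the relative train track machinery: if $\phi$ has a filling attracting lamination, then for every $T$ there is some edge $e \subset T$ whose $\phi^k$-image, straightened in $T$, contains a filling subpath for all $k \ge K(\Gamma;\A)$ --- this is where the ``filling'' hypothesis on the lamination is consumed.
\end{enumerate}

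Given these two ingredients, suppose for contradiction that $\tau_\phi < A$ for a sufficiently small $A$. Pick $T$ near the axis so that $d_{\FS}(T, \phi^n T)$ grows slowly in $n$. Using (1), the filling property of any long path is preserved along the whole axis through many iterations of $\phi$; using (2), long iterates of $\phi$ produce filling paths. Matching these against the Lipschitz upper bound from the first half would pin $\tau_\phi$ below by a constant $A = A(\Gamma;\A) > 0$ determined by $N$, $K$, and the combinatorics of $\FS(\Gamma;\A)$.

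The main obstacle, as the abstract signals, is the combinatorial theory of filling paths: defining them intrinsically in a free splitting, proving their stability under bounded $\FS$-motion, and showing that filling attracting laminations force their appearance under iteration. The uniformity of $A$ in $(\Gamma;\A)$ --- independent of $\phi$ --- is the real content and must come from finite-complexity features of $\FS(\Gamma;\A)$ (for example, only finitely many orbit types of small-displacement configurations), likely dovetailing with the constant $\Omega(\Gamma;\A)$ from Theorem B.
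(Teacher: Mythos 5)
Your sketch of the upper bound is plausible, but that bound is obtained in Part~II via the Two Over All Theorem rather than a direct Lipschitz-to-distance comparison; the present paper only has to supply the lower bound, so I will focus my comments there.

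For the lower bound, your item (2) — a uniform $K = K(\Gamma;\A)$ such that high enough iteration tiles fill — is exactly the content of Proposition~\ref{PropUniformFilling}, and you are right that it is where the filling hypothesis is consumed. But you propose it as a black box without indicating why $K$ should be independent of $\phi$. The paper's proof of uniformity depends on two specific devices you do not mention. First, one must put the train track representative on a \emph{natural} free splitting $T$ (Proposition~\ref{PropNaturalAxis}, via Francaviglia--Martino); only then does the transition matrix have uniformly bounded size, yielding a uniform PF-exponent $\kappa_0$. Second, one studies the filling support $\Fm(\eta_m)$ of the nested tiles $\eta_m = G^m(E)$ and uses the monotone evolution of Kurosh rank $\KR(\eta_m)$ (Lemma~\ref{LemmaNestingOfFillingSupport}) together with the bound $\KR(\eta_m) \le \KR(\Gamma;\A)$ to force stabilization in at most $\KR(\Gamma;\A)$ steps; the $G$-invariance of the stable protocomponent then conflicts with the filling lamination unless $\Fm(\eta_M)=\Gamma$. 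None of this combinatorics of filling protoforests is visible in your sketch, and it is the actual content of Section~\ref{SectionUniformFilling}.

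More fundamentally, your contradiction scheme does not close. You say items (1) and (2) ``matching against the Lipschitz upper bound would pin $\tau_\phi$ below by a constant,'' but the upper bound $B\log\lambda_\phi$ cannot produce a $\phi$-independent lower bound, since $\lambda_\phi$ can approach $1$. The paper's mechanism is entirely different: it is a direct lower bound, not a contradiction. Filling tiles force component free splitting units along the fold axis (Lemma~\ref{LemmaQAPQuant}, via a collapse-expand diagram in which a filling tile's nonoverlapping filling subpaths contradict constant component complexity), and component free splitting units are quasicomparable to $d_\FS$ with constants depending only on $(\Gamma;\A)$ (\cite[Corollary 5.5]{\RelFSHypTag}). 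This yields $\tau_\phi \ge 1/(\nu(3\kappa_0+\omega_0))$ directly. Your item (1) — stability of filling paths under bounded $\FS$-motion — is not one of the ingredients; the paper instead uses the opposite-flavored Lemma~\ref{LemmaFillingDiameterBound} (constant filling rank along a fold subpath implies small diameter) inside the \STOAT, which is a Part~II-style tool rather than a step in the Theorem~A lower bound. Without the free-splitting-unit mechanism your argument has no way to convert ``tiles fill'' into ``distance grows,'' and the lower bound does not follow.
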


\begin{TheoremB}
There exists \hbox{$\Omega=\Omega(\Gamma;\A) > 0$} such that for any $\phi \in \Out(\Gamma;\A)$ the following are equivalent:
\begin{enumerate}
\item\label{ItemFillingLamExists}
$\phi$ has an attracting lamination that fills $\Gamma$ \relA;
\item\label{ItemActLox}
$\phi$ acts loxodromically on $\FS(\Gamma;\A)$;
\item\label{ItemActUnbdd}
$\phi$ acts with unbounded orbits on $\FS(\Gamma;\A)$;
\item\label{ItemActLargeOrbit}
Every orbit of the action of $\phi$ on $\FS(\Gamma;\A)$ has diameter $\ge \Omega$.
\end{enumerate}
\nobreak
As a consequence, no element of $\Out(\Gamma;\A)$ acts parabolically on $\FS(\Gamma;\A)$.
\end{TheoremB}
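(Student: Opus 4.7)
The plan is to prove the cyclic chain $(1) \Rightarrow (2) \Rightarrow (3) \Rightarrow (4) \Rightarrow (1)$; the ``no parabolic'' consequence then follows immediately, since a parabolic element would satisfy~(3) while failing~(2).

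The first three arrows are essentially formal. For $(1) \Rightarrow (2)$, Theorem~A supplies $\tau_\phi \ge A(\Gamma;\A) > 0$, which is precisely loxodromic action. For $(2) \Rightarrow (3)$, positive stable translation length produces quasigeodesic, hence unbounded, orbits in the $\delta$-hyperbolic space $\FS(\Gamma;\A)$. For $(3) \Rightarrow (4)$, since $\phi$ acts by isometries the diameters of any two $\phi$-orbits differ by at most $2\,d(T,T')$, so one unbounded orbit forces every orbit to be unbounded and in particular of diameter $\ge \Omega$ for any finite $\Omega$.

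The substantive content is the implication $(4) \Rightarrow (1)$, which is the filling paths result advertised in the abstract. My plan is to argue the contrapositive: assuming $\phi$ has no attracting lamination filling $\Gamma$ rel $\A$, produce a uniform $\Omega_0 = \Omega_0(\Gamma;\A)$ bounding the diameter of every $\phi$-orbit, and then set $\Omega = \Omega_0 + 1$. After passing to a suitable power, choose a CT representative $f\from G \to G$ of $\phi$ from \LymanCT{} with filtration $G_0 \subset \cdots \subset G_K = G$. The hypothesis forces every exponentially growing stratum of $f$ to be carried by a proper $\phi$-invariant free factor system $\A \prec \F \prec [\Gamma]$, so all of the exponential dynamics of $\phi$ takes place inside a proper sub-splitting. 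One then expects every $\phi$-orbit in $\FS(\Gamma;\A)$ to stay within bounded $\FS$-distance of the free splitting associated to $\F$, by comparing $\phi^n(T)$ with $T$ via the homotopy equivalence $f^n$ and tracking the maximal common subforest.

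The main obstacle will be making this bound quantitative and uniform. A naive estimate drawn from the CT depends on the specific representative $f$, whereas $\Omega$ must depend only on $(\Gamma;\A)$. This is precisely where the filling paths theory developed earlier in the paper must enter. Among edge paths in a free splitting, a \emph{filling} path causes a definite amount of motion in $\FS(\Gamma;\A)$ under the folding operations used to compare splittings, while a non-filling path confines the motion to a bounded neighbourhood of the splitting determined by its carrying factor, with a bound depending only on $(\Gamma;\A)$. When $\phi$ has no filling attracting lamination, no $f^n$-image of an edge of $G$ fills, and iterating this observation constrains every $\phi^n(T)$ to lie in such a bounded neighbourhood, yielding the uniform $\Omega$.
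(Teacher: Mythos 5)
Your treatment of the first three implications is fine and matches the paper's: $(1)\Rightarrow(2)$ follows directly from Theorem~A, $(2)\Rightarrow(3)$ from the definition of loxodromic, and $(3)\Rightarrow(4)$ since all orbits of an isometry have comparable diameter. The consequence about parabolics is also deduced the same way.

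For $(4)\Rightarrow(1)$, your contrapositive plan diverges from the paper both in structure and in a point of substance, and it has a concrete error. You propose to show that ``no filling attracting lamination'' implies a uniform bound $\Omega_0$ on the diameter of \emph{every} $\phi$-orbit. This is false: if $\phi$ is elliptic, orbits of points far from a quasi-center of $\phi$ have large diameter, and there is no uniform bound over all starting points $T$. What the contrapositive actually requires is only that \emph{some} orbit have diameter $<\Omega$; the paper extracts exactly this (an orbit of diameter $\le 4$) from \cite[Proposition~4.24]{\RelFSHypTwoTag} in the case that the count $m$ of attracting laminations not carried by a maximal proper invariant $\FF$ is $\ne 1$. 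Your CT-based route would in effect have to re-derive that result, and the uniformity concern you flag is precisely what makes it nontrivial.

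More importantly, the paper's proof of $(4)\Rightarrow(1)$ is not a contrapositive argument at all, and the place you invoke ``filling paths'' is not where they are actually used. After ruling out the $m\ne 1$ case via Constraint~\#1 ($\Omega\ge 5$), one has a unique attracting lamination $\Lambda$ rel~$\A$ not carried by $\FF$, but one still does not know that $\Lambda$ fills rel~$\A$; the support of $\Lambda$ is a $\phi$-invariant free factor system, yet maximality of $\FF$ does not immediately force that support to be $\{[\Gamma]\}$. The bulk of the section (Sections~\ref{SectionSecondConstraint}--\ref{SectionEnablingProjection}) is devoted to proving $\Lambda$ does fill: Constraint~\#2 ($\Omega\ge\Theta$) feeds the \STOAT\ to produce filling tiles along a fold axis, which then proves the axis is a quasigeodesic, and finally a projection-diagram/bounded-cancellation argument shows a generic leaf of $\Lambda$ crosses every edge orbit of an arbitrary Grushko free splitting $R$ rel~$\A$. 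That is where the filling-paths machinery does its work --- it is applied in the \emph{direct} direction, to verify that certain tiles fill, not to argue that absence of filling confines the dynamics. Your sketch also leans on a CT representative, which the paper deliberately avoids in this implication (it uses only basic train track tools plus fold axes), and it does not address the subtlety --- spelled out in Sections~\ref{SectionDT}--\ref{SectionTTsAndLams} --- that the lamination constructed rel~$\FF$ must be identified with a lamination of $\phi$ rel~$\A$ before any of this can be completed.
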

In these theorems and throughout this work, notations like $C=C(\Gamma;\A)$ mean that $C$ is a numerical constant depending only on two non-negative, integer valued numerical invariants of $\Gamma$ and $\A$, namely the cardinality $\abs{\A}$ and the $\corank(\A)=\corank(\Gamma;\A)$ (see Section~\ref{SectionSTOATTerms}). The constants $A$, $B$, $\Omega$ will themselves be defined by formulas involving constants drawn from \cite{\RelFSHypTag,\RelFSHypTwoTag}.

In the foundational case where $\Gamma=F_n$ and $\A=\emptyset$, the equivalences \pref{ItemFillingLamExists}$\iff$\pref{ItemActLox}$\iff$\pref{ItemActUnbdd} of Theorem~B were obtained in \cite{HandelMosher:FreeSplittingLox}; the hard implication \pref{ItemFillingLamExists}$\implies$\pref{ItemActLox} was proved using delicate relative train track methods. The methods we use here are new even in that foundational case, and the proofs here rely only the most basic relative train track methods. Also, there are powerful new conclusions of a quantitative nature, namely the additional equivalent item~\pref{ItemActLargeOrbit} in Theorem~B, and the lower bound in Theorem~A. 

Part~II of this work was devoted primarily to the proof and applications of the \emph{Two Over All Theorem}, including application to the upper bound of Theorem~A. Here in Part~III, our primary work is to produce the following: 
\begin{description}
\item[Section~\ref{SectionNoFillingLam}:] The constant $\Omega$ and a proof of the implication \pref{ItemActLargeOrbit}$\implies$\pref{ItemFillingLamExists} of Theorem~B.
\item[Section~\ref{SectionLowerBound}:] A proof of the lower bound of Theorem~A.
\end{description}
Applying these results, the proof of Theorem~A is complete, and we also have:

\begin{proof}[Proof outline for Theorem~B:] The implication \pref{ItemFillingLamExists}$\implies$\pref{ItemActLox} is an immediate consequence of the lower bound of Theorem~A. The implications \pref{ItemActLox}$\implies$\pref{ItemActUnbdd}$\implies$\pref{ItemActLargeOrbit} are immediate consequences of the definitions (using any value of $\Omega$). And in Section~\ref{SectionNoFillingLam} we construct an appropriate constant~$\Omega$ and use it to prove the the remaining implication \pref{ItemActLargeOrbit}$\implies$\pref{ItemFillingLamExists}.
\end{proof}

\paragraph{Section~\ref{SectionFFAandB}: Conjectures and partial results for $\CFFS(\Gamma;\A)$.} The dynamic classification of elements of $\Out(\Gamma;\A)$ acting on the complex of relative free factor systems $\CFFS(\Gamma;\A)$ was given by Guirardel and Horbez in \cite{GuirardelHorbez:Laminations}, generalizing the foundational case of $\Out(F_n)$ by Bestvina and Feighn \cite{BestvinaFeighn:FFCHyp} and the case of $\Out(F_n;\A)$ by Gupta \cite{Gupta:Loxodromic}. This general result says that $\phi \in \Out(\Gamma;\A)$ acts loxodromically on $\CFFS(\Gamma;\A)$ if and only if $\phi$ is fully irreducible \relA, and otherwise $\phi$ acts elliptically. For purposes of refining this classification, we formulate analogues of Theorems~A and~B in which the property ``$\phi$ has a filling lamination \relA'' is replaced with the property ``$\phi$ is fully irreducible \relA''. We prove several partial results regarding these analogues, some of which are easy to prove. Nore substantial partial results include the general $\CFFS(\Gamma;\A)$ analogue of the upper bound in Theorem~B; and the $\CFFS(F_n;\A)$ analogue of the implication \pref{ItemActLargeOrbit}$\implies$\pref{ItemFillingLamExists} of Theorem~B. The most substantial conjecture is the $\CFFS(\Gamma;\A)$ analogue of the lower bound of Theorem~A. We leave this conjecture untouched in \emph{all} cases, including the foundational case: 

\begin{conjecture*} There is a lower bound to the translation lengths of the actions on $\CFFS(F_n)$ of the fully irreducible elements of $\Out(F_n)$. 
\end{conjecture*}
See also the end of this introduction for a still stronger question underlying the lower bound of Theorem~A (for both $\FS(\Gamma;\A)$ and $\CFFS(\Gamma;\A)$), motivated by work of Bowditch \cite{Bowditch:tight}.

\subsection*{Methods of proof, based on filling paths in free splittings.} Concepts of ``filling'', and related concepts of ``free factor supports'', play an important role in the study of $\Out(F_n)$ and its generalizations $\Out(\Gamma;\A)$. 

For a simple example of filling consider any group element $\gamma \in \Gamma$. There is a unique smallest free factor $F$ of $\Gamma$ relative to~$\A$ such that $\gamma \in F$, and one can say that $F$ is the \emph{free factor support} of $\gamma$ \relA. To say that \emph{$\gamma$ fills $\Gamma$ \relA} means that $F=\Gamma$. There is another characterization of filling of a more ``concrete'' nature in that that it is expressed in terms of how $\gamma$ relates to each individual Grushko free splitting $T$ of $\Gamma$ \relA: $\gamma$ fills $\Gamma$ \relA\ if and only if the action of $\gamma$ on each $T$ has an invariant axis $L \subset T$ along which $\gamma$ acts as a translation, and the set of translates $\{\delta \cdot L \suchthat \delta \in \Gamma\}$ covers~$T$. More generally, given a lamination $\Lambda$ of $\Gamma$ relative to~$\A$ --- meaning a closed, $\Out(\Gamma;\A)$-invariant subset of the abstract line space $\B(\Gamma;\A)$ --- associated to $\Lambda$ is its free factor support $\F\Lambda$ of $\Lambda$ \relA, defined to be the minimal free factor system of $\Gamma$ \relA\ that ``supports'' $\Lambda$ \cite[Section 4.1.2]{\RelFSHypTwoTag}. One says that $\Lambda$ fills $\Gamma$ \relA\ if and only if $\F\Lambda = \{[\Gamma]\}$. This version of filling also has a more concrete version: $\Lambda$ fills $\Gamma$ \relA\ if and only if for every Grushko free splitting $R$ of $\Gamma$ \relA, the tree $R$ is covered by the set of lines in $R$ that realize leaves of $\Lambda$; see \cite[Section 4.1.2]{\RelFSHypTwoTag} for details, in particular \cite[Lemmas 4.2, 4.3]{\RelFSHypTwoTag}. Free factor supports of attracting laminations of elements of $\Out(F_n)$ play a central role in relative train track theory and its applications \cite{\BookOneTag,HandelMosher:FreeSplittingLox,HandelMosher:Subgroups,HandelMosher:BddCohomology}. For the general case of $\Out(\Gamma;\A)$, the concrete filling criterion for attracting laminations is applied here in the opening paragraphs of~Section~\ref{SectionEnablingProjection}.

The germ of this work was the following question that we formulated while first pondering the existence of a lower bound as in Theorem~A. Consider $\phi \in \Out(\Gamma;\A)$, and assume that $\phi$ has an attracting lamination $\Lambda$ that fills $\Gamma$ \relA; also, assume for simplicity that there exists an EG-aperiodic train track representative $F \from T \to T$ of $\phi$ defined on a Grushko free splitting $T$ of $\Gamma$ \relA\ (see \cite[Section 4.3.3]{\RelFSHypTag}). A~bi-infinite line $L \subset T$ represents a generic leaf of~$\Lambda$ if and only if~$L$ can be written as a nested union of ``natural iteration tiles'' of $F$, meaning paths of the form $F^k(E)$ for natural edges $E \subset T$  (see \cite[Section 4]{\LymanCTTag}, or see Section~\ref{SectionTTsAndLams} here for a quick summary). We asked ourselves: 
\begin{description}
\item[Vague Question:] What properties of the tiles $F^k(E)$ are correlated with the hypothesis that $\Lambda$ fills~$\Gamma$ \relA? Is there some kind of useful ``filling'' criterion for paths in $T$, such that $\Lambda$ fills if and only if the paths $F^k(E)$ fill $T$ for all sufficiently large $k$?
\end{description}
We found such a criterion (see Proposition~\ref{PropUniformFilling}, discussed below), expressed using a property roughly modeled on the ``concrete'' filling criteria discussed above. The idea is to ask how a path $\alpha \subset T$ relates to free splittings $S$ that are expansions of $T$: 
\begin{description}
\item[\qquad Filling paths (Definition~\ref{DefinitionFillingPaths})] In a free splitting $T$ of $\Gamma$ \relA, to say that a path $\alpha \subset T$ \emph{fills $T$} means that for any collapse map $S \mapsto T$ defined on a free splitting $S$ of $\Gamma$ \relA, letting $\tilde\alpha \subset S$ denote the natural lift of $\alpha$ to $S$ (Definition~\ref{DefinitionLiftingPaths}), the path $\tilde\alpha$ has an interior crossing of some edge in the orbit of every natural edge of $S$. More precisely: for every natural edge $E \subset S$ there exists $\gamma \in \Gamma$ such that $\gamma \cdot E$ is contained in the interior of $\alpha$.
\end{description}
The lifted path $\tilde\alpha$ itself may be characterized as the smallest path in $S$ (with respect to inclusion) whose image under the collapse map $S \mapsto T$ is $\alpha$. The earlier examples of filling concepts each had two equivalent versions: a ``concrete'' criterion and a ``free factor system'' criterion. Beside the ``concrete'' criterion for filling paths expressed just above (taken from Definition~\ref{DefinitionFillingPaths}), in Proposition~\ref{PropFillingPath} we state a criterion expressed in terms of a certain free factor system \relA\ denoted $\F[\alpha;T]$ that we call the \emph{filling support} of $\alpha$; see just below for a brief discussion.

Our two main technical results here regarding filling paths are: the \STOAT~\ref{TheoremStrongTwoOverAll}, which is applied in the proof of the implication \pref{ItemActLargeOrbit}$\implies$\pref{ItemFillingLamExists} of Theorem~B; and Proposition~\ref{PropUniformFilling} which answers the question above, and which is applied in finding the lower bound for Theorem~A. Although as said above our first progress on this project was Proposition~\ref{PropUniformFilling}, in retrospect the exposition is cleaner if one leads off with the \STOAT, which we do in Section~\ref{SectionSTOAT}.

\paragraph{The \STOAT\ (Section~\ref{SectionSTOAT}).} Recall the \TOAT\ of \RelFSHypTwo\ in its uniterated form, which says that for any foldable map $f \from S \to T$ between free splittings of $\Gamma$ \relA, if the distance $d_\FS(S,T)$ exceeds a certain constant then the free splitting $S$ has two natural edges in distinct orbits whose $f$-images in $T$ each cross a representative of every edge orbit of~$T$. The \STOAT\ says that if $d_\FS(S,T)$ exceeds a certain larger constant $\Theta=\Theta(\Gamma;\A)$ then $S$ has two natural edges in distinct orbits whose $f$-images in~$T$ each fill~$T$; there is also an iterated form of the theorem which gives conclusions when $d_\FS(S,T)$ exceeds a multiple $K\Theta$ of the constant~$\Theta$. 

In broad outline the two theorems have similar proofs: one factors $f$ as a Stallings fold path $S=S_0 \xrightarrow{f_1} \cdots\xrightarrow{f_M} S_M=T$, and for each natural edge $E \subset S$ one studies the evolution of the natural tiles $\alpha_l = f^0_l(E) \subset S_l$ as $l$ increases from~$0$ to~$M$. In the proof of the \TOAT, the game was over once the ``covering forest'' $\beta(\alpha_l;S_l) = \bigcup_{\gamma \in \Gamma} \gamma \cdot \alpha_l$ covered the whole of~$S_l$. But that is barely the start of the game in the \STOAT. We study certain decompositions of $\beta(\alpha_l;S_l)$ called ``protoforests'' (Definition~\ref{DefProtoforest}), which are $\Gamma$-invariant collections of non-overlapping subtrees called the ``protocomponents'' of the protoforest. The overlap protoforest $\beta^\Over(\alpha_l;S_l)$ (Definition~\ref{DefFineCoveringForest}) captures patterns of overlap amongst the translates of $\alpha_l$. The protocomponent stabilizers of $\beta^\Over(\alpha_l;S_l)$ need not be free factors of $\Gamma$ \relA; see examples and counterexamples in Section~\ref{SectionFillingCriterion}. The filling support $\F[\alpha_l;S_l]$ is indirectly defined as the smallest free factor system \relA\ that supports the protocomponent stabilizers of the overlap protoforest $\beta^\Over(\alpha_l;S_l)$ (Definition~\ref{DefinitionFineFFS}). Section~\ref{SectionCriterionProof} contains the proof of the filling criterion mentioned earlier. The key step occurs in Lemma~\ref{LemmaCheckDeltaConnected} which is a delicate construction of the \emph{filling protoforest} $\beta^\Fill(\alpha_l;S_l)$, whose protocomponent stabilizers \emph{do}~give the filling support~$\F[\alpha_l;S_l]$, as summarized at the end of Section~\ref{SectionFillingProtoforest}. After a further study of filling protoforests in Sections~\ref{SectionBlowingUpOnlyIf}--\ref{SectionBlowupProof}, focussing on their behavior under expansions of a free splitting, in Section~\ref{SectionProofStrongTOA} we prove the \STOAT\ by studying the evolution of the filling protoforests $\beta^\Fill(\alpha_l;S_l)$ as~$l$ increases.

\paragraph{The implication \pref{ItemActLargeOrbit}$\implies$\pref{ItemFillingLamExists} of Theorem~B (Section~\ref{SectionNoFillingLam}).} 
The proof will produce a certain constant $\Omega=\Omega(\Gamma;\A)$ and then use it to show that if all orbits of $\phi \in \Out(\Gamma;\A)$ have diameter $\ge\Omega$ then $\phi$ has a filling lamination. Carrying this out will require tools from relative train track theory (from \cite{\LymanCTTag}, from the prequels \cite{\RelFSHypTag,\RelFSHypTwoTag}, and from earlier sections of this paper), and a certain amount of the work in this section is to provide justifications for applying each of these tools. These justifications depend on assuming two constraints on the value of $\Omega$, in the form of two lower bounds $\Omega_1$, $\Omega_2$. The value of $\Omega$ is then set to be the maximum of those lower bounds. Here is a brief outline of the proof; for a fuller and more formal outline see Section~\ref{SectionOmegaAndProof}.

In Section~\ref{SectionFirstConstraint} the tool we use is \cite[Proposition 4.24]{\RelFSHypTwoTag}. That result, applied in conjunction with the first constraint $\Omega \ge \Omega_1 = 5$, let's us specify a free factor system~$\B$ \relA\ that is fixed by $\phi$, a train track representative $F \from T \to T$ of $\phi$ rel~$\B$ that is ``EG-aperiodic'' (short for ``exponentially growing aperiodic''), an attracting lamination $\Lambda$ of $\phi$ that is associated to~$F$, and a train track axis for $\phi$ in $\FS(\Gamma;\A)$ which is obtained by suspending~$F$. At each free splitting occupying a position along this axis, every generic leaf of $\Lambda$ is realized by a unique line in that free splitting, and each such line can be written as a nested union of tiles obtained by pushing forward edges from earlier free splittings along the axis (see~Conclusion (2c) of Section~\ref{SectionFoldAxisReview}).

In Section~\ref{SectionSecondConstraint}, one tool we use is the \STOAT. The hypothesis of that theorem is verified (uniformly along the axis) by applying the second constraint $\Omega \ge \Omega_2$. From the conclusion of that theorem we obtain (uniformly) filling tiles at every position along the axis. Using these tiles in conjunction with another tool --- namely, concepts of free splitting units from \cite[Section 4.5]{\RelFSHypTag} --- we then prove that the axis is a quasigeodesic line in $\FS(\Gamma;\A)$. Note that this proves item~\pref{ItemActLox} of Theorem~B as a step along the way to verifying item~\pref{ItemFillingLamExists}; nonetheless the remainder of that verification does not use \emph{only} item~\pref{ItemActLox} but continues to use the machinery built up by application of the lower bounds $\Omega \ge \max\{\Omega_1,\Omega_2\}$.

The remainder of the proof is found in Section~\ref{SectionEnablingProjection}. To show that $\Lambda$ fills it suffices to show that, in any Grushko free splitting $R$ of $\Gamma$ \relA\ and for any bi-infinite line in $R$ that realizes a generic leaf of~$\Lambda$, that line crosses a representative of every edge orbit of $R$ (see Corollary~\ref{CorollaryFillingLine}). We do this with the aid of a tool from \cite{\RelFSHypTag}, namely ``projection diagrams''. Given $R$, we carefully choose a finite subpath of the fold axis; the choice is informed by the \emph{Quasi-Closest Point Property} (a consequence of the Masur--Minsky axioms that is derived in \cite[Section 5.1]{\RelFSHypTag}) in conjunction with the fact that the axis is a quasigeodesic line. We then take a projection diagram from $R$ to the chosen axis subpath, and we use filling tiles in that diagram to prove the desired crossing property for generic leaves in $R$.

\smallskip

But there is a complication. When applying \cite[Proposition 4.24]{\RelFSHypTwoTag} in Section~\ref{SectionFirstConstraint} to obtain the train track representative $f \from T \to T$ of $\phi$, it may happen that the ``elliptic'' free factor system of the free splitting $T$ --- meaning the free factor system~$\F$ given by the vertex stabilizers of $T$ --- is strictly larger than the free factor system~$\A$. In order to even apply \cite[Proposition 4.24]{\RelFSHypTwoTag}, we must therefore choose $\F$ first, so as to be a maximal, non-filling, $\phi$-invariant free factor system \relA. It follows that $\Lambda$ is an attracting lamination of $\phi$ \emph{relative to~$\F$}. But, what we need is an attracting lamination of $\phi$ \emph{relative to~$\A$} --- and that's not what we have yet, in the case that $\F$ is strictly larger than~$\A$. So there is still some work to do in order to somehow identify $\Lambda$ with \emph{one of} the attracting laminations of~$\phi$ relative to~$\A$. This work is done in Sections~\ref{SectionDT} and~\ref{SectionRealizingGeneralLines}, applying work of Dowdall and Taylor \cite[Section 3]{DowdallTaylor:cosurface}; and then see Section~\ref{SectionTTsAndLams} for a summary of applications to the theory of train track maps and attracting laminations.

\paragraph{The lower bound of Theorem~A (Section~\ref{SectionLowerBound}).} 

In this section we find $A>0$ and prove if $\phi \in \Out(\Gamma;\A)$ acts loxodromically on~$\FS(\Gamma;\A)$ --- and so, by Theorem~B,~$\phi$ has an attracting lamination $\Lambda$ that fills $\Gamma$ \relA\ --- then $\tau_\phi \ge A$. Here is an outline.

Consider an EG-aperiodic train track representative $F \from T \to T$ (with respect to some maximal, non-filling, $\phi$-invariant free factor system~$\F$ \relA), and so each generic leaf of the lamination $\Lambda$ as a nested union of iteration tiles of $F$. In Lemma~\ref{LemmaQAPQuant}~\pref{ItemSTLLowerBound}, by applying concepts of component free splitting units, we obtain a positive lower bound for $\tau_\phi$ expressed by a formula depending only on $\Gamma$ and~$\A$, \emph{except} for the occurrence of two numerical quantities that depend putatively on the choice of $F$: a \emph{PF-exponent}~$\kappa$ (every entry of the transition matrix of $F^\kappa$ is $\ge 4$); and a \emph{filling exponent} $\omega$ (for every edge $E$ the tile $F^\omega(E)$ fills $T$). 

The desired lower bound for Theorem~A is therefore obtained if we can choose $F$ having a \emph{uniform} PF exponent and filling exponent, independent of all other choices. For this purpose we shall choose the train track representative $F \from T \to T$ to be defined on a natural free splitting~$T$. Such an $F$ was constructed in \cite{\BHTag} for the special case of $\Out(F_n)$, and in \cite{FrancavigliaMartino:TrainTracks} for the general case, using the Lipschitz semi-metric on the outer space of $\Gamma$ \relA. See Section~\ref{SectionNaturalTTRep} for details, including a further construction in the general case which mimics the special case construction of \cite{\BHTag}. An elementary construction of a uniform PF-exponent $\kappa_0(\Gamma;\A)$, for natural EG-aperiodic train track representatives of $\phi$, is found in Section~\ref{SectionUniformPF}.

The heart of the proof is thus Proposition~\ref{PropUniformFilling}, proved in Section~\ref{SectionUniformFilling}, which produces a uniform filling exponent $\omega_0=\omega_0(\Gamma;\A)$ as long as $F \from T \to T$ is defined on a natural free splitting~$T$. The scheme of the proof is similar to that of the \STOAT, which is a study of the evolution of filling support of paths under application of fold maps along a fold sequence. In Proposition~\ref{PropUniformFilling}, by contrast we study the evolution of filling support of paths under application of iterates of $F^{\kappa_0}$ --- namely, the filling supports $\F[\eta_m;T]$ of the sequence of paths $\eta_m = F^{m\kappa_0}(E)$, for any natural edge $E \subset T$. We first use the combinatorics of filling protoforests to prove that the sequence of free factor systems $\F[\eta_m;T]$ must increase strictly until, at some value $m=M$ having an upper bound $M \ge M_0$ depending only on $\Gamma$ and $\A$, the sequence stabilizes, that is, $\F[\eta_{M};T] = \F[\eta_{M+1};T] = \F[\eta_{M+2};T] = \cdots$. We then deduce that the stable value $\F[\eta_M;T]$ must be the full free factor system $[\Gamma]$, based on the fact that, up to translation, the tiles $\eta_m$ may be nested so that their union is a generic leaf of the filling attracting lamination~$\Lambda$. We thus obtain the uniform filing exponent $\omega_0 = M_0 \kappa_0$.

\subparagraph{Remarks and Questions.} The lower bound of Theorem~A was inspired by an analogous result of Bowditch \cite{Bowditch:tight} regarding the action of the mapping class group $\MCG(S)$ of a finite type surface $S$ on its curve complex $\C(S)$. It was known from \cite{MasurMinsky:complex1} that $\C(S)$ is Gromov hyperbolic and that the elements of $\MCG(S)$ acting loxodromically on $\C(S)$ are precisely the pseudo-Anosov mapping classes. Bowditch proved that the set of stable translation lengths $\tau_\phi$ of pseudo-Anosov elements $\phi \in \MCG(S)$ acting on $\C(S)$ has a positive lower bound. In fact he proved something much stronger, using an analysis of tight geodesics in $\C(S)$: there is an integer $M \ge 1$ depending only on the genus and number of punctures of $S$ such that for each pseudo-Anosov $\phi \in \MCG(S)$, its power $\phi^M$ has a bi-infinite geodesic axis in $\C(S)$ \cite[Theorem 1.4]{Bowditch:tight}; it follows that $\tau_\phi$ is a positive rational number with denominator~$M$. 

Our methods in this work are somewhat softer than the methods of  \cite{Bowditch:tight}, in that we use only quasigeodesics; we do not consider geodesics in $\FS(\Gamma;\A)$ nor in $\CFFS(\Gamma;\A)$. This motivates the following (Questions 2 and 3 in the \emph{Overview} \cite{HandelMosher:RelHypComplexIntro}):

\medskip\textbf{Questions:} Do there exist integers $m=m(\Gamma;\A) \ge 1$ and $n=n(\Gamma;\A) \ge 1$ such that for each $\phi \in \Out(\Gamma;\A)$ the following hold?
\begin{itemize}
\item If $\phi \in \Out(\Gamma;\A)$ has a filling lamination rel~$\A$ then the action of $\phi^m$ on $\FS(\Gamma;\A)$ has a bi-infinite geodesic axis.
\item If $\phi \in \Out(\Gamma;\A)$ is fully irreducible rel~$\A$ then the action of $\phi^n$ on $\CFFS(\Gamma;\A)$ has a bi-infinite geodesic axis.
\end{itemize}

\paragraph{}

\setcounter{tocdepth}{3}
\tableofcontents

\section{Filling paths and the \STOAT} 
\label{SectionSTOAT}

The \TOAT, in its ``uniterated'' form found in \cite[Section 5.1]{\RelFSHypTwoTag}, says the following. Consider a foldable map $f \from S \to T$ between free splittings in $\FS(\Gamma;\A)$. Assume, as a hypothesis, that the distance in $\FS(\Gamma;\A)$ between $S$ and $T$ satisfies a certain lower bound $d_\FS(S,T) \ge \Delta = \Delta(\Gamma;\A)$. Under this assumption, there exist two natural edges $E_i \subset S$ ($i=1,2$) in distinct $\Gamma$ orbits such for every natural edge $E' \subset T$ the path $f(E_i)$ crosses some translate $\gamma \cdot E'$ of $E'$ ($\gamma \in \Gamma$). For the precise definition of ``crossing'' see \cite[Definition 2.20]{\RelFSHypTwoTag}.

\paragraph{Vague Question:} The conclusions above control only how $\alpha$ crosses natural edges in~$T$. Can one, in addition,  also control how $\alpha$ crosses natural vertices of~$T$?

\paragraph{An example.} Suppose that $T$ is a free splitting of $\Gamma$ \relA\ with a vertex $v$ of valence~$4$ having trivial stabilizer. For example, with $\Gamma=F_2$ and $\A=\emptyset$ one can take $T$ to be the universal cover of a rank~$2$ rose. Let $e_1,e_2,e_3,e_4$ denote the four oriented edges with initial vertex $v$; choosing two out of those four gives a total of six different (nondegenerate) turns at $v$. For example one could label the loops of the rose as $a,b$, and choose the labels so that $e_1,e_2,e_3,e_4$ are initial directions of lifts of $a$, $b$, $\bar a$, and $\bar b$ respectively. Consider a path $\alpha \subset T$ and its collection of translates $\{g \cdot \alpha \suchthat g \in \Gamma\}$, and consider which of the six turns at $v$ are taken by that collection of paths. Suppose that the only turns at $v$ that are taken by these translates $g \cdot \alpha$ are the two turns $\{e_1,e_2\}$ and $\{e_3,e_4\}$; for example one could take $\alpha$ to be a lift of the path~$ab$. Intuitively this path $\alpha$ fails to ``fill'' $T$ in a certain visceral sense: there is an expansion $T \expands U$ in which the vertex $v \in \Vertices(T)$ expands to an edge $E \in \Edges(U)$ with endpoints $v',v''$, such that the directions $\{e_1,e_2\}$ are attached to $v'$ and $\{e_3,e_4\}$ are attached to $v''$; it follows that every translate in $U$ of the natural lift $\ti\alpha \subset U$ is disjoint from the interior of this expanded edge~$E$. In our example, $U$ would be the universal cover of a $\theta$-graph expansion of the rank~$2$ rose, and the projection of $\tilde \alpha \subset U$ to the $\theta$-graph would be a path that does not cross the expanded edge.

\medskip

What is only hinted at in the above example, and what we will supply in Definition~\ref{DefinitionFillingPaths} below, is a definition of what it means for a finite path to \emph{fill} a free splitting, which is then used to formulate the \STOAT. The proof of that theorem will take up the rest of Section~\ref{SectionSTOAT}.

\subsection{Statement of the \STOAT}
\label{SectionSTOATStatement}

\subsubsection{Terminology review.} 
\label{SectionSTOATTerms}
We assume that the reader is familiar with fundamental concepts regarding free factor systems, free splittings, relative free splitting complexes, and Stallings fold paths. The terminologies and notations that we use are laid out in \RelFSHypTwo, where one can also find more detailed references: 
\begin{description}
\item[\protect{\cite[Section 2.1]{\RelFSHypTwoTag}}] free factor systems, relative outer automorphism groups;
\item[\protect{\cite[Section 2.2]{\RelFSHypTwoTag}}] (2.2.1) graphs, the graph complement operator $G \setminus H$, maps of graphs (always assumed to be PL), paths in graphs (trivial and nontrivial); (2.2.2) trees and their paths, rays, and lines; 
(2.2.3) free splittings, natural vertices and edges, edgelets; (2.2.4) maps among free splittings, twisted equivariant maps, collapse maps; (2.2.5) relative free factor systems, relative free splittings, relative free factors, Kurosh rank; (2.2.6) relative free splitting complexes
\item[\protect{\cite[Section 2.3]{\RelFSHypTwoTag}}] foldable maps, foldable paths, folds, fold paths, Stallings fold theorem.
\end{description}
We quickly review a few highlights. 

\smallskip\textbf{Free factor systems.} A \emph{free factor system} of $\Gamma$ is a set of the form $\A = \{[A_1],\ldots,[A_I]\}$ (possibly empty, \hbox{i.e. $I=0$)} such that there exists a free factorization $\Gamma = A_1 * \ldots * A_I * B$ where $B$ is free of finite rank (possibly rank~$0$), each $A_i$ is nontrivial $(1 \le i \le I)$, and $[A]$ denotes the conjugacy class of $A$ in $\Gamma$. We denote $\abs{\A}=I$. We also denote $\corank(\A) = \rank(B)$, which is well-defined independent of $B$, equal to the rank of the free group which is the quotient of $\Gamma$ by the subgroup normally generated by $A_1 \union\cdots\union A_I$. The \emph{full} free factor system is $\A=\{[\Gamma]\}$; we use various synonyms for ``full'', such as ``filling'' and ``improper''.

Inclusion of free factors induces a partial order $\A \sqsubset \B$ on the set of free factor systems of~$\Gamma$. The natural action of $\Out(\Gamma)$ on free factor systems preserves this partial order, and the stabilizer of a free factor system~$\A$ under this action is a subgroup of $\Out(\Gamma)$ denoted $\Out(\Gamma;\A)$. The pre-image of $\Out(\Gamma;\A)$ under the natural homomorphism $\Aut(\Gamma) \mapsto \Out(\Gamma)$ is denoted $\Aut(\Gamma;\A)$, and it coincides with the subgroup of $\Aut(\Gamma)$ whose action on $\Gamma$ preserves the collection of subgroups $\{A \subgroup \Gamma \suchthat [A] \in \A\}$. 

Given one free factor system~$\A$ of~$\Gamma$, a \emph{free factor system of $\Gamma$ \relA} is a free factor system~$\F$ such that $\A \sqsubset \F$. A \emph{free factor} of $\Gamma$ \relA\ is a subgroup $F \subgroup \Gamma$ such that for some free factor system $\F$ \relA\ we have $[F] \in \F$; furthermore, we say that $F$ and $[F]$ are \emph{atomic} if $[F] \in \A$, otherwise $F$ and $[F]$ are \emph{nonatomic}. For any free factor $F$ of $\Gamma$ \relA, the \emph{restriction of~$\A$ to $F$} is a free factor system of the group $F$ denoted $\A \mid F$, consisting of the $F$-conjugacy classes of all free factors $A \subgroup F$ such that $[A] \in \A$; we note that the atomic/nonatomic dichotomy for $F$ (as a free factor rel~$\A$) is equivalent to the filling/nonfilling dichotomy of the free factor system $\A \mid F$ in the group $F$. The action of $\Out(\Gamma)$ on all free factor systems of $\Gamma$ restricts to an action of $\Out(\Gamma;\A)$ on free factor systems \relA.


\smallskip\textbf{Free splittings, their maps, and their visible free factor systems.} A \emph{free splitting} of $\Gamma$ is a minimal simplicial action $\Gamma \act S$ on a simplicial tree with finitely many vertex and edge orbits and with trivial edge stabilizers. The given graph structure on $S$ is a subdivision of the unique \emph{natural} graph structure which has for its vertices those points of $S$ having either nontrivial stabilizer or valence~$\ge 3$. Sometimes the given graph structure on $S$ is further subdivided, in which case we use the term \emph{edgelets} to refer to the edges of the subdivided structure; proceeding from coarser to finer graph structures we thus have \emph{natural edges}, which decompose into \emph{edges}, and which further decompose into \emph{edgelets}. A \emph{nondegenerate subgraph} $\sigma \subset S$ is a proper, $\Gamma$-invariant subgraph with respect to some invariant subdivision of $S$, such that no component of $\sigma$ is a point, equivalently $\sigma$ is a union of edgelets of the subdivision; we allow $\sigma$ to be empty. The ``elliptic'' free factor system $\FFSS$ consists of the conjugacy classes of nontrivial vertex stabilizers for the action of $\Gamma$ on~$S$. To~say that $S$ is a \emph{Grushko} free splitting of $\Gamma$ \relA\ means that $\FFSS = \A$. 

To say that a map $f \from S \to T$ of free splittings of $\Gamma$ is \emph{$\Phi$-twisted equivariant} where $\Phi \in \Aut(\Gamma)$ means that $f(\gamma \cdot x) = \Phi(\gamma) \cdot f(x)$ for all $x \in S$, $\gamma \in \Gamma$; and $f$ is \emph{equivariant} if this holds with $\Phi = \Id$. All maps $f$ of free splittings are assumed to be equivariant, unless twisted equivariance is explicitly stated; and in either case $f$ is always assumed to be PL. \emph{Equivalence} of free splittings is defined by existence of an equivariant PL homeomorphism. In this section we will use a formal notation $[S]$ for the equivalence class of $S$; later on we usually abuse notation and let $S$ stand for its own equivalence class. The group $\Out(\Gamma)$ acts from the right on equivalence classes, with $[S] = [T] \cdot \phi$ if and only if there exists a $\Phi$-twisted equivariant PL homeomorphism $f \from S \to T$ for some $\Phi \in \Aut(\Gamma)$ representing~$\phi \in \Out(\Gamma)$.

The notation $f \from S \xrightarrow{\<\sigma\>} T$ represents a  \emph{collapse map} with \emph{collapse forest} $\sigma$, meaning that $\sigma \subset S$ is a subgraph of $S$, and $f$ is a quotient map that collapses each component of $\sigma$ to a point. The collapse map $f$ is \emph{trivial} if and only if $\sigma$ does not contain any natural edge of $S$, which occurs if and only if $S$ and $T$ are equivalent free splittings. In general the existence of a collapse map $S \mapsto T$ defines a well-defined partial order $[S] \collapse [T]$ on the set of equivalence classes of free splittings of~$\Gamma$, and two free splittings $S$ and $T$ are equivalent if and only if $[S] \collapse [T] \collapse [S]$. This \emph{collapse partial order} is respected by the action of $\Out(\Gamma)$ on equivalence classes of free splittings.

Given a free splitting $S$ of $\Gamma$ and a $\Gamma$-invariant subforest $\sigma \subset S$, as $s$ varies over the components of $\sigma \union \V(S)$ for which the stabilizer subgroup $\Stab(s)$ is nontrivial, the conjugacy classes of those subgroups form a free factor system of $\Gamma$ denoted $\F[\sigma]$; furthermore, under the associated collapse map $S \xrightarrow{\langle\sigma\rangle} T$ we have $\FFSS \sqsubset \F[\sigma] = \FST$ (see \cite[Lemma 2.3 and Definition 2.4]{\RelFSHypTwoTag}). A free factor system $\F$ of $\Gamma$ is said to be \emph{visible} in a free splitting $S$ if $\F=\F[\sigma]$ for some $\Gamma$-invariant subforest~$\sigma \subset S$ \cite[Definition 2.5]{\RelFSHypTwoTag}; if $\F$ is visible in $S$ then $\FFSS \sqsubset \F$. For any nested pair of free factor systems $\A \sqsubset \B$ there exists a Grushko free splitting $S$ of $\Gamma$ \relA\ in which $\B$ is visible \cite[Lemma 3.1]{\RelFSHypTag}.

\smallskip\textbf{The free splitting complex.}
Consider a free factor system~$\A$ of $\Gamma$. For any free splitting $T$ of $\Gamma$, to say that \emph{$T$ is a free splitting} \relA\ means that $\A \sqsubset \FST$, i.e. for every subgroup $A \subgroup \Gamma$ such that $[A] \in \A$, the restricted action of $A$ on $T$ fixes a vertex. The \emph{free splitting complex of $\Gamma$ \relA} is the ordered simplicial complex that is associated to the collapse partial ordering on equivalence classes of free splittings \relA. The action $\Out(\Gamma;\A) \act \FS(\Gamma;\A)$ is induced by the action of $\Out(\Gamma)$ on the collapse partial ordering.

\smallskip\textbf{Foldable maps and fold sequences.}
A \emph{foldable} map $f \from S \to T$ is an equivariant map of free splittings such that every point of $S$ is contained in the interior of some embedded arc $\alpha \subset S$ for which the restriction $f \restrict \alpha$ is injective. If this holds then, furthermore, $f$ is a \emph{fold} if it is defined by identifying initial segments of a pair of oriented natural edges having a common initial vertex. If $f$ is a fold map between free splittings of $\Gamma$ \relA\ then in $\FS(\Gamma;\A)$ we have a distance bound $d_\FS([S],[T]) \le 2$.

Consider a sequence of maps of the form \hbox{$\cdots \xrightarrow{f_i} T_i \xrightarrow{f_{i+1}} \cdots \xrightarrow{f_j} T_j \xrightarrow{f_{j+1}} \cdots$} with compositions denoted in ``index contraction'' notation $f^i_j = f_j \circ \cdots f_{i+1} \from T_i \to T_j$ whenever ~$i<j$. To say this is a \emph{foldable sequence} means that every $f^i_j$ is foldable; and if so then furthermore it is a \emph{fold sequence} if each $f_i$ is a fold map. We often regard a fold sequence as a path in $\FS(\Gamma;\A)$, where each step of that path consists of an edge path of length $\le 2$.

\smallskip\textbf{Notational convention:} In what follows we use the notation $[I,\ldots,J]$ for the \emph{integer interval} with endpoints $I,J \in \Z$: if $I \le J$ then $[I,\ldots,J] = \{i \in \Z \suchthat I \le i \le J\}$; and in general $[I,\ldots,J] = [J,\ldots,I]$. For example, $[I,\ldots,J]$ will be commonly used as an index set for a fold subpath $T_I \xrightarrow{f_{i+1}} \cdots \xrightarrow{f_J} T_J$ of a fold path as denoted in the previous paragraph.

\subsubsection{Filling paths}
\label{SectionFillingTiles}

Recall from \cite[Section 2.1]{\RelFSHypTwoTag} that \emph{paths} in a free splitting can be either nontrivial or trivial: a \emph{nontrivial path} is a concatenation of one or more edges without backtracking; and a \emph{trivial path} takes constant value at a single vertex. The unadorned terminology \emph{path} refers to both nontrivial and trivial paths, but beware that when the ``nontriviality'' qualifier is clear from the context we will sometimes drop it, as in the titles of the next couple of definitions.

\begin{definition}[Lifting paths through collapse maps]
\label{DefinitionLiftingPaths}
Consider two free splittings $U,T$ of $\Gamma$ relative to~$\A$ and a collapse map $\pi \from U \xrightarrow{\<\sigma\>} T$. After choosing subdivisions of $U$ and $T$ with respect to which $\sigma$ is a subcomplex and $\pi$ is simplicial, the projection map $\pi$ induces a bijection denoted $\tilde\alpha \leftrightarrow \alpha=\pi(\tilde\alpha)$ between the set of nontrivial edgelet paths $\tilde\alpha \subset U$ that begin and end with edgelets of $U \setminus \sigma$ and the set of nontrivial edgelet paths in~$T$. We refer to $\tilde\alpha$ as the \emph{lift} of $\alpha$ to~$U$. Note that $\tilde\alpha$ is well-defined independent of the choice of subdivision: as a subset of $U$, it is the unique compact arc in $U$ that projects onto $\alpha$ and that begins and ends with segments of $U \setminus \sigma$.
\end{definition}

\begin{definition}[Interior Crossings, and Filling Paths] 
\label{DefinitionFillingPaths}
Consider a free splitting $T$ and a path $\alpha \subset T$. Given another path $\eta \subset T$, to say that $\alpha$ \emph{has an interior crossing of the orbit of $\eta$} means that there exists $\gamma \in \Gamma$ such that the interior of $\alpha$ contains $\gamma \cdot \eta$, equivalently there exists $\delta \in \Gamma$ such that the interior of $\delta \cdot \alpha$ contains $\eta$. To say that $\alpha$ is a \emph{filling path in $T$}, or more briefly that \hbox{$\alpha$ \emph{fills $T$}}, means that for every free splitting $U$, every simplicial collapse map $\pi \from U \to T$, and every natural edge $E \subset U$, the lifted path $\tilde\alpha \subset U$ has an interior crossing of some natural edge in the orbit of~$E$. 
\end{definition}

\smallskip\noindent\textbf{An invariance principle for filling paths.} Filling paths are invariant under an equivariant homeomorphism of free splittings $f \from S \to T$, meaning that a given path $\alpha \subset S$ fills $S$ if and only if $f(\alpha)$ fills $T$: for every collapse map $U \to S$, its composition with $f$ is a collapse map and the respective lifts of $\alpha$ and $f(\alpha)$ are identical in $U$; and similarly for every collapse map $V \to S$.

\smallskip\noindent\textbf{An example.}  Notice that if $\alpha$ fills $T$ then $\alpha$ has an interior crossing of every natural edge~$E$ of~$T$, otherwise $U=T$ itself witnesses that $\alpha$ does not fill~$T$. But a simple counterexample shows that the converse fails. Let $U$ be the Cayley tree of the rank~$2$ free group $\<a,b\>$, let $\tilde\alpha \subset U$ be a path consisting of three consecutive edges labelled~$a$, and let~$T$ be obtained from~$U$ by collapsing each~$b$ edge. The path $\tilde\alpha \subset U$ is the lift of a path $\alpha \subset T$ that has an interior crossing of the unique natural edge orbit of $T$, but $U$ witnesses that $\alpha$ does not fill~$T$.

\medskip\noindent\textbf{The concept of tiles.} This concept was introduced into relative train track theory in \BookOne\ as a tool for constructing and applying attracting laminations (see also \cite[Section 4.3.2]{\RelFSHypTwoTag}). In that context, the tiles of a relative train track map $f \from G \to G$ are the (nonbacktracking) edge paths of the form $f^k_\#(E)$ where $E \subset G$ is an edge in a marked graph and $f^k_\#(E)$ is the path obtained by straightening the (possibly backtracking) edge path $f^k \restrict E$.

In Part II of this work \cite{\RelFSHypTwoTag}, in addition to tiles of relative train tracks, we used a more general concept of tiles of foldable maps in the \TOAT. The \emph{very} rough idea here is to think of a tile in a free splitting $T$ as a path which is determined by data in some far distant free splitting $S$, and which serves as a record in $T$ of some modicum of information about the relation between $S$ and $T$. In order to formulate, prove, and apply the \STOAT, we now refine and formalize the concept of tiles used in  \cite{\RelFSHypTwoTag}.

\begin{definition}[Tiles]
\label{DefGeneralTiles}
Consider a foldable map $f \from S \to T$. A \emph{natural $f$-tile} in $T$ is a path of the form $f(E)$ where $E \subset S$ is a natural edge; an \emph{$f$-tile} in $T$ is a path of the form $f(e)$ where $e \subset S$ is an edge. The ``natural'' qualifier may be dropped where it is understood. Also, the notation ``$f$'' may be replaced by other notations which, in context, have the effect of determining the map $f$; see for example ``iteration tiles'' in Section~\ref{SectionTTsAndLams}.
\end{definition}

Combining Definitions~\ref{DefinitionFillingPaths} and~\ref{DefGeneralTiles} one obtains the concept of a \emph{filling tile} which will play a prominent role in the statement, proof, and application of \STOAT.

\subsubsection{The statement: noniterated and iterated forms}
\label{SectionIteratedAndNon}
Just as with the \TOAT\ (see \cite[Section 5.1]{\RelFSHypTwoTag}), the \STOAT\ comes in iterated and noniterated forms, and the latter implies the former as we shall immediately show.

\begin{theorem}[\STOAT\ (noniterated form)]
\label{TheoremStrongTwoOverAll} 
For any group $\Gamma$ and any free factor system~$\A$ there exists a  constant $\Theta = \Theta(\Gamma;\A)$ such that for any foldable map $f \from S \to T$ of free splittings of $\Gamma$ \relA, if $d(S,T) \ge \Theta$ then there exist two natural edges $E_i \subset S$ $(i=1,2)$ in different orbits such that each of the natural $f$-tiles $f(E_i) \subset T$ is a filling path.
\end{theorem}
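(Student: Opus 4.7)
The strategy, following the sketch given earlier in this introduction, is to factor the foldable map $f \colon S \to T$ via the Stallings fold theorem as a fold sequence
\[
S = S_0 \xrightarrow{f_1} S_1 \xrightarrow{f_2} \cdots \xrightarrow{f_M} S_M = T,
\]
and, for each natural edge $E \subset S$, to study the evolution of the natural tile $\alpha_l := f^0_l(E) \subset S_l$ as $l$ runs from $0$ to $M$. Since each fold moves distance at most $2$ in $\FS(\Gamma;\A)$, we have $M \ge \tfrac12 d_\FS(S,T) \ge \Theta/2$, so by choosing $\Theta$ sufficiently large the fold sequence can be made as long as needed.

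First I would apply the (uniterated) \TOAT\ of \RelFSHypTwo\ to an initial subsequence of length at least the \TOAT-constant $\Delta(\Gamma;\A)$, obtaining two natural edges $E_1, E_2 \subset S$ in distinct $\Gamma$-orbits whose tiles $\alpha^i_l$ ($i=1,2$) eventually enjoy the property that their $\Gamma$-translates cover $S_l$. The new content of the \STOAT\ beyond the \TOAT\ is to upgrade this ``covering'' property to the filling property of Definition~\ref{DefinitionFillingPaths}: namely, that for every expansion $U$ of $S_l$ and every natural edge $E' \subset U$, the lift $\tilde\alpha^i_l \subset U$ has an interior crossing of some translate of $E'$. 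By the filling criterion (Proposition~\ref{PropFillingPath}), this is equivalent to showing that the filling support $\FF[\alpha^i_l; S_l]$ equals the improper free factor system $\{[\Gamma]\}$.

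To prove this I would track the filling supports $\FF[\alpha^i_l; S_l]$ as $l$ advances, using the filling protoforest machinery developed in Sections~\ref{SectionFillingProtoforest}--\ref{SectionBlowupProof}. The key structural point is a monotonicity/stabilization statement: once the covering property has been secured, $\FF[\alpha^i_l; S_l]$ is nondecreasing in the partial order $\sqsubset$ on free factor systems of $\Gamma$ rel $\A$, and across any window of length at least some $\Delta' = \Delta'(\Gamma;\A)$ it either strictly increases or has already stabilized. Because the length of any strictly ascending chain of free factor systems of $\Gamma$ rel $\A$ is uniformly bounded by a constant $K_0 = K_0(\Gamma;\A)$ depending only on $\abs{\A}$ and $\corank(\A)$, after at most $K_0 \Delta'$ additional fold steps both filling supports must stabilize, and the stable value must be $\{[\Gamma]\}$: if it were some proper $\FF$, then the blowup analysis of Sections~\ref{SectionBlowingUpOnlyIf}--\ref{SectionBlowupProof} would produce an expansion $U$ of $S_l$ together with a natural edge $E' \subset U$ in the ``new'' direction none of whose orbit is interior-crossed by the lift of the tile, contradicting the covering property. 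Setting $\Theta$ roughly equal to $2\Delta + 2K_0 \Delta'$ then yields the theorem.

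The main obstacle I anticipate is the strict-increase half of the monotonicity claim: that if the tile's translates already cover $S_l$ but $\FF[\alpha^i_l; S_l] \neq \{[\Gamma]\}$, then within a bounded number of further folds the filling support must strictly grow. This is precisely where the delicate construction of the filling protoforest $\beta^\Fill$ in Lemma~\ref{LemmaCheckDeltaConnected} enters, because it is the filling rather than the overlap protoforest $\beta^\Over$ whose protocomponent stabilizers are guaranteed to assemble into a free factor system of $\Gamma$ rel $\A$; one must then understand how those protocomponents merge or enlarge under a single Stallings fold and under passage through an expansion, which is the technical content of Sections~\ref{SectionBlowingUpOnlyIf}--\ref{SectionBlowupProof}.
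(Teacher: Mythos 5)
Your overall plan matches the paper's: factor $f$ via a Stallings fold sequence, apply the \TOAT\ to an initial subsequence to produce two natural edges in distinct orbits, and then track the filling support $\FF[\alpha_l;S_l]$ via filling protoforests as $l$ advances, exploiting monotonicity (Lemma~\ref{LemmaFoldFillingProtoforest}) together with the uniform bound on ascending chains of free factor systems to arrive at the full free factor system. So the skeleton of the argument is right.

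But there is a genuine gap at the heart of it, and you yourself flag it as "the main obstacle" without resolving it. You assert that ``across any window of length at least some $\Delta'=\Delta'(\Gamma;\A)$ the filling support either strictly increases or has already stabilized,'' which amounts to bounding the \emph{number of fold steps} in a constant-rank plateau. No such bound exists: a fold subsequence along which the filling support is constant can contain arbitrarily many folds. What the paper actually proves (Lemma~\ref{LemmaFillingDiameterBound}) is a bound on the \emph{diameter in $\FS(\Gamma;\A)$} of a constant-rank subsequence, namely $\le 6$, provided the rank is strictly less than $\KR(\Gamma;\A)$ and each tile has interior crossings of every natural edge orbit. The proof blows up the filling protoforest at the terminal position, combs backwards along the subsequence via Lemma~\ref{LemmaExpansionConstruction} and the Combing by Expansion Lemma, and then applies the ``prioritize folding the invariant subforest'' mechanism (Propositions 5.3 and 5.4 of Part II) to get a $2+2$ distance estimate. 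The overall logic is then a contrapositive: \emph{assume} $\alpha_M$ does not fill, use the diameter bound to get $d(S_L,S_M)\le 7\KR(\Gamma;\A)-1$, hence $d(S,T)\le 3\Delta + 7\KR(\Gamma;\A)-1$, and conclude by setting $\Theta = 3\Delta+7\KR(\Gamma;\A)$.

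Your attempt to directly argue that the stable value must be $\{[\Gamma]\}$ by ``contradicting the covering property'' is also incorrect. The covering property is a statement about the translates of $\alpha_l$ covering $S_l$. If the filling support stabilized at a proper free factor system, the blowup would indeed produce an expansion $U$ and a natural edge $E'\subset U$ that the lifted tile misses; but this only says that $\alpha_l$ does not \emph{fill} $S_l$ --- it does not contradict that the translates of $\alpha_l$ cover $S_l$. The entire point of the Filling Criterion (Proposition~\ref{PropFillingPath}) is precisely that covering is strictly weaker than filling (see the continued example and new example in Section~\ref{SectionFillingCriterion}). So no contradiction arises, and your argument does not close.

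A smaller inaccuracy: you apply the \emph{uniterated} \TOAT, which only gives a single covering subpath. The paper applies the \emph{iterated} \TOAT\ with $N=3$ (using $d(S_0,S_L)=3\Delta$), producing four nonoverlapping subpaths each crossing every natural edge orbit, and then uses a pigeonhole argument on three of them to show that the middle one sits in the \emph{interior} of $\alpha_l$; this is how Condition~\pref{ItemAlphaInteriorCrossing} of the Filling Criterion is obtained, and it is also a standing hypothesis of Lemma~\ref{LemmaFillingDiameterBound}. Without interior crossings the plateau diameter bound does not apply.
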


\begin{theorem}[\STOAT\ (iterated form)]
\label{TheoremSTOATIterated}
For any $\Gamma$ and $\A$ as above, using the same value of $\Theta$, and for any integer $N \ge 1$, if $d(S,T) \ge N \Theta$ then there exist two natural edges $E_i \subset S$ in different orbits such that each of the natural $f$ tiles $f(E_i) \subset T$ contains $2^{N-1}$ nonoverlapping filling paths.
\end{theorem}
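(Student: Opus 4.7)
The plan is to deduce the iterated form from the noniterated \STOAT\ (Theorem~\ref{TheoremStrongTwoOverAll}) by induction on $N$, combined with a Stallings-fold factorization of $f$ into two halves. The base case $N=1$ asks for $2^{0}=1$ filling path in each $f(E_i)$, which is exactly the noniterated statement.

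For the inductive step, suppose the result holds for $N-1$ and that $d(S,T) \ge N\Theta$. Factor $f$ along a Stallings fold sequence $S = S_0 \to \cdots \to S_M = T$ and choose an intermediate index $k$ so that the tree $R = S_k$ satisfies $d(S,R) \ge (N-1)\Theta$ and $d(R,T) \ge \Theta$; such a splitting is available after arranging $\Theta$ to absorb the constant additive slack of~$2$ inherent in a single fold step, and it yields a factorization $f = g_2 \circ g_1$ with $g_1 \from S \to R$ and $g_2 \from R \to T$ both foldable. Apply the induction hypothesis to $g_1$ to obtain two natural edges $E_1, E_2 \subset S$ in distinct orbits such that each $g_1(E_i)$ contains $2^{N-2}$ nonoverlapping subpaths $\beta_{i,1},\ldots,\beta_{i,2^{N-2}}$ filling~$R$; apply the noniterated \STOAT\ to $g_2$ to obtain natural edges $E'_1, E'_2 \subset R$ in distinct orbits whose $g_2$-images both fill~$T$.

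Since each $\beta_{i,j}$ fills~$R$, its interior contains a translate $\gamma_{i,j,1}\cdot E'_1$ and a translate $\gamma_{i,j,2}\cdot E'_2$; these two natural edges have disjoint interiors in~$R$ because they belong to distinct orbits. Their images under $g_2$ are translates of $g_2(E'_1), g_2(E'_2)$, hence fill~$T$ by the equivariance invariance of the filling property noted after Definition~\ref{DefinitionFillingPaths}. Parametrizing $f(E_i)$ as the immersed image of $E_i$, each $\beta_{i,j}$ corresponds to a subinterval $I_{i,j} \subset E_i$, and the two inscribed natural edges within $\beta_{i,j}$ cut out two disjoint sub-subintervals of $I_{i,j}$ whose $f$-images fill~$T$. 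Since the $I_{i,j}$ are pairwise disjoint by the induction hypothesis, we obtain $2\cdot 2^{N-2} = 2^{N-1}$ parametrically nonoverlapping filling paths in $f(E_i)$. The hard part will be the careful interpretation of ``nonoverlapping'' in the parametric sense along the immersion $E_i \to f(E_i)$ rather than as set-theoretic disjointness in~$T$ (the foldable map may identify points on~$E_i$), together with ensuring that the value of $\Theta$ produced in the noniterated case has enough slack to support the greedy factorization at each inductive step.
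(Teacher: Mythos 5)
Your proof is correct and essentially matches the paper's argument, except that you apply the inductive hypothesis to the first half of the Stallings factorization and the noniterated theorem to the second half, whereas the paper does the reverse (noniterated on $S \to U$, induction on $U \to T$); the two variants are symmetric and the bookkeeping works out identically. The concern you flag as ``the hard part'' about parametric versus set-theoretic nonoverlapping is not actually an issue: since $f = g_2 \circ g_1$ is foldable it restricts to an injection on each natural edge $E_i$, so $f(E_i)$ is an embedded arc in $T$ and the two notions coincide, which is exactly what the paper invokes when it observes that $h$ is injective on $g(E_i)$.
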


\begin{proof}[Proof of the iterated form, assuming the noniterated form] The basis step $N=1$ follows immediately from the noniterated form of the theorem. 

Assume by induction that the theorem holds for $N$, and suppose that $d(S,T) \ge (N+1)\Theta$. Choose a Stallings fold factorization of $f$ in which each fold map has distance~$\le 1$ \cite[Theorem 2.17]{\RelFSHypTwoTag}. Along that fold path there exists a term $U$ such that $d(S,U)=\Theta$. It follows that $d(U,T) \ge N\Theta$, and we have a foldable factorization of $f$ of the form
$$S \xrightarrow{g} U \xrightarrow{h} T
$$
Applying the induction hypothesis to $h$ we obtain natural edges $E'_1,E'_2 \subset U$ in distinct orbits such that for each $j=1,2$ the path $h(E'_j)$ contains $2^{N-1}$ nonoverlapping filling paths. Applying the noniterated form to $g$ we obtain natural edges $E_1,E_2 \subset S$ in distinct orbits such that for each $i=1,2$ the path $g(E_i) \subset U$ is filling, and therefore for each $j=1,2$ the path $g(E_i)$ contains some translate $\gamma_{ij} \cdot E'_j$ of $E'_j$, $\gamma_{ij} \in \Gamma$. Noting that $\gamma_{i1} \cdot E'_1$ and $\gamma_{i2} \cdot E'_2$ are natural edges in distinct orbits, they do not overlap. Since the factorization above is foldable, the map $h$ is injective on $g(E_i)$. The path $f(E_i) = h \circ g(E_i)$ therefore contains two nonoverlapping paths $h(\gamma_{i1} \cdot E'_1)$ and $h(\gamma_{i2} \cdot E'_2)$, each of which contains $2^{N-1}$ nonoverlapping filling paths. It follows that $f(E_i)$ contains $2^N$ nonoverlapping filling paths.
\end{proof}

\subsection{A filling criterion for paths, expressed using protoforests}
\label{SectionFillingCriterion}
Given a free splitting $T$ and a path $\alpha \subset T$, in order to directly verify whether $\alpha$ fills $T$ according to Definition~\ref{DefinitionFillingPaths} one must search over all expansions of~$T$. In this section we give a more intrinsic description of what it means for $\alpha$ to fill~$T$, expressed without reference to any expansion of~$T$. To motivate that description, consider $\beta(\alpha;T) = \bigcup_{g \in \Gamma} g \cdot \alpha$, called the \emph{covering forest} in $T$ associated to~$\alpha$; see \cite[Definition 5.6]{\RelFSHypTwoTag}. In the proof of Step~1 of the original \TOAT, carried out in \cite[Section 5.4]{\RelFSHypTwoTag}, covering forests $\beta(\alpha;T)$ played the starring role, with guest star being the free factor system $\F[\beta(\alpha;T)]$ formed from the stabilizers of the components of $\beta(\alpha;T)$ (see \cite[Definition 2.4]{\RelFSHypTwoTag}). By studying how the translates $g \cdot \alpha$ of the path $\alpha$ overlap with each other in $T$, we shall describe the \emph{overlap protoforest} of $\alpha$ in $T$, a kind of refinement of the covering forest $\beta(\alpha;T)$ that we denote $\beta^\Over(\alpha;T)$. From the overlap protoforest we obtain (somewhat indirectly) a free factor system \relA\ called the \emph{filling support} $\F[\beta^\Over(\alpha;T)]$, which again is a refinement of $\F[\beta(\alpha;T)]$ in the sense of the nesting relation on free factor systems $\F[\beta^\Over(\alpha;T)] \sqsubset \F[\beta(\alpha;T)]$, and the nesting might be strict. Proposition~\ref{PropFillingPath} states our criterion for $\alpha$ to fill~$T$, one condition for which is that the free factor system $\F[\beta^\Over(\alpha;T)]$ must be \emph{full}, i.e.\ it must equal $\{[\Gamma]\}$. The proof of that proposition will follow in Section~\ref{SectionCriterionProof}, and its application to the \STOAT\ is found in Section~\ref{SectionProofStrongTOA}.

\begin{definition}[Protoforests in a free splitting]
\label{DefProtoforest}
Consider a tree $T$ and two subtrees $b,b' \subset T$ (with respect to the given simplicial structure on~$T$). To say that $b$ and $b'$ \emph{overlap} means that their intersection $b \, \intersect \, b'$ contains an edge of $T$; since $b,b'$ are subtrees, it follows that $b,b'$ do not overlap if and only if their intersection $b \intersect b'$ is a single point or empty. A \emph{protoforest} in $T$ is a set $\delta$ of subtrees of~$T$ called the \emph{protocomponents} of~$\delta$, such that no two distinct protocomponents overlap, and such that the collection of protocomponents is $\Gamma$-invariant in the sense that the image under any $g \in \Gamma$ of any protocomponent of~$\delta$ is also a protocomponent of~$\delta$. The partial order $\beta \prec \delta$ on protoforests in~$T$ is defined by requiring that every protocomponent of $\beta$ is a subset of some protocomponent of~$\delta$.
\end{definition}

\subparagraph{Remarks.} We have built $\Gamma$-invariance into the definition of protoforests. We did \emph{not} do that with ordinary subforests, of which there are many important non-invariant examples.

The concept of a \emph{protoforest} is very close to the concept of a \emph{transverse covering} which occurs in Guirardel's work \cite{Guirardel:GroupsActingFreely} in the broader setting of group actions on $\reals$-trees.

\medskip

An ordinary $\Gamma$-invariant forest is the same thing as a protoforest whose protocomponents are the same as its path components, for example $\beta(\alpha;T)$. We shall focus on two other special types of protoforests which are refinements of $\beta(\alpha;T)$: the ``overlap protoforest'' $\beta^\Over(\alpha;T)$; and the ``filling protoforest'' $\beta^\Fill(\alpha;T)$. See Definitions~\ref{DefFineCoveringForest} and~\ref{DefinitionFillingForest}, and see also the \emph{Protoforest Summary} just below Definition~\ref{DefinitionFillingForest}.

\begin{definition}[The overlap protoforest of a path in a free splitting] 
\label{DefFineCoveringForest}
Consider a free splitting $T$ of $\Gamma$ \relA, and a path $\alpha \subset T$ with corresponding covering forest $\beta(\alpha) = \beta(\alpha;T) = \bigcup_g g \cdot \alpha$. For each $g \in \Gamma$ we let $\beta_g(\alpha)$ denote the connected component of $\beta(\alpha)$ containing $g \cdot \alpha$. An \emph{$\alpha$-connection} is a sequence of paths of the form  
$$g_0 \cdot \alpha, \quad g_1 \cdot \alpha, \quad \ldots, \quad g_K \cdot \alpha \quad (g_0,g_1,\ldots,g_{K} \in \Gamma)
$$
such that any two consecutive terms $g_{k-1} \cdot \alpha$, $g_k \cdot \alpha$ overlap ($1\le k \le K$). If furthermore we are given edges $e,e' \subset T$ such that $e \subset g_0(\alpha)$ and $e' \subset g_K(\alpha)$ then we say this sequence is an \emph{$\alpha$-connection from $e$ to $e'$}. Also, given a subgraph $b \subset \beta(\alpha)$, we say that this sequence is an $\alpha$-connection \emph{in the subgraph $b$} if $g_k \cdot \alpha \subset b$ for $0 \le k \le K$. 

To say that a subgraph~$b \subset \beta(\alpha)$ is \emph{$\alpha$-connected} means that there exists an $\alpha$-connection in $b$ from any edge of $b$ to any other edge of $b$. Clearly every $\alpha$-connected subgraph of~$\beta(\alpha)$ is connected, i.e.\ it is a subtree. Also, if $b,b' \subset \beta(\alpha)$ are two $\alpha$-connected subgraphs, and if $b$ overlaps $b'$, then $b \union b'$ is clearly $\alpha$-connected. It follows that the invariant forest $\beta(\alpha)$ has a unique decomposition into maximal $\alpha$-connected subgraphs, each of which is a tree, no two of which overlap, and this decomposition is $\Gamma$-invariant. This collection of subgraphs therefore gives the covering forest $\beta(\alpha)$ the structure of a protoforest, which we call the \emph{overlap protoforest} of $\alpha$ in $T$, denoted $\beta^\Over(\alpha;T)$. For each $g \in \Gamma$, the protocomponent of $\beta^\Over(\alpha;T)$ that contains $g \cdot \alpha$ is denoted $\beta^\Over_g(\alpha;T)$, and clearly $\beta^\Over_g(\alpha;T) \subset \beta_g(\alpha;T)$. When $T$ and/or $\alpha$ are understood we often simplify the notation by writing $\beta^\Over(\alpha)$ or just $\beta^\Over$, and by writing $\beta^\Over_g(\alpha)$ or just $\beta^\Over_g$.
\end{definition}

Since $\Gamma$ acts transitively on the set $\{g \cdot \alpha\}$ we clearly obtain the following consequences of Definition~\ref{DefFineCoveringForest}:
\begin{description}
\item[Protocomponent Transitivity:] $\Gamma$ acts transitively on the protocomponents of $\beta^\Over(\alpha)$.
\item[Protocomponent Stabilizers:] The set of protocomponent stabilizers of $\Gamma$ acting on $\beta^\Over(\alpha)$ is the conjugacy class of the subgroup $\Stab(\beta^\Over_\Id(\alpha))$. 
\end{description}
Note that $\Stab(\beta^\Over_\Id(\alpha))$ need not be a free factor of $\Gamma$ \relA; see ``A new example'' below.


\begin{definition}[The filling support of a path in a free splitting]
\label{DefinitionFineFFS}
Continuing with the notation of Definition~\ref{DefFineCoveringForest}, we define a free factor \relA\ denoted $F(\alpha)$, defined to be the unique minimal free factor of $\Gamma$ \relA\ containing the subgroup $\Stab(\beta^\Over_\Id(\alpha))$. The \emph{(Kurosh) filling rank} of $\alpha$ is defined to be the Kurosh rank of the free factor $\Fm(\alpha)$ relative to~$\A$, an integer denoted $\KR(\alpha)=\KR(\Fm(\alpha)) \ge 0$ \cite[Definition 2.8]{\RelFSHypTwoTag}. Let $\F[\alpha]$ denote the unique smallest free factor system of $\Gamma$ \relA\ which carries the free factor $\Fm(\alpha)$. It follows that one of three cases holds (see the final sentence of \cite[Definition 2.8]{\RelFSHypTwoTag}):
\begin{enumerate}
\item\label{ItemFFFSTrivial}
$\Fm(\alpha)$ is trivial, equivalently $\KR(\alpha)=0$; it follows that $\F[\alpha] = \A$.
\item\label{ItemFFFSAtomic}
$\Fm(\alpha)$ is atomic, equivalently $[\Fm(\alpha)] \in \A$; it follows that $\KR(\alpha)=1$ and $\F[\alpha]=\A$. 
\item\label{ItemFFFSNonatomic}
$\Fm(\alpha)$ is nontrivial and nonatomic, equivalently $\KR(\alpha)>0$ and $[\Fm(\alpha)] \not\in \A$. It follows that $\F[\alpha] \ne \A$; and furthermore there is a decomposition $\A = \A_{\Fm(\alpha)} \sqcup \overline\A_{\Fm(\alpha)}$ such that $\F[\alpha] = \{[\Fm(\alpha)]\} \union \overline\A_{\Fm(\alpha)}$. 
\end{enumerate}
In particular $\Fm(\alpha)$ determines $\F[\alpha]$, and furthermore $\F[\alpha]$ determines $\Fm(\alpha)$ up to conjugacy \emph{except} in the case that $\F[\alpha]=\A$: in that case, $\Fm(\alpha)$ can be either trivial or atomic. Despite this ambiguity we will refer to either the free factor $\Fm(\alpha)$ or the free factor system $\F[\alpha]$ as the \emph{filling support} of~$\alpha$; in context the exact referent should be clear.  

We sometimes use the notation $\F[\alpha;T]$ to highlight the dependence on the free splitting $T$ that contains $\alpha$. Also, we note that $\F[\alpha]$ is determined by the connected subgraph $\beta^\Over_\Id(\alpha)$, in that it is the unique smallest free factor system that ``carries'' the subgroup $\Stab(\beta^\Over_\Id(\alpha))$. For this reason we sometimes denote $\F[\alpha]=\F[\beta^\Over(\alpha;T)]$ and $\KR(\alpha)=\KR(\beta^\Over(\alpha;T))$.
\end{definition}

\noindent
\textbf{Remark.} In the statement of Definition~\ref{DefinitionFineFFS}, Case~\pref{ItemFFFSTrivial} holds if and only if $\beta^\Over_\Id(\alpha) = \alpha$ if and only if for all $g \ne h \in \Gamma$ the translated paths $g \cdot \alpha$ and $h \cdot \alpha$ do not overlap; in this case the protocomponents of $\beta^\Over(\alpha)$ are precisely the translates of $\alpha$. Also, Case~\pref{ItemFFFSAtomic} holds if and only if there exists a vertex $V \in \beta^\Over_\Id(\alpha)$ such that $\Stab(\beta^\Over_\Id(\alpha)) = \Stab(V)$ and $[\Stab(V)] \in \A$; it follows in this case that $V \in \alpha$, and that $\beta^\Over_\Id(\alpha) = \bigcup_{g \in \Stab(V)} g \cdot \alpha$.

\paragraph{A continued example.} In the example we gave preceding the statement of Theorem~\ref{TheoremStrongTwoOverAll}, the protocomponents of $\beta^\Over_\Id(\alpha)$ are the axes in $T$ of the infinite cyclic free factor $\<a\>$ of $\Gamma = \<a,b\>$, and so the filling support is $\F[\alpha;T] = \{[\<a\>]\}$ which is a nonfull free factor system, despite the fact that the path $\alpha \subset T$ does indeed have an interior crossing of a translate of every natural edge orbit of~$T$: the tree $T$ has only one natural edge orbit, and $\alpha$ crosses three different natural edges of that orbit, the middle one of those three being an interior crossing.

\paragraph{A new example.} This example shows that the subgroup $\Stab(\beta^\Over_\Id(\alpha))$ \emph{need not be} a free factor of $\Gamma$ \relA, in contrast to the fact that the stabilizer of every component of the $\Gamma$-invariant subforest $\beta(\alpha) \subset T$ is a free factor of $\Gamma$ \relA. Consider the rank 4 free group $\Gamma = \<a,b,c,d\>$. Let $G$ be a marked graph with two vertices $p,q$, with the $\<a,b\>$ rose attached to $p$, the $\<c,d\>$ rose attached to $q$, and an edge $e$ from $p$ to~$q$; we identify $\pi_1(G,p) \approx \<a,b\> * \<c,d\> \approx \<a,b,c,d\>$ by using $e$ as a maximal subtree of~$G$. Let $U = \wt G$ be the universal covering with edge labels lifted from~$G$. Choosing a lift $\tilde p \in U$ of $p$ determines the deck transformation action $\Gamma \act U$. Let $\tilde\alpha \subset U$ be the path with initial vertex $\tilde p$ that is labelled by the word $e \, c \, d \, \bar c \, \bar d \, \bar e \, a \, b \, \bar a \, \bar b \, e$. Note that $\tilde\alpha$ is contained in the $U$-axis of the infinite cyclic subgroup 
$$C = \< c \, d \, c^\inv \, d^\inv \, a \, b \, a^\inv \,  b^\inv \> < \<a,b,c,d\>
$$
Also, $\tilde\alpha$ consists of one entire fundamental domain for the action of $C$ on its $U$-axis, followed by the first $e$ edge of the next fundamental domain. Let $T$ be the free splitting obtained from $U$ by collapsing all edges labelled $a,b,c,d$, so $T$ has a single natural edge orbit, represented by the $T$-image of any $e$-edge of~$U$. Let $\alpha \subset T$ be the image of $\tilde\alpha$. Note that $\alpha$ crosses the unique natural edge orbit three times, the middle crossing being an interior crossing. Again $\alpha$ is contained in the axis of~$C$ in~$T$, and $\alpha$ consists of one entire fundamental domain of that axis followed by the first edge of the next fundamental domain. Note that no two distinct translates of the axis of $C$ have a common edge in~$T$. It follows that the protocomponent $\beta^\Over_\Id(\alpha)$ equals the axis of $C$ in~$T$, and that $C = \Stab(\beta^\Over_\Id(\alpha))$. But $C$ has trivial image under abelianization of $\<a,b,c,d\>$ and hence $C$ is \emph{not} a free factor of $\<a,b,c,d\>$. In fact we have the equation $\F[\alpha;T] = \{[\<a,b,c,d\>]\}$, in other words $C$ fills the group $\<a,b,c,d\>$, and hence $\alpha$ fills~$T$ by Proposition~\ref{PropFillingPath}. 

The fact that $C$ fills $\<a,b,c,d\>$ can be proved using a beautiful transversality argument that we learned from a paper of Stallings \cite{Stallings:transversality}. Here are some details of this argument. Consider $S$ a compact orientable surface of genus~2 with 1 boundary component $C$. There is an embedding $G \hookrightarrow S$ and a deformation retraction $\rho \from S \mapsto G$ such that $\rho(C)$ is the loop $aba^\inv b^\inv c d c^\inv d^\inv$. Arguing by contradiction, if $C$ did not fill  $\<a,b,c,d\>$ then there would exist a homotopy equivalence $h \from G \to H$ to a connected graph $H$ such that $h(C)$ is a circuit contained in a proper subgraph of $H$, thus missing the midpoint $m \in E$ of some edge $E \subset H$. After perturbing $h$ to be transverse to $m$, it follows that $h^\inv(m)$ is disjoint union of simple closed curves in the interior $S$. But $h$ is a homotopy equivalence and the image of each of these curves is the point $m$, hence each such curve bounds a disc in $S$. We can then homotope $h$ to remove these curves one-at-a-time, obtaining a homotopy equivalence $S \to H$ that misses $m$ entirely. But this implies that $\<a,b,c,d\> \approx \pi_1(S) \approx \pi_1(H)$ is contained in a proper free factor of itself, which is absurd. 

\smallskip

We now state our combinatorial criterion for a path to fill a free splitting:


\begin{proposition}
\label{PropFillingPath}
For each free splitting $T$ of $\Gamma$ \relA\ and each path $\alpha \subset T$, the path $\alpha$ fills $T$ if and only if the following holds:

\medskip
\textbf{\emph{The Filling Criterion:}}
\begin{enumerate}
\item\label{ItemAlphaInteriorCrossing}
$\alpha$ has an interior crossing of the orbit of every natural edge of $T$ 
\item\label{ItemAlphaFullRank} 
$\Fm(\alpha) = \Gamma$, which holds if and only if $\F[\alpha;T] = \{[\Gamma]\}$, which holds if and only if 
$$\KR(\alpha) = \KR(\Gamma;\A) \,\, (= \abs{\A} + \corank(\A))
$$ 
(see \cite[Definition 2.8 and Lemma 2.9]{\RelFSHypTwoTag}).
\end{enumerate}
\end{proposition}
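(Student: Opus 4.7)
The proof has two directions.

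For the forward direction, assume $\alpha$ fills $T$. Condition~\pref{ItemAlphaInteriorCrossing} follows immediately by taking $U=T$ and $\pi$ the identity collapse, so that $\tilde\alpha=\alpha$ and the filling hypothesis applied to this choice of $U$ delivers the required interior crossings of natural edge orbits of~$T$. Condition~\pref{ItemAlphaFullRank} is argued by contrapositive: suppose $\Fm(\alpha)$ is a proper free factor of $\Gamma$ rel~$\A$; I will build an expansion $\pi \from U \to T$ and a natural edge $E \subset U$ such that no $\Gamma$-translate of $E$ lies in the interior of $\tilde\alpha$, contradicting filling. Invoking \cite[Lemma 3.1]{\RelFSHypTag} applied to the free factor system $\FF[\alpha;T]$ produces a Grushko free splitting rel~$\A$ in which $\FF[\alpha;T]$ is visible, and a blow-up construction at the $T$-vertex that carries $\Fm(\alpha)$ yields the desired $U$ together with a natural edge $E$ of trivial stabilizer that detaches the $\Fm(\alpha)$-part from the remainder of $U$. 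Because $\Stab(\beta^\Over_\Id(\alpha;T)) \subgroup \Fm(\alpha)$, the lifted protocomponent $\beta^\Over_\Id(\tilde\alpha;U)$ --- and in particular $\tilde\alpha$ itself --- stays inside the $\Fm(\alpha)$-half of $U$, so no $\gamma \cdot E$ can appear in its interior.

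For the backward direction, assume both items of the Filling Criterion. Given a collapse $\pi \from U \xrightarrow{\<\sigma\>} T$ and a natural edge $E \subset U$, the goal is to exhibit $\gamma \in \Gamma$ with $\gamma\cdot E$ contained in the interior of $\tilde\alpha$. The central tool is the filling protoforest $\beta^\Fill(\alpha;T)$ constructed in Lemma~\ref{LemmaCheckDeltaConnected}: a $\Gamma$-invariant refinement of $\beta^\Over(\alpha;T)$ whose protocomponent stabilizers are themselves free factors of $\Gamma$ rel~$\A$ and whose associated free factor system is exactly $\FF[\alpha;T]$. Hypothesis~\pref{ItemAlphaFullRank} gives that this free factor system is $\{[\Gamma]\}$, so by Protocomponent Transitivity and minimality of the $\Gamma$-action on $T$ the filling protoforest consists of a single protocomponent equal to all of $T$. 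I then transport this conclusion to $U$ using the behavior of filling protoforests under expansion, developed in Sections~\ref{SectionBlowingUpOnlyIf}--\ref{SectionBlowupProof}: the filling protoforest $\beta^\Fill(\tilde\alpha;U)$ likewise exhausts $U$, so every edge of $U$ lies in some translate of $\tilde\alpha$. Condition~\pref{ItemAlphaInteriorCrossing} is then used to promote ``contained in a translate'' to ``contained in the \emph{interior} of a translate'': splitting into the cases $E \subset \sigma$ and $E \not\subset \sigma$, one feeds the interior-crossing hypothesis on natural edges of $T$ through the compatibility of $\tilde\alpha$ with $\pi$, handling endpoints of $\tilde\alpha$-translates that arise at natural vertices of $U$ produced by collapsed $\sigma$-components.

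The principal obstacle is the delicate construction of the filling protoforest in Lemma~\ref{LemmaCheckDeltaConnected}. One must identify, for each translate $g\cdot\alpha$, a subtree $\beta^\Fill_g(\alpha;T) \supset g\cdot\alpha$ such that the family $\{\beta^\Fill_g(\alpha;T)\}_{g\in\Gamma}$ is $\Gamma$-equivariant, pairwise non-overlapping, refines $\beta^\Over(\alpha;T)$, and has each stabilizer a free factor of $\Gamma$ rel~$\A$. The refined $\alpha$-connectivity argument needed to ensure these sets are genuine subtrees with a coherent stabilizer is the heart of the matter; once this is in hand, verifying that the resulting free factor system is neither too coarse nor too fine to equal $\FF[\alpha;T]$, and that $\beta^\Fill$ pulls back correctly under expansions, proceeds combinatorially from the definitions.
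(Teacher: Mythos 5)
Your proposal gets the broad architecture right (both directions rest on protoforest bookkeeping, and the filling protoforest is where the non-elementary content lives), but each direction has a genuine gap.

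\textbf{Forward direction (fills $\implies$ criterion).} Your plan to construct the witnessing expansion by ``a blow-up construction at the $T$-vertex that carries $\Fm(\alpha)$'' does not work, because $\Fm(\alpha)$ need not fix any vertex of $T$. By Lemma~\ref{LemmaCheckDeltaConnected}, $\Fm(\alpha)$ is the stabilizer of the subtree $\beta^\Fill_\Id(\alpha)\subset T$, and when $\Fm(\alpha)$ is nonatomic that subtree contains the minimal subtree $T_{\Fm(\alpha)}$, which has edges; there is no single vertex to blow up. Likewise, invoking \cite[Lemma 3.1]{\RelFSHypTag} to make $\FF[\alpha;T]$ visible in \emph{some} Grushko free splitting rel~$\A$ produces a free splitting with no a priori relation to $T$ — what is required is an expansion of $T$ itself in which the filling protoforest becomes a genuine invariant subforest. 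This is precisely what the Protoforest Blowup Lemma~\ref{LemmaExpansionConstruction} supplies, and its proof requires a multi-stage detachment/reattachment over the entire $\Gamma$-invariant family $\{\beta^\Fill_g(\alpha)\}$ (including an intermediate expansion $T'$ to repair the failure of condition $(*)$), together with verification of the Tame Stabilizer Hypothesis from Lemma~\ref{LemmaCheckDeltaConnected}. Your one-vertex blow-up picture is not a simplification of this — it is false.

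\textbf{Backward direction (criterion $\implies$ fills).} You assert that $\beta^\Fill(\tilde\alpha;U)$ exhausts $U$ and cite ``the behavior of filling protoforests under expansion, developed in Sections~\ref{SectionBlowingUpOnlyIf}--\ref{SectionBlowupProof}''. Those sections do not contain such a pullback statement; they construct a \emph{single} expansion from a given protoforest, not a result about the filling protoforest of the lift in an \emph{arbitrary} expansion $U\to T$. What your argument actually needs is the identity $\FF[\tilde\alpha;U]=\FF[\alpha;T]$ for any collapse $U\to T$, and the paper establishes this in Section~\ref{SectionFillingIf} by first invoking Lemma~\ref{LemmaAllOrNone}, then reducing to the case where $q\restrict\tilde\alpha$ is a simplicial isomorphism, and then matching the \emph{overlap} protoforests edgelet-by-edgelet; this deduction \emph{uses condition~\pref{ItemAlphaInteriorCrossing} at the start}, not merely at the end to ``promote to interior crossings'' as you describe. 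Indeed, without condition~\pref{ItemAlphaInteriorCrossing} the transport claim fails, as the counterexample following Proposition~\ref{PropFillingPath} shows: there $\Fm(\alpha)=\Gamma$, so $\beta^\Fill(\alpha;T)=T$, yet $\alpha$ does not fill because condition~\pref{ItemAlphaInteriorCrossing} fails. Note also that the paper's proof of this direction works entirely with the overlap protoforest $\beta^\Over$ and never invokes $\beta^\Fill$ at all; the filling protoforest is needed only for the other direction.
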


\paragraph{A counterexample.} Condition~\pref{ItemAlphaInteriorCrossing} of Proposition~\ref{PropFillingPath} is necessary for $\alpha$ to fill~$T$, because if there is a natural edge $e \subset T$ such that $\alpha$ has no interior crossing of $e$ then the trivial expansion of~$T$, namely $U = T$, witnesses that $\alpha$ does not fill $T$. But one might wonder whether Condition~\pref{ItemAlphaInteriorCrossing} follows from Condition~\pref{ItemAlphaFullRank}, in which case the statement of Proposition~\ref{PropFillingPath} could be simplified by eliminating any mention of Condition~\pref{ItemAlphaInteriorCrossing}. Here is an example to dispel that wonder, in which $\alpha$ does \emph{not} have an interior crossing of the orbit of every natural edge of $T$, and yet $\Fm(\alpha) = \Gamma$. Let $T$ be the Cayley tree of $\<a,b\>$. Equivariantly subdivide each $a$ edge into three subedges $a=a_1 \, a_2 \, a_3$. Let $\alpha \subset T$ be a path labelled $a_2 \, a_3 \, b \, b \, a_1 \, a_2$, and note that $\alpha$ has no interior crossing of any natural edge of $T$ labelled~$a$. To prove $\Fm(\alpha)=\Gamma$ it suffices to show that the overlap protoforest $\beta^\Over(\alpha)$ has just one protocomponent, namely the entire tree $T$. Note first that each path in $T$ labelled $b \, b$ is contained in a single protocomponent of $\beta^\Over(\alpha)$, and hence each $b$-axis is contained in a single protocomponent. Note next that each path labelled $b \, a_1 \, a_2 \, a_3 \, b$ is contained in a single protocomponent of $\beta^\Over(\alpha)$, because its initial $b \, a_1 \, a_2$ subpath is a terminal subpath on one translate of~$\alpha$, its terminal $a_2 \, a_3 \, b$ subpath is an initial subpath of another translate of~$\alpha$, and those two subpaths share their $a_2$ edge. The unique protocomponent of $\beta^\Over(\alpha)$ is therefore~$T$. 

\subsection{Proof of the filling criterion}
\label{SectionCriterionProof}

In this section we prove the \emph{Filling Criterion}, Proposition~\ref{PropFillingPath}. The ``if'' direction is proved in Section~\ref{SectionFillingIf}. 

The ``only if'' direction requires considerably more work. Section~\ref{SectionFillingProtoforest} covers some preliminary work, defining the \emph{filling protoforest} $\beta^\Fill(\alpha;T)$ of a path $\alpha$ in a free splitting $T$ of $\Gamma$ \relA, whose protocomponent stabilizers realize the free factor system $\F[\alpha;T]$, and which sits between the overlap protoforest $\beta^\Over(\alpha)$ and the covering forest $\beta(\alpha)$ with respect to the partial order $\prec$. Section~\ref{SectionBlowingUpOnlyIf} reduces the ``only if'' direction to the \emph{Protoforest Blowup Lemma}~\ref{LemmaExpansionConstruction}, which gives criteria under which a protoforest in $T$ can be ``lifted'' to a forest in some expansion of~$T$; the reduction is carried out by verifying that the filling protoforest $\beta^\Fill(\alpha;T)$ satisfies those criteria. Section~\ref{SectionBlowupProof} contains a proof of \emph{Protoforest Blowup Lemma}.

\subsubsection{Proof of the ``if'' direction.} 
\label{SectionFillingIf}
Assuming that condition~\pref{ItemAlphaInteriorCrossing} of Proposition~\ref{PropFillingPath} holds, but that the path $\alpha$ fills $T$, we shall prove that condition~\pref{ItemAlphaFullRank} does not hold. We start by deriving a consequence of condition~\pref{ItemAlphaInteriorCrossing}.

\begin{lemma}\label{LemmaAllOrNone}
Consider a free splitting $T$ and a path $\alpha \subset T$ that has an interior crossing of a translate of every natural edge of $T$. For any expansion $T \expands U$ with lifted path $\tilde\alpha \subset U$ the following holds: for every natural edge $E$ of $U$, either $\tilde\alpha$ has an interior crossing of a translate of $E$, or $\tilde\alpha$ is disjoint from the orbit of the interior of~$E$. 
It follows that the covering forest $\beta(\tilde\alpha;U) = \bigcup_{g \in \Gamma} g \cdot \tilde\alpha$ is a natural (invariant) subforest of~$U$. 
\end{lemma}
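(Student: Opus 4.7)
The plan is to establish the stated dichotomy for each natural edge $E$ of $U$ and then deduce the covering-forest conclusion by $\Gamma$-invariance. I would let $\pi \colon U \to T$ denote the collapse map of the expansion $T \expands U$ with collapse forest $\sigma \subset U$, and split into cases according to whether $\pi(E)$ is a single vertex of $T$ or a nondegenerate path.

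In the first case $E$ lies inside a single $\sigma$-component $C$. Using the tree structure of $U$, the intersection $\tilde\alpha \cap C$ is a connected sub-arc of $\tilde\alpha$; since $\tilde\alpha$ begins and ends with edgelets of $U \setminus \sigma$, its endpoints cannot lie in $\interior(C)$, and in a tree $\tilde\alpha$ cannot re-enter $C$ after leaving through a boundary vertex. So $\tilde\alpha \cap C$ is either empty, a single boundary vertex, or an arc whose two endpoints on $\bdy C$ are both interior points of $\tilde\alpha$. If some $g \cdot \interior(E)$ meets $\tilde\alpha$ then $g \cdot C$ must realize the nontrivial-arc case and the arc $\tilde\alpha \cap g \cdot C$ contains $g \cdot E$ completely (the valence-$2$ interior of a natural edge forces full traversal once entered), giving $g \cdot E \subset \interior(\tilde\alpha)$. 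In the second case I would first verify that $\pi(E)$ is itself a natural edge $E'$ of $T$: endpoints of $E$ are natural vertices of $U$ projecting to natural vertices of $T$, and interior valence-$2$ trivial-stabilizer points of $E$ off $\sigma$ project homeomorphically. The hypothesis applied to $E'$ yields a translate $h \cdot E' \subset \interior(\alpha)$, and lifting through the edgelet-path bijection gives $h \cdot \widetilde{E'} \subset \interior(\tilde\alpha)$, where $\widetilde{E'}$ is the $U \setminus \sigma$ portion of the lift; when $E$ lies entirely in $U \setminus \sigma$ we have $\widetilde{E'} = E$ and this finishes the case.

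The hard part will be the \emph{straddling} sub-case where $E$ contains both $\sigma$- and non-$\sigma$-edgelets (this can arise when an expansion creates a valence-$2$ trivial-stabilizer attachment vertex at the boundary of some $\sigma$-component, merging an old edge of $T$ with an edge of the $\sigma$-component into a single natural edge of $U$). In this sub-case $\widetilde{E'}$ is only the non-$\sigma$ subpath of $h \cdot E$, ending at an interior subdivision vertex $h \cdot w \in \bdy\sigma$ of $h \cdot E$. Since $h \cdot w \in \interior(\tilde\alpha)$ has valence $2$ in $U$, the path $\tilde\alpha$ uses both adjacent edgelets; this forces $\tilde\alpha$ to extend from $h \cdot w$ into the $\sigma$-part of $h \cdot E$ and, by the same valence-$2$ argument at each interior subdivision vertex, to traverse it all the way to the natural-vertex endpoint $h \cdot v$ of $h \cdot E$ lying in that $\sigma$-component. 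The concluding observation is that $\tilde\alpha$ arrives at $h \cdot v$ along a $\sigma$-edgelet, and since the lift definition requires the terminal edgelets of $\tilde\alpha$ to lie in $U \setminus \sigma$, $\tilde\alpha$ cannot stop at $h \cdot v$; therefore $h \cdot v \in \interior(\tilde\alpha)$ and $h \cdot E \subset \interior(\tilde\alpha)$. With the dichotomy in hand, the covering-forest statement follows at once from $\Gamma$-invariance of $\beta(\tilde\alpha;U)$: if some translate of $E$ sits in $\interior(\tilde\alpha)$ then the whole $\Gamma$-orbit of $E$ lies in $\beta(\tilde\alpha;U)$, while otherwise the orbit's interior is disjoint from $\beta(\tilde\alpha;U)$, so $\beta(\tilde\alpha;U)$ decomposes as a union of full natural edges together with vertices, which is the desired natural invariant subforest.
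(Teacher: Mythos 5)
Your overall approach matches the paper's: split on whether $E$ lies in the collapse forest, and in the other case use the hypothesis on $T$, lift, and propagate across $\sigma$ by a valence-$2$ argument. Your Case (i) is the paper's $E\subset\sigma$ case in slightly different clothing, and is fine. There are, however, two places where your treatment of the remaining case ($E\not\subset\sigma$) is less complete than it needs to be, and it is worth understanding why the paper's single decomposition $E=\zeta\,\eta\,\theta$ (maximal $\sigma$-prefix, maximal subpath beginning and ending with non-$\sigma$ edgelets, maximal $\sigma$-suffix) sidesteps both.

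First, your ``straddling'' sub-case is written only for one configuration: a non-$\sigma$ arc followed by a single $\sigma$ arc meeting one endpoint of $E$. But in general $E$ may carry a $\sigma$-arc touching \emph{both} of its natural endpoints (i.e.\ both $\zeta$ and $\theta$ nontrivial), and $\eta$ itself may contain $\sigma$-arcs in its interior (a $\sigma$-component lying entirely inside $\interior(E)$ is a subarc of $\interior(E)$, and one checks it collapses to a valence-$2$ trivial-stabilizer point of $T$, so $q(E)$ is still a natural edge of $T$, but $\widetilde{E'}$ is then \emph{not} simply ``the non-$\sigma$ subpath''). Your valence-$2$ propagation and the observation that $\tilde\alpha$ cannot terminate on a $\sigma$-edgelet do extend symmetrically to cover these configurations, but as written the argument handles only one of them. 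The paper's decomposition $E=\zeta\,\eta\,\theta$ handles them all at once: $q(E)\subset\interior(g\alpha)$ lifts to $\eta\subset\interior(g\tilde\alpha)$ regardless of how $\sigma$ sits inside $\eta$, and the endpoint argument on $\zeta$ and $\theta$ is the same on both sides. Second, the assertion that ``$\pi(E)$ is itself a natural edge $E'$'' does require an argument (it is used implicitly in the paper too): one must check that a natural vertex of $U$ projects to a natural vertex of $T$ and that interior points of $E$ project to non-natural points; both follow from the fact (using minimality of $U$ and an edge-count/Euler argument on the relevant $\sigma$-component) that a $\sigma$-component containing a trivial-stabilizer valence-$\ge 3$ vertex has at least three external edges, while a $\sigma$-component contained in $\interior(E)$ has exactly two. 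Neither of these is hard, but your proof offers the assertion as if it were immediate.

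In short: same route, same key ideas, slightly less uniform case bookkeeping than the paper's $E=\zeta\,\eta\,\theta$ decomposition. Tightening the straddling case to cover both ends and interior $\sigma$-arcs, and saying a word about why $\pi(E)$ is a full natural edge, would close the gaps.
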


\begin{proof} Denote the collapse map $q \from U \xrightarrow{\<\upsilon\>} T$. Let $e',e''$ denote the first and last edges of $\tilde\alpha$, both of which are in $U \setminus \upsilon$. We prove the lemma in two cases. 

\smallskip
\textbf{Case 1: $E \not\subset \upsilon$.} The image $q(E) \subset T$ is a natural edge of $T$ and so there exists $g \in \Gamma$ such that $g \cdot \alpha$ has an interior crossing of $q(E)$. We may decompose $E$ as a concatenation $E = \zeta \, \eta \, \theta$ so that $\zeta,\theta$ are the (possibly trivial) maximal prefix and suffix of $E$ that are contained in $\upsilon$, and $\eta$ is the (nontrivial) maximal subpath of $E$ that begins and ends with edges of $U \setminus \upsilon$. Using that $q(E)$ is contained in the interior of $g \cdot \alpha$, it follows that $\eta$ is contained in the interior of $g \cdot \tilde\alpha$, and that $g \cdot e' \not\subset \eta$ and $g \cdot e'' \not\subset \eta$. Since $g \cdot e'$ and $g \cdot e''$ are the first and last edges of $g \cdot \tilde\alpha$, and since $g \cdot e' \not\subset \zeta\union\theta$ and $g \cdot e''\not\subset \zeta\union\theta$, it follows that $E$ is contained in the interior of $g \cdot \tilde\alpha$.

\smallskip
\textbf{Case 2:} $E \subset \upsilon$. If the interior of $E$ is disjoint from the orbit of $\tilde\alpha$ then we are done. Otherwise, for some $g \in \Gamma$ the intersection $E \intersect g \cdot \tilde\alpha$ contains some edge of~$U$. Again $g \cdot e'$ and $g \cdot e''$ are the first and last edges in $g \cdot \tilde\alpha$, and neither is contained in $\upsilon$, hence neither is contained in $E$. It follows that $E$ is contained in the interior of $g \cdot \ti\alpha$.
\end{proof}

Continuing now with the proof of the ``if'' direction, assuming that $\alpha$ \emph{does} have an interior crossing of a translate of every natural edge orbit of $T$ but that $\alpha$ \emph{does not} fill $T$, we must prove that the free factor system $\F[\alpha;T]$ is not full. Using that $\alpha$ does not fill~$T$, choose a free splitting and collapse map $q \from U \xrightarrow{\<\upsilon\>} T$, and a natural edge $E \subset U$, such that the $\tilde\alpha \subset U$ does not have an interior crossing of the orbit of $E$. Applying Lemma~\ref{LemmaAllOrNone}, the interior of $E$ is disjoint from $\beta(\tilde\alpha;U)$.

We may assume that $q$ restricts to a simplicial isomorphism $\tilde\alpha \mapsto \alpha$, equivalently the invariant forests $\upsilon$~and~$\Gamma \cdot \tilde\alpha$ are edge disjoint. To justify making this assumption, we can factor $q$ as 
$$U \xrightarrow{\<\upsilon \intersect \Gamma \cdot \tilde\alpha\>} U' \xrightarrow{q'} T
$$
Letting $\tilde \alpha' \subset U'$ be the lift of $\alpha$, and so $\tilde\alpha$ is the lift of $\tilde\alpha'$, it follows that $\beta(\tilde\alpha';U')$ is disjoint from the interior of the natural edge $E' \subset U'$ that is the image of $E$, and that the collapse map $q' \from U' \mapsto T$ is injective on $\tilde\alpha'$, and hence $q'$ restricts to a simplicial isomorphism $\tilde\alpha' \to \alpha$. Replacing $U$ by $U'$ completes the justification.

Since $q$ restricts to a simplicial isomorphism $\tilde\alpha \mapsto \alpha$, also $q$ restricts to a simplicial isomorphism $g \cdot \tilde\alpha \mapsto g \cdot \alpha$ for each $g \in \Gamma$. Also, since the collapse map $q$ takes the edges of $U \setminus \upsilon$ bijectively to $T$, and since $\upsilon$ is edge disjoint from $\Gamma \cdot \tilde\alpha$, it follows that $q$ induces a bijection from the edges of $\beta(\tilde\alpha;U) = \Gamma \cdot \tilde\alpha$ to the edges of $\beta(\alpha;T) = \Gamma \cdot \alpha$. As a consequence, for any two edges $\tilde e, \tilde e' \subset \beta(\tilde\alpha;U)$ with images $e,e' \subset \beta(\alpha;T)$, each $\tilde\alpha$ connection from $\tilde e$ to $\tilde e'$ in $\beta(\tilde\alpha;U)$ maps to an $\alpha$ connection from $e$ to $e'$ in $\beta(\alpha;T)$, and conversely each $\alpha$ connection from $e$ to $e'$ in $\beta(\alpha;T)$ lifts to a $\tilde\alpha$ connection from $\tilde e$ to $\tilde e'$ in $\beta(\tilde\alpha;U)$. From this it follows that $q$ induces a $\Gamma$-equivariant bijective correspondence between protocomponents of $\beta^\Over(\tilde\alpha;U)$ and protocomponents of $\beta^\Over(\alpha;T)$, restricting to a simplicial isomorphism between corresponding protocomponents. The protocomponent stabilizers of $\beta^\Over(\tilde\alpha;U)$ and of $\beta^\Over(\alpha;T)$ are therefore identical, and hence $\F[\tilde\alpha;U] = \F[\alpha;T]$. But $\beta^\Over_\Id(\tilde\alpha;U) \subset \beta^\Over(\tilde\alpha;U) \subset U \setminus (\Gamma \cdot E)$, hence $\F[\tilde\alpha;U] \sqsubset \F[U \setminus (\Gamma \cdot E)]$ which is a nonfilling free factor system.

\subsubsection{The filling protoforest of a path.} 
\label{SectionFillingProtoforest}
To prepare for the proof of the ``only if'' direction of Proposition~\ref{PropFillingPath}, and for later purposes, given a path $\alpha \subset T$ in a free splitting of $\Gamma$ \relA\ we shall describe an invariant protoforest denoted $\beta^\Fill(\alpha)=\beta^\Fill(\alpha;T)$, and called the \emph{filling protoforest} of $\alpha$ in $T$; see Definition~\ref{DefinitionFillingForest} below. In the ``refinement'' partial ordering, this protoforest will fit between the overlap forest and the ordinary covering forest: $\beta^\Over(\alpha) \prec \beta^\Fill(\alpha) \prec \beta(\alpha)$ (see the ``Protoforest Summary'' at the end of Section~\ref{SectionFillingProtoforest}).

The protoforest $\beta^\Fill(\alpha)$ is a little tricky to define: it will be obtained by delicately conglomerating protocomponents of the overlap protoforest $\beta^\Over(\alpha;T)$. Like $\beta^\Over(\alpha;T)$, the filling protoforest $\beta^\Fill(\alpha;T)$ will have just one orbit of protocomponents. The protocomponent stabilizers of $\beta^\Fill(\alpha;T)$ will be the subgroups conjugate to~$\Fm(\alpha)$; recall that the protocomponent stabilizers of $\beta^\Over(\alpha;T)$ do not generally fit that bill, because $\Stab(\beta^\Over_\Id(\alpha;T))$ can be a proper subgroup of $\Fm(\alpha)$, as shown in the ``new example'' preceding Proposition~\ref{PropFillingPath}. 

The next lemma provides the basis for conglomerating components of the overlap protoforest $\beta^\Over(\alpha)$ to form the protocomponents of the filling protoforest $\beta^\Fill(\alpha)$. For any free splitting $\Gamma \act T$ and any nontrivial subgroup $H \subgroup \Gamma$, recall the notation $T_H$ for the minimal $H$-invariant subtree of~$T$.


\begin{lemma} 
\label{LemmaCheckDeltaConnected}
Consider a free splitting $T$ of $\Gamma$ \relA\ and a path $\alpha \subset T$, associated to which we have the overlap protoforest $\beta^\Over(\alpha)$ with protocomponent $\beta^\Over_\Id(\alpha)$ containing $\alpha$, and with filling support $\Fm(\alpha)$. The following hold:
\begin{enumerate}
\item\label{ItemDeltaTree}
The subforest $\beta^\Fill_\Id(\alpha) = \Fm(\alpha) \cdot \beta^\Over_\Id(\alpha) \subset T$
is an $\Fm(\alpha)$-invariant subtree of $T$. If $\Fm(\alpha)$ is nontrivial with minimal subtree \hbox{$T_\alpha = T_{\Fm(\alpha)}$} then it follows that $T_\alpha \subset \beta^\Fill_\Id(\alpha)$.
\item\label{ItemDeltaTranslates}
The translates $\beta^\Fill_g(\alpha) = g \cdot \beta^\Fill_\Id(\alpha)$ form the protocomponents of a protoforest in $T$ denoted $\beta^\Fill(\alpha) = \beta^\Fill(\alpha;T)= \{\beta^\Fill_g(\alpha) \suchthat g \in \Gamma\}$.
\item\label{ItemDeltaStabilizer}
The protoforest $\beta^\Fill(\alpha)$ and the conjugacy class $[\Fm(\alpha)] = \{g \Fm(\alpha) g^\inv \suchthat g \in \Gamma\}$ are related as follows:
\begin{enumerate}
\item\label{ItemDeltaCompStab}
$\Stab(\beta^\Fill_g(\alpha)) = g \Fm(\alpha) g^\inv$;
\item\label{ItemDeltaStabBiject}
If $\Fm(\alpha)$ is not trivial then the function $\beta^\Fill_g(\alpha) \mapsto \Stab(\beta^\Fill_g(\alpha))$ is a well-defined bijection between the set $\beta^\Fill(\alpha)$ and the set of subgroups forming the conjugacy class $[\Fm(\alpha)]$. 
\end{enumerate}
\item\label{ItemDeltaIdChar}
The tree $\beta^\Fill_\Id(\alpha)$ is characterized as the smallest subtree of $T$ such that its stabilizer is a free factor of $\Gamma$ \relA, it contains $\alpha$, and it is a union of maximal $\alpha$-connected subtrees (i.e.\ protocomponents of $\beta^\Over(\alpha;T)$). 
\end{enumerate}
\end{lemma}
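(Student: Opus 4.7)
Write $X = \beta^\Over_\Id(\alpha)$, $H = \Stab(X)$, $F = \Fm(\alpha)$, so that $\beta^\Fill_\Id(\alpha) = F \cdot X$ and $\beta^\Fill_g(\alpha) = g \cdot (F \cdot X)$. The main work is part~(1), since the $F$-invariance of $F \cdot X$ is immediate and the content is its connectedness. I would split along the trichotomy of Definition~\ref{DefinitionFineFFS}. When $F$ is trivial, $F \cdot X = X$ is already a subtree. When $F$ is atomic, the Remark following Definition~\ref{DefinitionFineFFS} supplies a vertex $V \in X$ with $F = \Stab(V)$, so $F \cdot X = \bigcup_{f \in F} f \cdot X$ is a union of subtrees all meeting at $V$, hence connected.

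The nontrivial, nonatomic case is the crux. First I would dispense with the elliptic sub-case: if $F \subseteq \Stab(v)$ for some vertex $v$, let $w$ denote the nearest-point projection of $v$ onto $X$; then each $h \in H$ fixes both $v$ (since $H \subseteq F \subseteq \Stab(v)$) and $w$ (by uniqueness of the projection, as $h$ preserves $X$), hence pointwise fixes the geodesic from $v$ to $w$, forcing $H = 1$ by triviality of edge stabilizers in $T$. Then $F$, being the minimal free factor containing $H$, is also trivial, contradicting nontriviality. So $v \in X$, and $F \cdot X$ is a union of subtrees through the common point~$v$, hence connected. In the nonelliptic sub-case, let $Y_0$ be the connected component of $F \cdot X$ containing $X$ and set $F_0 = \{f \in F : f \cdot Y_0 = Y_0\} \subseteq F$, so $Y_0 = F_0 \cdot X$ and $H \subseteq F_0 \subseteq F$. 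The strategy is to show $F_0$ is itself a free factor of $\Gamma$ rel~$\A$: minimality of $F$ will then force $F_0 = F$, so $Y_0 = F \cdot X$ is connected. I would identify $F_0$ as a free factor by applying \cite[Lemma~2.3]{\RelFSHypTwoTag} to the $F$-action on the minimal invariant subtree $T_F \subset T$ (which is a free splitting of $F$ rel~$\A \mid F$) together with the $F$-invariant subforest of $T_F$ obtained by intersecting $F \cdot X$ with $T_F$: the component stabilizer $F_0$ emerges as a free factor of $F$ rel~$\A \mid F$, hence of $\Gamma$ rel~$\A$ by the nesting of free factor systems. The concluding assertion $T_\alpha \subseteq \beta^\Fill_\Id(\alpha)$ then follows because $\beta^\Fill_\Id(\alpha)$ is a nonempty $F$-invariant subtree and $T_\alpha = T_F$ is minimal.

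Parts~(2), (3), (4) should follow cleanly from part~(1). For~(2), an overlap between $g_1(F \cdot X)$ and $g_2(F \cdot X)$ sharing an edge would force protocomponents $g_1 h_1 X$ and $g_2 h_2 X$ of $\beta^\Over$ to share an edge, hence to coincide by the protoforest property of $\beta^\Over$; this gives $g_2^{-1} g_1 \in F H F^{-1} = F$ (using $H \subseteq F$), whence $g_1(F \cdot X) = g_2(F \cdot X)$. For~(3a), the inclusion $g F g^{-1} \subseteq \Stab(\beta^\Fill_g(\alpha))$ is clear; for the reverse, any stabilizing $\gamma$ must send the protocomponent $g \cdot X$ to $gh \cdot X$ for some $h \in F$, so $\gamma \in gFg^{-1}$. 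For~(3b), the bijection statement reduces to the self-normalizing property of nontrivial free factors rel~$\A$, which ensures that $g_1 F g_1^{-1} = g_2 F g_2^{-1}$ implies $g_2^{-1} g_1 \in F$. For~(4), $\beta^\Fill_\Id(\alpha)$ manifestly satisfies the three listed properties; conversely, any subtree $Y$ with those properties contains $\alpha$ and hence the whole protocomponent $X$ (being a union of protocomponents of $\beta^\Over$), so $\Stab(Y) \supseteq H$; the free factor property of $\Stab(Y)$ combined with minimality of $F$ then gives $\Stab(Y) \supseteq F$, and hence $Y \supseteq F \cdot X = \beta^\Fill_\Id(\alpha)$.

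The main obstacle is the Bass--Serre step in the nonelliptic subcase of~(1): specifically, the passage between the $F$-action on $T$ (where $F \cdot X$ naturally sits) and on its minimal invariant subtree $T_F$ (where \cite[Lemma~2.3]{\RelFSHypTwoTag} directly yields the free factor structure), while carefully matching components of $F \cdot X$ with components of $F \cdot X \cap T_F$ and identifying their respective stabilizers.
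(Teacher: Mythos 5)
Your proposal is correct in outline and takes a genuinely different route to the crux of part~\pref{ItemDeltaTree} (connectedness of $\beta^\Fill_\Id(\alpha)$ and the inclusion $T_\alpha \subset \beta^\Fill_\Id(\alpha)$). The paper's proof establishes $T_\alpha \subset \beta^\Fill_\Id(\alpha)$ \emph{first}, by a contradiction argument: it takes the convex hull $\H$ of $\beta^\Fill_\Id(\alpha)$, supposes some edge $e \subset T_\alpha$ has interior missing $\beta^\Fill_\Id(\alpha)$, passes to the $\Fm(\alpha)$-equivariant component bijection $T_\alpha \setminus (\Fm(\alpha)\cdot e) \hookrightarrow \H \setminus (\Fm(\alpha)\cdot e)$, and contradicts minimality of $\Fm(\alpha)$ since $\beta^\Over_\Id(\alpha)$ lands in a component whose stabilizer is a proper free factor of $\Fm(\alpha)$ rel~$\A_{\Fm(\alpha)}$. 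Connectedness then follows cleanly and uniformly from the inclusion: $T_\alpha$ is a connected $\Fm(\alpha)$-invariant subset of $\beta^\Fill_\Id(\alpha)$ meeting every component, and $\Fm(\alpha)$ acts transitively on components. You instead form $F_0 = \Stab_F(Y_0)$ directly, identify it as a free factor of $F$ rel~$\A\mid F$ via the invariant subforest $(F\cdot X)\cap T_F \subset T_F$, and then force $F_0 = F$ by minimality, deducing the inclusion $T_\alpha \subset \beta^\Fill_\Id(\alpha)$ only afterwards as a corollary. Both arguments ultimately lean on the same underlying fact (component stabilizers of an invariant subforest of a free splitting yield a free factor system, \cite[Lemma~2.3]{\RelFSHypTwoTag}), but apply it to different subforests of $T_\alpha$. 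Your ``main obstacle'' --- matching components of $F\cdot X$ with components of $(F\cdot X)\cap T_F$ and identifying stabilizers --- is surmountable: distinct components of $F\cdot X$ meet $T_F$ in disjoint sets (if they meet it at all), so the assignment $Y \mapsto Y\cap T_F$ is injective and $\Stab_F(Y_0\cap T_F) = F_0$; and nonemptiness of $Y_0\cap T_F$ follows since it contains $T_H$. But you should be aware that if $H$ is elliptic the component $Y_0\cap T_F$ may be a single vertex, and one must then appeal to the ``$\sigma\union\V(S)$'' formulation to get $F_0$ as a vertex stabilizer of $T_F$; the paper's convex-hull route sidesteps this degenerate case entirely, which together with the uniform connectedness derivation makes it somewhat tidier. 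Your treatment of the trivial/atomic/elliptic cases via the nearest-point projection of the fixed vertex onto $X$ is a phrasing variant of the paper's observation that the fixed vertex is also the minimal $H$-invariant subtree, hence lies in $\beta^\Over_\Id(\alpha)$. Your arguments for parts~\pref{ItemDeltaTranslates}, \pref{ItemDeltaStabilizer}, \pref{ItemDeltaIdChar} essentially match the paper's.
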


\begin{definition}[The filling protoforest of a path in a free splitting]
\label{DefinitionFillingForest}
Given a path $\alpha \subset T$ in a free splitting of $\Gamma$ \relA, 
the \emph{filling protoforest} of $\alpha$ is the invariant protoforest $\beta^\Fill(\alpha)=\beta^\Fill(\alpha;T)$ defined in Lemma~\ref{LemmaCheckDeltaConnected}~\pref{ItemDeltaTranslates}, with protocomponents $\beta^\Fill_g(\alpha) = g \cdot \beta^\Fill_\Id(\alpha)$ where $\beta^\Fill_\Id(\alpha)$ is the unique protocomponent containing $\alpha$. 
\end{definition}

\begin{proof}[Proof of Lemma \ref{LemmaCheckDeltaConnected}] To start the proof we note that $\Stab(\beta^\Over_\Id(\alpha))$ is trivial if and only if $\Fm(\alpha)$ is trivial. If this is so then $\beta^\Fill_\Id(\alpha) = \Fm(\alpha) \cdot \beta^\Over_\Id(\alpha) = \beta^\Over_\Id(\alpha) = \alpha$ is a tree, and the rest of the proof is straightforward.

We may therefore assume that $\Stab(\beta^\Over_\Id(\alpha))$ and $\Fm(\alpha)$ are both nontrivial, hence the minimal subtree $T_\alpha = T_{\Fm(\alpha)} \subset T$ is defined. Assuming for the moment that~\pref{ItemDeltaTree} holds, the remaining items are proved as follows. 

From~\pref{ItemDeltaTree} it follows that each translate $\beta^\Fill_g(\alpha) = g \cdot \beta^\Fill_\Id(\alpha)$ is a subtree of $T$, and this collection of subtrees is clearly $\Gamma$-invariant. The equation $\beta^\Fill_g(\alpha) = (g \Fm(\alpha)) \cdot \beta^\Over_\Id(\alpha)$, together with $\Gamma$-invariance of the protoforest $\beta^\Over(\alpha)$, shows that $\beta^\Fill_g(\alpha)$ is a union of protocomponents of $\beta^\Over(\alpha)$. In order to prove~\pref{ItemDeltaTranslates}, it suffices (by $\Gamma$-invariance of $\beta^\Fill(\alpha)$) to consider each $g \in \Gamma$, and to assume that the trees $\beta^\Fill_\Id(\alpha)$ and $\beta^\Fill_g(\alpha)$ overlap in an edge, and to prove that those trees are identical. From that assumption we obtain $h,h' \in \Fm(\alpha)$ such that $h \cdot \beta^\Over_\Id(\alpha)$ and $g h' \cdot \beta^\Over_\Id(\alpha)$ overlap in an edge. Since $\beta^\Over(\alpha)$ is a protoforest it follows that $h \cdot \beta^\Over_\Id(\alpha) = g h' \cdot \beta^\Over_\Id(\alpha)$ and therefore $h^\inv g h' \in \Stab(\beta^\Over_\Id(\alpha)) \subgroup \Fm(\alpha)$, implying that $g \in \Fm(\alpha)$ and therefore $\beta^\Fill_g(\alpha) = g \cdot \beta^\Fill_\Id(\alpha) = g \Fm(\alpha) \cdot \beta^\Over_\Id(\alpha) = \Fm(\alpha) \cdot \beta^\Over_\Id(\alpha) = \beta^\Fill_\Id(\alpha)$, completing the proof of~\pref{ItemDeltaTranslates}. 

Since $\Stab(\beta^\Over_\Id(\alpha)) \subgroup \Fm(\alpha)$ we have $\Stab(\beta^\Fill_\Id(\alpha)) = \Stab(\Fm(\alpha) \cdot \beta^\Over_\Id(\alpha)) = \Fm(\alpha)$, and item~\pref{ItemDeltaCompStab} immediately follows. To prove~\pref{ItemDeltaStabBiject}, for any $g,h \in \Gamma$ we have
\begin{align*}
\beta^\Fill_g(\alpha) = \beta^\Fill_h(\alpha) &\iff g^\inv h \in \Stab(\beta^\Fill_\Id(\alpha)) = \Fm(\alpha) \\
&\iff g \Fm(\alpha) g^\inv = h \Fm(\alpha) h^\inv 
\intertext{(using that the free factor $\Fm(\alpha)$ is its own normalizer)}
&\iff g \Stab(\beta^\Fill_\Id(\alpha)) g^\inv = h \Stab(\beta^\Fill_\Id(\alpha)) h^\inv \\
&\iff  \Stab(\beta^\Fill_g(\alpha)) = \Stab(\beta^\Fill_h(\alpha))
\end{align*}
To prove item~\pref{ItemDeltaIdChar}, we note first that $\beta^\Fill_\Id(\alpha)$ certainly does satisfy all of the properties required by~\pref{ItemDeltaIdChar}. Since the set of protocomponents of $\beta^\Over(\alpha)$ whose union comprises $\beta^\Fill_\Id(\alpha)$ is a single $\Fm(\alpha)$ orbit of protocomponents of $\beta^\Over(\alpha)$, it follows that for any proper subtree of $\beta^\Fill_\Id(\alpha)$ which is a union of protocomponents of $\beta^\Over(\alpha)$, its stabilizer is strictly smaller than $\Fm(\alpha)$; but since $\Fm(\alpha)$ is the smallest free factor of~$\Gamma$ \relA\ that contains $\Stab(\beta^\Over_\Id(\alpha))$, so the stabilizer of that subtree cannot be a free factor of $\Gamma$ \relA.

We turn to the proof of~\pref{ItemDeltaTree}, first proving the inclusion $T_\alpha \subset \beta^\Fill_\Id(\alpha)$. If $T_\alpha = \{V\}$ is a single vertex then it is the unique point fixed by each nontrivial element of $\Fm(\alpha)$, including each nontrivial element of $\Stab(\beta^\Over_\Id(\alpha))$. Therefore $T_\alpha$ is also the minimal invariant subtree for $\Stab(\beta^\Over_\Id(\alpha))$, and so $T_\alpha \subset \beta^\Over_\Id(\alpha) \subset \Fm(\alpha) \cdot \beta^\Over_\Id(\alpha) = \beta^\Fill_\Id(\alpha)$. 

We may therefore assume that $T_\alpha$ contains at least one edge; it follows that $\Fm(\alpha)$ is nontrivial and nonatomic (Definition~\ref{DefinitionFineFFS}~\pref{ItemFFFSNonatomic}). Recall the decomposition $\A = \A_{\Fm(\alpha)} \union \overline\A_{\Fm(\alpha)}$ from \cite[Lemma~2.7]{\RelFSHypTwoTag}. Recall also from that lemma that $\A_{\Fm(\alpha)}$ ``restricts to'' a free factor system of the group~$\Fm(\alpha)$, in the sense that there is a free factorization $\Fm(\alpha) = A_1 * \cdots * A_K * B$ such that $\A_{\Fm(\alpha)} = \{[A_1],\ldots,[A_K]\}$ (where $[A_k]$ is the $\Gamma$-conjugacy class of $A_k$). In this situation the action $\Fm(\alpha) \act T_\alpha$ is clearly a free splitting of $\Fm(\alpha)$ relative to~$\A_{\Fm(\alpha)}$. Let $\H$ denote the convex hull of $\beta^\Fill_\Id(\alpha)$, meaning the smallest subtree of $T$ containing $\beta^\Fill_\Id(\alpha)$. By $\Fm(\alpha)$-invariance of $\beta^\Fill_\Id(\alpha)$ it follows that the tree $\H$ is also $\Fm(\alpha)$-invariant. By minimality of $T_\alpha$ we have $T_\alpha \subset \H$.  Assuming by contradiction that $T_\alpha \not\subset \beta^\Fill_\Id(\alpha)$, there exists an edge $e \subset T_\alpha$ with interior disjoint from $\beta^\Fill_\Id(\alpha)$. The inclusion $T_\alpha \subset \H$ restricts to the following inclusion of $\Fm(\alpha)$-invariant subforests of~$T$:
$$T_\alpha \setminus (\Fm(\alpha) \cdot e) \,\subset\, \H  \setminus (\Fm(\alpha) \cdot e)
$$
This inclusion induces an $\Fm(\alpha)$-equivariant bijection between the components on either side of the inclusion, hence corresponding components have the same stabilizer subgroup with respect to the $\Fm(\alpha)$ action. From the left hand side of that inclusion, the stabilizer of each component is a proper free factor of $\Fm(\alpha)$ rel~$\A_{\Fm(\alpha)}$, and is therefore a free factor of $\Gamma$ \relA\ that is properly contained in $\Fm(\alpha)$. By construction $\beta^\Over_\Id(\alpha)$ is contained in some component of $\H \setminus (\Fm(\alpha) \cdot e)$, and therefore $\Stab(\beta^\Over_\Id)(\alpha)$ is contained in some free factor of $\Gamma$ \relA\ that is properly contained in~$\Fm(\alpha)$. But this contradicts the definition of $\Fm(\alpha)$ as the smallest such free factor (see Definition~\ref{DefinitionFineFFS}). From this contradiction, the desired inclusion $T_\alpha \subset \beta^\Fill_\Id(\alpha)$ follows.

Using the inclusion $T_\alpha \subset \beta^\Fill_\Id(\alpha)$ we now prove that the forest $\beta^\Fill_\Id(\alpha) = \Fm(\alpha) \cdot \beta^\Over_\Id(\alpha)$ is connected, i.e.\ it is a tree, which will complete the proof of~\pref{ItemDeltaTree}. Clearly $\Fm(\alpha)$ acts transitively on the components of $\beta^\Fill_\Id(\alpha)$. Knowing that $T_\alpha \subset \beta^\Fill_\Id(\alpha)$, it follows that some component of $\beta^\Fill_\Id(\alpha)$ intersects $T_\alpha$. By $\Fm(\alpha)$-invariance it follows that every component of $\beta^\Fill_\Id(\alpha)$ intersects~$T_\alpha$. But $T_\alpha$ is a connected subset of $\beta^\Fill_\Id(\alpha)$ hence $\beta^\Fill_\Id(\alpha)$ is connected.
\end{proof}

\subparagraph{Protoforest summary.} To summarize Definitions~\ref{DefFineCoveringForest}, \ref{DefinitionFineFFS} and~\ref{DefinitionFillingForest}, associated to any path $\alpha \subset T$ we have a nested sequence of three protoforests, namely the overlap protoforest, the filling protoforest, and the ordinary covering forest:
$$\beta^\Over(\alpha) \prec \beta^\Fill(\alpha) \prec \beta(\alpha)
$$
For each $g \in \Gamma$ the associated protocomponents containing $g \cdot \alpha$ are denoted 
$$\beta^\Over_g(\alpha) \subset \beta^\Fill_g(\alpha) \subset \beta_g(\alpha) \quad g \in \Gamma
$$
We have a corresponding nested sequence of stabilizer subgroups, which in the case $g=\Id$ takes the form 
$$\Stab(\beta^\Over_\Id(\alpha)) \subgroup \underbrace{\Stab(\beta^\Fill_\Id(\alpha))}_{\Fm(\alpha)} \subgroup \Stab(\beta_\Id(\alpha))
$$
We note that only $\Stab(\beta^\Fill_\Id(\alpha))$ and $\Stab(\beta_\Id(\alpha))$ are guaranteed to be free factors \relA. 

The filling support of $\alpha$, denoted $\F[\alpha;T]$, is the free factor system \relA\ whose unique nonatomic component is $[\Fm(\alpha)]$, \emph{if} the free factor $\Stab(\beta^\Fill_\Id(\alpha;T)) = \Fm(\alpha)$ is nontrivial and nonatomic \relA; otherwise $\F[\alpha;T]$ is just $\A$ itself. 

\medskip

\subsubsection{Blowing up the filling protoforest to prove the ``only if'' direction.}
\label{SectionBlowingUpOnlyIf}
Given a protoforest $\delta$ in a free splitting $T$ of $\Gamma$ \relA, its protocomponent stabilizers need not be free factors \relA\ (see the ``new example'' preceding Proposition~\ref{PropFillingPath}). But the filling protoforest $\beta^\Fill(\alpha)$ of a path $\alpha \subset T$ has been constructed so that its protocomponent stabilizers $\Stab(\beta^\Fill_g(\alpha)) = g \Fm(\alpha) g^\inv$ are indeed free factors \relA, and furthermore so that one of two alternatives holds: those stabilizers are all trivial or atomic, in which case the filling support is $\F[\alpha;T]=\A$; or they are all nontrivial, in which case they form a single conjugacy class $[\Fm(\alpha)]$, and that class is the unique non-atomic element of the filling support $\F[\alpha;T]$ (see Definition~\ref{DefinitionFineFFS} and Lemma~\ref{LemmaCheckDeltaConnected}~\pref{ItemDeltaStabilizer}). Furthermore, in the nontrivial case we get extra information about the individual protocomponent stabilizers, allowing us to conclude that $\beta^\Fill(\alpha)$ satisfies the following \emph{Tame Stabilizer Hypothesis}. We state this hypothesis in a general manner, allowing it to be tested on protoforests in any free splitting, not just filling protoforests of paths:

\paragraph{Tame Stabilizer Hypothesis (for a protoforest $\delta$ in a free splitting $T$ of $\Gamma$ \relA)} \quad
\begin{enumerate}
\item\label{ItemProtoDistinct}
Either all protocomponents of $\delta$ have trivial stabilizer, or any two distinct protocomponents have distinct stabilizers.
\item\label{ItemProtoStabSys}
There is a free factor system $\F[\delta]$ of $\Gamma$ \relA, and a decomposition \mbox{$\F[\delta] = \F_0[\delta] \sqcup \F_1[\delta]$} such that $\F_1[\delta] \subset \A$ and $\F_0[\delta]$ consists of those nontrivial, nonatomic conjugacy classes of the form $[\Stab(\delta_i)]$ as $\delta_i$ varies over the protocomponents of $\delta$.
\item
\label{ItemProtoStabInt}
For any two distinct protocomponents of $\delta$, the intersection of their stabilizers is trivial.
\end{enumerate}
In fact item~\pref{ItemProtoStabInt} follows from~\pref{ItemProtoDistinct} and~\pref{ItemProtoStabSys}: item~\pref{ItemProtoDistinct} reduces to the case that the two stabilizers are distinct nontrivial subgroups; and that case is covered by applying item~\pref{ItemProtoStabSys} together with \cite[Lemma 2.1]{\RelFSHypTag} which says that the collection of subgroups representing elements of a free factor system is mutually malnormal.

As alluded to above, by applying Definition~\ref{DefinitionFineFFS} and Lemma~\ref{LemmaCheckDeltaConnected}~\pref{ItemDeltaStabilizer}, it follows that the \emph{Tame Stabilizer Hypothesis} holds for the filling protoforest $\beta^\Fill(\alpha)$ associated to any path $\alpha \subset T$ in any free splitting $T$ of $\Gamma$ \relA, with the filling support $\F[\alpha;T]$ playing the role of $\F[\delta]$. Furthermore, in the nontrivial/nonatomic case there are corresponding decompositions using the notation of Definition~\ref{DefinitionFineFFS}~\pref{ItemFFFSNonatomic}:
$$\underbrace{\F[\alpha;T]}_{\F[\delta]} = \underbrace{\{[\Fm(\alpha)]\}}_{\F_0[\delta]} \union \underbrace{\overline\A_{\Fm(\alpha)}}_{\F_1[\delta]}
$$

Given any protoforest $\delta$ in $T$, a sufficient condition for the \emph{Tame Stabilizer Hypothesis} to hold is that there exists a simplicial collapse map $q : U \xrightarrow{\upsilon} T$ defined on a free splitting $U$ of $\Gamma$ \relA, and there exists an invariant subforest $\tilde\delta \subset U$ (with respect to the given simplicial structure on $U$), such that the forests $\upsilon$ and $\tilde\delta$ have no edge in common, the map $q$ induces a bijection between connected components of $\tilde\delta$ and protocomponents of $\delta$, and $q$ takes each connected component of $\tilde\delta$ to its associated protocomponent of $\delta$ by a simplicial isomorphism. The following lemma says that this condition is also necessary:


\begin{lemma}[The Protoforest Blowup Lemma]
\label{LemmaExpansionConstruction}
Consider a free splitting $T$ of $\Gamma$ \relA\ and a protoforest $\delta = \{\delta_i\}_{i \in I}$ in~$T$. If~$\delta$ satisfies the \emph{Tame Stabilizer Hypothesis}, then there exists a free splitting $U$ of $\Gamma$ \relA, a simplicial collapse map $q \from U \xrightarrow{\upsilon} T$, and an invariant subforest $\tilde\delta \subset U$ with connected components $\tilde\delta = \bigsqcup_{i \in I} \tilde\delta_i$ (with the same index set $I$), such that the following hold:
\begin{enumerate}
\item\label{ItemBlowupFFEqual}
For each $i \in I$ the projection $q$ restricts to a simplicial isomorphism $q \from \tilde\delta_i \to \delta_i$. 
\item $\F[\delta]=\F[\tilde\delta]$, hence the free factor system~$\F[\delta]$ is visible in $U$ as witnessed by $\tilde\delta \subset U$.
\end{enumerate}
\end{lemma}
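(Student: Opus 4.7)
The plan is to construct $U$ as an equivariant blowup of $T$: at each vertex where multiple protocomponents of $\delta$ meet, insert a small star of collapsible edges to separate their attachments. The essential consequence of the \emph{Tame Stabilizer Hypothesis} that makes the construction work is that any vertex $v \in T$ with $\Stab(v) \ne 1$ lies in at most one protocomponent of $\delta$; for otherwise $\Stab(v)$ would be contained in $\Stab(\delta_i) \intersect \Stab(\delta_j)$ for some distinct $i,j$, which is trivial by item~\pref{ItemProtoStabInt}. This observation ensures that the blowups are only ever performed at vertices of trivial stabilizer, which keeps all newly introduced edge stabilizers trivial.

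For each vertex $v \in T$ let $P(v) = \{i \in I \suchthat v \in \delta_i\}$, and set $Q(v) = P(v) \union \{*\}$ if $v$ has some incident edge not lying in any $\delta_i$, else $Q(v) = P(v)$. When $\Stab(v) = 1$ and $|Q(v)| \ge 2$, I would replace $v$ by a new central vertex $v^\star$ together with petal vertices $(v,a)$ for $a \in Q(v)$, each joined to $v^\star$ by a new \emph{star edge}; otherwise keep a single copy of $v$. Each edge $e=[v,w]$ of $T$ lying in $\delta_a$ lifts to the edge between the copies of $v$ and $w$ labeled by $a$, and each edge outside every $\delta_a$ lifts between the $*$-labeled copies. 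The $\Gamma$-action on $T$, together with the induced action on $I$ (and with $*$ fixed), extends equivariantly to the resulting graph $U$. Let $\upsilon$ be the $\Gamma$-orbit of star edges, and let $\tilde\delta_a$ be the union of the lifted edges of $\delta_a$ together with their endpoints.

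I would then check in turn: (a)~$U$ is a simplicial tree, since the blowups replace vertices by stars preserving both connectedness and acyclicity; (b)~all edge stabilizers of $U$ are trivial, since lifted $T$-edges inherit triviality and star edges at a vertex $v$ lie above a point with $\Stab(v)=1$; (c)~vertex stabilizers of $U$ at the non-blown-up vertices coincide with those of $T$, so $\A \sqsubset \FFSU$ and $U$ is a free splitting of $\Gamma$ rel~$\A$; (d)~collapsing each star recovers $T$, giving the desired collapse map $q \from U \xrightarrow{\<\upsilon\>} T$; and (e)~the components of $\tilde\delta$ are exactly the $\tilde\delta_a$ (distinct $\tilde\delta_a$'s are disjoint by the first observation), each mapping simplicially isomorphically onto $\delta_a$ under $q$, with $\tilde\delta$ and $\upsilon$ edge-disjoint by construction. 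Items (a)--(e) give item~\pref{ItemBlowupFFEqual} of the lemma together with everything except the equality $\FF[\tilde\delta] = \FF[\delta]$.

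That equality is where I expect the principal technical obstacle. The nonatomic part $\FF_0[\delta]$ is realized directly by the component stabilizers $\Stab(\tilde\delta_a) = \Stab(\delta_a)$. For the atomic part $\FF_1[\delta]$, the key point is again the first observation: each vertex $v \in T$ with nontrivial stabilizer is never blown up, so it lifts to a unique vertex of $U$, which lies in $\tilde\delta_i$ exactly when $v \in \delta_i$ (for the at most one such~$i$). Hence an atomic class $[\Stab(v)]$ with $v$ in no $\delta_i$ contributes to $\FF[\tilde\delta]$ as an isolated-vertex stabilizer, while an atomic class at a $v \in \delta_i$ is subsumed by $[\Stab(\delta_i)]$. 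Matching this combinatorics against the decomposition $\FF[\delta] = \FF_0[\delta] \sqcup \FF_1[\delta]$ yields $\FF[\tilde\delta] = \FF[\delta]$, and visibility of $\FF[\delta]$ in $U$ via $\tilde\delta$ then follows from Lemma~2.3 of~\RelFSHypTwo.
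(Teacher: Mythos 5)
Your central observation --- that a vertex $v$ with $\Stab(v) \ne 1$ lies in at most one protocomponent of $\delta$ --- is false, and the reasoning you offer for it is circular. You infer from $v \in \delta_i$ that $\Stab(v) \subset \Stab(\delta_i)$; but all that follows from $g \in \Stab(v)$ and $v \in \delta_i$ is that $g \cdot \delta_i$ is again a protocomponent through $v$, which equals $\delta_i$ only if $\delta_i$ is the \emph{unique} protocomponent through $v$ --- the very thing you are trying to establish. When $v$ lies in distinct protocomponents $\delta_i,\delta_j$, the group $\Stab(v)$ merely permutes the (finite) set of protocomponents through $v$, and the third item of the \emph{Tame Stabilizer Hypothesis} gives only that $C_{iv} \intersect C_{jv}$ is trivial, where $C_{iv}=\Stab(v) \intersect \Stab(\delta_i)$; it does not force either $C_{iv}$ to be trivial, let alone $\Stab(v)$. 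Such configurations genuinely occur and are, in fact, the entire content of the lemma.

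This is where your construction breaks. Following your criterion of never blowing up a vertex with nontrivial stabilizer, such a $v$ lifts to a single vertex of $U$ lying simultaneously in $\tilde\delta_i$ and $\tilde\delta_j$, so the $\tilde\delta_i$ are not the connected components of $\tilde\delta$ as the lemma requires. If you were instead to attempt a star blowup at such a $v$, the star edge to the petal labeled $i$ would inherit stabilizer $C_{iv}$, possibly nontrivial, and $U$ would fail to be a free splitting. The paper's proof devotes essentially all its effort to exactly this issue. It first performs an auxiliary blowup $T' \to T$ --- roughly your star construction, but applied only to pairs $(i,w)$ with $C_{iw}$ trivial --- after which the implication ``$w \in \delta_i \implies C_{iw}$ nontrivial'' holds, so every remaining intersection vertex $w$ has nontrivial $\Stab(w)$, and the family $\{C_{iw}\}_{i \in I_w}$ gives, up to conjugacy in $\Stab(w)$, a nonfull free factor system $\B \restrict \Stab(w)$ with $\B = \FF[\delta] \meet \FFST$. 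The decisive step, absent from your proposal, is to blow up each such $w$ not into a star but into a Grushko free splitting $U_w$ of $\Stab(w)$ rel $\B \restrict \Stab(w)$, re-attaching each incident $\delta_i$ at the unique point of $U_w$ fixed by $C_{iw}$; it is this that keeps the new edge stabilizers trivial while separating the $\tilde\delta_i$. Your argument for $\FF[\tilde\delta]=\FF[\delta]$ rests on the same false claim and inherits the gap.
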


Before proving this lemma in Section~\ref{SectionBlowupProof} to follow, we first apply it:

\begin{proof}[Proof of Proposition~\ref{PropFillingPath}: The ``only if'' direction]
Consider an arc $\alpha \subset T$ in a free splitting $T$ of $\Gamma$ \relA, with filling support $\F[\alpha;T]$ (Definition~\ref{DefinitionFineFFS}). We must prove that $\alpha$ does \emph{not} fill $T$ if either of the following two cases holds:
\begin{description}
\item[Case 1:] There exists a natural edge $E$ of $T$ such that $\alpha$ does not have an interior crossing of the orbit of $E$; or
\item[Case 2:] $\F[\alpha;T]$ is \emph{not} filling. 
\end{description}
In Case 1, the trivial expansion $U=T \xrightarrow{\Id} T$ witnesses that $\alpha$ does not fill $T$. 

In Case~2, consider the filling protoforest $\beta^\Fill(\alpha)$ in $T$, which as we have seen satisfies the \emph{Tame Stabilizer Hypothesis} and has associated free factor system $\F[\beta^\Fill(\alpha)] = \F[\alpha;T]$. Applying Lemma~\ref{LemmaExpansionConstruction} we obtain an expansion $U \mapsto T$ and for each $g \in \Gamma$ a simplicially isomorphic lift $\tilde\beta_g \subset U$ of $\beta^\Fill_g(\alpha)$, such that these lifts are the connected components of a proper, invariant subforest $\tilde\beta \subset U$ whose component stabilizers form the free factor system~$\F[\alpha;T]$. From the Case~2 hypothesis, $\tilde\beta$ is a proper subforest of~$U$. Since the lift $\tilde\alpha \subset U$ of $\alpha$ is contained in the component $\tilde\beta_\Id$, the translates of $\tilde\alpha$ are contained in the proper subforest $\tilde\beta$ and hence do not cover~$U$. The expansion $U \mapsto T$ thus witnesses that $\alpha$ does not fill~$T$.
\end{proof}

\subsubsection{Proof of the Protoforest Blowup Lemma}
\label{SectionBlowupProof}
The last remaining piece of the proof of Proposition~\ref{PropFillingPath}, the Filling Criterion, is the following:

\begin{proof}[Proof of the Blowup Lemma \ref{LemmaExpansionConstruction}] Let $T$ and $\delta = \{\delta_i\}_{i \in }$ be given. For all $i \ne j \in I$, the protocomponent intersection $\delta_i \intersect \delta_j$ is either empty or a single vertex of $T$. Let $W \subset \V T$ denote those vertices $w \in \V T$ for which there exist $i \ne j \in I$ such that $w = \delta_i \intersect \delta_j$; clearly $W$ is $\Gamma$-invariant. If $W$ is empty then distinct protocomponents of $\delta$ are disjoint, hence $\delta$ is a true subforest of $T$, and the conclusions of the lemma are true with the trivial expansion $U=T \xrightarrow{\Id} T$. Henceforth we assume that $W$ is not empty.

We have two actions $\Gamma \act I$ and $\Gamma \act W$, the former induced by the action $\Gamma \act \{\delta_i\}_{i \in I}$ on the protocomponents of $\delta$. Consider the diagonal action $\Gamma \act I \times W$. For each $(i,w) \in I \times W$ let $C_{iw} =  \Stab(\delta_i) \intersect \Stab(w)$. Since $T$ is a free splitting of~$\Gamma$, the following implication holds: if $C_{iw}$ is nontrivial then $w \in \delta_i$. 

We would like that the converse implication holds as well:
\begin{itemize}
\item[] $(*)$ \quad For all $(i,k) \in I \times K$, \, $C_{ik}$ is nontrivial if and only if $w_k \in \delta_i$.
\end{itemize}
While statement $(*)$ may fail to be true, we shall reduce the proof of Lemma~\ref{LemmaExpansionConstruction} to the case that $(*)$ \emph{is} true, repairing its failure by constructing an intermediate expansion $T' \mapsto T$ as follows. 

Consider the following $\Gamma$-set that witnesses the falsity of~$(*)$:
$$Z = \bigl\{(i,w) \in I \times W \suchthat w \in \delta_i \,\,\text{and $C_{iw}$ is trivial} \bigr\} 
$$
Let $W^f \subset W$ be the image of the restricted projection map $Z \subset I \times W \mapsto W$ --- that is, $W^f$~is the set of all $w \in W$ for which there exists $i \in I$ such that $w \in \delta_i$ and $C_{iw}$ is trivial. Note that $W^f$ is $\Gamma$-invariant. We construct $T'$ from $T$ in two stages, first detaching from each $w \in W^f$ those $\delta_i$ for which $(i,w) \in Z$, and then attaching new edges to~$T''$ to fill the gaps created by these detachments, thus constructing the free splitting~$T'$. 

In more detail, the forest $T''$ is constructed from $T$ as follows. First, remove each $w \in W^f$ from the vertex set of $T$ and replace it by a subset $\wh w = \{w'\} \union \{w''_{i} \suchthat (i,w) \in Z\}$ of vertices of~$T''$ (all vertices of $T-W^f$ remain as vertices of $T''$). For any edge $e \subset T$ that was attached to $w \in W^f$, we re-attach $e$ to an appropriate point of $\wh w$ as follows: if there exists $i \in I$ such that $(i,w) \in Z$ and $e \subset \delta_i$, re-attach $e$ to $w''_i$; otherwise, reattach $e$ to $w'$. There is a quotient map $T'' \mapsto T$ obtained by collapsing each $\wh w \subset T''$ to the corresponding point $w \in T$ (for $k \in K'$). That quotient map induces a bijection of edges, and each protocomponent $\delta_i \subset T$ of the protoforest $\delta$ lifts by a simplicial isomorphism to a subtree $\delta''_i \subset T''$. Intuitively, we think of $w''_i$ as a new, special copy of $w$ to which~$\delta''_i$ is attached if it so happens that $C_{iw}$ is trivial; and we think of $w'$ as the ``original'' copy of $w$ to which everything else stays attached.

For use below we record that for all $w \in W$ and for all $i \ne j \in I$ such that $w = \delta_i \intersect \delta_j$, the following dichotomy holds in~$T''$:
\begin{description}
\item[Case 1:] $\delta''_i \intersect \delta''_j = \emptyset$ $\iff$ at least one of $(i,w)$ or $(j,w)$ is in~$Z$ $\iff$ at least one of $C_{iw}$ or $C_{jw}$ is trivial.
\item[Case 2:] $\delta''_i \intersect \delta''_j = \{w'\} \ne \emptyset \iff$ neither $(i,w)$ nor $(j,w)$ is in $Z$ $\iff$ both $C_{iw}$ and $C_{jw}$ are nontrivial.
\end{description} 
We note that if $w \in W - W^f$ then Case~2 must hold; whereas if $w \in W^f$ then either of Case~1 or Case~2 could hold. 

We next attach to the forest $T''$ a new edge $e_{iw}$ with endpoints $w'$ and~$w''_{i}$, for all \hbox{$(i,w) \in Z$.} The overall result of this detachment--attachment operation is a new tree $T'$. The actions of $\Gamma$ on $T$, $I$, $W$ and $Z$ induce an action $\Gamma \act T'$. This action is a free splitting of $\Gamma$ \relA\ because $\Stab(e_{iw}) = C_{iw}$ is trivial for each $(i,w) \in Z$. There is a collapse map $T' \mapsto T$ under which the vertices of $T$ with nontrivial preimage are precisely the points $w \in W^f$, and the preimage of each such $w$ is a star graph with star vertex $w'$, consisting of the union of those new edges $e_{iw} \subset T'$ for which $(i,w) \in Z$. By construction we obtain a protoforest $\delta' = \delta''$ in $T' \supset T''$ with protocomponents $\delta'_i = \delta''_i \subset T'$, and the collapse map $T' \to T$ induces a $\Gamma$-invariant bijection of protocomponents, mapping corresponding protocomponents by simplicial isomorphism. The \emph{Tame Stabilizer Hypothesis} therefore still holds with respect to~$T'$ and $\delta'$. Also, as an immediate consequence of the dichotomy recorded above, condition $(*)$ now holds in $T'$, which completes the reduction argument. 

\smallskip

It remains to establish the conclusions of the lemma for given $T$ and $\delta$ under the additional assumption that condition $(*)$ holds, which we restate here:
\begin{description}
\item[$(*)$] For all $w \in W$ and all $\delta_i$ we have 
$w \in \delta_i \iff C_{iw} = \Stab(w) \intersect \Stab(\delta_i) \ne \{\Id\}$.
\end{description}
Denote $I_w = \{i \in I \suchthat w \in \delta_i\}$. By definition each $w \in W$ is contained in more than one protocomponent $\delta_i$, and so $\abs{I_w} \ge 2$. Combining this with $(*)$ it follows that
\begin{description}
\item[$(**)$] For all $w \in W$ we have $\Stab(w) \ne \{\Id\}$, and so $[\Stab(w)]$ is an element of the free factor system $\FST$.
\end{description}
Note that $[\Stab(w)] \in \FST$ depends only on the orbit of $w$ under the action $\Gamma \act W$, and distinct orbits produce distinct elements of $\FST$.

Since $\F[\delta]$ and $\FST$ are both free factor systems of $\Gamma$ \relA, their meet $\B = \F[\delta] \meet \FST$ is also a free factor system of~$\Gamma$ \relA, and clearly $\B \sqsubset \FST$ (see \cite[Section 2.2]{HandelMosher:RelComplexHyp} for discussion of the meet). For each $w \in W$ we have $[\Stab(w)] \in \FST$, and so $\Stab(w)$ is a free factor rel~$\B$. Applying \cite[Lemma/Definition 2.7]{\RelFSHypTwoTag} together with $(*)$ and the definition of the meet, in the group $\Stab(w)$ we have a free factor system $\B \restrict \Stab(w)$ of the following form:
$$\B \restrict \Stab(w) = \biggl\{[\Stab(w) \intersect \Stab(\delta_i)]^w \biggm| i \in I_w \biggr\}
$$
where we use the notation $[\cdot]^w$ to refer to conjugacy classes within the group $\Stab(w)$. 

Note that $\B \restrict \Stab(w)$ is a nonfull free factor system of the group $\Stab(w)$, for otherwise there would exist $i \in I_w$ such that $\Stab(w) = \Stab(w) \intersect \Stab(\delta_i)$, and by choosing $j \ne i \in I_w$ it would follow that
$$\Stab(w) \intersect \Stab(\delta_j) = \Stab(w) \intersect \Stab(\delta_i) \intersect \Stab(\delta_j)
$$
But $\Stab(\delta_i) \intersect \Stab(\delta_j)$ is trivial (by item~\pref{ItemProtoStabInt} of the \emph{Tame Stabilizer Hypothesis}) whereas $\Stab(w) \intersect \Stab(\delta_j)$ is nontrivial (by $(*)$), which is a contradiction.

For each $w \in W$ choose $\Stab(w) \act U_w$ to be a (nontrivial) Grushko free splitting of the group $\Stab(w)$ relative to its (nonfull) free factor system $\B \restrict \Stab(w)$. We may make these choices in a $\Gamma$-equivariant manner, meaning that the individual actions $\Stab(w) \act U_w$ extend to a $\Gamma$-action on the forest $\sqcup_{w\in W} U_w$, and that the stabilizer of $U_w$ is $\Stab(w)$: for each orbit of the action $\Gamma \act W$ one first chooses a single orbit representative $w$ and a Grushko free splitting $\Stab(w) \act U_w$ relative to~$\B \restrict \Stab(w)$; and then one extends that to a diagonal action $\Gamma \act (\Gamma / \Stab(w)) \times U_w$ using the natural left action of $\Gamma$ on the set of left cosets $\Gamma / \Stab(w)$. 

For each $w \in W$ and $i \in I_w$ consider the subgroup $C_{iw} = \Stab(\delta_i) \intersect \Stab(w)$. Since $[C_{iw}]^w \in \B \restrict \Stab(w)$, and since $C_{iw}$ is nontrivial, it follows that $C_{iw}$ fixes a unique point $u_{iw} \in U_w$ and that $C_{iw} = \Stab(u_{iw})$. Note that if $i \ne j \in I_w$ then $u_{iw} \ne u_{jw}$, because $C_{iw}, C_{jw}$ are nontrivial subgroups of $\Stab(w)$ whose intersection $C_{iw} \intersect C_{jw}$ is a subgroup of $\Stab(\delta_i) \intersect \Stab(\delta_j) = \{\Id\}$ and is therefore trivial.

\smallskip

For the remainder of the proof we shall construct the desired free splitting $\Gamma \act U$ from the actions $\Gamma \act T$ and $\Gamma \act \sqcup_w U_w$ using a detachment/re-attachment procedure carried out at each $w \in W$. Recall the direction set $D_w T$ consisting of all oriented edges $E$ with initial vertex~$w$. The set $D_w T$ subdivides as follows: one subset $D_w \delta_i$ for each $i \in I_w$; and a single \emph{leftover} direction for each $E \in D_w T - \bigsqcup_{i \in I_w} D_w \delta_i$ that we denote $D_w E$.

\smallskip
Starting with the free splitting action of $\Gamma$ on the tree $T$:

\smallskip\textbf{Protoforest detachments:} For each $w \in W$ and each $i \in I_w$, detach the entire set $D_w \delta_i$ from $w \in T$, and let $x_{iw} \in \delta_i$ denote the point that was formerly attached to $w$. We may thus identifying $D_{x_{iw}} \delta_i$ with $D_w \delta_i$. The points $x_{wi}$, one for each $w \in E$ and $i \in I_w$, are called \emph{protoforest detachment points}.

\smallskip\textbf{Leftover detachments:} For each $w \in W$ and each leftover oriented edge $E \subset T$ with initial vertex $w$, detach $E$ from $w$, and let $y_{Ew} \in E$ be the initial vertex that was formerly attached to $w$. The points $\{y_{Ew}\}$, one for each $w \in W$ and each leftover oriented edge $E$ with initial vertex $w$, are called \emph{leftover detachment points}.

\smallskip\textbf{The detachment forest $\Delta$:} The result of these detachments is a forest denoted $\Delta$ on which $\Gamma$ acts, together with a $\Gamma$-equivariant quotient map $\Delta \mapsto T$: the action of $\Gamma$ on detachment points is given by 
$$\gamma \cdot x_{iw} = x_{\gamma \cdot i, \gamma \cdot w} \quad\text{and}\quad \gamma \cdot y_{Ew} = y_{\gamma \cdot E, \gamma \cdot w}
$$
and the quotient map takes $x_{iw}$ and each $y_{Ew}$ to $w$. This quotient map is a bijection on edges, and so the $\Gamma$ stabilizer of every edge of $\Delta$ is trivial. Note that each protocomponent $\delta_i \subset T$ lifts to $\Delta$ by simplicial isomorphism, we identify $\delta_i$ with its lift, and we note that the protocomponents are \emph{pairwise disjoint} in $\Delta$.

\paragraph{Re-attachments:} Consider the disjoint union of the forest $\Delta$ and the forest $\sqcup_w U_w$, with the induced $\Gamma$ action on this disjoint union. 

\smallskip\textbf{Protoforest re-attachments:} Noting that both of the sets $\{x_{iw}\}$ and $\{u_{iw}\}$ are bijectively indexed by the set $\{(i,w) \suchthat w \in W, i \in I_w\}$, we may re-attach $x_{iw}$ to $u_{iw}$ in a well-defined manner, and these re-attachments are automatically $\Gamma$-equivariant. 

\smallskip\textbf{Leftover re-attachments:} Re-attach each leftover detachment point $y_{Ew} \in \Delta$ to \emph{some} point $z_{Ew} \in U_w$. The reattachment map $y_{Ew} \mapsto z_{Ew}$ is \emph{not} automatically $\Gamma$-equivariant. However, since each of the actions $\Gamma \act \Delta$ has trivial edge stabilizers, and since each leftover detachment point is incident to a unique edge of $\Delta$, the restricted action of $\Gamma$ on the set of leftover detachment points is a free action, and so we can force these reattachments to be $\Gamma$-equivariant in the usual manner: choose one representative $y_{Ew}$ of each $\Gamma$-orbit of leftover detachment points, choose $z_{Ew} \in U_w$ arbitrarily and reattach $y_{Ew}$ to $z_{Ew}$, and then for each $\gamma \in \Gamma - \{\Id\}$, the leftover detachment point $y_{\gamma \cdot E, \gamma \cdot w} = \gamma \cdot y_{Ew}$ is re-attached to the point $z_{\gamma \cdot E,\gamma \cdot w} := \gamma \cdot z_{Ew}$.

\medskip

Let $U$ be the quotient of the disjoint union of $\Delta$ and $\sqcup_w U_w$ under the reattachments just described. By construction the $\Gamma$ actions on $\Delta$ and on $\sqcup_w U_w$ induce an action $\Gamma \act U$. Since $\Gamma$ acts on each of $\Delta$ and $\sqcup_w U_w$ with trivial edge stabilizers, and since no edges are identified under the quotient map from their disjoint union to $U$, the $\Gamma$-stabilizer of each edge of $U$ is trivial. We have two $\Gamma$-equivariant quotient maps $\Delta \mapsto T$ and $\sqcup_w U_w \mapsto T$ (the latter defined by $U_w \mapsto w$), and these maps agree where points of $\Delta$ are identified with points of $\sqcup_w U_w$, hence we have an induced equivariant quotient map $U \mapsto T$. The only points of $T$ that have nontrivial pre-image under this quotient map are the points $w \in W$, and for each such point its pre-image is the tree $U_w$; furthermore, the collapse map $U \mapsto T$ takes the edges in $U$ that are incident to but not contained in the tree $U_w$ to the edges of $T$ that are incident to $w$. It follows that $U$ is a tree, and furthermore that $\Gamma \act U$ is a free splitting action and $U \mapsto T$ is a collapse map. Because the protocomponents are pairwise disjoint in $\Delta$, and because of the inequality $u_{iw} \ne u_{jw}$ for each $i \ne j \in I_w$, the protocomponents remain pairwise disjoint in $U$. This completes the proof of the Protoforest Blowup Lemma~\ref{LemmaExpansionConstruction}.
\end{proof}

\subsection{Proof of the \STOAT}
\label{SectionProofStrongTOA}
We have already proved, in Section~\ref{SectionIteratedAndNon}, that the uniterated form of the \STOAT~\ref{TheoremStrongTwoOverAll} quickly implies the iterated form. The hard work left is to prove the uniterated form, which we do here in Section~\ref{SectionTheProof}, by giving a formula for $\Theta = \Theta(\Gamma;\A) \ge 1$ and using it to prove that for any foldable map $f \from S \to T$, if $d(S,T) \ge \Theta$ then $S$ has two natural edges in different $\Gamma$ orbits each of whose $f$-images fills~$T$.

The proof will share certain strategies of the proof of the original \TOAT, with filling protoforests taking over the job of covering forests. Starting with a Stallings fold factorization $f \from S = S_0 \xrightarrow{f_1} S_1 \xrightarrow{f_2} \cdots \xrightarrow{f_M} S_M=T$, the rough idea is to consider a natural edge $E_0 \subset S_0$ and its corresponding sequence of natural tiles
$$\alpha_0 = E_0 \subset S_0 \qquad \alpha_1 = f_1(\alpha_0) \subset S_1 \qquad \cdots
\qquad \alpha_M = f_M(\alpha_{M-1}) \subset S_M
$$
and to study the evolution of the filling rank $\KR(\alpha_i)$: recall from Definition~\ref{DefinitionFineFFS} and Lemma~\ref{LemmaCheckDeltaConnected} that $\KR(\alpha_i)=\KR(F(\alpha_i))$ where $F(\alpha_i)$, a free factor of $\Gamma$ \relA, is the stabilizer of the protocomponent $\beta^\Fill_\Id(\alpha)$ of the filling protoforest $\beta^\Fill(\alpha)$ . We describe the evolution of the sequence $\KR(\alpha_i)$ in two lemmas. Section~\ref{ItemKuroshMotonocity} contains  Lemma~\ref{LemmaFoldFillingProtoforest}, a monotonicity property saying that the sequence $\KR(\alpha_i)$ is nondecreasing. Section~\ref{SectionConstantKRBound} contains Lemma~\ref{LemmaFillingDiameterBound} which gives a distance bound while Kurosh rank is constant: a universal upper bound (namely $6$) for the diameter of any fold subsequence along which $\KR(\alpha_i)$ is constant, \emph{assuming} that this constant is strictly less than the maximum value $\KR(\Gamma;\A) = \abs{\A} + \corank(\A)$ for Kurosh ranks.

\subsubsection{Monotonicity of filling ranks along fold paths.}
\label{ItemKuroshMotonocity}
To set up Lemma~\ref{LemmaFoldFillingProtoforest}, consider a single fold map $h \from S \to S'$ between free splittings of $\Gamma$ \relA, and paths $\alpha \subset S$, $\alpha' \subset S'$ such that $h \restrict \alpha$ is a homeomorphism onto $\alpha'$. Consider also the following associated objects:
\begin{itemize}
\item (Definition~\ref{DefFineCoveringForest}) The overlap protoforests $\beta^\Over(\alpha;S)$ and $\beta^\Over(\alpha';S')$ with protocomponents $\beta^\Over_g(\alpha;S)$ and $\beta^\Over_g(\alpha';S')$ containing $g \cdot \alpha$ and $g \cdot \alpha'$ respectively; 
\item (Definition~\ref{DefinitionFineFFS}) The filling supports $F = \Fm(\alpha)$ and $F' = \Fm(\alpha')$, which are the smallest free factors of $\Gamma$ \relA\ containing $\Stab(\beta^\Over_\Id(\alpha;S))$ and $\Stab(\beta^\Over_\Id(\alpha';S'))$ respectively; 
\item (Definition \ref{DefinitionFillingForest}) The filling protoforests $\beta^\Fill(\alpha;S)$ and $\beta^\Fill(\alpha';S')$, with protocomponents containing $g \cdot \alpha$, $g \cdot \alpha'$ respectively:
\begin{align*}
\beta^\Fill_g(\alpha;S) &= g F \cdot \beta^\Over_\Id(\alpha;S) = gFg^\inv \cdot \beta^\Over_g(\alpha;S) \\
\beta^\Fill_g(\alpha;'S') &= g F' \cdot \beta^\Over_\Id(\alpha';S') = gF'g^\inv \cdot \beta^\Over_g(\alpha';S')
\end{align*}
\end{itemize}


\begin{definition}
\label{DefinitionPreserveProtocomponents}
Using the above notation, to say that $h$ \emph{preserves filling protocomponents} (with respect to $\alpha$ and $\alpha'$) means:
\begin{enumerate}
\item\label{ItemProtoToProto}
$h(\beta^\Fill_g(\alpha;S)) = \beta^\Fill_g(\alpha';S')$ for each $g \in \Gamma$.
\item\label{ItemDistinctToDistinct}
For each $g_1,g_2 \in \Gamma$, we have $\beta^\Fill_{g_1}(\alpha;S) = \beta^\Fill_{g_2}(\alpha;S)$ if and only if $\beta^\Fill_{g_1}(\alpha;'S') = \beta^\Fill_{g_2}(\alpha';S')$.
\end{enumerate}
In applying this definition one should keep in mind that two protocomponents of the same protoforest are distinct if and only if they are edge disjoint.
\end{definition}

\begin{lemma}
\label{LemmaFoldFillingProtoforest}
For any fold map $h \from S \to S'$ and any paths $\alpha \subset S$ and $\alpha' \subset S'$, if $h \mid \alpha$ is a homeomorphism onto $\alpha'$ then (with notations as above including respective filling supports $F$ and $F'$) we have:
\begin{enumerate}
\item\label{ItemMono}
$F \subgroup F'$ and $h(\beta^\Fill_g(\alpha;S)) \subset \beta^\Fill_g(\alpha;'S')$ for all $g \in \Gamma$.
\item\label{ItemPlateau} 
$F = F'$ if and only if $h$ preserves filling protocomponents. 
\item\label{ItemKRFillingInequality}
$\KR(\alpha) \le \KR(\alpha')$, with equality if and only if $h$ preserves filling protocomponents.
\end{enumerate}
As a consequence, if $\alpha$ fills $S$ then $\alpha'$ fills $S'$.
\end{lemma}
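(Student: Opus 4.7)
The plan is to extract everything from one basic observation about how the fold map $h$ interacts with overlapping translates of $\alpha$, and propagate this through overlap protoforests, filling supports, filling protoforests, and the filling criterion.

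\textbf{Basic observation and proof of (1).} Since $h$ is equivariant and restricts to a homeomorphism on each translate $g \cdot \alpha$, if two translates $g_1 \cdot \alpha$ and $g_2 \cdot \alpha$ share an edge in $S$ then $g_1 \cdot \alpha' = h(g_1 \cdot \alpha)$ and $g_2 \cdot \alpha' = h(g_2 \cdot \alpha)$ share the image edge in $S'$; overlaps are preserved. Hence each $\alpha$-connection in $S$ pushes forward to an $\alpha'$-connection in $S'$, yielding $h(\beta^\Over_g(\alpha;S)) \subset \beta^\Over_g(\alpha';S')$ for every $g \in \Gamma$. For any $\gamma \in \Stab(\beta^\Over_\Id(\alpha;S))$ we have $\gamma \cdot \alpha \subset \beta^\Over_\Id(\alpha;S)$, so $\gamma \cdot \alpha' = h(\gamma \cdot \alpha) \subset \beta^\Over_\Id(\alpha';S')$, giving $\gamma \in \Stab(\beta^\Over_\Id(\alpha';S'))$. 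Minimality of the filling support (Definition~\ref{DefinitionFineFFS}) then forces $F \subset F'$. Unpacking $\beta^\Fill_g(\alpha;S) = gF \cdot \beta^\Over_\Id(\alpha;S)$ and using equivariance,
\[
h(\beta^\Fill_g(\alpha;S)) = gF \cdot h(\beta^\Over_\Id(\alpha;S)) \subset gF' \cdot \beta^\Over_\Id(\alpha';S') = \beta^\Fill_g(\alpha';S'),
\]
which is~(1).

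\textbf{Proof of (2).} For the backward direction, if $h$ preserves filling protocomponents and $f' \in F'$, then combining property~\pref{ItemProtoToProto}, equivariance, and $f' \in \Stab(\beta^\Fill_\Id(\alpha';S'))$ yields $h(f' \cdot \beta^\Fill_\Id(\alpha;S)) = h(\beta^\Fill_\Id(\alpha;S))$; property~\pref{ItemDistinctToDistinct} then forces $f' \cdot \beta^\Fill_\Id(\alpha;S) = \beta^\Fill_\Id(\alpha;S)$, so $f' \in F$. For the forward direction, assume $F = F'$. Property~\pref{ItemDistinctToDistinct} follows from the bijection in Lemma~\ref{LemmaCheckDeltaConnected}\pref{ItemDeltaStabBiject} between protocomponents and conjugates of the common stabilizer (the trivial case $F = \{\Id\}$ is direct, the protocomponents being just the individual translates $g \cdot \alpha$ and $g \cdot \alpha'$). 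For property~\pref{ItemProtoToProto}, the inclusion~$\subset$ is~(1); for~$\supset$, take any edge $e' \subset \beta^\Fill_g(\alpha';S')$ and choose $\gamma_0 \in \Gamma$ with $e' \in \gamma_0 \cdot \alpha' \subset \beta^\Fill_{\gamma_0}(\alpha';S')$. Since distinct protocomponents of a protoforest cannot share an edge, $\beta^\Fill_{\gamma_0}(\alpha';S') = \beta^\Fill_g(\alpha';S')$, and then~\pref{ItemDistinctToDistinct} gives $\beta^\Fill_{\gamma_0}(\alpha;S) = \beta^\Fill_g(\alpha;S)$, so $e' \in h(\gamma_0 \cdot \alpha) \subset h(\beta^\Fill_g(\alpha;S))$.

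\textbf{Proof of (3) and the consequence.} The standard monotonicity of Kurosh rank under inclusion of free factors rel~$\A$, with strict inequality for proper inclusion, together with $F \subset F'$ from~(1), gives $\KR(\alpha) \le \KR(\alpha')$ with equality iff $F = F'$, which by~(2) occurs iff $h$ preserves filling protocomponents; this proves~\pref{ItemKRFillingInequality}. If $\alpha$ fills~$S$ then Proposition~\ref{PropFillingPath} gives $F = \Gamma$, hence $F' = \Gamma$ by~(1), yielding condition~\pref{ItemAlphaFullRank} for~$\alpha'$. For condition~\pref{ItemAlphaInteriorCrossing} in~$S'$, a short case analysis on the fold shows that each natural edge of~$S'$ is either the $h$-image of a natural edge of~$S$ untouched by the fold, or is the identified image of initial segments of the two folded edges; in either case an interior crossing in~$S$ by a translate of~$\alpha$ pushes forward to an interior crossing in~$S'$ by the corresponding translate of~$\alpha'$. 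The main technical obstacle throughout is the reverse inclusion in the forward direction of~(2), cleanly resolved by the ``distinct protocomponents share no edge'' principle once~\pref{ItemDistinctToDistinct} is established.
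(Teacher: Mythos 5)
Your proof of items (1)--(3) is correct and follows essentially the same route as the paper: you push $\alpha$-connections forward through $h$ to get $\Stab(\beta^\Over_\Id(\alpha;S)) \subgroup \Stab(\beta^\Over_\Id(\alpha';S'))$ and hence $F \subgroup F'$ by minimality; for (2) the backward direction is the stabilizer equality, and the forward direction exploits edge-disjointness of protocomponents together with Lemma~\ref{LemmaCheckDeltaConnected}\pref{ItemDeltaStabBiject} (the paper reaches the same conclusion via coset representatives of $F=F'$); and (3) is the same deduction from \cite[Lemma 2.9]{\RelFSHypTwoTag}.

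Your argument for the closing sentence ``if $\alpha$ fills $S$ then $\alpha'$ fills $S'$'' --- which the paper's proof does not actually address --- has a genuine gap. The dichotomy you propose for natural edges of $S'$ is incomplete. It omits the two remainders $e'_1, e'_2$ of the folded natural edges, which are natural edges of $S'$ falling under neither of your two headings. Worse, it omits natural edges of $S'$ formed by concatenation: if the fold vertex $v$ has valence $3$ with trivial stabilizer in $S$, then $h(v)$ has valence $2$ with trivial stabilizer, so it is no longer natural in $S'$, and the fold segment merges with the third natural edge $e_3$ at $v$ into a single longer natural edge $E'$ of $S'$. In that situation your ``push forward'' step fails as stated: an interior crossing $\gamma e_3 \subset \interior(\alpha)$ only gives $h(\gamma e_3) \subset \interior(\alpha')$, and $h(\gamma e_3)$ is a \emph{proper} subpath of $\gamma E'$, so one cannot directly conclude $\gamma E' \subset \interior(\alpha')$. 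Closing this case requires locating a vertex in the fold-vertex orbit that lies in $\interior(\alpha)$ with \emph{both} of its adjacent natural edges fully inside $\interior(\alpha)$; the non-fold-turn constraint at such a vertex then forces both halves of the merged $E'$ into $\interior(\alpha')$. That existence claim needs its own argument, which your ``short case analysis'' does not supply.
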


\begin{proof} For the proof we abbreviate protoforest notations by dropping the argument $(\alpha;S)$, for example $\beta^\Over_\Id = \beta^\Over_\Id(\alpha;S)$. We also drop $(\alpha';S')$ but in its place we add a ``prime'' symbol, for example $\beta'{}^\Over_\Id = \beta^\Over_\Id(\alpha';S')$.

Item~\pref{ItemKRFillingInequality} follows immediately from~\pref{ItemMono} and~\pref{ItemPlateau} together with \cite[Lemma 2.9]{\RelFSHypTwoTag} and the equations $\KR(\alpha)=\KR(F)$ and $\KR(\alpha')=\KR(F')$.
 
Since $\beta^\Over_\Id$ is $\alpha$-connected and contains $\alpha$, and since $\alpha'=h(\alpha)$, it follows that $h(\beta^\Over_\Id)$ is $\alpha'$-connected and contains~$\alpha'$. But $\beta'{}^\Over_\Id$ is union of all $\alpha'$ connected subgraphs of $S'$ that contain~$\alpha'$, hence $h(\beta^\Over_\Id) \subset \beta'{}^\Over_\Id$. For each $g \in \Stab(\beta^\Over_\Id)$ it follows that $(g \cdot \beta'{}^\Over_\Id) \intersect \beta'{}^\Over_\Id$ contains $h(\beta^\Over_\Id)$ and so the two protocomponents $g \cdot \beta'{}^\Over_\Id$ and $\beta'{}^\Over_\Id$ of $\beta(\delta';S')$ are not edge disjoint, hence they are equal, and therefore $g \in \Stab(\beta'{}^\Over_\Id)$, proving that $\Stab(\beta^\Over_\Id) \subgroup \Stab(\beta'{}^\Over_\Id)$. Since $F$, $F'$ are the smallest free factors of $\Gamma$ \relA\ that contain the respective subgroups $\Stab(\beta^\Over_\Id)$ and $\Stab(\beta'{}^\Over_\Id)$, it follows that $F \subgroup F'$. It follows furthermore that 
$$h(\beta^\Fill_\Id) = h(F \cdot \beta^\Over_\Id) = F \cdot h(\beta^\Over_\Id)  
\subset F' \cdot \beta'{}^\Over_\Id = \beta'{}^\Fill_\Id
$$
which completes the proof of item~\pref{ItemMono} in the special case $g=\Id$; the general case follows by $\Gamma$-invariance of protoforest components and the identities $g \cdot \beta^\Fill_\Id = \beta^\Fill_g$ and $g \cdot \beta'{}^\Fill_\Id = \beta'{}^\Fill_g$.

If $h$ preserves filling protocomponents then it immediately follows that $F = \Stab(\beta^\Fill_\Id)=\Stab(\beta'{}^\Fill_\Id)=F'$. For the other direction, suppose that $F=F'$. Letting $g$ vary over a set of left coset representatives in $\Gamma$ of the subgroup $\Stab(\beta^\Fill_\Id)=F=F'=\Stab(\beta'{}^\Fill_\Id)$, the trees $g \cdot \beta^\Fill_\Id$ form an edge disjoint cover of $\beta^\Fill(\alpha;S)$ and the trees $g \cdot \beta'{}^\Fill_\Id$ form an edge disjoint cover of $\beta^\Fill(\alpha';S')$; but from \pref{ItemMono} we also have $h(\beta^\Fill_\Id) \subset \beta'{}^\Fill_\Id$, and by equivariance it follows that $h(g \cdot \beta^\Fill_\Id) \subset g \cdot \beta'{}^\Fill_\Id$. These facts taken together imply that $h(g \cdot \beta^\Fill_\Id)=g \cdot \beta'{}^\Fill_\Id$ for all $g \in \Gamma$, and the following chain of equivalences for each $g_1,g_2 \in \Gamma$ completes the proof that $h$ preserves protocomponents equivariantly: 
\begin{align*}
g_1 \cdot \beta^\Fill_\Id = g_2 \cdot \beta^\Fill_\Id &\iff g_2^\inv g_1(\beta^\Fill_\Id) = \beta^\Fill_\Id \iff g_2^\inv g_1 \in F=F' \\
&\iff g_2^\inv g_1(\beta'{}^\Fill_\Id)=\beta'{}^\Fill_\Id \\
&\iff h(g_1 \cdot \beta^\Fill_\Id)  = g_1(\beta'{}^\Fill_\Id)=g_2(\beta'{}^\Fill_\Id)= h(g_2 \cdot \beta^\Fill_\Id) 
\end{align*}
\end{proof}

\subsubsection{A distance bound while Kurosh rank is constant}
\label{SectionConstantKRBound}

In Lemma~\ref{LemmaFillingDiameterBound} to follow, we will be concerned with finding an upper bound on the diameter of a fold sequence along which there is a ``fold-invariant path sequence'' of constant filling complexity; but the lemma does have an additional hypothesis requiring that each relevant path has interior crossings of translates of every natural edge. 

The proof of the lemma will require the concept of \emph{combing rectangles} which we quickly review before stating the lemma (see from \cite[Section 4.3]{\RelFSHypTag}; we will also need this concept later, in Section~\ref{SectionEnablingProjection}, when reviewing \emph{projection diagrams}).


\smallskip

Recall the concept of a ``nondegenerate subgraph'' of a free splitting, in Section~\ref{SectionSTOATTerms} under the heading \emph{Free splittings and their maps}.

\begin{definition}[Pullbacks]
\label{DefNondegenSubgraph}
Consider a foldable map $g \from V' \to V$ of free splittings and a nondegenerate subgraph $\sigma \subset V$ (with respect to some subdivision of $V$). The \emph{pullback} of $\sigma$ (with respect to $g$, in $V'$) is the nondegenerate subgraph $g^*(\sigma) \subset V'$ that is obtained from $g^\inv(\sigma)$ by removing any single point component of $g^\inv(\sigma)$. When working with pullbacks we generally use ``edgelet'' terminology, because one cannot generally say that $g^*(\sigma) \subset V'$ is a subforest with respect to the given simplicial structure on $V'$; at best it is a subforest with respect to a subdivisions of $V'$ and $V$ such that $g$ is simplicial and $\sigma \subset V$ is a simplicial subforest. 

Along any foldable sequence $V_I \xrightarrow{g_{I+1}} \cdots \xrightarrow{g_J} V_J$, a \emph{pullback sequence} consists of nondegenerate subgraphs $\beta_i \subset V_i$, one for each $i \in [I,\ldots,J]$, such that $\beta_i$ is the pullback of $\beta_j$ under $f^i_j \from V_i \mapsto V_j$ for all $i \le j \in [I,\ldots,J]$. A pullback sequence determines and is determined by its final term: for any nonempty, nondegenerate subgraph $\beta_J \subset V_J$, by pulling back one map at a time we obtain the unique pullback sequence with final term~$\beta_J$.
\end{definition}

\begin{definition}
\label{DefCombingRectangle}
(\cite[Section 4.3]{\RelFSHypTag})
A \emph{combing rectangle} is a commutative diagram of free splittings and equivariant maps of the form
$$\xymatrix{
V_I \ar[r]^{g_{I+1}} \ar[d]_{\<\sigma_I\>}^{\pi_I} & \cdots \ar[r]^{g_{i-1}} 
& V_{i-1} \ar[r]^{g_i}\ar[d]_{\<\sigma_{i-1}\>} \ar[d]^{\pi_{i-1}}
& V_{i} \ar[r]^{g_{i+1}} \ar[d]_{\<\sigma_{i}\>} \ar[d]^{\pi_i}
& \cdots \ar[r]^{g_J} 
& V_J \ar[d]_{\<\sigma_J\>} \ar[d]^{\pi_J} \\
U_I \ar[r]_{f_{I+1}} & \cdots \ar[r]_{f_{i-1}} & U_{i-1} \ar[r]_{f_i} &U_{i} \ar[r]_{f_{i+1}} & \cdots \ar[r]_{f_J} & U_J
}$$
such that the following hold:
\begin{enumerate}
\item Each row is a foldable sequence.
\item Each vertical arrow is a collapse map $\pi_i \from V_i \to U_i$ with indicated collapse subforest $\sigma_i \subset V_i$.
\item\label{ItemCombRectPullback} 
The $\sigma_i$'s form a pullback sequence along the $V$-row.
\end{enumerate}
We recall the \emph{Combing By Expansion Lemma}, \cite[Lemma 4.9]{\RelFSHypTag} which, starting with any foldable sequence $U_I \mapsto\cdots\mapsto U_J$ and any collapse map $V_J \xrightarrow{\<\sigma_J\>} U_J$, produces a combing rectangle as denoted above. In addition, from the proof of that lemma we extract the following property which we add to the previous three properties to define a \emph{combing by expansion rectangle}:
\begin{enumeratecontinue}
\item\label{ItemFiberProduct}
For each $I< i \le J$ the $i^{\text{th}}$ square in the diagram is a fiber product diagram in the category of minimal $\Gamma$ trees, meaning that $V_{i-1}$ is the unique minimal $\Gamma$-invariant subtree of the $\Gamma$-forest 
$$\{(x,y) \in U_{i-1} \times V_i \suchthat f_i(x)=\pi_i(y)\}
$$
\end{enumeratecontinue}
If desired, after possibly subdividing each tree in the above combing diagram, we may assume that each map depicted is a simplicial map, so each horizontal arrow therefore takes each edgelet (of its domain) to an edgelet (of its range), and each vertical arrow takes each edgelet to either an edgelet or a vertex. Assuming that holds then, as with any simplicial collapse map, for each $I \le i \le J$ we have:
\begin{enumeratecontinue}
\item\label{ItemVUEdgeBijection}
The map $\pi_i \from V_i \to U_i$ induces an edgelet bijection $\E(V_i \setminus \sigma_i) \leftrightarrow \E(U_i)$. 
\end{enumeratecontinue}
This completes Definition~\ref{DefCombingRectangle}. 
\end{definition}

\medskip
 
Fix a fold sequence in $\FS(\Gamma;\A)$ of the form
$$U_I \xrightarrow{f_{I+1}} U_{I+1} \xrightarrow{f_{I+2}} \cdots \xrightarrow{f_J} U_J
$$ 
and a sequence of paths $\alpha_i \in U_i$, such that $f_i \restrict \alpha_{i-1}$ is a homeomorphism onto $\alpha_i$ (for $I <i \le J$). Let $F_i \subgroup \Gamma$ denote the filling support of~$\alpha_i$ (Definition~\ref{DefinitionFineFFS}), and $K_i=\KR(\alpha_i)=\KR(F_i)$ the filling rank. Applying Lemma~\ref{LemmaFoldFillingProtoforest} we have subgroup inclusions and numerical inequalities 
$$F_I \subgroup F_{I+1} \subgroup \cdots \subgroup F_J \quad\text{and}\quad K_I \le K_{I+1} \le \cdots \le K_J
$$

\begin{lemma}
\label{LemmaFillingDiameterBound}
If $\alpha_i$ has an interior crossing of a translate of each natural edge of $U_i$ (for $I \le i \le J$), and if the sequence of Kurosh ranks $K_i = K$ is a constant independent of $I \le i \le J$, and if the strict inequality $K < \KR(\Gamma;\A)=\abs{\A} + \corank{\A}$ holds, then the diameter in $\FS(\Gamma;\A)$ of the fold sequence \hbox{$U_I \mapsto\cdots\mapsto U_J$} is $\le 6$.
\end{lemma}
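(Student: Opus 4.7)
First I would exploit the constancy of $K_i = K$ along the fold sequence. By Lemma~\ref{LemmaFoldFillingProtoforest} each fold $f_i$ preserves filling protocomponents, so the common filling support $F_i = F$ is a fixed free factor of $\Gamma$ rel~$\A$ and $\FF = \FF[\alpha_i;U_i]$ is a fixed free factor system, independent of~$i$. The strict inequality $K < \KR(\Gamma;\A)$ forces $\FF \ne \{[\Gamma]\}$, so $\FF$ is proper and non-filling relative to~$\A$.

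Second, the interior-crossing hypothesis immediately gives $\beta(\alpha_i;U_i) = U_i$: every natural edge of $U_i$ lies in a translate of $\alpha_i$, hence lies in some protocomponent of the filling protoforest $\beta^\Fill(\alpha_i;U_i)$.

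Third, I would apply the Protoforest Blowup Lemma~\ref{LemmaExpansionConstruction} to $\beta^\Fill(\alpha_i;U_i) \subset U_i$, producing a blowup $V_i \xrightarrow{\<\upsilon_i\>} U_i$ with subforest $\tilde\beta_i \subset V_i$ satisfying $\FF[\tilde\beta_i] = \FF$. The covering property from the previous step implies that the edges of $V_i$ outside $\tilde\beta_i$ are exactly those comprising the collapse forest~$\upsilon_i$. Collapsing $\tilde\beta_i$ in~$V_i$ yields a free splitting $W_i = V_i / \tilde\beta_i$ of $\Gamma$ rel~$\A$ with $\FFS{W_i} = \FF$; non-triviality of $W_i$ follows from $\FF \ne \{[\Gamma]\}$. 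The chain $U_i \mapsfrom V_i \mapsto W_i$ yields $d_\FS(U_i, W_i) \le 2$.

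The main obstacle is to bound $d_\FS(W_i, W_j)$ by~$2$ uniformly, so that the triangle inequality delivers the stated bound~$6$. My strategy is to apply the Combing By Expansion Lemma to the fold sequence together with the collapse $V_J \mapsto U_J$ coming from the Blowup Lemma applied at~$J$, obtaining a coherent family of foldable maps $V'_I \to V'_{I+1} \to \cdots \to V'_J = V_J$ over the original sequence. Pulling back $\tilde\beta_J$ along the top row yields subforests $\tilde\beta'_i \subset V'_i$, and collapsing in each $V'_i$ yields splittings $W'_i$. The delicate point, which I expect to be where most of the real work lies, is verifying that $\FF[\tilde\beta'_i] = \FF$ and analyzing how the foldable maps $V'_i \to V'_{i+1}$ descend to maps between the~$W'_i$. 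Here I would use the observation that because protocomponents cannot share edges, each fold $f_i$ must identify edges lying in a single protocomponent orbit (otherwise distinct protocomponents of $U_i$ would merge into one in~$U_{i+1}$, violating the preservation from Lemma~\ref{LemmaFoldFillingProtoforest}); after blowup the lifted folds occur inside $\tilde\beta'_i$ and hence descend to equivariant homeomorphisms of the~$W'_i$. If this coordination succeeds exactly then $W'_i = W'_j$ and we in fact get $d_\FS(U_i, U_j) \le 4$; more realistically, bounded discrepancies in the blowup choices should contribute $d_\FS(W_i, W_j) \le 2$, yielding the stated bound~$6$.
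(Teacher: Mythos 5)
Your setup matches the paper's: you correctly use constancy of $K_i = K$ to extract a fixed filling support $F$ via Lemma~\ref{LemmaFoldFillingProtoforest}, you observe that the interior-crossing hypothesis forces $\beta(\alpha_i;U_i)=U_i$, you apply the Protoforest Blowup Lemma at $J$, and you bring in the Combing By Expansion Lemma to propagate the blowup backward along the fold sequence. Up to this point you are aligned with the paper. The observation that a fold preserving filling protocomponents must identify edges lying in a single protocomponent is also correct and is implicit in the paper's argument.

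Where you diverge, and where there is a genuine gap, is the final diameter bound. The paper never forms the second collapse $V_i \to W_i = V_i/\tilde\beta_i$; instead it bounds $d(V_i,V_j)\le 4$ directly by factoring the foldable map $g^i_j\colon V_i\to V_j$ as a Stallings fold sequence $V_i=W_0\to\cdots\to W_K\to\cdots\to W_L=V_j$ that \emph{prioritizes} folding the subforest $\tilde\beta_i$, and then applying \cite[Propositions 5.3, 5.4]{\RelFSHypTwoTag} to get $d(W_0,W_K)\le 2$ and $d(W_K,W_L)\le 2$. Your claim that the combing-rectangle maps ``descend to equivariant homeomorphisms of the $W'_i$'' is not justified: the top-row maps $V'_i\to V'_{i+1}$ in a combing rectangle are foldable maps, not single folds, and nothing you have said rules out that they fold edges of $\sigma_i = V'_i \setminus \tilde\beta'_i$. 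Consequently the descended maps $W'_i\to W'_j$ need not be isomorphisms, and your hoped-for conclusion $W'_i = W'_j$ (giving $d\le 4$) is false in general. Your fallback --- ``bounded discrepancies in the blowup choices should contribute $d_\FS(W_i,W_j)\le 2$'' --- is not an argument; it names the gap rather than closing it. What is missing is precisely the fold-prioritization machinery (\cite[Lemma/Definition 5.2, Propositions 5.3--5.4]{\RelFSHypTwoTag}), which is the genuine technical engine of this lemma.

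Separately, note that the paper's bound arrives as $d(U_i,U_j) \le d(U_i,V_i) + d(V_i,V_j) + d(V_j,U_j) \le 1 + 4 + 1$, not via a two-step collapse on each side contributing $2+2+2$. Your accounting happens to give the same total but reflects a different (and unproven) decomposition of the distance.
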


\begin{proof} For the proof we use abbreviated ``filling protoforest'' notation $\beta^\Fill_i = \beta^\Fill(\alpha_i,U_i)$ and $\beta^\Fill_{i,g} = g \cdot \beta^\Fill_{i,\Id} = \beta^\Fill_g(\alpha_i;U_i)$.

To set up the proof of Lemma~\ref{LemmaFillingDiameterBound}, since $K=\KR(F_i)$ is constant for $I \le i \le J$, it follows that $F=F_i$ is also constant (\cite[Lemma~2.9]{\RelFSHypTwoTag}). Applying Lemma~\ref{LemmaFoldFillingProtoforest} it follows that fold map $f_i \from U_{i-1} \to U_i$ preserves filling protocomponents (with respect to $\alpha_{i-1}$ and $\alpha_i$).

We note that $F$ is not the trivial subgroup. To see why, by hypothesis there is a subset $\hat\alpha$ of the interior of $\alpha_i$ consisting of a union of exactly one representative of each natural edge orbit of~$U_i$. It follows that $\alpha_i \setminus \hat\alpha$ contains some edgelet $e$ of~$U_i$, and it also follows that $g \cdot e \subset \hat\alpha$ for some $g \ne \Id \in \Gamma$. The paths $\alpha_i$ and $g^\inv \cdot \alpha_i$ therefore overlap, and so $g^\inv \in \Stab(\beta^\Over_{i,\Id}) \subgroup F$. 

For the rest of the proof we index the protocomponents of each protoforest $\beta^\Fill_i$ by choosing a set of left coset representatives of $\Gamma$ modulo $F$ that includes the identity element $\Id$. For notating these coset representatives and their associated protocomponents we use $\kappa$ instead of~$g$, writing $\beta^\Fill_{i,\kappa}=\kappa \cdot \beta^\Fill_{i,\Id}$. 

As seen in Section~\ref{SectionBlowingUpOnlyIf}, the protoforest $\beta^\Fill_J$ satisfies the Distinct Stabilizer Hypothesis. We may therefore apply the Proforest Blowup Lemma~\ref{LemmaExpansionConstruction}, obtaining an expansion $\pi_J \from V_J \xrightarrow{\<\sigma_J\>} U_J$ that satisfies the following property $(A_J)$ and a quick consequence $(B_J)$:
\begin{itemize}
\item[$(A_J)$] There is an invariant subforest $\tilde\beta_J \subset V_J$ with component decomposition $\tilde\beta_J = \bigsqcup_{\kappa} \tilde\beta_{J,\kappa}$, such that $\pi_J$ restricts to a simplicial isomorphism $\tilde\beta^{\vphantom{\Fill}}_{J,\kappa} \to \beta^\Fill_{J,\kappa}$ for each $\kappa$; in particular $\Stab(\tilde\beta_{J,\Id}) = \Stab(\beta^\Fill_{J,\Id}) = F_J=F$.
\item[$(B_J)$] $\tilde\beta_J \subset V_J$ is a proper subforest, equal to the uncollapsed subforest $V_J \setminus \sigma_J$.
\end{itemize}
Property $(B_J)$ is a consequence of $(A_J)$ which, together with the  fact that $\beta^\Fill_J$ covers $U_J$, implies the equation $\tilde\beta_J = V_J \setminus \sigma_J$; also properness of $\tilde\beta_J$ follows from the equation $\F[\tilde\beta_J] = \F[\beta^\Fill_J]$ (Lemma~\ref{LemmaExpansionConstruction}~\pref{ItemBlowupFFEqual}) and the strict inequality $\KR(F_J) = K < \KR(\Gamma;\A)$.

\smallskip

Applying the statement and the proof of the \emph{Combing By Expansion Lemma}, \cite[Lemma 4.9]{\RelFSHypTag} as indicated in Definition~\ref{DefCombingRectangle}, choose a combing-by-expansion rectangle with notation as in that definition. By downward induction we shall extend properties $(A_J)$, $(B_J)$ to a sequence of properties $(A_i)$, $(B_i)$, and add a new sequence of properties $(C_i)$, all defined for $I \le i \le J$, as follows:
\begin{itemize}
\item[$(A_i)$] There is an invariant subforest $\tilde\beta_i \subset V_i$ with component decomposition $\tilde\beta_i = \bigsqcup_{\kappa} \, \tilde\beta_{i,\kappa}$ such that $\pi_i$ restricts to a simplicial isomorphism $\tilde\beta_{i,\kappa} \to \beta^\Fill_{i,\kappa}$ for each $\kappa$; in particular $\Stab(\tilde\beta_{i,\Id})=\Stab(\beta^\Fill_{i,\Id}) = F_i = F$.
\item[$(B_i)$] $\tilde\beta_i \subset V_i$ is a proper subforest, equal to the uncollapsed subforest $V_i \setminus \sigma_i$.
\item[$(C_i)$] If $i \ge 1$, the foldable map $g_i$ restricts to a map $g_i \from \tilde\beta_{i-1} \to \tilde\beta_i$ that induces a component bijection $g_i(\tilde\beta_{i-1,\kappa}) = \tilde\beta_{i,\kappa}$.
\end{itemize}
To start the proof, we know already that $(A_J)$ and $(B_J)$ are true. For each $i \ge 1$ assuming by downward induction that $(A_i)$ and $(B_i)$ are true, we shall prove that $(C_i)$ and $(A_{i-1})$ are true; and then $(B_{i-1})$ is a consequence of $(A_{i-1})$ for the same reasons that $(B_J)$ is a consequence of~$(A_J)$.

To prove $(A_{i-1})$, from the inductive assumption that $(A_i)$ holds we get a restricted simplicial isomorphism $\pi_i \from \tilde\beta_{i,\kappa} \to \beta^\Fill_{i,\kappa}$ for each $\kappa$. Knowing that $f_i$ preserves protocomponents, we obtain a restricted simplicial map $f_i \from \beta^\Fill_{i-1,\kappa} \to \beta^\Fill_{i,\kappa}$. Let $\tilde\beta_{i-1,\kappa} \subset V_{i-1}$ be the subforest of all edges $e \subset V_{i-1}$ for which $\pi_{i-1}(e) \subset \beta^\Fill_{i-1,\kappa}$ and $g_i(e) \subset \tilde\beta_{i,\kappa}$. By applying Definition~\pref{DefCombingRectangle} item~\pref{ItemFiberProduct} and using that $\pi_i \from \tilde\beta_{i,\kappa} \to \beta^\Fill_{i,\kappa}$ is a simplicial isomorphism, the restricted map $\pi_{i-1} \from \tilde\beta_{i-1,\kappa} \to \beta^\Fill_{i-1,\kappa}$ is a simplicial isomorphism, and in particular $\tilde\beta_{i-1,\kappa}$ is connected. Since $g_i(\tilde\beta_{i-1,\kappa}) \subset \tilde\beta_{i,\kappa}$, it follows that as $\kappa$ varies the trees $\tilde\beta_{i-1,\kappa}$ are pairwise disjoint, and hence $\tilde\beta_{i-1} = \bigsqcup_\kappa \tilde\beta_{i-1,\kappa}$ is the component decomposition, which proves $(A_{i-1})$.

To prove $(C_i)$, by definition of $\tilde\beta_{i-1}$ we have $g(\tilde\beta_{i-1,\kappa}) \subset \tilde\beta_{i,\kappa}$. As $\kappa$ varies, we know that the components $\tilde\beta_{i,\kappa}$ of $\tilde\beta_i$ are pairwise disjoint. The trees $\tilde\beta_{i-1,\kappa}$ are therefore pairwise disjoint, hence they are indeed the components of the forest $\tilde\beta_{i-1}$, proving $(C_i)$.

\medskip

For each $I \le i < j \le J$ we must prove the distance bound $d(U_i,U_{j}) \le 6$, which we do using methods of \cite[Section 5]{\RelFSHypTwoTag} that go back to the proof of \cite[Lemma 5.2]{\FSOneTag}. Since each vertical arrow is a collapse map we have $d(V_i,U_i)$, $d(V_{j},U_{j}) \le 1$, and we shall prove $d(V_i,V_{j}) \le 4$. Consider the foldable map 
$$h = g^i_{j} \from V_i \to V_{j}
$$
Combining properties $(C_{i+1})$, \ldots, $(C_{j})$, the restricted map $h \from \tilde\beta_i \to \tilde\beta_{j}$ induces a component bijection $\tilde\beta_{i,\kappa} \to \tilde\beta_{j,\kappa}$. Applying \cite[Lemma/Definition~5.2]{\RelFSHypTwoTag} together with \cite[Propositions~5.3, 5.4]{\RelFSHypTwoTag}, by prioritizing folding the subforest $\tilde\beta_i \subset V_i$ we obtain a fold factorization of $h$ of the form
$$V_i = W_0 \xrightarrow{h_1} \cdots \xrightarrow{h_K} W_K \xrightarrow{h_{K+1}} \cdots \xrightarrow{h_L} W_L = V_{j}
$$
such that the following hold, with foldable maps denoted $h^k_l = h_l \circ\cdots\circ h_{k+1} \from W_k \to W_l$ and invariant subforests denoted $\delta_k = h^0_k(\tilde\beta_i) \subset W_k$:
\begin{enumerate}
\item\label{ItemTwoEdgesOfdelta}
Each fold map $h_k \from W_{k-1} \to W_k$ with $0 < k \le K$ is defined by folding two edges of $\delta_{k-1}$.
\item\label{ItemFirstHalfTwo} $d(W_0,W_K) \le 2$ (by combining \pref{ItemTwoEdgesOfdelta} with \cite[Proposition~5.4]{\RelFSHypTwoTag}).
\item\label{ItemRestrictInjectdeltaK}
The foldable map $h^K_L \from W_K \to W_L$ restricts to an injection on each component of $\delta_K$.
\end{enumerate}
Since the map $h = g^i_j$ induces a bijection from components of $\delta_0 = \tilde\beta_i$ to components of $\delta_L = \tilde\beta_j=\delta_L$, it follows for each $0 \le k < l \le L$ that the map $h^k_l$ induces a bijection from components of $\delta_k$ to components of $\delta_l$. Using this for the case $k=K$ and $l=L$, and combining it with item~\pref{ItemRestrictInjectdeltaK} above, it follows that $h^K_L$ restricts to an equivariant homeomorphism $\delta_K \mapsto \delta_L$. Finally, since $h$ does not identify any edgelet of $\tilde\beta_i$ with an edgelet of $V_i \setminus \tilde\beta_i$, it follows that $h^K_L$ does not identify any edgelet of $\delta_K$ with an edgelet of $W_K \setminus \delta_K$. In summary, the remainder of the fold sequence $W_K \mapsto\cdots\mapsto W_L$ prioritizes folding of the subforest $W_K \setminus \delta_K$. Again applying \cite[Proposition~5.4]{\RelFSHypTwoTag}, we obtain
\begin{enumeratecontinue}
\item\label{ItemSecondHalfTwo} $d(W_K,W_L) \le 2$. 
\end{enumeratecontinue}
Combining~\pref{ItemFirstHalfTwo} and~\pref{ItemSecondHalfTwo} we get $d(V_i,V_{j}) = d(W_0,W_L) \le d(W_0,W_K) + d(W_K,W_L) \le 2 + 2 = 4$ which completes the proof of Lemma~\ref{LemmaFillingDiameterBound}.
\end{proof}

\subsubsection{Proof of the \STOAT} 
\label{SectionTheProof}

Consider a foldable map $f \from S \to T$ between free splittings $S,T \in \FS(\Gamma;\A)$. Choose a Stallings fold factorization of $f$ with folds of length~$\le 1$ (see~\cite[Theorem 2.17~(4)]{\RelFSHypTwoTag}), denoted
$$f \from S = S_0 \xrightarrow{f_1} S_1 \xrightarrow{f_2} \cdots \xrightarrow{f_M} S_M=T
$$ 
Using the constant $\Delta = \Delta(\Gamma;\A) \ge 1$ from the \TOAT\ \cite{\RelFSHypTwoTag}, we assume a preliminary lower bound
$$d(S,T) \ge 3 \Delta
$$
Using the fact that each fold has length~$\le 1$, we can choose $L$ so that $0 < L \le M$ and so that $d(S,S_L) =  d(S_0,S_L) = 3 \Delta$. Applying the \TOAT\ to the foldable map~$f^0_L$ we obtain two natural edges $E_0,E'_0 \subset S_0$ in distinct orbits such that each of the natural tiles $f^0_L(E_0), f^0_L(E'_0) \subset S_L$ contains $2^{3-1}=4$ nonoverlapping subpaths each of which crosses a translate of every natural edge of~$S_L$. Choose $\alpha_L \subset S_L$ to be either of the paths $f^0_L(E_0)$ or $f^0_L(E'_0)$, and then for each $l$ such that $l \le l \le M$, let $\alpha_l = f^L_l(\alpha_L)$. The path $\alpha_M$ is thus either of $f^0_M(E_0)$ or $f^0_M(E'_0)$, and our goal has become: 
\begin{itemize}
\item To find some lower bound $d(S,T)=d(S_0,S_M) \ge \Theta = \Theta(\Gamma;\A)$ (with $\Theta \ge 3 \Delta$) which implies that $\alpha_M$ satisfies the \emph{Filling Criterion} of Proposition~\ref{PropFillingPath}:

\noindent
\textbf{(1)${}_M$:} \, $\alpha_M$ has an interior crossing of some translate of every natural edge of $S_M$;

\noindent
\textbf{(2):} $\KR(\alpha_M)=\KR(\Gamma;\A)$.
\end{itemize}
Once that goal is achieved, by applying Proposition~\ref{PropFillingPath} it will then follow that $\alpha_M$ fills $S_M=T$, and the proof is done.

For purposes of verifying \textbf{(1)${}_M$}, and for the broader purposes of applying Lemma~\ref{LemmaFillingDiameterBound} along the fold sequence from $S_L$ to $S_M$, for each $L \le l \le M$ we first verify:
\begin{description}
\item[(1)${}_l$:] $\alpha_l$ has an interior crossing of some translate of every natural edge of $S_l$
\end{description}
We start with the fact that the path $\alpha_L$ ($=f^0_L(E_0)$ or~$f^0_L(E'_0)$) contains $4$ nonoverlapping subpaths of $S_L$ each of which crosses a translate of every natural edge of~$S_L$. We next distill that fact down to the statement that $\alpha_L \subset S_L$ contains $3$ nonoverlapping subpaths each of which crosses a translate of every edge of $S_L$. That property is inherited by $\alpha_l \subset S_l$ for each $L \le l \le M$, and so we obtain a decomposition of the form 
$$\underbrace{\xymatrix{
\bullet \ar@{-}[r]^{\nu_0} & 
\bullet \ar@{-}[r]^{\mu_1} & 
\bullet \ar@{-}[r]^{\nu_1} & 
\bullet \ar@{-}[r]^{\mu_2} & 
\bullet \ar@{-}[r]^{\nu_2} & 
\bullet \ar@{-}[r]^{\mu_3}  &
\bullet \ar@{-}[r]^{\nu_3}  &
\bullet
}}_{\alpha_l}
$$
such that each of $\mu_1, \mu_2, \mu_3$ crosses a translate of every edge of $S_l$; the $\nu_i$ subpaths are allowed to be empty. Each $\mu_i$ must contain a natural vertex of $S_l$, for otherwise $\mu_i$ is a proper subpath of some natural edge $E \subset S_l$, contradicting that $\mu_i$ crosses a translate of every edge of $S_l$. Also, the subpath $\mu_1$ must contain a natural vertex $w_1$ of $S_l$ which is contained in the interior of $\alpha_l$: otherwise no point of $\mu_1$ except possibly its left endpoint is a natural vertex, implying that $\mu_1$ is a proper subpath of some natural edge of $S_l$, again a contradiction. The subpath $\mu_3$ must similarly contain a natural vertex $w_3$ of~$S_l$ which is in the interior of $\alpha_l$. It follows that $\overline{w_1 w_3}$ is a natural path in $S_l$, it is an interior subpath of $\alpha_l$, and it contains the path~$\mu_2$. Consider any natural edge $E \subset S_l$. Choosing any edge $e \subset E$ we obtain $\gamma \in \Gamma$ such that $\gamma \cdot e \subset \mu_2$. The natural edge $\gamma \cdot E$ therefore overlaps the natural path $\overline{w_1 w_3}$, implying that $\gamma \cdot E \subset \overline{w_1 w_3}$, and hence $\gamma \cdot E$ is contained in the interior of~$\alpha_l$, completing the proof of (1)${}_l$.

\medskip

We turn now to verifying~(2), starting with a formula for~$\Theta$. For $L \le l \le M$ consider the filling protoforest~$\beta^\Fill_l = \beta^\Fill(\alpha_l;S_l)$, with protocomponent $\beta^\Fill_{l,\Id} = \beta^\Fill_\Id(\alpha_l;S_l)$ containing $\alpha_l$, and with filling support $F_l = \Stab(\beta^\Fill_{l,\Id}) \subgroup \Gamma$ of Kurosh rank~$\KR(\alpha_l) = \KR(F_l)$. By applying Lemma~\ref{LemmaFoldFillingProtoforest} we have subgroup inclusions $F_L \subgroup \cdots \subgroup F_M$ and integer inequalities $\KR(\alpha_L) \le \cdots \le \KR(\alpha_M)$, and furthermore for each $L < l \le M$ we have $F_{l-1}=F_l$ if and only if $\KR(\alpha_{l-1}) = \KR(\alpha_l)$. We may therefore decompose the fold sequence $S_L \mapsto\cdots\mapsto S_M$ into maximal fold subsequences along which Lemma~\ref{LemmaFillingDiameterBound} may be applied to obtain constant Kurosh rank along each fold subsequence:
\begin{align*}
\underbrace{S_L = S_{M_0} \mapsto\cdots\mapsto S_{M_1-1}}_{\KR_1} \xrightarrow{f_{M_1}} \underbrace{S_{M_1} \mapsto\cdots\mapsto S_{M_2-1}}_{\KR_2} & \xrightarrow{f_{M_2}} \cdots \\
\cdots & \xrightarrow{f_{M_{N-1}}} \underbrace{S_{M_{L-1}} \mapsto\cdots\mapsto S_{M_N-1} = S_M}_{\KR_N} \\
\KR_n = \KR(\alpha_{M_{n-1}}) = \cdots = \KR(\alpha_{M_n - 1}) &\quad\text{for $1 \le n \le N$} \\
\KR_{n-1} < \KR_n &\quad\text{for $1 < n \le N$}
\end{align*}
Suppose now that:
\begin{description}
\item[$(\#)$] $\alpha_M$ does \emph{not} fill $S_M$, equivalently $\KR_N = \KR(\alpha_M) < \KR(\Gamma;\A)$
\end{description}
For each $1 \le n \le N$ it follows that $\KR_n \le \KR_N <  \KR(\Gamma;\A)$ and so we can apply Lemma~\ref{LemmaFillingDiameterBound} to each fold subsequence, with the conclusion that the diameter in $\FS(\Gamma;\A)$ of each fold subsequence is~$\le 6$. From the sequence of strict integer inequalities $0 \le \KR_1 < \cdots < \KR_N < \KR(\Gamma;\A)
$ we obtain $N \le \KR(\Gamma;\A)$. Taking into account the diameter bound of $1$ for each of the individual folds $f_{M_l} \from S_{M_l-1} \to S_{M_l}$, the fold subsequence $S_L \mapsto\cdots\mapsto S_M$ therefore has diameter bounded above by the constant $7 \cdot \KR(\Gamma;\A) - 1$.

Combining this with the previously obtained distance equation $d(S_0,S_L) = 3 \Delta$, under the supposition $(\#)$ we obtain the bound 
\begin{align*}
d(S,T) = d(S_0,S_M) &\le d(S_0,S_L) + d(S_L,S_M) \\
&\le 3 \Delta + 7 \KR(\Gamma;\A) - 1
\end{align*}
Adding $1$ to the above constant, if
$$d(S,T) \ge \Theta(\Gamma;\A) = 3 \Delta + 7 \KR(\Gamma;\A)
$$
then the filling rank of $\alpha_M$ must equal $\KR(\Gamma;\A)$. This holds for each choice of $\alpha_M = f(E_0)$ or $f(E'_0)$ as noted earlier. To summarize: having also proved that the natural tile $\alpha_M$ has an interior crossing of each natural edge orbit of $S_M$, by applying Proposition~\ref{PropFillingPath} it follows that $\alpha_M$ fills~$S_M$, completing the proof of the Strong Two-Over-All Theorem~\ref{TheoremStrongTwoOverAll}.

\subsection{Comparing filling supports of nested paths}
\label{SectionNestedFillingSupports}

The proof of the \STOAT, carried out in the previous section, was based on a study of the evolution of filling support as a path is pushed forward along a fold sequence (see Lemma~\ref{LemmaFoldFillingProtoforest} and its application in Lemma~\ref{LemmaFillingDiameterBound}).

For application in Section~\ref{SectionUniformFilling}, here we study a different evolution of filling support, namely evolution under nesting of paths within a single free splitting. To review protoforest concepts and notations see Definitions~\ref{DefFineCoveringForest}, \ref{DefinitionFineFFS} and~\ref{DefinitionFillingForest}, and the \emph{Protoforest Summary} that follows Definition~\ref{DefinitionFillingForest}.


\begin{lemma}
\label{LemmaNestingOfFillingSupport}
For any free splitting $S$ of $\Gamma$ \relA\ and any paths $\eta,\eta' \subset S$ the following hold:
\begin{enumerate}
\item\label{ItemEtaEtapSubset}
 If $\eta \subseteq \eta'$ then $\eta \subseteq \beta^\Fill_\Id(\eta')$.
\item\label{ItemIfEtaInBlah}
If $\eta \subseteq \beta^\Fill_\Id(\eta')$ then
\begin{enumerate}
\item\label{ItemFourRelations} $\beta^\Fill_\Id(\eta) \subseteq \beta^\Fill_\Id(\eta')$ and $\beta^\Fill(\eta) \preceq \beta^\Fill(\eta')$ and $\Fm(\eta) \subseteq \Fm(\eta')$ and $\KR(\eta) \le \KR(\eta')$.
\item\label{ItemFourEquations}
If furthermore $\beta(\eta)=\beta(\eta')=S$ then the following are equivalent:
\begin{enumerate}
\item One of the four relations in \pref{ItemFourRelations} is an equality
\item All of the four relations in \pref{ItemFourRelations} are equalities.
\end{enumerate}
\end{enumerate}
\end{enumerate}
As a consequence of~\pref{ItemEtaEtapSubset} and~\pref{ItemFourRelations}, if $\eta \subseteq \eta'$ and if $\eta$ fills $S$ then $\eta'$ fills~$S$.
\end{lemma}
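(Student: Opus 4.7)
The plan is to work through the three parts in order, relying on the protoforest framework from Definitions~\ref{DefFineCoveringForest} and~\ref{DefinitionFillingForest} together with the characterizations in Lemma~\ref{LemmaCheckDeltaConnected}. Item~\pref{ItemEtaEtapSubset} is immediate from the chain of inclusions $\eta \subseteq \eta' \subseteq \beta^\Over_\Id(\eta') \subseteq \Fm(\eta') \cdot \beta^\Over_\Id(\eta') = \beta^\Fill_\Id(\eta')$.

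For item~\pref{ItemFourRelations}, the main tool is the fact that distinct protocomponents of a protoforest are edge-disjoint. Starting from $\eta \subseteq \beta^\Fill_\Id(\eta')$, we first prove $\beta^\Over_\Id(\eta) \subseteq \beta^\Fill_\Id(\eta')$ by induction on the length of an $\eta$-connection: supposing inductively that $g \cdot \eta \subseteq \beta^\Fill_\Id(\eta')$ and that $h \cdot \eta$ overlaps $g \cdot \eta$ in an edge, the path $h \cdot \eta$ shares an edge with $\beta^\Fill_\Id(\eta')$; but also $h \cdot \eta \subseteq h \cdot \beta^\Fill_\Id(\eta') = \beta^\Fill_h(\eta')$, so the two protocomponents $\beta^\Fill_\Id(\eta')$ and $\beta^\Fill_h(\eta')$ share an edge and hence coincide, forcing $h \in \Fm(\eta')$ and $h \cdot \eta \subseteq \beta^\Fill_\Id(\eta')$. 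A parallel argument applied to any $g \in \Stab(\beta^\Over_\Id(\eta))$ shows $\Stab(\beta^\Over_\Id(\eta)) \subseteq \Fm(\eta')$, and minimality of $\Fm(\eta)$ as a free factor rel~$\A$ (Definition~\ref{DefinitionFineFFS}) gives $\Fm(\eta) \subseteq \Fm(\eta')$. Then $\beta^\Fill_\Id(\eta) = \Fm(\eta) \cdot \beta^\Over_\Id(\eta) \subseteq \Fm(\eta') \cdot \beta^\Fill_\Id(\eta') = \beta^\Fill_\Id(\eta')$; the protoforest relation $\beta^\Fill(\eta) \preceq \beta^\Fill(\eta')$ follows by $\Gamma$-equivariance, and the Kurosh rank inequality is \cite[Lemma~2.9]{\RelFSHypTwoTag}.

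For item~\pref{ItemFourEquations}, among the four relations in~\pref{ItemFourRelations}, the implications ``$\beta^\Fill_\Id$-equality $\Leftrightarrow$ $\beta^\Fill$-equality $\Rightarrow$ $\Fm$-equality $\Leftrightarrow$ $\KR$-equality'' follow straight from the definitions and \cite[Lemma~2.9]{\RelFSHypTwoTag}. The hard and critical direction is ``$\Fm$-equality $\Rightarrow$ $\beta^\Fill_\Id$-equality'', which is the main obstacle and requires the covering hypothesis $\beta(\eta)=\beta(\eta')=S$. The plan here is to reuse the protocomponent-overlap trick from~\pref{ItemFourRelations}: assuming $\beta^\Fill_\Id(\eta) \subsetneq \beta^\Fill_\Id(\eta')$ (with one inclusion already known), choose an edge $e$ in the excess; since the translates $g \cdot \beta^\Fill_\Id(\eta)$ cover $\beta(\eta) = S$, some translate $g \cdot \beta^\Fill_\Id(\eta)$ contains $e$, hence $\beta^\Fill_g(\eta') = g \cdot \beta^\Fill_\Id(\eta') \supseteq g \cdot \beta^\Fill_\Id(\eta)$ also contains $e$, overlapping $\beta^\Fill_\Id(\eta')$; the two protocomponents must coincide, giving $g \in \Fm(\eta') = \Fm(\eta)$, whence $g \cdot \beta^\Fill_\Id(\eta) = \beta^\Fill_\Id(\eta)$, contradicting $e \notin \beta^\Fill_\Id(\eta)$.

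For the final consequence, combining~\pref{ItemEtaEtapSubset} with the inclusion $\Fm(\eta) \subseteq \Fm(\eta')$ from~\pref{ItemFourRelations}: if $\eta$ fills, then $\Fm(\eta') \supseteq \Fm(\eta) = \Gamma$, so $\Fm(\eta') = \Gamma$; moreover, any natural edge $\gamma \cdot E$ lying strictly between the endpoints of $\eta$ also lies strictly between the endpoints of $\eta' \supseteq \eta$, so interior crossings of natural edges transfer from $\eta$ to $\eta'$. Both clauses of the Filling Criterion (Proposition~\ref{PropFillingPath}) therefore hold for $\eta'$.
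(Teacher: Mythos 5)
Your proof is correct and follows essentially the same route as the paper's. Item (1), Steps 2--4 of item (2a), and the covering argument for item (2b) all match the paper's argument in structure; your version of the overlap step in (2a) (comparing $\beta^\Fill_h(\eta')$ directly against $\beta^\Fill_\Id(\eta')$ rather than passing through $gh^{-1}$) is a mild cosmetic streamlining of the paper's Step~1, and your direct-contradiction phrasing of the hard direction of (2b) is the contrapositive of the paper's argument but uses the hypothesis $\beta(\eta)=S$ in the identical way.
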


\subparagraph{Remark.} Regarding item~\pref{ItemFourEquations} of this lemma, without the hypothesis $\beta(\eta)=\beta(\eta')=S$ there are counterexamples. For a simple one, let $S$ be the universal cover of the rank~$2$ rose with edges labelled $a,b$, let $\eta' \subset S$ be an $ab$ path, let $\eta \subset \eta'$ be the $a$ subpath; then $\beta(\eta) \ne S = \beta(\eta')$ and so $\beta^\Fill(\eta) \ne \beta^\Fill(\eta')$ and yet $\Fm(\eta)=\Fm(\eta')$ is the trivial free factor.

\begin{proof} Item~\pref{ItemEtaEtapSubset} follows immediately from $\eta' \subset \beta^\Fill_\Id(\eta')$. Assuming that $\eta \subseteq \beta^\Fill_\Id(\eta')$, we prove item~\pref{ItemFourRelations} in a few steps. 

\medskip
\noindent
\textbf{Step 1:} For all $g,h \in \Gamma$, if $g \cdot \eta \subset \beta^\Fill_\Id(\eta')$, and if $h \cdot \eta$ overlaps $g \cdot \eta$, then $h \cdot \eta \subset \beta^\Fill_\Id(\eta')$. 

\medskip
For the proof, choose an edge $e' \subset h \cdot \eta \,\intersect\, g \cdot \eta$, and so $e' \subset \beta^\Fill_\Id(\eta')$. It also follows that $gh^\inv \cdot e' \subset gh^\inv h \cdot \eta = g \cdot \eta \subset \beta^\Fill_\Id(\eta')$. Since $\beta^\Fill(\eta')$ is a protoforest, and since its protocomponent $\beta^\Fill_\Id(\eta')$ contains both $e'$ and $gh^\inv \cdot e'$, it follows that $gh^\inv \in \Stab(\beta^\Fill_\Id(\eta'))$. Also, therefore, $h g^\inv \in \Stab(\beta^\Fill_\Id(\eta'))$. It follows that $h \cdot \eta = h g^\inv \cdot (g \cdot \eta) \subset \beta^\Fill_\Id(\eta')$.

\medskip
\noindent
\textbf{Step 2:} $\beta^\Over_\Id(\eta) \subset \beta^\Fill_\Id(\eta')$. 

\medskip
Choose an edge $e \subset \beta^\Over_\Id(\eta)$. The conclusion that $e \subset \beta^\Fill_\Id(\eta')$ follows by induction on the length of an $\eta$-connection between an edge of $\eta$ and the edge $e$: use the hypothesis $\eta \subset \beta^\Fill_\Id(\eta')$ as the basis step where that length equals~$0$; and use Step 1 as the induction step.

\medskip
\noindent
\textbf{Step 3:} $\Fm(\eta) \subgroup \Fm(\eta')$ and hence $\KR(\eta) \le \KR(\eta')$.

\medskip
Combining Step 2 with the fact that $\beta^\Fill_\Id(\eta')$ is a protoforest, it follows that $\Stab(\beta^\Over_\Id(\eta))$ stabilizes $\beta^\Fill_\Id(\eta')$, and so 
$$\Stab(\beta^\Over_\Id(\eta)) \subgroup \Stab(\beta^\Fill_\Id(\eta')) = \Fm(\eta')
$$
But $\Fm(\eta)$ is the smallest free factor containing  $\Stab(\beta^\Over_\Id(\eta))$, by application of Definition~\ref{DefinitionFineFFS}, and so \hbox{$\Fm(\eta) \subgroup \Fm(\eta')$.}

\medskip
\noindent
\textbf{Step 4:} $\beta^\Fill_\Id(\eta) \subseteq \beta^\Fill_\Id(\eta')$ and hence $\beta^\Fill(\eta) \preceq \beta^\Fill(\eta')$.
$$\beta^\Fill_\Id(\eta) = \Fm(\eta) \cdot \beta^\Over_\Id(\eta) \underbrace{\subseteq}_{\text{\tiny Step 2}} \Fm(\eta) \cdot \beta^\Fill_\Id(\eta') \underbrace{\subseteq}_{\text{\tiny Step 3}} \Fm(\eta') \cdot \beta^\Fill_\Id(\eta') = \beta^\Fill_\Id(\eta')
$$
This completes the proof of~\pref{ItemFourRelations}.

\medskip

We prove \pref{ItemFourEquations}. First, from the definition of protoforests it follows that $\beta^\Fill_\Id(\eta)  = \beta^\Fill_\Id(\eta')$ if and only if $\beta^\Fill(\eta)=\beta^\Fill(\eta')$. Next, a nested pair of free factors of $\Gamma$ \relA\ are equal to each other if and only if their Kurosh ranks are equal \cite[Lemma 2.9]{\RelFSHypTwoTag}. Next, since $\Fm(\eta)=\Stab(\beta^\Fill_\Id(\eta))$ and similarly for $\eta'$, the equation $\beta^\Fill_\Id(\eta) = \beta^\Fill_\Id(\eta')$ implies $\Fm(\eta)=\Fm(\eta')$.

Finally, assuming that $\beta^\Fill_\Id(\eta) \ne \beta^\Fill_\Id(\eta')$, we prove that $\Fm(\eta) \ne \Fm(\eta')$. This is where we use the hypothesis of \pref{ItemFourEquations} saying that \hbox{$\beta(\eta)=S$:} the protoforest $\beta^\Fill(\eta)$ being a refinement of $\beta(\eta)$, it follows that $\beta^\Fill(\eta)$ covers~$S$. Knowing also that $\beta^\Fill(\eta) \preceq \beta^\Fill(\eta')$ and that $\beta^\Fill_\Id(\eta) \ne \beta^\Fill_\Id(\eta')$, it follows that $\beta^\Fill(\eta)$ has a protocomponent $\beta^\Fill_g(\eta)$ for some $g \in \Gamma$, such that  $\beta^\Fill_g(\eta)$ does not overlap $\beta^\Fill_\Id(\eta)$, and such that $\beta^\Fill_g(\eta) \subseteq \beta^\Fill_\Id(\eta')$. It follows that $g \not\in \Fm(\eta) = \Stab(\beta^\Fill_\Id(\eta))$. It also follows that $\beta^\Fill_\Id(\eta')$ and $g \cdot \beta^\Fill_\Id(\eta')$ overlap along edges of $\beta^\Fill_g(\eta)$, and so from the definition of a protoforest we obtain $g(\beta^\Fill_\Id(\eta'))=\beta^\Fill_\Id(\eta')$ which implies that $g \in \Fm(\eta') = \Stab(\beta^\Fill_\Id(\eta'))$.
\end{proof}

\section{Large orbits in $\FS(\Gamma;\A)$: The implication \pref{ItemActLargeOrbit}$\implies$\pref{ItemFillingLamExists} of~Theorem~B}
\label{SectionNoFillingLam}

Given $\phi \in \Out(\Gamma;\A)$, with a value of $\Omega = \Omega(\Gamma;\A)$ as yet unspecified we are assuming: 
\begin{description}
\item[Large Orbit Hypothesis (Theorem~B~\pref{ItemActLargeOrbit}):] Every $\phi$-orbit in $\FS(\Gamma;\A)$ has diameter $\ge \Omega$.
\end{description}
and we must prove
\begin{description}
\item[Conclusion (Theorem~B~\pref{ItemFillingLamExists}):] $\phi$ has an attracting lamination that fills $\Gamma$ \relA.
\end{description}
In Section~\ref{SectionOmegaAndProof} we outline how to find $\Omega$ and how to use it prove the conclusion. That outline will motivate reviews and preliminary materials contained in Sections~\ref{SectionDT}--\ref{SectionTTsAndLams}; in particular for a very quick review of attracting laminations see Section~\ref{SectionTTsAndLams}. The steps of the outline are then carried out in Sections~\ref{SectionFirstConstraint}--\ref{SectionEnablingProjection}. 

\subsection{An outline: Finding $\Omega$ and proving the implication} 
\label{SectionOmegaAndProof}

After some significant preliminary work in Sections~\ref{SectionDT}--\ref{SectionTTsAndLams}, starting in Section \ref{SectionFirstConstraint} we will find~$\Omega$ by formulating two constraints on its value in the form of two lower bounds $\Omega_1$, $\Omega_2$, each chosen to enable application of different tools and methods (see also the parallel discussion in the Introduction). To get the proof off the ground, one starts by choosing a maximal, nonfull, $\phi$-invariant free factor system~$\F$ of $\Gamma$ \relA.

In Section \ref{SectionFirstConstraint}, the first constraint $\Omega \ge \Omega_1 = 5$ is chosen to enable application of \cite[Proposition 4.24]{\RelFSHypTwoTag}, the conclusion of which describes a dichotomy between two different types of train track representative. The first case of this dichotomy can be avoided by applying the constraint $\Omega \ge \Omega_1 = 5$. The other case of this dichotomy therefore applies, providing us with a unique attracting lamination $\Lambda$ of $\phi$ relative to~$\F$. Furthermore, there is an EG-aperiodic train track axis for $\phi$ in the outer space of $\Gamma$ rel~$\F$ which we may include into the free splitting complex $\FS(\Gamma;\A)$, each position $T_i$ along that axis is a free splitting rel~$\F$ supporting an EG-aperiodic train track representative $T_i \to T_i$ of $\phi$, and the generic leaves of $\Lambda$ are realized by lines in $T_i$ that are exhausted by iteration tiles of that train track map. The hard work that remains is to prove that $\Lambda$ fills $\Gamma$ \relA.

In Section \ref{SectionSecondConstraint}, the second constraint $\Omega \ge \Omega_2 = \Omega_2(\Gamma;\A)$ is chosen to enable application of the \STOAT\ to the train track axis. The hypothesis of that theorem can be verified \emph{uniformly} at every position along the train track axis for $\phi$, by applying the lower diameter bound $\Omega \ge \Omega_2$ to orbits along the axis. We can then apply the conclusion of the \STOAT\ to show that at each position $T_i$ along the fold axis, sufficiently high iteration tiles are filling tiles in the sense of Definition~\ref{DefGeneralTiles}. We then prove that the train track axis is a bi-infinite, quasigeodesic line in $\FS(\Gamma;\A)$, by applying those filling tiles in conjunction with concepts of free splitting units from \cite[Section 4.5]{\RelFSHypTag}.  

In Section~\ref{SectionEnablingProjection}, the proof is completed by applying concepts of projection diagrams that were developed in \cite[Section 5.2]{\RelFSHypTag} and were used there to define a Masur--Minsky projection function from $\FS(\Gamma;\A)$ to any finite fold path in $\FS(\Gamma;\A)$. To show that $\Lambda$ fills, by applying Corollary~\ref{CorollaryFillingLine} it suffices to find a ``witness path'' in any given Grushko free splitting $R$ of $\Gamma$ relative to $\A$, meaning a path in $R$ that crosses a representative of every edge orbit of $R$ and is contained in a bi-infinite line of $R$ that realizes a generic leaf of~$\Lambda$. We do this with the aid of a projection diagram from $R$ to a very carefully chosen finite subpath of the fold axis. This axis subpath is chosen with the aid of the \emph{Quasi-Closest Point Property}, a consequence of the Masur--Minsky axioms that is derived in \cite[Section 5.1]{\RelFSHypTag}, and which gives some control on the distance within the axis subpath between the projection of $R$ to that subpath and the point on that subpath which minimizes distance to~$R$. The choice of axis subpath enables us to identify a free splitting $T_i$ at a certain location along that subpath, and to take a line $\ell$ in $T_i$ which represents a generic leaf of $\Lambda$, and then to find a ``multi-filling'' tile in that line, meaning a tile that contains multiple nonoverlapping subpaths each of which fills $T_i$. That ``multi-filling'' property then lets us set up a bounded cancellation argument for pushing that tile to the desired witness path in~$R$.  

\medskip

In order to carry out the above outline one needs some preliminary work, carried out in Sections~\ref{SectionDT} --- \ref{SectionTTsAndLams}, regarding how to compare attracting laminations of $\phi$ relative to~$\F$ with attracting laminations of $\phi$ relative to~$\A$, whenever one is given a nested pair of $\phi$-invariant free factor systems~$\A \sqsubset \F$ of~$\Gamma$. The summary result that we will need is found in Section~\ref{SectionFirstConstraint} under the heading ``Conclusion~(2)''. Here is a simplified version: 
\begin{itemize}
\item $\phi$ has a unique attracting lamination relative to $\A$ that is not supported by~$\F$ if and only if $\phi$ has a unique attracting lamination relative to $\F$, and furthermore there is a natural equivariant bijection between the generic leaves of these attracting laminations. 
\end{itemize}
The ``natural equivariant bijection'' in this statement comes by combining some elements of relative train track theory (see Section~\ref{SectionTTsAndLams}) with a relation between the line spaces $\wtBinf(\Gamma;\A)$ and $\wtBinf(\Gamma;\F)$ (see Section~\ref{SectionRealizingGeneralLines}). That relation between line spaces is itself described using a relation between the canonical boundaries $\bdyinf(\Gamma;\A)$ and $\bdyinf(\Gamma;\F)$ which was first worked out carefully by Dowdall and Taylor \cite{DowdallTaylor:cosurface} (see Section~\ref{SectionDT}): there is a natural $\Gamma$-equivariant homeomorphism between $\bdyinf(\Gamma;\A)$ and a dense subset $\bdy_\F(\Gamma;\A) \subset \bdyinf(\Gamma;\A)$ called the \emph{$\F$ sub-boundary} of $\Gamma$ \relA.

\subsection{Boundaries under collapse: the Dowdall Taylor correspondence}
\label{SectionDT}

For recalling notations and concepts regarding ends of free splittings and boundaries of groups relative to free factor systems: see \cite[Section 4.1.1]{\RelFSHypTwoTag}, much of which follows \cite{GuirardelHorbez:Laminations}. In particular we recall: the end space $\bdyinf T$ of a free splitting $T$ of $\Gamma$, consisting of asymptotic classes of rays in~$T$; the action $\Gamma \act  \bdyinf T$ that is naturally induced by the free splitting action $\Gamma \act T$; and the unique action $\Aut(\Gamma;\A) \act \bdyinf T$ that extends the free splitting action~$\Gamma \act T$ with respect to the natural monomorphism $\Gamma \approx \Inn(\Gamma) \subgroup \Aut(\Gamma;\A)$. Recall also the canonical boundary $\bdyinf(\Gamma;\A)$ on which $\Gamma$ acts with unique extended action by $\Aut(\Gamma;\A)$, together with equivariant identifications $I_T \from \bdyinf(\Gamma;\A) \to \bdyinf T$ which commute with the equivariant map $\bdyinf S \mapsto \bdyinf T$ induced by any equivariant simplicial map $S \mapsto T$ (between two Grushko free splittings \relA). Also, for each nonatomic free factor $F \subgroup \Gamma$ \relA\ with restricted free factor system $\A \mid F$ there is an $F$-equivariant embedding $\bdyinf(F;\A \mid F) \hookrightarrow \bdyinf(\Gamma;\A)$ which, for any free splitting $T \in \FS(\Gamma;\A)$, is induced by the inclusion $T_F \hookrightarrow T$ of the minimal subtree for the restricted action $F \act T$. The action of $\Phi \in \Aut(\Gamma;\A)$ on $\bdyinf(\Gamma;\A)$ is denoted $\bdyinf\Phi \from \bdyinf(\Gamma;\A) \to \bdyinf(\Gamma;\A)$ and may regarded as the unique $\Phi$-twisted equivariant homeomorphism of $\bdyinf(\Gamma;\A)$ with respect to the action $\Gamma \act \bdyinf(\Gamma;\A)$ (see \cite[Lemma 4.1]{\RelFSHypTwoTag}). 

Consider a nested pair of free factor systems~$\A \sqsubset \F$ of $\Gamma$. Consider also two Grushko free splittings $S$ and $T$ of $\Gamma$ relative to $\A$ and $\F$ respectively, with end spaces $\bdyinf S$ and~$\bdyinf T$. Consider, finally, a choice of equivariant simplicial map $f \from S \to T$.\footnote{Some such $f$ always exists: for $v \in S$ such that $\Stab(v)$ is nontrivial we have $[\Stab(v)] \in \A$, and so $\Stab(v)$ is a subgroup of a unique $F$ such that $[F] \in \F$; let $f(v) \in T$ be the unique point stabilized by~$F$. One then chooses the value of $f$ arbitrarily on a single representative of each vertex orbit, extending over that orbit by equivariance, and then extending equivariantly over all edges.} As recalled above, if $\A=\F$ then $f$ naturally induces an equivariant homeomorphism $\bdyinf S \to \bdyinf T$. But when $\A \ne \F$, one would not expect $f$ to induce a natural equivariant \emph{homeomorphism}. Nonetheless Dowdall and Taylor discovered that $f$ induces a natural equivariant \emph{subboundary correspondence} in the form of an equivariant homeomorphism with codomain $\bdyinf T$, the domain of which is a certain $\Gamma$-invariant subspace of $\bdyinf S$ called the \emph{$T$-subboundary of $S$}, denoted  $\bdy_T S$ (see \cite[Section 3]{DowdallTaylor:cosurface}). In Lemma~\ref{LemmaDTBdy} we review this subboundary correspondence, emphasizing its connection with bounded cancellation concepts. 
In Lemma~\ref{LemmaRelBdyFunctor} we then translate the subboundary correspondence into the language of the canonical boundaries $\bdyinf(\Gamma;\A)$ and $\bdyinf(\Gamma;\F)$, producing an ``$\F$-subboundary'' $\bdy_\F(\Gamma;\A) \subset \bdyinf(\Gamma;\A)$ and a homeomorphism $\bdy_\F(\Gamma;\A) \mapsto \bdyinf(\Gamma;\F)$, all of which are well-defined independent of the choice of $S$, $T$ and~$f$; this homeomorphism is equivariant not just with respect to the actions of $\Gamma$ but also with respect to the extended actions of the subgroup
$$\Aut(\Gamma;\A,\F) = \Aut(\Gamma;\A) \intersect \Aut(\Gamma;\F) \subgroup \Aut(\Gamma)
$$
which contains the inner automorphism group $\Inn(\Gamma) \approx \Gamma$.

\paragraph{Examples of the subboundary correspondence.} Here are some simple examples that can be understood using collapse maps, without bounded cancellation. Consider the rank~3 free group $\Gamma = \<a,b,c\>$ and the free factor system $\F=\{\langle a \rangle\}$. The Cayley tree $S$ of the generating set $\{a,b,c\}$ is a free splitting of $\Gamma$ rel~$\A = \emptyset$, and by collapsing every $a$-edge of $S$ one obtains a Grushko free splitting $T$ of $\Gamma$ relative to~$\F$ with corresponding collapse map $S \xrightarrow{[\alpha]} T$ where $\alpha \subset S$ is the subforest of edges labelled~$a$; the components of $\alpha$ are precisely the axes of subgroups conjugate to $\<a\>$. There is no equivariant homeomorphism $\bdy_\infty S \to \bdy_\infty T$. Nonetheless, after removing from $\bdy_\infty S$ the ideal endpoints of components of $\alpha$ we obtain the subboundary $\bdy_T S$ and a natural induced bijection $\bdy_T S \to \bdy S$, and this bijection is an equivariant homeomorphism with respect to the subspace topology on $\bdy_T S$ that is induced by the end topology on $\bdy S$. 

One can continue onto another example using the free factor systems $\A'=\F$ and $\F' = \{\langle a,b \rangle\}$ of $\Gamma$: from a collapse map $S' = T \xrightarrow{[\beta]} T'$ where $\beta$ is the subforest of (the images in $T$ of) edges labelled~$b$, one obtains a Grushko free splitting $T'$ of $\Gamma$ relative to $\F'$, a subboundary $\bdy_{T'} S' \subset \bdy_\infty S'$ by removing the ideal endpoints of each component of $\beta$, and an equivariant subboundary homeomorphism $\bdy_{T'} S' \approx \bdy_\infty T'$.

\paragraph{Defining the subboundary correspondence.}
Consider now a nested pair of free factor systems $\A \sqsubset \F$ of $\Gamma$, a Grushko free splitting $S$ of $\Gamma$ \relA, a Grushko free splitting $T$ of $\Gamma$ rel~$\F$, and an equivariant map $f \from S \to T$. Recall that each $\xi \in \bdyinf S$ is represented by an asymptotic family of rays $[v,\xi) \subset S$, one for each  $v \in S$. One may consider the image $f[v,\xi) \subset T$ and ask whether $f[v,\xi)$ has finite or infinite diameter in~$T$. This property of $\xi$ is well-defined independent of $v$, because for any two $v_1,v_2 \in S$,
the symmetric difference closure $\overline{[v_1,\xi) \, \Delta \, [v_2;\xi)}$ is identical to the line segment $[v_1,v_2]$, the $f$-image of which is the finite diameter subset $f[v_1,v_2] \subset T$, hence $f[v_1,\xi)$ and $f[v_2,\xi)$ have finite Hausdorff distance in~$T$. The \emph{$T$-subboundary of~$S$} is defined as the subspace $\bdy_T S \subset \bdy_\infty S$ consisting of those $\xi \in \bdyinf S$ for which $f[v,\xi)$ has infinite diameter in~$T$. The subset $\bdy_T S$ is well-defined independent of the choice of~$f$, because any two equivariant maps $f,f' \from S \mapsto T$ are \emph{boundedly equivalent}, meaning that $\max_{x \in S} d_T(f(x),f'(x)) < \infty$ (the maximum is achieved on any finite subtree of $S$ whose $\Gamma$-orbit covers~$S$), and therefore $f[v,\xi)$, $f'[v,\xi)$ have finite Hausdorff distance in~$T$ for all $v \in T$ and all~$\xi \in \bdy T$. Note that the subset $\bdy_T S$ is invariant under the action of $\Gamma$ on $\bdy S$, for the following reasons. For any $\xi \in S - \bdy_T S$ and any $v \in S$ the set $f[v,\xi)$ has finite diameter in $T$, and it follows for each $\gamma \in \Gamma$ that the set $f[\gamma \cdot v, \gamma \cdot \xi) = f \left(\gamma \cdot [v,\xi)\right) = \gamma \cdot f[v,\xi)$ has finite diameter in $T$, hence $\gamma \cdot \xi \in S - \bdy_T S$.

In the following lemma we apply the bounded cancellation lemma for finite paths; see \cite[Section 4.1.4]{\RelFSHypTwoTag} for a discussion the origins of this lemma in \cite{Cooper:automorphisms} and in \cite[Section 3]{BFH:laminations}, and see \cite[Lemma 4.14]{\RelFSHypTwoTag} for the specific version we will be applying together with elements of its proof.


\begin{lemma}
\label{LemmaDTBdy}
For any free factor systems $\A \sqsubset \F$ of $\Gamma$, any equivariant map $f \from S \to T$ from a Grushko free splitting $S$ of $\Gamma$ \relA\ to a Grushko free splitting $T$ of $\Gamma$ rel~$\F$ induces a map $\bdy f \from \bdy_T S \to \bdyinf T$ that satisfies the following properties:
\begin{enumerate}
\item\label{ItemDTBdyHomeo}
$\bdy f$ is a homeomorphism, and is characterized by the following property: for each $\xi \in \bdy_T S$ its image $\bdy f(\xi) \in \bdyinf T$ is the unique point satisfying the following (with constant $C$ depending only on~$f$ and on chosen equivariant geodesic metrics on $S$ and $T$):
\begin{description}
\item[Bounded Cancellation for Rays:] For each $p \in S$ we have 
$$\bigl[f(p),\bdy f(\xi)\bigr) \subset f[p,\xi) \subset N_C\bigl[f(p),\bdy f(\xi)\bigr)
$$
\end{description}
\item\label{ItemDTBdyGromov}
$f \union \bdy f \from S \union \bdy_T S \to T \union \bdyinf T$ is continuous with respect to end topologies. 
\item\label{ItemDTWellDef}
$\bdy f$ is well-defined independent of the choice of~$f$.
\item\label{ItemIfCollapse}
If $f$ is a collapse map then $\bigl[f(p),\bdy f(\xi)\bigr) = f[p,\xi)$ for all $p \in S$ and $\xi \in \bdy_T S$.
\item\label{ItemDTEquiv}
The subset $\bdy_T S \subset \bdyinf S$ is invariant under the action $\Aut(\Gamma;\A,\F) \act \bdyinf S$, and the map~$\bdy f$ is equivariant with respect to the actions of $\Aut(\Gamma;\A,\F)$ on $\bdy_T S$ and on~$\bdyinf T$.
\end{enumerate}
\end{lemma}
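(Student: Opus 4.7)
The plan is to build $\bdy f(\xi)$ for each $\xi \in \bdy_T S$ by a limiting bounded-cancellation argument applied to increasing finite subsegments of the ray $[p,\xi)$, then derive all five items as corollaries. Fix a basepoint $p \in S$ and a sequence $p_n \to \xi$ along $[p,\xi)$ with $d_S(p,p_n) \to \infty$. By the bounded cancellation lemma for finite paths \cite[Lemma 4.14]{\RelFSHypTwoTag}, there is a constant $C = C(f) \ge 0$ depending only on $f$ and the chosen metrics, such that the straightened geodesic $[f(p),f(p_n)]$ in $T$ satisfies $f[p,p_n] \subset N_C([f(p),f(p_n)])$. Because $\xi \in \bdy_T S$, the nested union $f[p,\xi) = \bigcup_n f[p,p_n]$ has infinite diameter in $T$, which forces the lengths of $[f(p),f(p_n)]$ to tend to infinity. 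A second application of bounded cancellation shows that $[f(p),f(p_n)]$ and $[f(p),f(p_{n+1})]$ share all but at most $2C$ of the length of the shorter one, so the segments share longer and longer initial subsegments and converge in end topology to a unique ray $[f(p),\eta) \subset T$; we set $\bdy f(\xi) := \eta$. Passing to the limit in the cancellation inclusion yields the double containment asserted in item (1), and independence of the base point $p$ follows by comparing the rays from $p$ and $p'$ using the finite connecting segment $[p,p']$.

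Items~(4),~(2),~(3),~(5) follow quickly from this construction. For item (4), if $f$ is a collapse map chosen simplicial after suitable subdivisions, then $f$ sends each edge of $S$ to a single edge of $T$ or to a vertex, and it cannot introduce backtracking along any embedded arc of $S$; hence $f[p,p_n]$ is already a reduced edge path equal to $[f(p),f(p_n)]$, and taking the union over $n$ gives the equality $[f(p),\bdy f(\xi)) = f[p,\xi)$. For item~(2), a basic neighborhood of $\xi$ in $\bdy_T S$ consists of those $\xi'$ whose ray $[p,\xi')$ agrees with $[p,\xi)$ on an initial segment of length $\ge N$; bounded cancellation then forces $[f(p),\bdy f(\xi'))$ to agree with $[f(p),\bdy f(\xi))$ on an initial segment whose length grows without bound as $N \to \infty$, giving continuity of $\bdy f$, and continuity on the interior comes from continuity of $f$. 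For item~(3), any two equivariant maps $f,f' \from S \to T$ are boundedly equivalent (their distance is bounded on any $\Gamma$-fundamental domain, hence everywhere), so $f[p,\xi)$ and $f'[p,\xi)$ have finite Hausdorff distance in $T$ and the uniqueness in item~(1) forces $\bdy f(\xi) = \bdy f'(\xi)$. For item~(5), any $\Phi \in \Aut(\Gamma;\A,\FF)$ acts by a $\Phi$-twisted equivariant homeomorphism on both $S$ and on $T$; the twisted composition $\Phi \circ f \circ \Phi^{-1} \from S \to T$ is again equivariant, invariance of $\bdy_T S$ is immediate from the infinite-diameter definition (since $\Phi$ acts isometrically on $T$), and the equivariance of $\bdy f$ then reduces to item~(3) applied to $f$ and $\Phi \circ f \circ \Phi^{-1}$.

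The hard part will be verifying that $\bdy f$ is a \emph{homeomorphism} in item~(1), which requires bijectivity together with continuity of the inverse (since $\bdy_T S$ is not in general compact). Injectivity is manageable: if $\xi \ne \xi'$ in $\bdy_T S$ then $[p,\xi)$ and $[p,\xi')$ diverge at some vertex $v \in S$, the infinite-diameter condition forces each of the rays $[f(p),\bdy f(\xi))$, $[f(p),\bdy f(\xi'))$ to leave every ball in $T$ of radius $> C + d_T(f(p),f(v))$, and the bounded cancellation inclusions prevent both limit rays from agreeing past the image neighborhood of $v$. Surjectivity is where the work lies: given $\eta \in \bdyinf T$ and a ray $[f(p),\eta)\subset T$, one first notes that $f$ is surjective, since $f(S) \subset T$ is a nonempty $\Gamma$-invariant subtree and $T$ is minimal as a $\Gamma$-tree. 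One then builds an unbounded connected subset of $S$ whose $f$-image boundedly fellow-travels $[f(p),\eta)$, by an inductive edge-lifting procedure: given a finite lift of the initial length-$n$ segment, the next edge of $[f(p),\eta)$ has at least one preimage edge in $S$ incident to the current lift, chosen so as to continue fellow-traveling; König's lemma applied to the tree of such lifts produces an infinite ray whose ideal endpoint $\xi \in \bdyinf S$ satisfies $\bdy f(\xi) = \eta$ and, by construction, $\xi \in \bdy_T S$. Continuity of the inverse is then the same bounded cancellation argument in reverse: long shared initial segments of $T$-rays force long shared initial segments of their $S$-lifts, modulo the constant $C$.
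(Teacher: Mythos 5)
Your proposal takes a genuinely different route from the paper. The paper verifies that $f$ satisfies the hypotheses of Dowdall--Taylor's Theorem~3.2 (surjective, Lipschitz, alignment preserving, where the last is checked via bounded cancellation) and then delegates the construction of the homeomorphism $\bdy f$ to that cited theorem, together with the proof of \cite[Lemma~4.14~(2)]{\RelFSHypTwoTag} for extracting the ray inclusions. You instead build $\bdy f(\xi)$ from scratch as the end of the increasing union of the geodesics $[f(p),f(p_n)]$, using bounded cancellation for finite paths to show these share increasingly long initial segments. That is a legitimate and more self-contained approach, and most of what you write is sound: the convergence and the double containment in item~(1) hold (with constant $2C$, still depending only on $f$ and the metrics); the item~(4) argument for collapse maps is correct --- since collapse maps have connected point preimages, the restriction of $q$ to any geodesic segment is a monotone map of a compact arc into a tree and therefore has image the geodesic $[q(p),q(p_n)]$, so no backtracking is introduced; items~(2), (3), (5) and the injectivity claim in item~(1) are all essentially right, though continuity requires the observation that $f(v_n)\to\bdy f(\xi)$ in $\overline T$ as $v_n\to\xi$ along $[p,\xi)$ in order to locate where the two limit rays first diverge.

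The genuine gap is surjectivity of $\bdy f$. Your K\"onig's-lemma argument rests on the step ``the next edge of $[f(p),\eta)$ has at least one preimage edge in $S$ incident to the current lift,'' but this is not true for a general equivariant map $f\from S\to T$. Surjectivity of $f$ guarantees that each edge of $T$ has \emph{some} preimage arc in $S$, but there is no reason that preimage meets $f^{-1}(q_n)$ near your current endpoint $p'_n$; the fiber $f^{-1}(q_n)$ need not be connected for a general simplicial equivariant map (connectedness of fibers is special to collapse maps), and the correct preimage edge may lie in a far-away component of that fiber. Since you cannot choose a quasi-inverse $g\from T\to S$ here (the inclusion $\A\sqsubset\FF$ goes the wrong way for an equivariant map $T\to S$ to exist), there is no easy way to control where the next lift must sit. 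This is precisely the part the paper sidesteps by citing Dowdall--Taylor. One possible repair: prove surjectivity first in the special case where $f$ is a collapse map (there the lift of $[f(p),\eta)$ exists uniquely edge by edge, because non-collapsed edges of $S$ biject with edges of $T$ and the fiber over each vertex of $T$ is connected), and then transfer to arbitrary $f$ by combining your item~(3) with the already-known identification of $\bdyinf S_0$ with $\bdyinf S$ when both are Grushko rel the same $\A$. As stated, though, the surjectivity step does not go through.
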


\begin{proof}[Proof of Lemma~\ref{LemmaDTBdy}]
We wish to apply \cite[Theorem 3.2]{DowdallTaylor:cosurface}, so we start by verifying the hypotheses of that theorem. One hypothesis is that $f$ is surjective, which follows by continuity of $f$ and minimality of the $\Gamma$ action on $T$. Another is that $f$ is Lipschitz, which follows from existence of a finite subtree of $S$ whose $\Gamma$-orbit covers~$S$. The remaining hypothesis says that $f$ is \emph{alignment preserving}, meaning that there exists a constant $D \ge 0$ such that for all $x,y,z \in S$ the following implication holds:
$$d_S(x,y) + d_S(y,z) = d_S(x,z) \implies d_T(f(x),f(y)) + d_T(f(y),f(x)) \le d_T(f(x),f(z)) + D
$$
We prove this by applying bounded cancellation to the path $[x,z] \subset S$, as follows. As in any tree equipped with a geodesic metric, from the equation $d_S(x,y) + d_S(y,z) = d_S(x,z)$ it follows that $y \in [x,z]$. Let $p$ be the unique point of $[f(x),f(z)]$ closest to $f(y)$. We may now the bounded cancellation lemma for finite paths in $S$, namely \cite[Lemma 4.14 (1)]{\RelFSHypTwoTag} (which applies even when $\A \ne \F$). From that lemma we obtain $f[x,z] \subset N_C[f(x),f(z)]$ with cancellation constant $C$ depending only on $f$, and hence $d_T(p,f(y)) \le C$. Noting that $p \in [f(x),f(y)]$ and $p \in [f(y),f(z)]$, we have
\begin{align*}
d_T(f(x),f(y)) + d_T(f(y),f(z)) &= \bigl( d_T(f(x),p) + d_T(p,f(y))\bigr) + \bigl(d_T(f(y),p) + d_T(p,f(z) \bigr) \\
   &= \bigl(d_T(f(x),p) + d_T(p,f(z))  \bigr) + 2 d_T(p,f(y)) \\
   & \le d_T(f(x),f(z)) + 2C
\end{align*}
With $D=2C$, this completes verification of the hypotheses of \cite[Theorem 3.2]{DowdallTaylor:cosurface}.

Applying \cite[Theorem 3.2]{DowdallTaylor:cosurface} and its proof, we obtain the homeomorphism $\bdy f \from \bdy_T S \to \bdy T$, and the extension $f \union \bdy f$ satisfies all the conclusions of~\pref{ItemDTBdyGromov}. 
To complete the proof of~\pref{ItemDTBdyHomeo}, consider any $p \in S$ and $\xi \in \bdy_T S$, and let $\eta = \bdy f(\xi) \in \bdyinf T$. The following statement may be found in the proof of  \cite[Theorem 3.2]{DowdallTaylor:cosurface}:
\begin{description} 
\item[$(*)$] There exists a sequence $(w_i)$ in $f[p,\xi)$ that converges to $\eta$ in $\overline T = T \union \bdyinf T$.
\end{description}
We now apply the proof of \cite[Lemma 4.14~(2)]{\RelFSHypTwoTag}, where it is shown that statement $(*)$ implies bounded cancellation for rays and the uniqueness of~$\eta = \bdy f(\xi)$.\footnote{In \cite[Lemma 4.14~(2)]{\RelFSHypTwoTag} where bounded cancellation for rays is proved under the stronger hypothesis $\A=\F$, that hypothesis was used to derive statement~$(*)$; the remainder of the proof did not depend on that hypothesis, it used only statement~$(*)$.}

Conclusion~\pref{ItemDTWellDef} follows, as in earlier arguments, using that any two equivariant maps $S \mapsto T$ are boundedly equivalent. To prove conclusion~\pref{ItemIfCollapse}, assume that $f[v,\xi) \not\subset \bigl[f(v),\bdy f(\xi)\bigr)$, choose $y \in [v,\xi)$ such that $f(y) \not\in \bigl[f(v),\bdy f(\xi)\bigr)$, and let $p \in \bigl[f(v),\bdy f(\xi)\bigr)$ be the point closest to $f(y)$. The segment $[f(v),y]$ and the ray $[f(y),\bdy f(\xi))$ both contain the point $p$; choose $x \in [v,y)$ and $z \in (y,\bdy\xi)$ such that $f(x)=f(z)=p$. Since $y \in [x,z]$ and $f(y) = p \not\in f[x,z]$, the set $f^\inv(p)$ is disconnected, contradicting the definition of a collapse map.

It remains to prove conclusion~\pref{ItemDTEquiv}. We have seen already that $\bdy_T S$ is invariant under the action $\Gamma \act \bdyinf S$, and we next show that the map $\bdy f \from \bdy_T S \to \bdyinf T$ is $\Gamma$-equivariant with respect to the action of $\Gamma$. Consider any $\xi \in \bdy_T S$ and $\gamma \in \Gamma$, let $\rho = [v,\xi) \subset S$ be a ray representing~$\xi$, and so $\gamma \cdot \rho = [\gamma \cdot v, \gamma \cdot \xi) \subset S$ is a ray representing $\gamma \cdot \xi \in \bdy_T S$. By bounded cancellation for rays it follows that $f(\gamma \cdot \rho) = f[\gamma \cdot v, \gamma \cdot \xi)$ has finite Hausdorff distance in $T$ from the ray $[f(\gamma \cdot v), \bdy f(\gamma \cdot \xi))$ representing $f(\gamma \cdot \xi) \in \bdyinf T$. Also by bounded cancellation for rays, it follows that $f[v,\xi)$ has finite Hausdorff distance in $T$ from the ray $[f(v),\bdy f(\xi))$ representing $\bdy f(\xi) \in \bdyinf T$. The point $\gamma \cdot \bdy f(\xi) \in \bdyinf T$, which is represented by the ray
\begin{align*}
[f(\gamma \cdot v), \gamma \cdot \bdy f(\xi)) &=  [\gamma \cdot f(v), \gamma \cdot \bdy f(\xi)) = \gamma \cdot [f(v),\bdy f(\xi))
\end{align*}
therefore has finite Hausdorff distance in $T$ from the ray $\gamma \cdot f[v,\xi) = f(\gamma \cdot [v,\xi)) = f(\gamma \cdot \rho)$ which, as we have seen, has finite Hausdorff distance in $T$ from a ray representing $f(\gamma \cdot \xi)$. It follows that $\gamma \cdot \bdy f(\xi) = f(\gamma \cdot \xi)$.

We next prove that $\bdy_T S$ is invariant under the action $\Aut(\Gamma;\A,\F) \act \bdyinf S$. The given action \hbox{$\Gamma \act \bdyinf S$} extends to an action \hbox{$\Aut(\Gamma;\A) \act \bdyinf S$} such that each element $\Phi \in \Aut(\Gamma;\A)$ acts by the unique $\Phi$-twisted equivariant homeomorphism of $\bdyinf S$, by applying \cite[Lemma 4.1]{\RelFSHypTwoTag}. Also applying the same lemma, for each $\Phi \in \Aut(\Gamma;\A,\F)$ there exist $\Phi^\inv$-twisted equivariant maps $h \from S \to S$ and $k \from T \to T$ inducing maps $\bdyinf h \from \bdyinf S \to \bdyinf S$ and $\bdyinf k \from \bdyinf T \to \bdyinf T$, each of which, on its respective domains, is the unique $\Phi^\inv$-twisted equivariant homeomorphisms. In particular we have the identity $\bdyinf h(\eta)=\Phi^\inv \cdot \eta$ for each $\eta \in \bdyinf S$.  The two maps $f \circ h$, $k \circ f \from S \to T$ are both $\Phi^\inv$-twisted equivariant, each being a composition (in one order or another) of a $\Phi^\inv$-twisted equivariant map and an \hbox{($\Id$-twisted)} equivariant map. It follows that $f \circ h$ and $k \circ f$ are boundedly equivalent: the distance between the values of two $\Phi^\inv$-twisted equivariant simplifical maps of $S$ is bounded by the maximum distance over a finite subtree of $S$ whose translates cover~$S$. Consider now $\xi \in \bdy_T S$ and $\Phi \cdot \xi \in \bdyinf S$, hence $\bdyinf h(\Phi \cdot \xi) = \Phi^\inv \cdot \Phi \cdot \xi = \xi$. Choose any $p \in S$ and consider the ray $[p,\Phi \cdot \xi) \subset S$. Its image $h[p,\Phi \cdot \xi)$ is Hausdorff close to $[h(p),\xi)$. Since $\xi \in \bdy_T S$ it follows that $f[h(p),\xi) \subset T$ has infinite diameter in~$T$; therefore $f(h[p,\Phi \cdot \xi)) = (f \circ h)[p,\Phi \cdot \xi)$ also has infinite diameter in $T$. By bounded equivalence of $f \circ h$ and $k \circ f$, it follows that $(k \circ f)[p,\Phi\cdot\xi) = k(f[p,\Phi\cdot\xi))$ has infinite diameter in $T$. Since $k$ is Lipschitz, it follows that $f[p,\Phi\cdot\xi)$ has infinite diameter in $T$, proving that $\Phi \cdot \xi \in \bdy_T S$. Since this holds for any $\Phi \in \Aut(\Gamma;\A,\F)$ and any $\xi \in \bdy_T S$, the proof of $\Aut(\Gamma;\A,\F)$-invariance of $\bdy_T S$ is complete.

Restricting the action $\Aut(\Gamma;\A,\F) \act \bdy S$ to $\bdy_T S$, each $\Phi \in \Aut(\Gamma;\A,\F)$ acts by a \hbox{$\Phi$-twisted} equivariant homeomorphism of $\bdy_T S$. Conjugating via the equivariant homeomorphism $\bdy f$ we obtain a $\Phi$-twisted equivariant homeomorphism of $\bdyinf T$. But \cite[Lemma 4.1]{\RelFSHypTwoTag} tells us that there is a unique $\Phi$-twisted equivariant homeomorphism of $\bdyinf T$, namely, the action of $\Phi \in \Aut(\Gamma;\A,\F) \subgroup \Aut(\Gamma;\F)$ as an element of the unique action $\Aut(\Gamma;\F) \act \bdyinf T$ that extends the given action $\Gamma \act \bdyinf T$. Putting this altogether, we have shown that $\bdy f$ is $\Aut(\Gamma;\A,\F)$ equivariant.
\end{proof}

\noindent
\emph{Remark.} Regarding conclusion~\pref{ItemIfCollapse} of Lemma~\ref{LemmaDTBdy}, one can observe that an equivariant simplicial map $f \from S \to T$ of free splittings is a collapse map if and only if $C=0$ is a cancellation constant for~$f$.

\paragraph{A canonical version of the subboundary correspondence.} Consider again a nested pair $\A \sqsubset \F$ of free factor systems of~$\Gamma$. Define the \emph{\hbox{$\F$-subboundary} of~$\Gamma$} \relA\ to be
$$\bdy_\F(\Gamma;\A) \quad = \quad \bdyinf(\Gamma;\A) \quad - \quad \bigcup_{F} \, \bdyinf(F;\A \mid F)
$$ 
where the union is taken over all nonatomic free factors $F$ of $\Gamma$ \relA\ such that $[F] \in \F$ (c.f.\ the earlier \emph{Examples of the subboundary correspondence}).

In Lemma~\ref{LemmaRelBdyFunctor} below we recast the subboundary correspondence in the language of the canonical boundaries $\bdyinf(\Gamma;\A)$: in brief, the lemma says that the inclusion map $\bdy_\F(\Gamma;\A) \hookrightarrow \bdyinf(\Gamma;\A)$ corresponds to the subboundary homeomorphism $\bdy_T S \mapsto \bdy T$ for any $\Gamma$-equivariant map $f \from T \to S$ from a Grushko free splitting \relA\ to a Grushko free splitting rel~$\F$. 

Our statement of Lemma~\ref{LemmaRelBdyFunctor} is expressed in terms of the \emph{bounded distance category of free splittings of $\Gamma$,} which has for its objects the class of free splittings of $\Gamma$, and for its morphisms the \emph{bounded equivalence classes} of the set of equivariant simplicial maps $S \mapsto T$ from one free splitting $S$ to another~$T$. But of course any two such maps \emph{are} boundedly equivalent, hence there is either no morphism at all from $S$ to~$T$ or there is a unique morphism; furthermore, a morphism exists if and only if $\FFSS \sqsubset \FST$ (see the opening passages of Section~\ref{SectionDT}). Together with the fact that every nonfull free factor system $\A$ of $\Gamma$ can indeed be realized as $\A = \FFSS$ for some free splitting $S$ of $\Gamma$ \cite[Lemma 3.1]{\RelFSHypTag}, we summarize as follows:

\begin{lemma}\label{LemmaBddEquiv}
The bounded distance category of free splittings of $\Gamma$ is equivalent, as a category, to the poset of nonfull free factor systems of~$\Gamma$ with respect to the partial ordering $\A \sqsubset \B$. \qed
\end{lemma}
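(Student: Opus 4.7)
The plan is to construct an explicit functor $\Phi$ from the bounded distance category of free splittings of $\Gamma$ to the poset category of nonfull free factor systems of $\Gamma$ ordered by $\sqsubset$, and verify that $\Phi$ is essentially surjective and fully faithful. On objects, define $\Phi(S) = \FFSS$. On morphisms, given a bounded equivalence class of equivariant simplicial maps $S \to T$ (of which there is at most one since any two such maps are boundedly equivalent), send it to the unique morphism $\FFSS \to \FFST$ in the poset category; this morphism exists precisely because the existence of an equivariant simplicial map $S \to T$ forces $\FFSS \sqsubset \FFST$, as noted in the opening paragraphs of Section~\ref{SectionDT}. Functoriality is automatic because both source and target categories are thin (each Hom-set has at most one element), so the composition identities hold trivially once well-definedness is established.

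For essential surjectivity, I would apply the third bullet recalled just before the lemma: every nonfull free factor system $\A$ of $\Gamma$ can be written as $\A = \FFSS$ for some free splitting $S$, by \cite[Lemma 3.1]{\RelFSHypTag} applied to the nested pair $\A \sqsubset \A$. Hence $\Phi$ is actually surjective on objects, which is stronger than essentially surjective.

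For fully faithful, fix two free splittings $S$ and $T$. The Hom-set $\mathrm{Hom}(S,T)$ in the bounded distance category is either empty or a singleton: at most one element by the bounded equivalence of any two equivariant simplicial maps $S \to T$ (a standard consequence of compactness of a finite subtree whose $\Gamma$-translates cover $S$), and nonempty precisely when $\FFSS \sqsubset \FFST$ (this direction uses the standard construction sending each nontrivial-stabilizer vertex of $S$ into the unique $T$-vertex fixed by the corresponding free factor of $\FF = \FFST$ containing it, then extending equivariantly over orbit representatives). On the other side, the poset Hom-set $\mathrm{Hom}(\FFSS, \FFST)$ is a singleton exactly when $\FFSS \sqsubset \FFST$ and empty otherwise. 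Thus $\Phi$ induces a bijection on every Hom-set.

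There is no real obstacle here; the only mild subtlety is remembering that an equivalence of categories requires essential surjectivity rather than surjectivity on objects (which we get for free), and fully faithfulness on Hom-sets rather than any stronger notion. Everything reduces to the three summarized facts just above the lemma, and because both categories are thin the verification of functoriality and of the bijection of morphisms is essentially bookkeeping.
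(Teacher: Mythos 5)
Your proof is correct and follows the same route the paper takes: the paper's preceding paragraph records exactly your three facts (at most one morphism between any two objects, a morphism $S \to T$ exists iff $\FFSS \sqsubset \FFST$, and every nonfull free factor system arises as some $\FFSS$), and leaves the formal functor-and-equivalence verification implicit behind the \qed. Your write-up simply makes that bookkeeping explicit, including the observation that both categories are thin so functoriality and full faithfulness reduce to the existence criterion.
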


\smallskip

Here is our ``canonical'' version of the subboundary correspondence:

\begin{lemma}
\label{LemmaRelBdyFunctor}
There is a contravariant functor from the poset of free factor systems of $\Gamma$ to the category of topological spaces equipped with a $\Gamma$-action, which assigns to each nonfilling free factor system $\A$ of $\Gamma$ the space $\bdyinf(\Gamma;\A)$, and to each nested pair of nonfilling free factor systems $\A \sqsubset \F$ a $\Gamma$-equivariant embedding $I^\F_\A \from \bdyinf(\Gamma;\F) \to \bdyinf(\Gamma;\A)$, such that the following hold:
\begin{enumerate}
\item\label{ItemRelBdyFunctorInclude}
$\text{image}(I^\F_\A) = \bdy_\F(\Gamma;\A)$, 
\item\label{ItemRelBdyFunctorEquiv}
$I^\F_\A$ is equivariant with respect to the actions on its domain and range of the group \\ $\Aut(\Gamma;\A,\F) = \Aut(\Gamma;\A) \intersect \Aut(\Gamma;\F)$
\item\label{ItemRelBdyFunctorCommutes}
For any equivariant map $f \from S \to T$ from a Grushko free splitting $S$~\relA\ to a Grushko free splitting $T$ rel~$\F$ we have the following $\Gamma$-equivariant commutative diagram of homeomorphims, injections, and the embedding $I^\F_\A$, and hence $\bdy_T S = I_S(\bdy_\F(\Gamma;\A))$:
\end{enumerate}
$$\xymatrix{
\bdyinf(\Gamma;\F) 
		\ar@[blue]@/^1.5pc/[rrrr]^*[blue]{I^\F_\A} 
		\ar@2{~>}[d]_{I_T} 
		\ar@[blue]@2{~>}[rr]
	&& \bdy_\F(\Gamma;\A) \,\, 
		\ar@{^{(}->}[rr] 
		\ar@[blue]@2{~>}[d]
		\ar@[blue]@2{~>}[dll]
	&& \bdyinf(\Gamma;\A) \ar@2{~>}[d]_{I_S} \\
\bdyinf T           
	&& \bdy_T S \,\, \ar@2{~>}[ll]_{\bdy f}  \ar@{^{(}->}[rr]^j     
	&& \bdyinf S 
}$$
\end{lemma}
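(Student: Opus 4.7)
The plan is to take the commutative diagram in item~\pref{ItemRelBdyFunctorCommutes} itself as the definition of $I^\FF_\A$. First I will pick a Grushko free splitting $S$ of $\Gamma$ rel~$\A$, a Grushko free splitting $T$ of $\Gamma$ rel~$\FF$ (both exist by \cite[Lemma 3.1]{\RelFSHypTag}), and an equivariant simplicial map $f \from S \to T$, and then set
$$I^\FF_\A \;:=\; I_S^{-1} \circ j \circ (\bdy f)^{-1} \circ I_T,$$
where $j \from \bdy_T S \hookrightarrow \bdyinf S$ is the inclusion. By Lemma~\ref{LemmaDTBdy}~\pref{ItemDTBdyHomeo} and the fact that $I_S, I_T$ are equivariant homeomorphisms, this is a topological embedding with image $I_S^{-1}(\bdy_T S)$.

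To verify well-definedness independent of $(S,T,f)$, I will invoke Lemma~\ref{LemmaBddEquiv}: given a second triple $(S',T',f')$, equivariant simplicial maps $g \from S \to S'$ and $h \from T \to T'$ exist, and the two compositions $h \circ f, f' \circ g \from S \to T'$ are boundedly equivalent. Combining conclusion~\pref{ItemDTWellDef} of Lemma~\ref{LemmaDTBdy} with the defining naturality of the canonical identifications $I_S,I_{S'},I_T,I_{T'}$ then reduces well-definedness to a routine diagram chase around the resulting cube.

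The main obstacle, as I see it, is the image identification in conclusion~\pref{ItemRelBdyFunctorInclude}, which amounts to proving $I_S^{-1}(\bdy_T S) = \bdy_\FF(\Gamma;\A)$. Here $\xi \in \bdyinf S$ fails to lie in $\bdy_T S$ precisely when $f[v,\xi)$ has finite diameter in~$T$. For one direction, if $\xi = I_S(\eta)$ with $\eta \in \bdyinf(F;\A \mid F)$ for some nonatomic free factor $F$ with $[F] \in \FF$, then $\xi$ is represented by a ray inside the minimal $F$-subtree $S_F \subset S$, and $F$-equivariance of $f$ together with finiteness of a fundamental domain for $F \act S_F$ forces $f(S_F)$ into a bounded neighborhood of the unique $T$-vertex $v_F$ fixed by $F$. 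For the reverse direction, I will show that any ray in $S$ with bounded $f$-image must eventually lie in a single component of $f^\inv(w)$ for some vertex $w$ of $T$ with nontrivial stabilizer $F=\Stab(w)$, and that this component is a translate of $S_F$ by equivariance and the Grushko property of $T$ rel~$\FF$; this is essentially the structural content of \cite[Section 3]{DowdallTaylor:cosurface} and is the technical heart of the argument.

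The equivariance in conclusion~\pref{ItemRelBdyFunctorEquiv} will then be immediate: $I_S$ and $I_T$ are equivariant with respect to the full automorphism actions, and $\bdy f$ is $\Aut(\Gamma;\A,\FF)$-equivariant by Lemma~\ref{LemmaDTBdy}~\pref{ItemDTEquiv}, so the composition defining $I^\FF_\A$ inherits the same equivariance. Finally, for contravariant functoriality I will take a triple $\A \sqsubset \FF \sqsubset \G$, choose Grushko splittings $R, S, T$ relative to $\A, \FF, \G$, and composable equivariant maps $R \xrightarrow{g} S \xrightarrow{f} T$; then conclusion~\pref{ItemDTWellDef} applied to the composite $f \circ g \from R \to T$, compared with $\bdy g$ precomposed with $\bdy f$ (which is visibly the induced map on the relevant subboundaries by the bounded cancellation characterization of Lemma~\ref{LemmaDTBdy}~\pref{ItemDTBdyHomeo}), yields $I^\G_\A = I^\FF_\A \circ I^\G_\FF$ after translating through the $I$ identifications.
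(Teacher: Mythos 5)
Your proposal follows the same route as the paper: you define $I^\FF_\A$ by the same formula $I_S^{-1} \circ j \circ (\bdy f)^{-1} \circ I_T$, prove well-definedness by bounded equivalence and a diagram chase, identify the image by a case analysis on ray images, and derive contravariance from composing the Dowdall--Taylor correspondences. The equivariance and functoriality arguments match what the paper does.

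The one place where your argument as written would fail is the reverse inclusion of the image identification. You claim that ``any ray in $S$ with bounded $f$-image must eventually lie in a single component of $f^{-1}(w)$ for some vertex $w$ of $T$ with nontrivial stabilizer.'' This is false for a general equivariant map $f$. Take $\Gamma=\langle a,b\rangle$, $\A=\emptyset$, $\FF=\{[\langle a\rangle]\}$, $S$ the Cayley tree, and $T$ the quotient collapsing $a$-edges with $w$ the vertex fixed by $\langle a\rangle$. Choose an equivariant $f$ with $f(e)=w$ but $f(a)\ne w$; then $f(a^n)=a^{n-1}f(a)$ for $n\ge 1$, which stays at the fixed positive distance $d(f(a),w)$ from $w$ (since $a$ fixes $w$ and acts isometrically). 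The $a$-ray from $e$ has bounded $f$-image but its image never returns to $w$, so it does not lie in $f^{-1}(w)$. The assertion only holds when $f$ is (boundedly close to) a collapse map, which is exactly the reduction the paper makes: by Step~1 one may prove the image equality for one particular choice of $(S,T,f)$, and the paper takes $S$ with $\FF$ visible, $\sigma\subset S$ the union of the minimal subtrees $S_F$, and $f\from S\xrightarrow{\langle\sigma\rangle}T$ the associated collapse map, for which the dichotomy ``finitely vs.\ infinitely many edges of $[v,\xi)$ outside $\sigma$'' is clean. You already have that reduction available from your own well-definedness argument; you just need to invoke it before stating the claim, and then phrase the claim in terms of the collapse forest rather than $f^{-1}(w)$.
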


\begin{proof} In the commutative diagram shown we have already defined all spaces and their $\Gamma$-actions, and we have defined all arrows \emph{except} for the blue ones, namely: the black arrows $\hookrightarrow$ denoting equivariant injections; and the black arrows $\xymatrix{\ar@2{~>}[r] &}$ denoting equivariant homeomorphisms. The blue embedding arrow $I^\F_\A$ is uniquely determined by the commutativity requirement: 
$$I^\F_\A = (I_S)^\inv \circ j \circ (\bdy f)^\inv \circ I_T
$$
As for equivariance properties: the $I_T$ and $I_S$ homeomorphism arrows are $\Aut(\Gamma;\F)$-equivariant (see the passages following \cite[Lemma 4.1]{\RelFSHypTwoTag}); the $\bdy f$ homeomorphism arrow and its inverse are $\Aut(\Gamma;\A,\F)$-equivariant (by Lemma~\ref{LemmaDTBdy}~\pref{ItemDTEquiv}); the subset $\bdy_T S$ of $\bdy_\infty S$ is invariant under the action of $\Aut(\Gamma;\A,\F)$ (by Lemma~\ref{LemmaDTBdy}~\pref{ItemDTEquiv}) and so the $j$ inclusion arrow is $\Aut(\Gamma;\A,\F)$-equivariant. It follows that $I^\F_\A$ is $\Aut(\Gamma;\A,\F)$ equivariant, and in particular is $\Gamma$-equivariant. 

The main steps that remain to be shown are:
\begin{description}
\item[Step 1:] $I^\F_\A$ is well-defined independent of the choice of $f \from S \to T$, 
\item[Step 2:] $\image(I^\F_\A) = \bdy_\F(\Gamma;\A)$, verifying conclusion~\pref{ItemRelBdyFunctorInclude}.
\item[Step 3:] Contravariance: for any triple of nonfilling free factor systems $\A \sqsubset \B \sqsubset \F$ of $\Gamma$ we have $I^\B_\A \circ I^\F_\B = I^\F_B \from \bdyinf(\Gamma;\F) \to \bdyinf(\Gamma;\A)$.
\end{description}
Once Steps 1 and 2 are complete, the remaining blue equivariant homeomorphism arrows~$\xymatrix{\ar@[blue]@2{~>}[r] &}$ are uniquely determined by the commutativity requirement, verifying conclusion~\pref{ItemRelBdyFunctorCommutes}.

\smallskip 

\textbf{Step 1:} Consider two equivariant maps $f \from S \to T$ and $f' \from S' \to T'$ from Grushko free splittings $S,S'$ \relA\ to Grushko free splittings $T,T'$ rel~$\F$. Choosing equivariant maps $h \from T \to T'$ and $g \from S \to S'$, the first diagram below coarsely commutes. By chasing around that first diagram we obtain (the black portion of) the second diagram, a commutative diagram of $\Gamma$-equivariant homeomorphisms and inclusions: 
$$
\xymatrix{
   &&&&*[red]{\bdyinf(\Gamma;\F)\,\,} 
	\ar@[red]@{-->}[rrrr]^*[red]{I^\F_\A}
   	\ar@[red]@2{~>}[dl]
	\ar@[red]@2{~>}[ddl]
   && 
   && *[red]{\,\, \bdyinf(\Gamma;\A)}
   	\ar@[red]@2{~>}[dl]
	\ar@[red]@2{~>}[ddl]
\\
T \ar[d]_h && S \ar[ll]_f \ar[d]_g & {\bdyinf T} \ar@2{~>}[d]_{\bdy h}
	&& \bdy_T S \,\, 
		\ar@2{~>}[ll]_{\bdy f} 
		\ar@2{~>}[d]_{\bdy g \restrict \bdy_T S} 
		\ar@{^{(}->}[rr]
	&& \bdyinf S \ar@2{~>}[d]_{\bdy g} 
\\
T' && S' \ar[ll]_{f'} & \bdyinf T' 
	&& \bdy_{T'} S' \,\,
		\ar@2{~>}[ll]_{\bdy f'} 
		\ar@{^{(}->}[rr]
	&& \bdyinf S'
}$$
In that second diagram, the two (partially red) triangles involving homeomorphisms from $\bdyinf(\Gamma;\F)$ and from $\bdyinf(\Gamma;\A)$ are already known to commute. It follows that $I^\F_\A$ is well-defined.

It remains to verify the diagram chase; here are a few details. Given $\xi \in \bdy_T S$ and any representative ray $[v,\xi) \subset S$, the $T$ image $f[v,\xi)$ has infinite diameter and has finite Hausdorff distance from the ray $\bigl[f(v),\bdy f(\xi)\bigr)$ (Lemma~\ref{LemmaDTBdy}). Since $h$ is a quasi-isometry, the $T'$ images $h\bigl[f(v),\bdy f(\xi)\bigr)$ and $h \circ f[v,\xi)$ have finite Hausdorff distance from each other and from the ray $\bigl[h(f(v)),\bdy h(\bdy f(\xi))\bigr)$. Since $g$ is a quasi-isometry, the $S'$ image $g[v,\xi)$ has finite Hausdorff distance from the ray $\bigl[g(v),\bdy g(\xi)\bigr)$, and so their $T'$ images $f' \circ g[v,\xi)$ and $f'\bigl[g(v),\bdy g(\xi)\bigr)$ have finite Hausdorff distance from each other; by coarse commutativity, they also have finite Hausdorff distance from $h \circ f[v,\xi)$ and hence also from the ray $\bigl[h(f(v)),\bdy h(\bdy f(\xi))\bigr)$. It follows that $f' \bigl[g(v),\bdy g(\xi)\bigr)$ has infinite diameter, proving that $\bdy g(\xi) \in \bdy_{T'}(S')$. This argument proves that the inclusion $\bdy g(\bdy_T S) \subset \bdy_{T'}(S')$ holds, as does the commutativity relation $\bdy h \circ \bdy f = \bdy f' \circ \left( \bdy g \restrict \bdy_T S \right)$. The reverse inclusion $\bdy_{T'}(S') \subset \bdy g(\bdy_T S)$ follows by a similar diagram chase using equivariant coarse inverses of $g$ and $h$, hence the restriction $\bdy g \restrict \bdy_T S$ is a homeomorphism $\xymatrix{\bdy_T S \ar@2{~>}[r] &\bdy_{T'} S'}$.

\medskip
\textbf{Step 2:} Applying Step~1, to prove the equation $I^{\vphantom{\F}}_S(\bdy_\F(\Gamma;\A)) = \bdy_T S$ for \emph{every} equivariant map $f \from S \to T$ from a Grushko free splitting $S$ \relA\ to a Grushko free splitting $T$ rel~$\F$, it suffices to prove that equation for just \emph{one choice} of such a map. We choose $S$ to be a free splitting of $\Gamma$ \relA\ in which the free factor system $\F$ is visible, and so there is a $\Gamma$-invariant subforest $\sigma \subset S$ such that $\F[\sigma] = \F$ (see Section~\ref{SectionSTOATStatement}; see also \emph{Examples of the subboundary correspondence} earlier here in Section~\ref{SectionDT}). We may assume that the components of $\sigma$ are precisely the minimal subtrees $S_F \subset S$ for the actions of those free factors $F \subgroup \Gamma$ such that $[F] \in \F$: we first replace $\sigma$ by its union with the set of vertices of $S$ having nontrivial stabilizer; we then discard components of $\sigma$ having trivial stabilizer; and finally we replace each component of $\sigma$ with the minimal subtree for the action of its stabilizer. We choose $T$ to be the quotient of $S$ obtained by collapsing each component of $\sigma$ to a point, with collapse map $f \from S \xrightarrow{\langle\sigma\rangle} T$. 

Given an end $\xi \in \bdyinf S$ with corresponding point $I_S^\inv(\xi) \in \bdyinf(\Gamma;\A)$, we must prove the following equivalence:
$$\xi \in \bdy_T S \iff I_S^\inv(\xi) \in \bdy_\F(\Gamma;\A)
$$
Pick a vertex $v \in S$. 

\smallskip
\emph{Case 1: The ray $[v,\xi)$ contains only finitely many edges of $S \setminus \sigma$.} In this case we prove both sides of the equivalence are false. First, clearly the $T$ image $f[v,\xi)$ consists of finitely many edges and hence has finite diameter in $T$, implying that $\xi \not\in \bdy_T S$. Also, choosing a vertex $w \in [v,\xi)$ beyond the finitely many edges of the intersection $[v,\xi) \intersect \sigma$, it follows that the $[w,\xi) \subset \sigma$. By connectivity, $[w,\xi)$ lies entirely in some component $S_F$ of $\sigma$. It follows that $\xi \in \bdyinf S_F$, hence $I_S^\inv(\xi)  \in \bdy(F;\Gamma \restrict \A)$, and therefore $I_S^\inv(\xi) \xi \not\in \bdy_\F(\Gamma;\A)$. 

\smallskip
\emph{Case 2: $[v,\xi)$ contains infinitely many edge of $S \setminus \sigma$.} In this case we prove both sides of the equivalence are true.  Enumerate the edges of $[v,\xi) \intersect (S \setminus \sigma)$, in order along $[v,\xi)$, as  $E_1 \, E_2 \, E_3 \, \cdots$. It follows that $f[v,\xi) = f(E_1) \, f(E_2) \,  f(E_3) \, \cdots$ is an infinite ray in $T$ and hence has infinite diameter, proving that $\xi \in \bdy_T S$. Also, consider any nonatomic free factor $F \subgroup \Gamma$ representing an element $[F] \in \F[\sigma]$, and consider the corresponding component $S_F$ of $\sigma$. The intersection $[v,\xi) \intersect S_F$ is connected, and if it were infinite then it would be a ray comprising all but finitely many edges of $[v,\xi)$, contradicting that $[v,\xi)$ has infinitely many edges not in $\sigma$. It follows that $[v,\xi) \intersect S_F$ is finite, hence $\xi \not\in \bdyinf S_F$ and $I_S^\inv(\xi) \not\in \bdy(F;\Gamma \restrict \A)$. Since this holds for any $F$, it follows that $I_S^\inv(\xi) \in \bdy_\F(\Gamma;\A)$.

\subparagraph{Step 3:} Applying Lemma~\ref{LemmaDTBdy} and the earlier steps, it suffices to consider $\Gamma$-equivariant maps $S \xrightarrow{f} T \xrightarrow{g} U$ where $S,T,U$ are Grushko free splittings with respect to $\A$, $\B$ and~$\F$ respectively, and to prove that for all $\xi \in \bdy_T S$, $\eta \in \bdy_U T$ and $\zeta \in \bdyinf U$, if $\bdy f(\xi)=\eta$ and $\bdy g(\eta)=\zeta$ then $\xi \in \bdy_U S$ and $\bdy(g \circ f)(\xi)=\zeta$. This is again a diagram chase argument. Picking $v \in S$, applying Lemma~\ref{LemmaDTBdy} it follows first that $f\bigl[v,\xi\bigr)$ has infinite diameter and is Hausdorff close to $[f(v),\eta)$. Applying that lemma again it follows that $g\bigl[f(v),\eta\bigr)$ has infinite diameter and is Hausdorff close to $[g(f(v)),\zeta)$. But $g \circ f\bigl[v,\xi\bigr)$ is Hausdorff close to $g\bigl[f(v),\eta\bigr)$ and hence $g \circ f\bigl[v,\xi\bigr)$ has infinite diameter, proving that $\xi \in \bdy_U S$; and furthermore $g \circ f\bigl[v,\xi\bigr)$ is Hausdorff close to $[g(f(v)),\zeta)$ proving that $\bdy(g \circ f)(\xi)=\zeta$. 
\end{proof}

\subsection{Line spaces under collapse.} 
\label{SectionRealizingGeneralLines}

We first recollect notations and concepts regarding line spaces of a group $\Gamma$ relative to a free factor system~$\A$ (see \cite[Section 4.1.2]{\RelFSHypTwoTag} which follows \cite{GuirardelHorbez:Laminations} and \cite{\LymanCTTag}). Then, in Lemma/Definition~\ref{LemmaDefBirecurrentRealization}, we describe a correspondence of bi-infinite lines spaces that is induced by the Dowdall--Taylor correspondence of end spaces.

By combining the double space construction \cite[Section 4.1.1]{\RelFSHypTwoTag}, the boundary identifications $I_T \from \bdy(\Gamma;\A) \approx \bdy T$, and the end subspace identifications $I_T \from \bdyinf(\Gamma;\A) \approx \bdyinf T$ (as $T$ varies over free splittings of $\Gamma$ \relA), one obtains line spaces and bi-infinite line subspaces with identifications and inclusions as follows: 
$$\xymatrix{
\wtB(\Gamma;\A) \ar@{=}[r] 
	& \bdy(\Gamma;\A)^\SetTwo \ar@2{~}[r] 
	& \bdy T^\SetTwo \ar@{=}[r] 
	&  \wtB T \\
\wtBinf(\Gamma;\A) 
		\ar@{=}[r] 
		\ar@{^{(}->}[u]
	& \bdyinf(\Gamma;\A)^\SetTwo 
		\ar@2{~}[r]
		\ar@{^{(}->}[u]
	& \bdyinf T^\SetTwo 
		\ar@{=}[r]
		\ar@{^{(}->}[u]
	& \wtBinf T
		\ar@{^{(}->}[u]
}$$
All arrows in this diagram are equivariant with respect to the action of $\Gamma$ and the extended action of $\Aut(\Gamma;\A)$. The group $\Out(\Gamma;\A)$ acts on $\Gamma$-invariant subsets of each space in this diagram, and we obtain an induced $\Out(\Gamma;\A)$-equivariant diagram of sets of $\Gamma$-invariant subsets. 

After some further notation review, for the remainder of the section we will mostly focus on the second line of the diagram above, for purposes of applying the Dowdall--Taylor correspondence on the level of bi-infinite lines.

Given $\ell \in \wtB(\Gamma;\A)$ and a free factor system~$\F$, to say that $\ell$ is \emph{carried by} or \emph{supported by}~$\F$ means that there exists $F \subgroup \Gamma$ such that $[F] \in \F$ and $\xi,\eta \in \bdy(F;\A \restrict F) \subset \bdy(\Gamma;\A)$. Given any subset $L \subset \wtB(\Gamma;\A)$, to say that a free factor system $\F$ \emph{supports} $L$ means that for each $\ell = \{\xi,\eta\} \in L$ there exists $F \subgroup \Gamma$ such that $[F] \in \F$ and $\xi,\eta \in \bdy(F;\A \restrict F) \subset \bdy(\Gamma;\A)$. The unique free factor system that supports $L$ and is minimal with respect to the partial order $\sqsubset$ is called the \emph{free factor support} of $L$; for the proof of existence and uniqueness see \cite[Lemma 4.2]{\RelFSHypTwoTag} following \cite[Corollary 4.12]{\LymanCTTag}. To say that $L$ \emph{fills} $\Gamma$ \relA\ means that its free factor support equals $\{[\Gamma]\}$, equivalently $L$ is not supported by any nonfull free factor system \relA. 

An individual element $\ell = \{\xi,\eta\} \in \wtBinf(\Gamma;\A)$ is called a \emph{bi-infinite line} of $\Gamma$ \relA\ with \emph{ideal endpoints}~$\xi,\eta \in \bdyinf(\Gamma;\A)$. For any Grushko free splitting $T$ of $\Gamma$ \relA\ the \emph{concrete realization} of $\ell$ in $T$, denoted either $\ell_T \subset T$ or $\ell(T)$, is the unique line in $T$ whose closure in $\overline T = T \union \bdyinf T$ is $T \union \{\xi,\eta\}$ (using $I_T$ to identify $\xi,\eta \in \bdyinf T$). Given a bi-infinite line $\ell \in \wtBinf(\Gamma;\A)$ and a Grushko free splitting $T$ of $\Gamma$ \relA, to say that the concrete realization $\ell_T \subset T$ is \emph{birecurrent in $T$} means that for any finite path $\alpha \subset \ell_T $ and any subray $\rho \subset \ell_T$ there exists $\gamma \in \Gamma$ such that $\gamma \cdot \alpha \subset \rho$. As shown in \cite[Lemma 4.1]{\LymanCTTag}, \emph{birecurrence of $\ell \in \wtBinf(\Gamma;\A)$} is well-defined, independent of the choice of Grushko free splitting $T$ \relA, by requiring that its concrete realization $\ell_T$ be birecurrent. 

For any nested pair of free factor systems $\A \sqsubset \F$, the boundary embedding $I^\F_\A \from \bdyinf(\Gamma;\F) \approx \bdy_\F(\Gamma;\A) \hookrightarrow \bdyinf(\Gamma;\A)$ given by Lemma~\ref{LemmaRelBdyFunctor} induces an embedding of line spaces
$$\underbrace{\wtBinf(\Gamma;\F)}_{\bdyinf(\Gamma;\F)^\SetTwo} \approx \underbrace{\wt\B_\F(\Gamma;\A)}_{\bdy_\F(\Gamma;\A)^\SetTwo} \hookrightarrow \underbrace{\wtBinf(\Gamma;\A)}_{\bdyinf(\Gamma;\A)^\SetTwo}
$$
and this embedding is equivariant with respect to the action of $\Aut(\Gamma;\A,\F)$.

\medskip

In our next result we take advantage of the embedding above in order to extend the concept of ``concrete realization'' of an abstract line in $\wtBinf(\Gamma;\A)$ beyond the case of Grushko free splittings \relA, making that concept apply to all free splittings \relA, but we do this at some cost: we require that the given line be birecurrent; and the ``concrete realization'' will not always be a concrete line, it could also be a point.

In the statement and later applications of the following lemma, for each nonfull free factor system $\F$ of $\Gamma$ \relA\ we identify $\wtB_\F(\Gamma;\A)$ with its embedded image in $\wtBinf(\Gamma;\A)$. Also, for each proper free factor $F \subgroup \Gamma$ \relA\ we identify $\bdyinf(F;\A \restrict F)$ with its embedded image in $\bdyinf(\Gamma;\A)$, and then in turn we identify $\wtBinf(F;\A \restrict F) = \bdyinf(F;\A \restrict F)^\SetTwo$ with its image under the induced embedding in~$\wtBinf(\Gamma;\A) = \bdyinf(\Gamma;A)^\SetTwo$. 


\begin{LemmaAndDefinition}
\label{LemmaDefBirecurrentRealization} 
For any nonfull free factor system~$\F$ of $\Gamma$ \relA\ and any birecurrent abstract line $\ell = \{\xi,\eta\} \in \wtBinf(\Gamma;\A)$ exactly one of the following holds:
\begin{description}
\item[Case 1:] $\ell \in \wtB_\F(\Gamma;\A)$; or
\item[Case 2:] There exists a unique proper free factor $F_\ell \subset \Gamma$ \relA\ such that $[F_\ell] \in \F$ and such that $\ell \in \wtBinf(F_\ell;\A \restrict F_\ell)$.
\end{description}
For any Grushko free splitting $T$ of $\Gamma$ rel~$\F$ the \emph{concrete realization of $\ell$ in $T$}, a subset denoted $\ell_T \subset T$, is defined as follows:
\begin{description}
\item[Case 1 realization, a line:] $\ell_T \subset T$ is the concrete realization of the abstract line $\ell_\F \in  \wtBinf(\Gamma;\F) \approx \wtBinf T$ that is identified with $\ell$ under the homeomorphism $\wtBinf(\Gamma;\F) \approx \wtB_\F(\Gamma;\A)$.
\item[Case 2 realization, a point:] $\ell_T$ is a singleton, namely the unique point of $T$ fixed by $F_\ell$.
\end{description}
Furthermore, using this definition of $\ell_T$ we have:
\begin{description}
\item[Bounded Cancellation for lines:] For any Grushko free splitting $S$ of $\Gamma$ \relA\ and any map $f \from S \to T$ with cancellation constant $C$:
$$\ell_T \subset f(\ell_S) \subset N_C(\ell_T)
$$
If in addition $f$ is a collapse map then $f(\ell_S)=\ell_T$.
\end{description}
\end{LemmaAndDefinition}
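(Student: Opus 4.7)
The lemma has three assertions to prove: the Case 1 / Case 2 dichotomy with uniqueness of $F_\ell$; the well-definedness of the concrete realizations $\ell_T$; and bounded cancellation for lines. The plan is to address them in order.

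\emph{The dichotomy.} Suppose $\ell \notin \wtB_\FF(\Gamma;\A)$, so at least one endpoint, say $\xi$, lies in $\bdyinf(F;\A\restrict F)$ for some nonatomic free factor $F$ with $[F]\in\FF$. I would choose a Grushko free splitting $S$ of $\Gamma$ rel~$\A$ in which $\FF$ is visible via a $\Gamma$-invariant subforest $\sigma\subset S$ whose components are the minimal subtrees $S_{F'}$ for nonatomic $[F']\in\FF$ (constructed as in Step~2 of the proof of Lemma~\ref{LemmaRelBdyFunctor}). Then the terminal ray of $\ell_S$ heading toward $\xi$ is eventually contained in the single component $S_F$ of~$\sigma$. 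By birecurrence, every finite subpath $\alpha\subset\ell_S$ containing an edge has a $\Gamma$-translate lying in that ray, hence in $S_F$; since distinct components of $\sigma$ are pairwise edge-disjoint, $\alpha$ is contained in a unique component of~$\sigma$. Any two overlapping finite subpaths of $\ell_S$ must lie in a common component; exhausting $\ell_S$ by nested finite subpaths then gives $\ell_S \subset S_{F_\ell}$ for a single $F_\ell$ conjugate to~$F$, so $\{\xi,\eta\} \subset \bdyinf(F_\ell;\A\restrict F_\ell)$. Uniqueness of $F_\ell$ is immediate from edge-disjointness of distinct components of~$\sigma$.

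\emph{Realization, and bounded cancellation in Case~1.} The realization $\ell_T$ is defined exactly as stated: in Case 1 via the Dowdall--Taylor embedding $I^\FF_\A$ of Lemma~\ref{LemmaRelBdyFunctor} followed by $I_T$; in Case 2 as the unique vertex $v\in T$ stabilized by $F_\ell$. For bounded cancellation in Case 1, both endpoints correspond via $I_S$ to points of $\bdy_T S$, so Lemma~\ref{LemmaDTBdy}\pref{ItemDTBdyHomeo} applies to the two rays of $\ell_S$ based at any $p\in\ell_S$. Their $f$-images cover $\ell_T \cup [f(p),q]$, where $q$ is the closest-point projection of $f(p)$ to $\ell_T$, with $f(\ell_S)$ sandwiched between this union and its $C$-neighborhood. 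The lower half gives $\ell_T \subset f(\ell_S)$; picking $p_0\in\ell_S$ with $f(p_0)\in\ell_T$ (guaranteed by that very inclusion) and rerunning the upper half with basepoint $p_0$ yields $f(\ell_S)\subset N_C(\ell_T)$. When $f$ is a collapse map, Lemma~\ref{LemmaDTBdy}\pref{ItemIfCollapse} upgrades the upper inclusion to equality $f(\ell_S)=\ell_T$.

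\emph{Case~2 of bounded cancellation and the main obstacle.} In Case 2 the line $\ell_S$ sits inside $S_{F_\ell}$, so $f(\ell_S)\subset f(S_{F_\ell})$, an $F_\ell$-invariant connected subset of~$T$. Since $F_\ell$ fixes only $v$ in $T$, each orbit $F_\ell\cdot x$ for $x\in f(S_{F_\ell})$ sits on the sphere of radius $d(x,v)$ about~$v$; combining with $F_\ell$-cocompactness of $S_{F_\ell}$ and a bounded cancellation estimate on a long $F_\ell$-translated segment yields $f(S_{F_\ell})\subset N_C(\{v\})$, hence $f(\ell_S)\subset N_C(\ell_T)$. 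For $v\in f(\ell_S)$, I would factor $f$ up to bounded equivalence (Lemma~\ref{LemmaBddEquiv}) through a collapse $S\to S'$ onto a Grushko free splitting rel~$\FF$: the line $\ell_S$ collapses to the unique vertex $v'\in S'$ with $\Stab(v')=F_\ell$, which maps to $v$ under any equivariant $S'\to T$. In the collapse-map case this is immediate ($S_{F_\ell}\mapsto\{v\}$). The main obstacle is precisely Case~2 for non-collapse $f$: the bounded cancellation machinery naturally carries a slack of size~$C$, and extracting the sharp inclusion $v\in f(\ell_S)$ rather than merely a nearby point requires careful use of the $F_\ell$-equivariance together with the rigidity of the unique fixed vertex $v$ in the Grushko free splitting~$T$.
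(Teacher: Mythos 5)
Your dichotomy argument and your Case~1 bounded cancellation argument are essentially sound and close to the paper's approach. But you have correctly identified, and left open, a genuine gap in Case~2: establishing the inclusion $\ell_T \subset f(\ell_S)$ when $f$ is not a collapse map. Your proposed factoring through a collapse $S \to S' \to T$ only proves that the boundedly-equivalent map $h\circ g$ sends $\ell_S$ to $\{v\}$; it gives no control on the actual image $f(\ell_S)$, which could in principle miss $v$ entirely.

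The paper closes this gap by a direct equivariance argument that avoids bounded cancellation entirely for the lower inclusion. First observe that $F_\ell \cdot \ell_S$ is an $F_\ell$-invariant subtree of $S_{F_\ell}$ (the translates overlap because of birecurrence), so by minimality $F_\ell \cdot \ell_S = S_{F_\ell}$. Next, $f(S_{F_\ell})$ is a connected $F_\ell$-invariant subset of $T$, so it contains the minimal $F_\ell$-subtree of $T$, which is the single point $\ell_T = \{v\}$. Choose $p' \in S_{F_\ell}$ with $f(p') = v$, and then use $F_\ell \cdot \ell_S = S_{F_\ell}$ to write $p' = g^{-1} p$ for some $p \in \ell_S$ and $g \in F_\ell$. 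By $\Gamma$-equivariance of $f$ and the fact that $g$ fixes $v$,
$$f(p) = f(g p') = g\cdot f(p') = g\cdot v = v,$$
so $v \in f(\ell_S)$. This is exactly the ``careful use of $F_\ell$-equivariance together with the rigidity of the unique fixed vertex $v$'' that you anticipated, but the actual mechanism is not bounded-cancellation machinery at all: it is the exactness of the equivariance of $f$ combined with the surjectivity $F_\ell \cdot \ell_S = S_{F_\ell}$, which lets you pull the preimage point $p'$ back into the line $\ell_S$ by an $F_\ell$-translation that leaves the target $v$ unmoved. Your upper inclusion $f(\ell_S) \subset N_C(\ell_T)$ is also handled more cleanly in the paper: rather than invoking $F_\ell$-cocompactness, write $S_{F_\ell}$ as the union of segments $[\gamma p, \delta p]$ over $\gamma,\delta \in F_\ell$, note that both endpoints map to $v$, and apply bounded cancellation for finite paths to each such segment.
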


\begin{proof} We first verify the case analysis. Because the subsets $\wtB_\F(\Gamma;\A)$ and $\wtBinf(F;\A \restrict F)$ form a pairwise disjoint collection in $\wtBinf(\Gamma;\A)$ as $F$ varies, clearly cases 1 and 2 are mutually exclusive, and the free factor $F$ in case 2 is unique, if it exists. To prove that one of cases 1 or 2 actually occurs, choose $S$ to be a free splitting \relA\ in which the free factor system~$\F$ is visible. For each free factor $F \subset \Gamma$ \relA, consider the minimal subtree $S_F$ for the restricted action $F \act S$, and so the inclusion $S_F \hookrightarrow S$ induces the embedding $\bdyinf(F;\A \restrict F) \approx \bdyinf S_F \hookrightarrow \bdyinf S \approx \bdyinf(\Gamma;\A)$. Because of the visibility assumption, as $F$ varies the subtrees $S_F$ are pairwise disjoint, and we have a $\Gamma$-invariant subforest with component decomposition $\bigsqcup_F S_F \subset S$. We break into two cases depending on the relation in $S$ between that subforest and the concrete realization $\ell_S \subset S$ of~$\ell$.

\smallskip
\textbf{Case 1 Hypothesis:} $\ell_S \not\subset \bigsqcup_F S_F$. 
We shall prove that 
$$\ell \in \wtB_\F(\Gamma;\A) = \wtBinf(\Gamma;\A) - \bigsqcup_F \bdy(F;\A \restrict F) \approx \bdyinf S - \bigsqcup_F \bdyinf S_F
$$
Given any $F$ and any ray in $S$, that ray represents a point of $\bdyinf S_F$ if and only if one of its subrays is contained in~$S_F$. It therefore suffices to choose any ray $\rho \subset \ell_S$ and prove that $\rho \not\subset \bigsqcup_F \bdyinf S_F$. Using the case hypothesis, there exists an edge $e \subset \ell_S$ such that $e \not\subset \bigsqcup_F S_F$. Since $\bigsqcup_F S_F$ is $\Gamma$ invariant, it follows for all $\gamma \in \Gamma$ that $\gamma \cdot e \not \subset \bigsqcup_F S_F$. Using birecurrence of $\ell$, there exists $\gamma \in \Gamma$ such that $\gamma \cdot e \subset \rho$, and therefore $\rho \not\subset \bigsqcup_F S_F$. 

\smallskip
\textbf{Case 2 Hypothesis:} $\ell_T \subset \bigsqcup_F S_F$. Using that $\ell_T$ is connected and that $\bigsqcup_F S_F$ is the decomposition into components, there exists $F$ such that $\ell_T \subset S_F$. It follows that both ideal endpoints of the concrete line $\ell_T$ are in $\bdyinf S_F$ and hence $\ell \in \wtBinf(S_F) \approx \wtBinf(F;\A \restrict F)$.

\medskip

We turn now to the proof of \emph{Bounded Cancellation for Lines}. Let $\ell = \{\xi,\eta\} \in \wtB_\F(\Gamma;\A)$, and so $\xi,\eta \in \bdy_\F(\Gamma;\A)$. Choose a map $f \from S \to T$ from a Grushko free splitting $S$ \relA\ to a Grushko free splitting $T$ rel~$\F$. Let $C$ be a cancellation constant for $f$ with respect to finite paths (for example, the one specified in \cite[Lemma~4.14]{\RelFSHypTwoTag}). 

\medskip

\textbf{Case 1:}  Let $\ell_T \subset T$ be the line as described under \emph{Case 1 realization}. We proceed as in the proof of Lemma~\ref{LemmaDTBdy}~\pref{ItemDTBdyHomeo}: by applying the proof of  \cite[Theorem 3.2]{DowdallTaylor:cosurface} to each of  two subrays of $\ell_T$, one with end $\xi$ and the other with end $\eta$, we obtain a bi-infinite sequence $(w_i)_{i \in \Z}$ of points in $f(\ell)$ such that in $T$ we have $w_i \to \xi$ as $i \to -\infty$ and $w_i \to \eta$ as $i \to +\infty$; using that sequence $(w_i)$ we may then apply the argument of \cite[Lemma~4.14~(3)]{\RelFSHypTwoTag}, leading to the conclusions of \emph{Bounded Cancellation for Lines}. The final sentence, for the case when $f$ is a collapse map, is a consequence of \hbox{Lemma~\ref{LemmaDTBdy}~\pref{ItemIfCollapse},} which tells us in this case that $[q,\xi(T))=f[p,\xi(S))$ and $[q,\eta(T))=f[p,\eta(S))$ hence $\ell_T=f(\ell_S)$. 

\smallskip
\textbf{Case 2:} Let $\ell_T \in T$ be as described under \emph{Case 2 realization}, namely, the point fixed by the action of $F_\ell$. Let $S_F \subset S$ be the minimal subtree for the action of $F$. We have $\ell_S \subset S_F$, hence $F \cdot \ell_S \subset S_F$, and so by minimality of $S_F$ this inclusion is an equation: $F \cdot \ell_S = S_F$. Since $f(S_F) \subset T$ is an $F$-invariant subtree of $T$ and $\ell_T$ is the minimal such subtree, we have $\ell_T \in f(S_F)=f(F \cdot \ell_S)$. Pick $p' \in S_F$ such that $f(p')=\ell_T$, and then pick $p \in \ell_S$ and $g \in F$ such that $p = g \cdot p'$, hence 
$$\ell_T = g \cdot \ell_T = g \cdot f(p') = f(g \cdot p')  = f(p) \in f(\ell_S)
$$
which is the first part of the \emph{Bounded Cancellation} conclusion. For the second part, since $f(\ell_S) \subset f(S_F)$, the inclusion $f(\ell_S) \subset N_C(\ell_T)$ will follow from the more general inclusion $f(S_F) \subset N_C(\ell_T)$ that we now verify. The union of the subsegments $[\gamma \cdot p, \delta \cdot p] \subset S_F$, taken over all $\gamma,\delta \in F$, is a $F$-invariant subtree of $S_F$ and hence is equal to~$S_F$. Applying \cite[Lemma~4.14~(1)]{\RelFSHypTwoTag}, namely \emph{Bounded Cancellation for Paths}, since $f(\gamma \cdot p) = f(\delta \cdot p)=\ell_T$ it follows that $f[\gamma \cdot p, \delta \cdot p] \subset N_C(\ell_T)$.  Since this holds for all~$\gamma,\delta \in F$, we obtain $f(S_F) \subset N_C(\ell_T)$. If in addition $f \from S \xrightarrow{\<\sigma\>} T$ is a collapse map then $S_F$ is contained in the component of $\sigma$ stabilized by~$F$, but that component collapses to the point~$\ell_T$, hence $f(S_F)=\ell_T$ which is the minimal subtree for $F$ acting on~$T$.
\end{proof}

\subparagraph{Remark.} There is a version of the Lemma~\ref{LemmaDefBirecurrentRealization} which drops the birecurrence hypothesis at the cost of more possibilities for the concrete realization; here is a brief description. As $F$ varies over all free factors such that $[F] \in \F$, the subsets $\bdyinf(F;\A \restrict F)$ and $\bdy_\F(\Gamma;\A)$ of $\bdyinf(\Gamma;\A)$ form a pairwise disjoint decomposition. And while the subsets $\wtBinf(F;\A \restrict F) = \wtBinf(F;\A \restrict F)^\SetTwo$ and $\bdy_\F(\Gamma;\A)^\SetTwo$ of $\bdyinf(\Gamma;\A)$ are pairwise disjoint, they do not form a decomposition, because the two ideal endpoints $\xi,\eta$ of an abstract line $\{\xi,\eta\} \in \wtBinf(\Gamma;\A)$ can lie in two different members of the decomposition of $\bdyinf(\Gamma;\A)$, leading to two other types of realizations: a ray, when one of $\xi,\eta$ lies in $\bdy_\F(\Gamma;\A)$ and the other lies in some set of type $\bdyinf(F;\A \restrict F)$; or a finite path, when $\xi,\eta$ lie im two different sets of type $\bdyinf(F;\A \restrict F)$. One could express this succinctly in the language of the Bowditch boundary $\bdy(\Gamma;\F)$ and its abstract line space $\bdy(\Gamma;\F)^\SetTwo$ by saying that there is a natural projection $\wtBinf(\Gamma;\A) \mapsto \bdy(\Gamma;\F)^\SetTwo \sqcup \bdy(\Gamma;\F)$. This is closely related to Lyman's treatment of attracting laminations \emph{including} their nongeneric leaves \cite[Section~4]{\LymanCTTag}.

\medskip
For stating the following corollary, consider a birecurrent abstract line $\ell \in \wtBinf(\Gamma;\A)$ and a free splitting $S \in \FS(\Gamma;\A)$, and let $\ell_S \subset S$ be the concrete realization of $\ell$ in $S$ as specified by Lemma/Definition~\ref{LemmaDefBirecurrentRealization}.  To say that \emph{$\ell$ fills~$S$} means that $\Gamma \cdot \ell_S = S$, where of course $\Gamma \cdot \ell_S = \bigcup_{\gamma \in \Gamma} \gamma \cdot \ell_S$. For example, if $\ell_S$ falls into Case~$1$ of Lemma/Definition~\ref{LemmaDefBirecurrentRealization} --- meaning that $\ell_S$ is supported by a nontrivial free factor that stabilizes a vertex of $S$ --- then $\ell_S$ is a point and so $\ell$ does not fill~$S$.

\begin{corollary}
\label{CorollaryFillingLine}
For each birecurrent abstract line $\ell \in \wtBinf(\Gamma;\A)$ the following are equivalent:
\begin{enumerate}
\item\label{ItemLineFillsRelA}
$\ell$ fills $\Gamma$ \relA.
\item\label{ItemLineFillsGrushko}
$\ell$ fills every Grushko free splitting $S \in \FS(\Gamma;\A)$.
\item\label{ItemLineFillsAllFS}
$\ell$ fills every free splitting $S \in \FS(\Gamma;\A)$.
\end{enumerate}
\end{corollary}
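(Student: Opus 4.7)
The plan is to verify the chain $(3)\Rightarrow(2)\Rightarrow(3)$ and $(2)\Leftrightarrow(1)$, with essentially all content concentrated in the direction $(1)\Rightarrow(2)$.

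Implication $(3)\Rightarrow(2)$ is immediate since Grushko free splittings rel~$\A$ are free splittings rel~$\A$. For $(2)\Rightarrow(3)$, I would represent any $S \in \FS(\Gamma;\A)$ as the target of a collapse map $\pi \from R \xrightarrow{\<\upsilon\>} S$ in which $R$ is a Grushko free splitting rel~$\A$, obtained by blowing up each vertex of $S$ whose stabilizer is a nonatomic free factor rel~$\A$. By hypothesis~(2) we have $\Gamma \cdot \ell_R = R$, and the collapse-map clause of \emph{Bounded Cancellation for Lines} in Lemma/Definition~\ref{LemmaDefBirecurrentRealization} gives $\pi(\ell_R) = \ell_S$; equivariance of~$\pi$ then forces $\Gamma \cdot \ell_S = \pi(\Gamma \cdot \ell_R) = \pi(R) = S$.

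For $(2)\Rightarrow(1)$ I would argue the contrapositive: if $\ell$ fails to fill $\Gamma$ rel~$\A$, then $\ell \in \wtBinf(F;\A \restrict F)$ for some proper free factor $F$ rel~$\A$, necessarily nonatomic since an atomic free factor has empty boundary relative to its own atomic restriction. Choose a Grushko $S$ rel~$\A$ in which some free factor system $\FF \ni [F]$ is visible, with witnessing $\Gamma$-invariant subforest $\sigma \subsetneq S$ whose components include the minimal $F$-subtree $S_F$. Under the identification $\bdyinf S \approx \bdyinf(\Gamma;\A)$, both ideal endpoints of $\ell$ correspond to ends of $S_F$, so $\ell_S \subset S_F$, and therefore $\Gamma \cdot \ell_S \subset \Gamma \cdot S_F \subset \sigma \subsetneq S$, showing that $\ell$ does not fill $S$.

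The heart of the argument is $(1)\Rightarrow(2)$. Fix a Grushko $R$ rel~$\A$ and suppose for contradiction that $\Gamma \cdot \ell_R \ne R$. Pick an edge $e \subset R$ whose entire $\Gamma$-orbit is disjoint from~$\ell_R$, and set
$$\sigma \,=\, R \setminus \bigl( \Gamma \cdot \interior(e) \bigr),$$
a proper $\Gamma$-invariant subforest of $R$ that contains~$\ell_R$ entirely within a single component~$\sigma_0$. Let $\pi \from R \xrightarrow{\<\sigma\>} T$ be the induced collapse; then $T$ retains the nontrivial edge orbit~$\pi(\Gamma \cdot e)$, so $T$ is a nontrivial free splitting and $\FFST = \FF[\sigma]$ is a nonfull free factor system of $\Gamma$ rel~$\A$. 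Apply Lemma/Definition~\ref{LemmaDefBirecurrentRealization} with $\FF = \FF[\sigma]$: the collapse-map clause of \emph{Bounded Cancellation for Lines} gives $\ell_T = \pi(\ell_R)$, and since $\pi$ collapses $\sigma_0$ to a single point, $\ell_T$ must be that single point. This places us in Case~2 of the lemma, producing a proper free factor $F_\ell \subset \Gamma$ rel~$\A$ with $[F_\ell] \in \FF[\sigma]$ such that $\ell \in \wtBinf(F_\ell;\A \restrict F_\ell)$. Hence $\ell$ is supported by the nonfull free factor system $\FF[\sigma]$, contradicting hypothesis~(1). The main obstacle is precisely this conversion: one must upgrade the set-theoretic failure of $\ell_R$ to cover~$R$ into an honest collapse whose codomain supplies a valid nonfull free factor system, so that Case~2 of Lemma/Definition~\ref{LemmaDefBirecurrentRealization} can extract the proper free factor witnessing the non-filling of~$\ell$.
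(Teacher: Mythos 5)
Your proof is correct, and it covers the same three nontrivial implications as the paper (your trivial $(3)\Rightarrow(2)$ plus your $(2)\Rightarrow(1)$ reproduce the paper's $(3)\Rightarrow(1)$). The $(2)\Rightarrow(3)$ argument is essentially identical to the paper's: blow up to a Grushko expansion, push the realization through the collapse map using the collapse clause of Lemma/Definition~\ref{LemmaDefBirecurrentRealization}, and use equivariance plus surjectivity.

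The interesting divergence is in $(1)\Rightarrow(2)$. The paper takes $\sigma = \Gamma\cdot\ell_S$, notes it is a proper invariant subforest, and then asserts directly (implicitly leaning on the concrete support criterion from Part~II, Lemmas~4.2--4.3) that $\FF[\sigma]$ is a nonfull free factor system supporting $\ell$, so (1) fails. You instead isolate a single missed edge orbit $\Gamma\cdot e$, take $\sigma = R \setminus \Gamma\cdot\interior(e)$, collapse to a one-edge-orbit free splitting $T$ with $\FFST = \FF[\sigma]$, and then feed this into the case dichotomy of Lemma/Definition~\ref{LemmaDefBirecurrentRealization}: since $\pi(\ell_R)$ is a vertex, the collapse clause forces $\ell_T$ to be a point, placing you in Case~2, which hands you the proper free factor $F_\ell$ with $[F_\ell]\in\FF[\sigma]$ explicitly. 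This buys self-containment — you only use machinery developed in the present paper — at the cost of routing through the collapse/dichotomy rather than citing the Part~II support criterion. It also sidesteps the need to inspect whether the component of $\Gamma\cdot\ell_S$ containing $\ell_S$ has a nontrivial stabilizer, which the paper's shorter statement elides. Similarly, for the remaining direction the paper works with a (possibly non-Grushko) free splitting $S$ with $\FFSS$ equal to the supporting free factor system, so that $\ell_S$ becomes a point outright, whereas you take a Grushko $S$ with that system merely \emph{visible}, so that $\ell_S$ is a line trapped inside a proper invariant subforest; both reach the same conclusion.
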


\begin{proof} To prove the implication \pref{ItemLineFillsRelA}$\implies$\pref{ItemLineFillsGrushko}, suppose that \pref{ItemLineFillsGrushko} fails as witnessed by~$S$. Since $S$ is Grushko, the realization $\ell_S$ is a concrete line in $S$, and so the invariant subforest $\sigma = \Gamma \cdot \ell_S \subset S$ is proper (by failure of~\pref{ItemLineFillsGrushko}). It follows that $\F\sigma$ is a nonfull free factor system \relA\ that supports $\ell$, hence \pref{ItemLineFillsRelA} fails. 

To prove the implication~\pref{ItemLineFillsGrushko}$\implies$\pref{ItemLineFillsAllFS}, consider any free splitting $S$ of $\Gamma$ \relA. We may choose a collapse map $f \from U \mapsto S$ defined on a Grushko free splitting $U$ of $\Gamma$ \relA\ \cite[Lemma 3.2~(3)]{\RelFSHypTag}. If $\ell$ is supported by $\FFS S$ then~\pref{ItemLineFillsRelA} fails. Otherwise the realizations $\ell_U \subset U$ and $\ell_S \subset S$ are both lines, and $f(\ell_U)=\ell_S$ (by Lemma/Definition~\ref{LemmaDefBirecurrentRealization}). Since~\pref{ItemLineFillsGrushko} holds we have $\Gamma \cdot \ell_U = U$, and hence $\Gamma \cdot \ell_S=S$.

To prove the implication \pref{ItemLineFillsAllFS}$\implies$\pref{ItemLineFillsRelA}, suppose that~\pref{ItemLineFillsRelA} fails, and hence $\ell$ is supported by some nonfull free factor system~$\F$ \relA. Letting $S$ be a free splitting \relA\ such that $\FFSS = \F$. It follows from Lemma/Definition~\ref{LemmaDefBirecurrentRealization} that $\ell_S$ is a point in $S$ hence hence~\pref{ItemLineFillsAllFS} fails.
\end{proof}

\subsection{EG-aperiodic train track maps and their attracting laminations}
\label{SectionTTsAndLams}

The theory of relative train track representatives and attracting laminations for elements of $\Out(\Gamma;\A)$ was developed by Lyman in \cite{\LymanRTTTag,\LymanCTTag}, adapting the methods of \cite{\BookOneTag} to make them work in the context of the Bowditch boundary that was introduced in \cite{GuirardelHorbez:Laminations}. For a full review see the following material in Part II:
\begin{itemize}
\item\label{ItemAttrLamReview} \cite[Section 4.1.2]{\RelFSHypTwoTag} Attracting laminations and their free factor supports; filling laminations.
\item\label{ItemRelTTReview} \cite[Section 4.3.1]{\RelFSHypTwoTag} Relative train track representatives;
\item \cite[Section 4.3.3]{\RelFSHypTwoTag} Train track representatives;
\item\label{ItemRelTTLamReview}
\cite[Theorem 4.17, in Section 4.3.2]{\RelFSHypTwoTag} A summary statement of the natural bijection between the finite set $\L(\phi)$  of attracting laminations of $\phi \in \Out(\Gamma;\A)$ and the set of EG-aperiodic substrata of a relative train track representative of~$\phi$.
\end{itemize} 
Everything we need about attracting laminations here in Part III is found in Theorem~\ref{TheoremLymanTTLams} below, which is the result of further whittling down the summary statement \cite[Theorem 4.17]{\RelFSHypTwoTag}, itself whittled down from \cite{\LymanCTTag}. Namely, Theorem~\ref{TheoremLymanTTLams} focusses on the special case of a unique attracting lamination and the expression of its generic leaves using tiles of an EG-aperiodic train track representative. 

\smallskip
\textbf{Train track representatives.} For the statement of Theorem~\ref{TheoremLymanTTLams} and its various supporting definitions, we switch to the notation $\F$ for our given free factor system of $\Gamma$, in order to prepare for later applications where $\F$ will be some $\phi$-invariant free factor system \relA.

Consider any free factor system~$\F$ of $\Gamma$ and any $\phi \in \Out(\Gamma;\F)$. As said in \RelFSHypTwo\ just preceding Section 4.1, we work entirely upstairs in trees and in $\wtB(\Gamma;\F)$ (rather than mostly downstairs in graphs of groups and in $\B(\Gamma;\F) = \wtB(\Gamma;\F) / \Gamma$ as is done in~\cite{\LymanCTTag}). 

Recall from \cite[Section 4.3.3]{\RelFSHypTag} that a \emph{train track representative} of $\phi$ rel~$\F$ is a map \hbox{$F \from T \to T$} defined on a Grushko free splitting $T$ rel~$\F$ such that the following hold: $F$~is $\Phi$-twisted equivariant with respect to some $\Phi \in \Aut(\Gamma;\F)$ representing $\phi$; the map $F$ takes vertices to vertices; and each point of~$T$ is contained in the interior of some edge path $\eta$ without backtracking such each restricted iterate $F^k \restrict \eta$ is injective ($k \ge 0$). It follows that the restrictions $f^k \mid e$ and $f^k \mid E$ are injective for each edge $e \subset T$ and each natural edge $E \subset T$; the image path $F^k(e)$ is referred to as a \emph{$k$-tile of $F$} or an \emph{iteration tile} when the value of $k$ is unimportant, and $F^k(E)$ is similarly referred to as a \emph{natural $k$-tile of $F$} or a \emph{natural iteration tile}. Fixing $e_1,\ldots,e_n \subset T_l$ to be an enumerated set of representatives of the $\Gamma$-orbits of edges of $T$, the \emph{transition matrix} of $F$ is the $n \times n$ matrix $\M$ whose entry $\M_{ij}$ counts how many times the $1$-tile $F(e_j) \subset T$ crosses translates of the edge~$e_i$. It follows that for any $k \ge 1$ the map $F^k$ is a train track representative of $\phi^k$ with transition matrix~$\M^k$, and so $\M^k_{ij}$ counts how many times the $k$-tile $F^k(e_j)$ crosses translates of~$e_i$. To say that $F$ is \emph{EG-aperiodic} means that there exists an exponent $k \ge 1$ such that each entry of $\M^k$ is positive (equivalently, each $k$-tile crosses at least one translate of each edge); and in this case $\M$ has Perron-Frobenius eigenvalue denoted $\lambda > 1$ and called the \emph{expansion factor} of $f$. 

To say that a concrete line $L \subset T$ is \emph{exhausted by iteration tiles (of $F$)} means that every finite subpath of $L$ is contained in some iteration tile.

\begin{theorem}[The special case of \protect{\cite[Theorem 4.17]{\RelFSHypTwoTag}}]
\label{TheoremLymanTTLams} For any free factor system~$\F$ of $\Gamma$ and any $\phi \in \Out(\Gamma;\F)$, if $\phi$ has an EG-aperiodic train track representative $F \from T \to T$ rel~$\F$ then $\phi$ has exactly one attracting lamination $\Lambda \subset \wtB(\Gamma;\F)$ relative to~$\F$. Furthermore, a concrete line $L \subset T$ is the realization of a generic leaf of $\Lambda$ if and only if $L$ is exhausted by iteration tiles~of~$F$. \qed
\end{theorem}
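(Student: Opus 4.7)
The plan is to deduce this theorem as a direct specialization of the summary statement \cite[Theorem~4.17]{\RelFSHypTwoTag}, which was in turn extracted from Lyman's work \cite{\LymanCTTag}. That summary supplies a natural bijection between the finite set $\L(\phi)$ of attracting laminations of $\phi$ rel~$\FF$ and the set of EG-aperiodic substrata of any relative train track representative of~$\phi$, together with a description of generic leaves built from iteration tiles of the associated stratum.

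First I would observe that an EG-aperiodic train track representative $F \from T \to T$ of $\phi$ is, in particular, a relative train track representative whose filtration is trivial: it has a single stratum, namely the entire natural edge set of~$T$. The EG-aperiodicity hypothesis --- that some power of the transition matrix $\M$ is positive --- is precisely what makes that single stratum an EG-aperiodic substratum in the sense of \cite[Theorem~4.17]{\RelFSHypTwoTag}. Since $F$ has exactly one EG-aperiodic substratum (the top, and only, stratum), the bijection supplied by that theorem forces $\L(\phi)$ to be a singleton, and we label its unique element~$\Lambda$. This establishes the first assertion.

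For the characterization of generic leaves I would unpack the description given in \cite[Theorem~4.17]{\RelFSHypTwoTag}: the attracting lamination associated to an EG-aperiodic stratum is obtained in $T$ as the closure of the collection of ``tile lines'' built from ascending unions of iteration tiles of that stratum, and its generic leaves correspond precisely to those concrete lines in~$T$ that are exhausted by iteration tiles of the stratum. In the present single-stratum setting, iteration tiles of the stratum coincide with the iteration tiles of $F$ as defined just above in Section~\ref{SectionTTsAndLams}, giving the ``if'' direction of the characterization. The ``only if'' direction follows from the same construction: any concrete realization in~$T$ of a generic leaf of $\Lambda$ must, by the weak-topology definition of attracting lamination combined with the Perron--Frobenius growth of tile lengths and EG-aperiodicity (ensuring any edge appears in a sufficiently high tile of any edge), have every finite subpath occurring inside some iteration tile $F^k(e)$.

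The main obstacle I anticipate is purely bookkeeping: one must verify that the trivial (single-stratum) filtration of our train track representative really does fit Lyman's admissibility conventions for relative train track structures, and that the ``tile'' terminology here, as formalized in Definition~\ref{DefGeneralTiles} and Section~\ref{SectionTTsAndLams}, translates correctly to the ``tile'' notion used in \cite[Theorem~4.17]{\RelFSHypTwoTag} and \cite{\LymanCTTag} (where arguments are typically phrased downstairs in the quotient graph of groups rather than upstairs in~$T$). Once these translations are aligned, the present theorem is essentially a restatement of the relevant portions of \cite[Theorem~4.17]{\RelFSHypTwoTag} under the simplifying assumption that the filtration is trivial.
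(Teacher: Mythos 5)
Your proposal is correct and matches the paper's approach exactly: the paper states Theorem~\ref{TheoremLymanTTLams} with a \qed attached to the statement itself, indicating that the proof \emph{is} the citation of \cite[Theorem 4.17]{\RelFSHypTwoTag} specialized to the single-stratum case, and your elaboration of the bijection between attracting laminations and EG-aperiodic substrata is precisely the intended justification. The bookkeeping concerns you flag (matching the trivial filtration to Lyman's conventions and matching the two tile vocabularies) are exactly the translation issues the paper implicitly defers to \cite[Section~4.3]{\RelFSHypTwoTag} and \cite{\LymanCTTag}.
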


\subsection{The first constraint on $\Omega$:  The lamination and the fold axis}
\label{SectionFirstConstraint}
We are ready to start executing the outline laid out in Section~\ref{SectionOmegaAndProof}.

\subsubsection{Finding the attracting lamination.}
\label{SectionCountLams}
Consider any $\phi \in \Out(\Gamma;\A)$ satisfying the \emph{Large Orbit Hypothesis}, with a value of $\Omega$ yet to be specified: Every $\phi$-orbit in $\FS(\Gamma;\A)$ has diameter $\ge \Omega$. Among all $\phi$-invariant, non-filling free factor systems of $\Gamma$ \relA, choose $\F$ to be one which is maximal with respect to $\sqsubset$. Let $m \ge 0$ be the number of attracting laminations of $\phi$ \relA\ that are not supported by $\F$. 

We now apply \cite[Proposition 4.24]{\RelFSHypTwoTag}. That proposition has two possible conclusions depending on whether $m=1$. Conclusion~(2) uses ``fold axes'' which will be reviewed immediately in Section~\ref{SectionFoldAxisReview} immediately, where we will augment conclusion (2) with an additional subconclusion.

Here are the two conclusions of \cite[Proposition 4.24]{\RelFSHypTwoTag}:
\begin{description}
\item[Conclusion (1) (when $m \ne 1$):] There exists a Grushko free splitting $T$ of $\Gamma$ rel~$\F$ such that the diameter in $\FS(\Gamma;\A)$ of the orbit $\{T \cdot \phi^k\}_{k \in \Z}$ is $\le 4$. 
\end{description}
We immediately rule out Conclusion (1) by combining our \emph{Hypothesis} with the following:
\begin{description}
\item[Constraint \#1:] $\Omega \ge \Omega_1 = 5$. 
\end{description}
\emph{Conclusion (1)} and \emph{Constraint \#1} together imply that $m=1$. The other conclusion of \cite[Proposition 4.24]{\RelFSHypTwoTag} must therefore hold:
\begin{description}
\item[Conclusion (2) (when $m=1$):] Letting $\Lambda$ denote the unique attracting lamination of $\phi$ \relA\ that is not carried by~$\F$,  
there exists a fold axis of $\phi$ with respect to~$\F$, such that the following hold:
\begin{description}
\item[Conclusion (2a):] The first return map $F_0 \from T_0 \to T_0$ for that axis is an EG-aperiodic train track representative of~$\phi$~rel~$\F$. Let $\Lambda_\F$ be the unique attracting lamination of $\phi$ rel~$\F$ (see~Theorem~\ref{TheoremLymanTTLams}).
\item[Conclusion (2b):] There exists a collapse map $\mu \from U \to T_0$ defined on some Grushko free splitting $U$ of $\Gamma$ \relA, such that $\mu$ induces a bijection between concrete lines in $U$ realizing generic leaves of $\Lambda$ and concrete lines in $T_0$ realizing generic leaves of $\Lambda_\F$.
\end{description}
\end{description} 


\subsubsection{Fold axes and tiles: a review.} 
\label{SectionFoldAxisReview}
We review here notation for fold axes taken from  \cite[Definition 4.8]{HandelMosher:RelComplexHypII}, for use in the setting of Conclusion~(2) above, and in settings to recur throughout this work.

An EG-aperiodic fold axis of $\phi$ with respect to $\F$ is a bi-infinite fold path of Grushko free splittings rel~$\F$, which fits into both the top and bottom rows of a bi-infinite commutative diagram shown just below, and which satisfies further properties to be described:
$$\xymatrix{
& \cdots \ar[r]  & T_{l-1} \ar[rr]^{f_{l}} \ar[dl]^<<<<{h^{l-1}_{l-p-1}} && T_{l} \ar[rr]^{f_{l+1}}  \ar[dl]^<<<<{h^l_{l-p}} && T_{l+1} \ar[r]  \ar[dl]^<<<<{h^{l+1}_{l-p+1}}& \cdots\\
\cdots \ar[r]  & T_{l-p-1} \ar[rr]_{f_{l-p}}  && T_{l-p} \ar[rr]_{f_{l-p+1}} && T_{l-p+1}  \ar[r] & \cdots
}$$
In this diagram, as said $T_i$ is a Grushko free splitting with respect to~$\F$, and is thus equipped with some simplicial structure. The~integer $p \ge 1$ is the \emph{period} of the axis. There exists a representative $\Phi \in \Aut(\Gamma;\F)$ of $\phi \in \Out(\Gamma;\F)$, such that each map \hbox{$h^l_{l-p} \from T_{l} \to T_{l-p}$} is a $\Phi$-twisted equivariant simplicial isomorphism ($l \in \mathbb Z$). Each of the foldable maps $f_l$ is, of course, $\Gamma$-equivariant. It follows that $T_l \cdot \phi = T_{l+p}$ for each $l \in \Z$, hence the $\phi$-orbit of $T_l$ can be expressed as $\{T_l \cdot \phi^k\}_{k \in \Z} = \{T_{l+kp}\}_{k \in \Z}$. For each $i < j$ we denote an equivariant map
$$f^i_j = f_j \circ \cdots \circ f_{i+1} \from T_i \to T_j
$$
For each $l \in \Z$ the \emph{first return map} $F_l \from T_l \to T_l$ is the unique $\Phi$-twisted equivariant topological representative of $\phi$ rel~$\F$ that makes the following diagram commute:
$$\xymatrix{
&& T_{l} \ar[rr]^{f^l_{l+p}}  \ar[dll]_{h^l_{l-p}} \ar[d]_{F_l}
&& T_{l+p} \ar[dll]^<<<<{h^{l+p}_{l}} \\
 T_{l-p} \ar[rr]_{f^{l-p}_l} && T_{l}
}
$$
Finally (as required in Conclusion~(2)), the topological representative $F_0 \from T_0 \to T_0$ is an EG-aperiodic train track representative of~$\phi$ rel~$\F$. 

For each $l \in \Z$, $k \ge 1$ and $i < j$ we denote a $\Phi^k$-twisted equivariant simplicial isomorphism
$$h^l_{l-kp} = h^{l-(k-1)p}_{l-kp} \circ \cdots \circ h^l_{l-p} \, \from \, T_l \to T_{l-kp}
$$
and its $\Phi^{-k}$-twisted equivariant inverse
$$h^{l-kp}_l = (h^l_{l-kp})^\inv \from T_{l-kp} \to T_l
$$
For each $l \in \Z$ and $k \ge 1$ we also have a commutative diagram for $(F_l)^k \from T_l \to T_l$, the \emph{$k^{\text{th}}$ return map}, which is a train track representative of $\phi^k$:
$$\xymatrix{
&& T_{l} \ar[rr]^{f^{l}_{l+kp}}  \ar[dll]_{h^l_{l-kp}} \ar[d]_{(F_l)^k}
&& T_{l+kp} \ar[dll]^<<<<{h^{l+kp}_{l}}\\
 T_{l-kp} \ar[rr]_{f^{l-kp}_l} && T_{l}
}
$$
From the previous diagram and the fact that $h^l_{l-kp}$ is a simplicial isomorphism one sees by inspection that in the free splitting $T_l$, an $(F_l)^k$ (natural) tile is the same thing as an $f^{l-kp}_l$ (natural) tile. These are referred to as \emph{(natural) $k$-tiles} (in $T_l$) or as \emph{(natural) iteration tiles} when the value of $k$ is not important.

Recall from \cite[Lemma~4.10]{\RelFSHypTag} that when the edge orbits of each $T_l$ are enumerated so that each simplicial isomorphism $h^l_{l-p}$ respects enumerations, the transition matrix $\M_l$ of the first return map $F_l \from T_l \to T_l$ is equal to the transition matrix $\M f^{l-p}_l$ of the foldable map $f^{l-p}_l$. More generally the matrix power $(\M_l)^k$ is equal to the transition matrix $\M f^{l-kp}_l$, the $i,j$ entry of~which counts how many times the path $f^{l-kp}_l(e)$ crosses edges in the orbit of $e'$, where $e \subset T_{l-kp}$ represents the $j^{th}$ edge orbit, and $e' \subset T_l$ represents the $i^{th}$ edge orbit.

This completes our review of fold axes. See also \cite[Lemma 4.22]{\RelFSHypTwoTag} which gives a general construction for a fold axis having a given EG-irreducible train track representative as its first return map.

\bigskip

Fixing notation for a fold axis of $\phi$ with respect to~$\F$ satisfying Conclusion~(2) above, we add another subconclusion regarding the behavior of generic leaves along the fold axis:

\begin{description}
\item[Conclusion (2c):] For every $i \in \Z$, for every collapse map $\mu \from V \to T_i$ defined on a Grushko free splitting $V$ \relA, and for~every birecurrent line $\ell \in \wtBinf(\Gamma;\A)$ with concrete realizations $\ell_V \subset V$ and $\ell_i \subset T_i$, we have $\mu(\ell_V)=\ell_i$. Furthermore, the following properties of $\ell$ are equivalent:
\begin{description}
\item[$(A):$] $\ell$ is a generic leaf of $\Lambda$, 
\item[$(B):$] $\ell_i$ is a line in $T_i$ that is exhausted by iteration tiles in $T_i$.
\item[$(C):$] $\ell_i$ is the concrete realization in $T_i$ of a generic leaf of $\Lambda_\F$.
\end{description}
\end{description}

\noindent
\textbf{Remark:} The main difference between Conclusions (2b) and (2c) is how $i \in \Z$ and \hbox{$\mu \from U \to T_i$} are quantified: (2b) is an existence statement; whereas (2c) is a universal statement.

\begin{proof} In the case that $\ell$ is supported by~$\F$, it follows that $\ell$ is not a generic leaf of~$\Lambda$, and that $\ell_l \subset T_l$ is a point instead of a line (by Lemma~\ref{LemmaDefBirecurrentRealization}~Case~1); each of (A), (B), (C) is therefore false in this case.

Consider now the case that $\ell$ is not supported by~$\F$. By Lemma~\ref{LemmaDefBirecurrentRealization}~Case~2 it follows that $\ell \in \wtB_\F(\Gamma;\A)$ and that $\ell_i$ is a line in~$T_i$. Note that the formulations of statements $(B)$ and $(C)$ depend on the choices of $i \in \Z$ and $\mu \from V \to T_i$, but on the other hand $(A)$ is independent of those choices. Applying Conclusion (2) it follows that statement~$(B)$, formulated with \emph{one particular} set of choices, is equivalent to statement $(A)$. But by applying \cite[\hbox{Proposition 4.25~(2b)}]{\RelFSHypTwoTag} it follows that statements $(B)$ and $(C)$ are all equivalent to each as one varies over all of the choices made in their formulations. Statement $(A)$ is therefore equivalent to all of those statements. 
\end{proof}

\subsubsection{PF-exponents and a uniform crossing property.}
\label{SectionPFExponentOnAxis}
The ``Two-over-all'' theorems --- weak and strong --- establish certain \emph{distances} along Stallings fold paths for which corresponding tiles satisfy strong properties. 

In this section (and in Section~\ref{SectionLowerBound}), one of our strategies  is to establish certain \emph{exponents} along fold axes for which corresponding tiles satisfy strong properties: see Definition~\ref{DefPFExponent} just below; and see also various properties to follow in Section~\ref{SectionNoFillingLam}, particularly Definition~\ref{DefFillingExponent} of Section~\ref{SectionUnifFillProps}.


\smallskip

We begin by observing that for every EG-aperiodic train track map $F \from T \to T$ with transition matrix $\M$, there exists an integer $\kappa \ge 1$ such that every entry of $\M^k$ is $\ge 4$: this follows because $\M$ has \emph{some} positive power, and it has Perron-Frobenius eigenvalue $\lambda>1$. In the language of our next definition one says that $\kappa$ is a \emph{PF-exponent} of $F$:

\begin{definition}[PF-exponents]
\label{DefPFExponent}
To say that an integer $\kappa \ge 1$ is a \emph{PF-exponent} of an EG-aperiodic train track map $F \from T \to T$ means that every $F^\kappa$ tile in $T$ crosses at least $4$ translates of every edge of $T$; equivalently, every entry of the transition matrix of $F$ is $\ge 4$. Also, given $\phi \in \Out(\Gamma;\A)$ and an EG-aperiodic fold axis of $\phi$ relative to a $\phi$-invariant free factor system~$\F$ \relA\ (denoted as in Section~\ref{SectionFoldAxisReview}), to say that $\kappa$ is a PF-exponent of the fold axis means that it is a PF-exponent of every train track map $F_l \from T_l \to T_l$ ($l \in \Z$); equivalently, every $\kappa p$ tile $f^{l-\kappa p}_l(e) \subset T_l$ crosses at least $4$ translates of every edge of $T_l$ (for edges $e \subset T_{l-\kappa p}$).
\end{definition}
\noindent
Note that every EG-aperiodic fold axis of $\phi$ does indeed have a PF exponent: letting $p$ be the period, simply take the maximum of the PF exponents of $F_i \from T_i \to T_i$ for $0 \le i < p$.

\medskip

\emph{Remark.} While the choice of ``$4$'' in Definition~\ref{DefPFExponent} may seem arbitrary, its utility should become apparent in the following lemma. 

\begin{lemma}[Uniform Crossing Property for Tiles] 
\label{LemmaUniformCrossing}
Given an EG-aperiodic train track map $F \from T \to T$ with PF-exponent $\kappa$, for every edge $e \subset T$ and for every \emph{natural} edge $E \subset T$, the tile $F^\kappa(e)$ crosses at least $2$ translates of~$E$. It follows that given any EG-aperiodic fold axis with period~$p$ and PF-exponent $\kappa$, and given any $T_i$ along that axis, every $\kappa p$ tile in $T_i$ crosses at least 2 translates of every natural edge of $T_i$.
\end{lemma}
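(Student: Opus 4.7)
The plan is to reduce the statement to a simple counting argument based on the fact that each edge of $T$ lies in a unique natural edge, so that distinct $\Gamma$-translates of a single edge lie in distinct translates of the natural edge containing it. First I would dispose of the trivial case in which $E$ itself consists of a single edge: there the PF-exponent hypothesis directly supplies $\ge 4$ translates of $E$ crossed, which is more than $2$. Otherwise, write $E$ as a concatenation $e_1 \cdots e_n$ of edges ($n \ge 2$) and pick any one of them, say $e_1$. By the PF-exponent hypothesis, the tile $\alpha = F^\kappa(e)$ crosses at least $4$ distinct $\Gamma$-translates $\gamma_1 \cdot e_1, \ldots, \gamma_4 \cdot e_1$; distinctness of these translates of $e_1$ follows from triviality of edge stabilizers in free splittings, and as a consequence the four natural edges $\gamma_i \cdot E$ are also pairwise distinct.

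The key claim is that $\alpha$ fully crosses $\gamma_i \cdot E$ unless an endpoint of $\alpha$ lies at an interior (bivalent) vertex of $\gamma_i \cdot E$. Indeed, every interior vertex of a natural edge has valence $2$, so at such a vertex a non-backtracking edge path can only continue into the other edge of that natural edge. Starting from the edge $\gamma_i \cdot e_1 \subset \alpha$ and walking along $\alpha$ in either direction, at each interior vertex of $\gamma_i \cdot E$ the path $\alpha$ must either terminate or continue into the next edge of $\gamma_i \cdot E$. By induction on the distance from $\gamma_i \cdot e_1$, the path $\alpha$ therefore contains all of $\gamma_i \cdot e_1, \ldots, \gamma_i \cdot e_n$ unless it terminates at some interior vertex $\gamma_i \cdot v_j$ between $\gamma_i \cdot e_j$ and $\gamma_i \cdot e_{j+1}$ for some $1 \le j < n$.

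Since $\alpha$ has exactly two endpoints and the interiors of distinct natural edges are disjoint, at most two of the four indices $i \in \{1,2,3,4\}$ can have an $\alpha$-endpoint lying in the interior of $\gamma_i \cdot E$. Hence $\alpha$ fully crosses at least $4 - 2 = 2$ translates of $E$, proving the first assertion. The second assertion is then immediate from the review in Section~\ref{SectionFoldAxisReview}: for each $l \in \Z$ the $\kappa p$-tile $f^{l-\kappa p}_l(e) \subset T_l$ coincides with an $F_l^\kappa$-tile of the first return map $F_l \from T_l \to T_l$, so the first assertion applied to $F_l$ yields the claim for every $T_i$ along the axis. I do not anticipate any genuine obstacle here; the sole delicate point is the constant ``$4$'' in Definition~\ref{DefPFExponent}, which is calibrated precisely to absorb the two endpoints of a tile, explaining the otherwise mysterious choice remarked upon just before the lemma.
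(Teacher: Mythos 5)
Your proof is correct and essentially matches the paper's: both arguments locate $4$ tile-crossings of an edge inside $E$, deduce that these lie in $4$ pairwise distinct translates of $E$ because natural edges have trivial stabilizer, and then observe that at most $2$ of those natural-edge translates can fail to be fully crossed since the tile has only two endpoints. The only cosmetic difference is that the paper packages the endpoint bookkeeping via a trichotomy and an ordered list $E_1,\ldots,E_m$ of natural edges meeting the interior of the tile, whereas you run a local ``walk along $\alpha$ until an endpoint hits a bivalent vertex'' induction; your closing remark about why the constant is $4$ rather than something smaller is exactly the intended calibration.
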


\begin{proof} Since the tile \hbox{$F^\kappa(e) \subset T$} crosses $4$ translates of every edge of $T$, it is not contained in any natural edge of $T$. For each natural edge $\eta \subset T$ it follows that one of three alternatives holds: $F^\kappa(e)$ crosses $\eta$; or $F^\kappa(e)$ is disjoint from the interior of $\eta$; or the subpath $F^\kappa(e) \intersect \eta$ has one endpoint in common with an endpoint of $\eta$, and opposite endpoint in common with an endpoint of $F^\kappa(e)$. 

Since distinct natural edges of $T$ have disjoint interiors, and since $F^\kappa(e)$ has exactly two endpoints, it follows that the set of translates of the natural edge $E$ whose interior is not disjoint from $F^\kappa(e)$ may be listed in order (and with disjoint interiors) as $E_1,\ldots,E_m$, such that the subpath $F^\kappa(e) \intersect E_1$ contains one endpoint of $F^\kappa(e)$, and $E_i \subset F^\kappa(e)$ for $2 \le i \le m-1$, and the subpath $F^\kappa(e) \intersect E_m$ contains the opposite endpoint of $F^\kappa(e)$. The tile $F^\kappa(e)$ therefore crosses at least $m-2$ translates of~$E$. 

Choose an edge $e'$ of $T$ such $e' \subset E$. Using that $\kappa$ is a PF-exponent for $F$, the path $F^\kappa(e)$ crosses 4 or more distinct translates of~$e'$. Any two of these translates are contained in two distinct translates of $E$, because each natural edge has trivial stabilizer. It follows that $m \ge 4$, and so $m-2 \ge 2$.
\end{proof}

For the concept of ``pullback'' referred to in the following lemma, see Definition~\ref{DefNondegenSubgraph}. 

\begin{lemma}[Trivial Collapse Property] 
\label{LemmaTrivialCollapse}
Let $\kappa$ be a PF-exponent for the fold axis.  Consider any $i,\ell \in \Z$ such that $i \le \ell - \kappa p$. For any proper subforest $\mathscr T_\ell \subset T_\ell$, consider the pullback subforest $\mathscr T_i = (f^i_{\ell})^*(\tau_i)$ and its corresponding collapse map $q_i \from T_i  \xrightarrow{\<\mathscr T_i\>} S_i$. The following hold:
\begin{enumerate}
\item\label{ItemEdgeNotInTi}
For any edge $e \subset T_i$, we have $e \not\subset \mathscr T_i$. The collapse map $q_i$ is therefore injective on the set of vertices of $T$.
\item\label{ItemCollapseIsTrivial}
The collapse map $q_i \from T_i \to S_i$ is trivial. More precisely, the restrictions of $q_i$ to the edges of $T_i$ can be equivariantly tightened to produce an equivariant homeomorphism $\hat q_i \from T_i \to S_i$ (see \cite[Section 2.2.4]{\RelFSHypTwoTag}). As a consequence, the restrictions of $q_i$ and $\hat q_i$ to the vertex set of $T_i$ are identical, and for every path $\alpha \subset T_i$ we have $q_i(\alpha)=\hat q_i(\alpha)$.
\item\label{ItemCollapseSameFF}
$\FFS S_i = \FFS T_i = \F$.
\item\label{ItemFillsIFFFills}
For every path $\alpha \subset T_i$, $\alpha$ fills $T_i$ if and only if $q_i(\alpha)$ fills $S_i$.
\end{enumerate}
\end{lemma}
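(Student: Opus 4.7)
The strategy is to prove \pref{ItemEdgeNotInTi} directly; items \pref{ItemCollapseIsTrivial}, \pref{ItemCollapseSameFF}, and \pref{ItemFillsIFFFills} will then follow by essentially formal arguments. The key input for \pref{ItemEdgeNotInTi} is the Uniform Crossing Property (Lemma~\ref{LemmaUniformCrossing}), which, because $i \le \ell - \kappa p$, can be transferred to edges of $T_i$ via the map $f^i_\ell$.

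For \pref{ItemEdgeNotInTi}, suppose for contradiction that some edge $e \subset T_i$ satisfies $e \subset \mathscr{T}_i$. Since $\mathscr{T}_i$ is obtained from $(f^i_\ell)^{-1}(\mathscr{T}_\ell)$ by deleting single-point components, we have $f^i_\ell(e) \subset \mathscr{T}_\ell$. Factor $f^i_\ell = f^{\ell-\kappa p}_\ell \circ f^i_{\ell-\kappa p}$: foldability ensures that $f^i_{\ell-\kappa p}$ is injective on $e$, so $f^i_{\ell-\kappa p}(e)$ contains some edge $e' \subset T_{\ell-\kappa p}$. Then the $\kappa p$-tile $f^{\ell-\kappa p}_\ell(e')$ sits inside $f^i_\ell(e)$, and Lemma~\ref{LemmaUniformCrossing} tells us it crosses at least two translates of every natural edge of $T_\ell$. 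Inspection of the proof of that lemma shows that of the $m \ge 4$ translates of a given natural edge whose interiors meet the tile, all but possibly the first and last are \emph{fully contained} in the interior of the tile, hence in $f^i_\ell(e)$.

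Now choose, using properness of $\mathscr{T}_\ell$, an edgelet $\varepsilon$ of some natural edge $E \subset T_\ell$ whose interior is disjoint from $\mathscr{T}_\ell$; by $\Gamma$-invariance the entire orbit $\Gamma \cdot \varepsilon$ is disjoint from $\mathscr{T}_\ell$. From the previous paragraph, some translate $\gamma \cdot E$ sits fully inside $f^i_\ell(e)$, so $\gamma \cdot \varepsilon \subset \gamma \cdot E \subset f^i_\ell(e)$, contradicting $f^i_\ell(e) \subset \mathscr{T}_\ell$. The vertex injectivity assertion in \pref{ItemEdgeNotInTi} is then automatic: if a component of $\mathscr{T}_i$ contained two vertices of $T_i$, its subtree structure would force it to contain the edge path between them in $T_i$, and hence an entire edge of $T_i$.

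The remaining items are short. Since $\mathscr{T}_i$ contains no edge of $T_i$, in particular no natural edge, the collapse map $q_i$ is trivial in the sense recalled in Section~\ref{SectionSTOATTerms}, and the standard equivariant tightening procedure produces the homeomorphism $\hat q_i \from T_i \to S_i$ of \pref{ItemCollapseIsTrivial}. Item \pref{ItemCollapseSameFF} is then immediate from this equivalence of free splittings together with $\FFS{T_i} = \FF$. Finally \pref{ItemFillsIFFFills} follows from the invariance principle for filling paths noted just after Definition~\ref{DefinitionFillingPaths}, applied to the equivariant homeomorphism $\hat q_i$, using $q_i(\alpha) = \hat q_i(\alpha)$ as subsets of $S_i$. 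The main technical point is the factor of $4$ baked into the definition of PF-exponent, which guarantees that each $\kappa p$-tile contains some translate of every natural edge strictly in its interior rather than merely meeting it.
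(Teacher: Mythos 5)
Your proof is correct and follows essentially the same route as the paper's: both arguments factor $f^i_\ell$ through $T_{\ell-\kappa p}$, use the PF-exponent to show the $\kappa p$-tile cannot lie inside $\mathscr T_\ell$, deduce item~\pref{ItemEdgeNotInTi}, and then obtain items~\pref{ItemCollapseIsTrivial}--\pref{ItemFillsIFFFills} by the same formal considerations (triviality of the collapse, invariance of filling under equivariant homeomorphism).

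One small point of overcomplication worth flagging: for item~\pref{ItemEdgeNotInTi} the paper uses the PF-exponent definition directly --- the $\kappa p$-tile \emph{crosses} a translate of every edge of $T_\ell$, which already means it contains such a translate as a subpath, and since $\mathscr T_\ell$ is a proper invariant subforest some edge orbit is entirely missing from $\mathscr T_\ell$, giving the contradiction at once. You instead route through Lemma~\ref{LemmaUniformCrossing} (natural edges) and then argue about \emph{interior} containment, but neither the ``factor of $4$'' nor interior crossing is actually needed here: mere subpath containment of one translate of one uncollapsed edge already forces $f^i_\ell(e) \not\subset \mathscr T_\ell$. The factor of $4$ earns its keep in Lemma~\ref{LemmaUniformCrossing} (to produce crossings of \emph{natural} edges) and later in Lemma~\ref{LemmaUniformCrossingFilling}, not in the Trivial Collapse Property. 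Your final sentence thus slightly misattributes the technical weight; it does not affect the correctness of the argument.
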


\textbf{Remark:} In this statement one should keep in mind the following somewhat tautological but nonetheless important statement: $\mathscr T_i$ is a subforest of $T_i$ with respect to the subdivision of $T_i$ for which the map $T_i \mapsto T_\ell$ is simplicial. An edge $e \subset T_i$ is typically a concatenation of many edgelets of this subdivision. The statement $e \not\subset \mathscr T_i$ is equivalent to saying that at least one edgelet of $e$ is not in the subforest $\mathscr T_i$, and to saying that $e$ does not collapse to a single point of~$S_i$.

\begin{proof} Conclusions~\pref{ItemCollapseIsTrivial} and~\pref{ItemCollapseSameFF} are both immediate consequences of Conclusion~\pref{ItemEdgeNotInTi}. Conclusion~\pref{ItemFillsIFFFills} then follows by combining the invariance principle stated after Definition~\ref{DefinitionFillingPaths} with the conclusions stated in~\pref{ItemCollapseIsTrivial} that $\hat q_i$ is an equivariant homeomrophism and that $q_i(\alpha)=\hat q_i(\alpha)$.

 To prove Conclusion~\pref{ItemEdgeNotInTi} we start with the special case $i = \ell - \kappa p$. Using that $\kappa$ is a PF-exponent, for any edge $e \subset T_{\ell - \kappa p}$ the image $\kappa$-tile $f^{\ell - \kappa p}_\ell(e) \subset T_{\ell}$ crosses a translate of every edge of $T_{\ell}$. Since the subforest $\mathscr T_\ell \subset T_\ell$ is proper, it follows that $f^{\ell - \kappa p}_\ell(e) \not\subset \mathscr T_{l}$, and therefore $e \not\subset \mathscr T_{\ell - \kappa p}$. The general case $i \le \ell-\kappa p$ reduces to the special case as follows. For each edge $e \subset T_i$, its image $f^i_{\ell-\kappa p}(e) \subset T_{\ell-\kappa p}$ crosses some edge $e' \subset T_{\ell-\kappa p}$. Knowing from the special case that $e' \not\subset \mathscr T_{\ell-\kappa p}$, it follows that $f^i_{\ell-\kappa p}(e) \not\subset \mathscr T_{\ell-\kappa p}$ and hence $e \not\subset \mathscr T_i = (f^i_{\ell-\kappa p})^*(T_{\ell-\kappa p})$. 
\end{proof}

\subsection{The second constraint on $\Omega$: The fold axis is quasigeodesic}
\label{SectionSecondConstraint}


\emph{Notation Overview:} In this and later subsections of Section~\ref{SectionNoFillingLam}, we continue with the objects and their notations and various properties that were established in Section~\ref{SectionFirstConstraint} as a consequence of Constraint~\#1 on the value of~$\Omega$, in particular: an element $\phi \in \Out(\Gamma;\A)$ satisfying the \emph{Large Orbit Hypothesis}; a choice of maximal, proper, $\phi$-invariant free factor system~$\F$ \relA; the unique attracting lamination $\Lambda$ for $\phi$ \relA\ that is not carried by~$\F$; and a choice of fold axis for $\phi$ with respect to~$\F$.

\smallskip

Our next constraint on $\Omega$ is expressed using the constant  $\Theta = \Theta(\Gamma;\A)$ from the \STOAT:
\begin{description}
\item[Constraint \#2:] $\Omega \ge \Omega_2 = \Theta$
\end{description}
Using this, the remainder of Section~\ref{SectionSecondConstraint} is dedicated to proving the following:
\begin{description}
\item[Quasigeodesic Axis Property (nonquantitative version):] The fold axis of $\phi$ is a bi-infinite quasigeodesic in $\FS(\Gamma;\A)$
\end{description}

\subparagraph{Remark} This property is equivalent to saying that $\phi$ acts loxodromically on $\FS(\Gamma;\A)$, which is item~\pref{ItemActLox} of Theorem~B. Perhaps one could think about breaking the proof of the implication~\pref{ItemActLargeOrbit}$\implies$\pref{ItemFillingLamExists} into two implications \pref{ItemActLargeOrbit}$\implies$\pref{ItemActLox}$\implies$\pref{ItemFillingLamExists}, but just setting up the proof of the latter implication would already require much of the rigamarole of axes and tiles that is already needed for the full implication.

\subsubsection{Filling exponents and uniform filling properties.} 
\label{SectionUnifFillProps}

The proof of the Quasigeodesic Axis Property will use two filling properties that are developed here in Section~\ref{SectionUnifFillProps} as consequences of Constraint \#2: the \emph{Uniform Filling Property} in Lemma~\ref{LemmaUniformFilling}; and the \emph{Uniform Crossing Filling Property} in Lemma~\ref{LemmaUniformCrossingFilling}.


\begin{definition}[Filling exponents]
\label{DefFillingExponent}
To say that an integer $\omega \ge 1$ is a \emph{filling exponent} of an EG-aperiodic train track map $F \from T \to T$ means that every $F^\omega$ tile in $T$ fills $T$. Also, given $\phi \in \Out(\Gamma;\A)$ and an EG-aperiodic fold axis of $\phi$ relative to a $\phi$-invariant free factor system~$\F$ \relA\ (denoted as in Section~\ref{SectionFoldAxisReview}), to say that $\omega$ is a filling exponent of the fold axis means that $\omega$ is a filling exponent of every train track map $F_l \from T_l \to T_l$ ($l \in \Z$); equivalently, every $\omega p$ tile $f^{l-\omega p}_l(e) \subset T_l$ fills $T_l$ (for edges $e \subset T_{l-\omega p}$).
\end{definition}

Along the $\phi$-orbit $\{T_0 \cdot \phi^k = T_{kp}\}_{k \in \Z}$, applying the \emph{Large Orbit Hypothesis} we obtain integers $I < J$ such that \hbox{$d_\FS(T_{Ip},T_{Jp}) \ge \Omega$}. Denoting $\delta=J-I$, and using that the~cyclic subgroup $\<\phi\> \subgroup \Out(\Gamma;\A)$ acts by isometries on the orbit of~$T_0$, we have:
$$d_\FS(T_{(m - \delta) p},T_{mp}) = d_\FS(T_{mp} \cdot \phi^{-\delta}, T_{mp}) \ge \Omega \quad\text{for each $m \in \Z$.}
$$
Any $\delta$ with the property is called a \emph{large orbit exponent} for the orbit $T_0 \cdot \phi^k$. 

Using Constraint \#2 we shall prove:


\begin{lemma}[Uniform Filling Property for Tiles] 
\label{LemmaUniformFilling}
If $\kappa$ is a PF-exponent for the fold axis, and if $\delta$ is a large orbit exponent for the orbit $\{T_0 \cdot \phi^k\}$, then $\omega = \kappa + \delta + 1$ is a filling exponent for the fold axis.
\end{lemma}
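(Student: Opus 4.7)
The plan is to verify directly that every $\omega p$-tile in $T_l$ fills $T_l$ by using the \STOAT\ to produce filling tiles at the distance scale $\delta p$, and then relaying that filling property to arbitrary $\omega p$-tiles through the PF-exponent. Fix $l \in \Z$ and an edge $e \subset T_{l-\omega p}$; we must prove that the tile $\alpha := f^{l-\omega p}_l(e) \subset T_l$ fills $T_l$.

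First I would apply the \STOAT~\ref{TheoremStrongTwoOverAll} to the foldable map $f^{l-\delta p}_l \from T_{l-\delta p} \to T_l$. By the \emph{Large Orbit Hypothesis} applied to the orbit of $T_l$ together with the isometric action of $\<\phi\>$ on $\FS(\Gamma;\A)$, and after enlarging $\delta$ if necessary so that it simultaneously serves as a large orbit exponent for each of the finitely many $\phi$-orbits intersecting the fold axis (namely those of $T_0, T_1, \ldots, T_{p-1}$), Constraint \#2 yields $d_\FS(T_{l-\delta p}, T_l) \ge \Omega \ge \Theta$. The \STOAT\ then produces natural edges $E_1, E_2 \subset T_{l-\delta p}$ in distinct $\Gamma$-orbits such that each of $f^{l-\delta p}_l(E_1), f^{l-\delta p}_l(E_2) \subset T_l$ fills $T_l$.

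Next I would exploit the PF-exponent to locate a translate of $E_1$ inside a sub-tile of $\alpha$. Factor the foldable composition as $f^{l-\omega p}_{l-\delta p} = f^{l-\omega p + p}_{l-\delta p} \circ f^{l-\omega p}_{l-\omega p + p}$, and choose any edge $e' \subset T_{l-\omega p + p}$ contained in the $1$-tile $f^{l-\omega p}_{l-\omega p + p}(e)$. Since $\omega = \kappa + \delta + 1$, we have $l - \omega p + p = (l - \delta p) - \kappa p$, so the path $\beta := f^{l-\omega p + p}_{l-\delta p}(e') \subset T_{l-\delta p}$ is a $\kappa$-tile of the train track map $F_{l-\delta p}$. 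By the defining property of a PF-exponent together with Lemma~\ref{LemmaUniformCrossing}, $\beta$ crosses at least two translates of every natural edge of $T_{l-\delta p}$; in particular $\beta$ contains (as a subpath) some translate $\gamma \cdot E_1$ of $E_1$. Consequently
$$\gamma \cdot E_1 \,\subseteq\, \beta \,=\, f^{l-\omega p + p}_{l-\delta p}(e') \,\subseteq\, f^{l-\omega p + p}_{l-\delta p}\!\left(f^{l-\omega p}_{l-\omega p + p}(e)\right) \,=\, f^{l-\omega p}_{l-\delta p}(e).$$

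Finally I would push this inclusion forward and invoke the nesting principle. Because $f^{l-\delta p}_l$ is foldable, it is injective on every natural edge and preserves subpath inclusions within the image of a single path, so applying $f^{l-\delta p}_l$ to the inclusion above and using equivariance gives
$$\gamma \cdot f^{l-\delta p}_l(E_1) \,=\, f^{l-\delta p}_l(\gamma \cdot E_1) \,\subseteq\, f^{l-\delta p}_l\!\left(f^{l-\omega p}_{l-\delta p}(e)\right) \,=\, \alpha.$$
Since $f^{l-\delta p}_l(E_1)$ fills $T_l$, the $\gamma$-translate $\gamma \cdot f^{l-\delta p}_l(E_1)$ also fills $T_l$ by the invariance principle stated after Definition~\ref{DefinitionFillingPaths}. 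The nesting consequence of Lemma~\ref{LemmaNestingOfFillingSupport} (a superpath of a filling path is filling) then yields that $\alpha$ fills $T_l$, as required. The main obstacle is the bookkeeping surrounding $\delta$: the lemma names $\delta$ only for the orbit of $T_0$, but the filling-exponent conclusion is required at every position $l$ along the axis, so one must verify that a uniform large orbit exponent exists across the finitely many residues $l \bmod p$; once that is in hand, the rest is a clean conjunction of the \STOAT\ with the crossing and nesting lemmas already developed.
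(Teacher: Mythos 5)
Your overall architecture (apply the PF-exponent to plant a $\kappa$-tile crossing every natural edge, use the \STOAT\ to extract a filling tile from that crossing, then push forward) is essentially the same as the paper's; the final push-forward via translation-invariance and the nesting Lemma~\ref{LemmaNestingOfFillingSupport} is a fine alternative to the paper's use of Lemma~\ref{LemmaFoldFillingProtoforest}. However, there is a genuine gap in how you verify the \STOAT\ hypothesis. You apply the \STOAT\ to the map $f^{l-\delta p}_l \from T_{l - \delta p} \to T_l$ and need $d_\FS(T_{l-\delta p}, T_l) \ge \Theta$, but the hypothesis that $\delta$ is a large orbit exponent for $\{T_0 \cdot \phi^k\}$ gives $d_\FS(T_{(m-\delta)p}, T_{mp}) \ge \Omega \ge \Theta$ only when the two indices are multiples of~$p$. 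For general $l$ this is not a hypothesis you have, and your proposed repair of ``enlarging $\delta$ to be a large orbit exponent for every residue $j \bmod p$'' both changes the quantity $\omega = \kappa + \delta + 1$ that the lemma pins down and is not obviously achievable: a larger $\delta'$ need not inherit the $\ge \Omega$ bound (the isometry argument transports the bound along the orbit only for the specific $\delta_j = J_j - I_j$, not monotonically), and a triangle-inequality patch only yields $d_\FS(T_{l-\delta p}, T_l) \ge \Theta - O(p)$, which is useless since the period $p$ is not bounded in terms of $\Gamma$ and $\A$.

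The paper sidesteps this by choosing $m$ to be the least integer with $\ell - \omega p \le mp$, then factoring
$$T_{\ell-\omega p} \to T_{mp} \to T_{(m+\kappa)p} \to T_{(m+\kappa+\delta)p} = T_{(m+\omega-1)p} \to T_\ell,$$
applying the PF-exponent step between $T_{mp}$ and $T_{(m+\kappa)p}$ and the \STOAT\ between $T_{(m+\kappa)p}$ and $T_{(m+\kappa+\delta)p}$ --- both pairs of indices being multiples of~$p$, so the large orbit exponent for the orbit of $T_0$ applies directly. The extra ``$+1$'' in $\omega = \kappa + \delta + 1$ is exactly the slack that lets $\ell - \omega p$ be rounded up to the next multiple $mp$ while still leaving $(m + \omega - 1)p < \ell$ on the other side. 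If you restructure your proof so the \STOAT\ is invoked on a segment whose endpoints are multiples of $p$, the rest of your argument goes through.
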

 
\begin{proof} Let $m \in \Z$ be the least integer such that $\ell - \omega p \le mp$. It follows that $(m+\omega-1)p < \ell$, and so we have the following foldable decomposition of $f^{\ell - \omega p}_\ell \from T_{\ell-\omega p} \to T_\ell$:
$$T_{\ell-\omega p} \xrightarrow{f^{\ell-\omega p}_{mp}} T_{mp} \xrightarrow{f^{mp}_{(m+\kappa)p}} T_{(m+\kappa)p} \xrightarrow{f^{(m+\kappa)p}_{(m+\kappa+\delta)p}} T_{(m+\kappa+\delta)p} = T_{(m+\omega-1)p} \xrightarrow{f^{(m+\omega-1)p}_{\ell}} T_\ell
$$
Consider an arbitrary edge $e \subset T_{\ell-\omega p}$ with image tiles denoted $\tau_i = f^{\ell-\omega p}_i(e) \subset T_i$ ($i \ge \ell-\omega p$). We must prove that the tile $\tau_\ell$ fills $T_\ell$. The tile $\tau_{mp} \subset T_{mp}$ contains some edge of $T_{mp}$, so the tile $\tau_{(m+\kappa)p}$ contains some $\kappa$-tile of $T_{(m+\kappa)p}$. Applying the \emph{Uniform Crossing Property for Tiles} to that $\kappa$-tile, it follows that the tile $\tau_{(m+\kappa)p}$ crosses a translate of every natural edge $E \subset T_{(m+\kappa)p}$. Applying Constraint \#2 together with the \emph{Strong Two Over All Theorem}, we may choose the natural edge $E$ so that the natural tile $f^{(m+\kappa)p}_{(m+\kappa+\delta)p}(E)$ fills $T_{(m+\kappa+\delta)p} = T_{(m+\omega-1)p}$. Since that natural tile is contained in $\tau_{(m+\omega-1)p}$ it follows that $\tau_{(m+\omega-1)p}$ also fills $T_{(m+\omega-1)p}$. Applying Lemma~\ref{LemmaFoldFillingProtoforest} it follows that $\tau_l$ fills $T_l$. 
\end{proof}

By combining the concepts of PF-exponents and filling exponents we obtain:

\begin{lemma}[Uniform Crossing--Filling Property]
\label{LemmaUniformCrossingFilling}
Let $\kappa,\omega$ be a PF-exponent and a filling exponent for the fold axis, respectively. For all $\ell,m \in \Z$ such that $m \ge 0$, every $(m \kappa + \omega)p$ tile in $T_{\ell}$ crosses $4^m$ nonoverlapping natural $\omega p$-tiles each of which fills $T_{\ell}$. 
\end{lemma}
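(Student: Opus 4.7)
The plan is to proceed by induction on $m$, peeling off a $\kappa p$ block at each inductive step so that the PF-exponent property multiplies the count of filling natural tiles by $4$, with the base case handled by the Uniform Filling Property (Lemma~\ref{LemmaUniformFilling}).

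For the base case $m = 0$, every $\omega p$-tile $\tau = f^{\ell - \omega p}_\ell(e) \subset T_\ell$ is contained in the natural $\omega p$-tile $f^{\ell - \omega p}_\ell(E)$, where $E \subset T_{\ell - \omega p}$ is the natural edge containing the edge $e$. Lemma~\ref{LemmaUniformFilling} guarantees that $\tau$ itself fills $T_\ell$, and then Lemma~\ref{LemmaNestingOfFillingSupport} upgrades this to show that the containing natural tile $f^{\ell - \omega p}_\ell(E)$ also fills $T_\ell$. Thus $\tau$ crosses $1 = 4^0$ filling natural $\omega p$-tile.

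For the inductive step, suppose the lemma holds for some $m \ge 0$, and let $\tau = f^{\ell - ((m+1)\kappa + \omega)p}_\ell(e_0)$ be a $((m+1)\kappa + \omega)p$-tile in $T_\ell$. I would factor the defining map through $T_{\ell - (m\kappa + \omega)p}$, writing
$$\tau = f^{\ell - (m\kappa+\omega)p}_\ell(\alpha), \qquad \alpha = f^{\ell - ((m+1)\kappa+\omega)p}_{\ell - (m\kappa+\omega)p}(e_0),$$
so that $\alpha$ is a $\kappa p$-tile in $T_{\ell - (m\kappa + \omega)p}$. By the PF-exponent property of Definition~\ref{DefPFExponent}, $\alpha$ crosses at least four nonoverlapping translates $e_1, \ldots, e_4$ of any chosen edge representative of $T_{\ell - (m\kappa+\omega)p}$. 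Since $f^{\ell - (m\kappa+\omega)p}_\ell$ is foldable and hence injective on the embedded arc $\alpha$, the images $\tau_i := f^{\ell - (m\kappa+\omega)p}_\ell(e_i)$ for $i = 1,\dots,4$ are four pairwise nonoverlapping $(m\kappa + \omega)p$-tiles, each a subpath of $\tau$. Applying the inductive hypothesis to each $\tau_i$ yields $4^m$ pairwise nonoverlapping filling natural $\omega p$-tiles crossing $\tau_i$, for a grand total of $4 \cdot 4^m = 4^{m+1}$ filling natural $\omega p$-tiles associated to $\tau$.

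The main obstacle will be certifying that the $4^{m+1}$ natural $\omega p$-tiles assembled from the four inductive applications remain pairwise nonoverlapping \emph{across} the different parent subtiles $\tau_i$: within a single $\tau_i$ the inductive hypothesis supplies the nonoverlap, but a natural $\omega p$-tile $f^{\ell - \omega p}_\ell(E)$ can extend a bounded amount beyond any edge it contains, so a priori a natural tile flagged by $\tau_i$ might spread into the territory of $\tau_j$. The intended resolution is to trace the recursion all the way back to $T_{\ell - \omega p}$: the $4^{m+1}$ natural tiles are precisely $f^{\ell - \omega p}_\ell(E_1), \ldots, f^{\ell - \omega p}_\ell(E_{4^{m+1}})$ for $4^{m+1}$ distinct natural edges $E_k \subset T_{\ell - \omega p}$, obtained by iterating the PF selection of edges inside each $\tau_i$ and taking the natural edge of $T_{\ell - \omega p}$ that covers each selected edge. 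Distinct natural edges of $T_{\ell - \omega p}$ have disjoint interiors and lie in nonoverlapping translates of a common subforest; since the foldable map $f^{\ell - \omega p}_\ell$ is injective on each natural edge and the four $\tau_i$ are themselves nonoverlapping subpaths of $\tau$, the nonoverlap property descends to the images, completing the count of $4^{m+1}$ nonoverlapping filling natural $\omega p$-tiles crossed by~$\tau$.
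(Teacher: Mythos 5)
Your plan — peel off $\kappa p$ blocks to multiply the count by $4$, then one $\omega p$ block to produce filling — is the same computation as the paper's, merely packaged as an induction on $m$ rather than a single $m$-fold application of the PF-exponent. That reorganization is fine in principle, but the base case as written does not establish what you claim. The $\omega p$-tile $\tau = f^{\ell-\omega p}_\ell(e)$ is a \emph{subpath} of the natural tile $f^{\ell-\omega p}_\ell(E)$, so $\tau$ does \emph{not} contain a translate of that natural tile, and therefore does not \emph{cross} it in the sense used throughout this paper (crossing means containing a translate). Lemma~\ref{LemmaNestingOfFillingSupport} tells you the enclosing natural tile fills, but that containment points the wrong way for a crossing claim. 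The honest base case is the trivial one: $\tau$ crosses \emph{itself} (take $\gamma=\Id$) and fills because $\omega$ is a filling exponent. Indeed, the paper's own proof produces $4^m$ nonoverlapping $\omega p$-tiles that are \emph{not} natural, and the only downstream use of the lemma (in the proof of Lemma~\ref{LemmaQAPQuant}) invokes only ``nonoverlapping filling paths,'' so the modifier ``natural'' in the statement of Lemma~\ref{LemmaUniformCrossingFilling} should be read loosely or dropped; your Lemma~\ref{LemmaNestingOfFillingSupport} maneuver was an attempt to honor it literally, and that attempt does not succeed.

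Two smaller points on the inductive step. The worry in your final paragraph is misplaced: the inductive hypothesis hands you, inside each $\tau_i$, a set of $4^m$ pairwise nonoverlapping subpaths of $\tau_i$ (the translates of the relevant tiles that $\tau_i$ contains), and since the $\tau_i$ are themselves pairwise nonoverlapping subpaths of $\tau$, the resulting $4^{m+1}$ subpaths of $\tau$ are automatically pairwise nonoverlapping — no recursion back to $T_{\ell-\omega p}$ is needed, because crossing is a containment statement and containment in disjoint-interior pieces of $\tau$ settles the matter. Separately, the justification ``$f^{\ell-(m\kappa+\omega)p}_\ell$ is foldable and hence injective on the embedded arc $\alpha$'' is not a consequence of foldability: a fold map identifies two edges and so is certainly not injective on every embedded arc. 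The correct reason is that the composition $f^{\ell-((m+1)\kappa+\omega)p}_\ell$ restricted to the single edge $e_0$ is injective (the tile property along the fold axis), $\alpha$ is the homeomorphic image of $e_0$ under $f^{\ell-((m+1)\kappa+\omega)p}_{\ell-(m\kappa+\omega)p}$, and therefore the final factor $f^{\ell-(m\kappa+\omega)p}_\ell$ restricts to an injection on $\alpha$.
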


\begin{proof} We use the following foldable factorization of the map $f^{\ell-(m\kappa+\omega)p}_{\ell} \from T_{\ell - (m\kappa+\omega)p} \to T_\ell$, 
in which all arrows are maps of the form $f^i_j$:
$$T_{\ell - (m\kappa+\omega)p} \xrightarrow{\hphantom{f^{\ell-\omega p}_\ell}} T_{\ell - ((m-1)\kappa+\omega)p} \xrightarrow{\hphantom{f^{\ell-\omega p}_\ell}}\cdots\xrightarrow{\hphantom{f^{\ell-\omega p}_\ell}} T_{\ell - (\kappa+\omega)p} \xrightarrow{\hphantom{f^{\ell-\omega p}_\ell}} T_{\ell - \omega p} \xrightarrow{f^{\ell-\omega p}_\ell} T_\ell
$$ 
Consider any edge of $T_{\ell - (m\kappa+\omega)p}$ with image tiles denoted $\tau_i \subset T_i$ (for $i \ge \ell - (m\kappa+\omega)p$), and so in particular $\tau_\ell \subset T_\ell$ is an $(m\kappa + \omega)p$ tile. By $m$ successive applications of the definition of PF-exponent, the tile $\tau_{\ell - \omega p}$ crosses $4^m$ distinct edges of $T_{\ell - \omega p}$. The images of those edges in $\tau_\ell$ give $4^m$ nonoverlapping $\omega p$ tiles in $T_\ell$, and by definition of a filling exponent each of those tiles fills $T_\ell$.
\end{proof}

\paragraph{Remarks on uniformity of PF-exponents and of filling exponents.} In Section~\ref{SectionLowerBound} we shall describe a \emph{uniform} PF-exponent $\kappa=\kappa(\Gamma;\A)$ and a \emph{uniform} filling exponent $\omega=\omega(\Gamma;\A)$ for the fold axis currently under discussion, under the additional assumption that $T_0$ is a natural free splitting. While that kind of uniformity is not needed in our current setting here in Section~\ref{SectionNoFillingLam}, it could be applied to further uniformize other constructions found here in Section~\ref{SectionNoFillingLam} which rely on unspecified values of $\kappa$ and $\omega$. For example Lemma~\ref{LemmaQAPQuant}~\pref{ItemSTLLowerBound} will be applied in Section~\ref{SectionLowerBound}, in conjunction with uniform values of $\kappa$ and $\omega$ also described in Section~\ref{SectionLowerBound}, im order to obtain the lower bound on positive translation lengths needed for Theorem~A.

\subsubsection{Component free splitting units.}
\label{SectionCFSU}
The proof of the \emph{Quasigeodesic Axis Property} uses concepts of \emph{free splitting units} and associated distance estimates along fold paths in $\FS(\Gamma;\A)$, taken from \RelFSHyp, and applied here to our fold axis for~$\phi$. There are two different flavors of free splitting units: ``ordinary'' free splitting units; and component free splitting units \cite[Section 4.5]{\RelFSHypTag}. These two definitions are founded on two respective concepts of the complexity of a nondegenerate subgraph of a free splitting: ``ordinary'' complexity; and component complexity \cite[Section 4.4]{\RelFSHypTag}. Ordinary free splitting units have important internal applications in \cite{\RelFSHypTag}, but outside of that context component free splitting units seem generally easier to apply. Here we shall only review component complexity and component free splitting units (outside of brief remarks comparing ``component'' free splitting units to ``ordinary'' free splitting units).

Recall from Section~\ref{SectionSTOATTerms} the concept of a nondegenerate subgraph of a free splitting, namely a $\Gamma$-invariant subgraph no component of which is a point. 

For any free splitting $R$ and any nonempty, nondegenerate subgraph $\beta \subset R$, the \emph{component complexity} $C_1(\beta)$ is a positive integer equal to the number of $\Gamma$-orbits of components of $\beta$; equivalently, $C_1(\beta) = \abs{\pi_0(\beta/\Gamma)}$ where $\pi_0(\beta/\Gamma)$ denotes the set of components of the orbit subspace $\beta/\Gamma \subset R/\Gamma$.

\begin{lemma}[Monotonicity of component complexity]
\label{LemmaMonotonicityOfComplexity} \quad
For any foldable map \hbox{$f \from R' \to R$} of free splittings and any nonempty, nondegenerate subgraph $\beta_R \subset R$ with pullback $\beta_{R'} = f^*(\beta_R) \subset R'$, the map $f$ induces a surjection $\pi_0(\beta_{R'}/\Gamma) \mapsto \pi_0(\beta_R/\Gamma)$. It follows that $C_1(\beta_{R'}) \ge C_1(\beta_R)$, and equality holds if and only if that surjection is a bijection. 
\end{lemma}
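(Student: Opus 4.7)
The plan is to pass to an edgelet picture and reduce surjectivity of the induced map on component orbits to the following concrete statement: every edgelet of $\beta_R$ is the $f$-image of some edgelet of $\beta_{R'}$. First I would choose compatible subdivisions of $R'$ and $R$, as permitted by Definition~\ref{DefNondegenSubgraph}, so that $f$ is simplicial, $\beta_R \subset R$ is a union of edgelets, and the pullback $f^\inv(\beta_R) \subset R'$ is also a union of edgelets (possibly together with isolated vertices). By definition $\beta_{R'}$ is obtained from $f^\inv(\beta_R)$ by discarding its single-point components, so $\beta_{R'}$ is a union of edgelets, is $\Gamma$-invariant, and $f(\beta_{R'}) \subseteq \beta_R$. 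In particular $f$ descends to a map $\beta_{R'}/\Gamma \to \beta_R/\Gamma$, inducing the claimed map on $\pi_0$.

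The main step is to verify that each edgelet $e \subset \beta_R$ lies in $f(\beta_{R'})$. Since $f$ is foldable it is surjective (being equivariant onto a minimal $\Gamma$-tree, with every point of $R$ interior to some embedded arc on which $f$ is injective); in particular any point $y$ in the interior of $e$ has a preimage $x \in R'$. Because $f$ is simplicial, any such $x$ lies in the interior of some edgelet $e' \subset R'$, and $f(e') = e \subset \beta_R$. Thus $e' \subset f^\inv(\beta_R)$, and $e'$ is not an isolated point component of $f^\inv(\beta_R)$, so $e' \subset \beta_{R'}$ and $e \subset f(e') \subset f(\beta_{R'})$. This step is where any difficulty lies, because of the need to check that the candidate preimage edgelet is not thrown away in the pullback definition; the ``not isolated'' observation handles this.

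With that step in hand, surjectivity on component orbits is routine. Given a component $C \subset \beta_R$, pick any edgelet $e \subset C$, produce $e' \subset \beta_{R'}$ with $f(e') = e$ by the previous paragraph, and let $C' \subset \beta_{R'}$ be the component containing $e'$. Then $f(C')$ is connected, contained in $\beta_R$, and meets $C$ (it contains $e$), hence $f(C') \subseteq C$. Passing to $\Gamma$-orbits, the class $[C']$ maps to~$[C]$, so $\pi_0(\beta_{R'}/\Gamma) \to \pi_0(\beta_R/\Gamma)$ is surjective.

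Finally, the inequality $C_1(\beta_{R'}) \ge C_1(\beta_R)$ follows from surjectivity of a map of finite sets, and equality characterizes injectivity exactly when that surjection is a bijection, giving the last clause of the lemma.
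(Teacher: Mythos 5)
Your proposal is correct and takes essentially the same approach as the paper: the paper's one-line proof simply asserts that $f$ restricts to a continuous, equivariant surjection $\beta_{R'}\to\beta_R$ between finite-component objects, and your argument is a careful unwinding of why that restriction is surjective (via the edgelet-level preimage argument and the observation that a preimage edgelet is never an isolated-point component).
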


\begin{proof} This follows immediately using that $f$ restricts to a continuous, equivariant surjection \hbox{$\beta_{R'} \mapsto \beta_R$}, and that the component sets $\pi_0(\beta_{R'}/\Gamma)$ and $\pi_0(\beta_R/\Gamma)$ are finite.
\end{proof}

Recall from Definition~\ref{DefNondegenSubgraph} the pullback operator on nondegenerate subgraphs with respect to foldable maps, and the concept of a pullback sequence along a foldable sequence.

\begin{definition}[Collapse expand diagrams]
\label{DefCollExp}
Consider any fold subpath $T_I \mapsto\cdots\mapsto T_J$ in our given axis. A \emph{collapse--expand diagram} over that subpath is a commutative diagram of two combing rectangles (see Definition~\ref{DefCombingRectangle}) having the following form, with collapse forests as indicated
$$\xymatrix{
R_I \ar[r] \ar[d]_{[\mathscr R_i]}  
                 & R_{I+1} \ar[r] \ar[d]_{[\mathscr R_{i+1}]}    
                 & \cdots \ar[r]
                 & R_{J-1} \ar[r] \ar[d]_{[\mathscr R_{j-1}]}   
                 & R_J  \ar[d]_{[\mathscr R_j]}
                                                                           \\
S_I \ar[r]                         
	& S_{I+1} \ar[r]                            
	& \cdots \ar[r]           & S_{J-1} \ar[r]   
                                                                            & S_J & \\
T_I \ar[u]^{[\mathscr T_i]} \ar[r]  
	& T_{I+1} \ar[u]^{[\mathscr T_{i+1}]} \ar[r] 
	& \cdots \ar[r]  
	& T_{J-1} \ar[u]^{[\mathscr T_{j-1}]} \ar[r]
	& T_J \ar[u]^{[\mathscr T_j]} \\
}$$
Combining Definition~\ref{DefCombingRectangle} with the discussion preceding Lemma~\ref{LemmaMonotonicityOfComplexity}, in this diagram we see two pullback sequences: the $\mathscr R_i$'s form a pullback sequence along the $R$-row; and the $\mathscr T_i$'s form a pullback sequence along the $T$-row. An \emph{edgelet subdivision} of this diagram is choice of subdivision of each free splitting in the diagram such that all arrows are simplicial maps, all collapse forests $\mathscr R_i$, $\mathscr T_i$ are subcomplexes, and $\Gamma$ acts by simplicial isomorphisms. For example, an edgelet subdivision of the diagram is determined by each subdivision of $S_J$ by pulling back vertices throughout the diagram. We use ``edgelet'' terminology to refer to $1$-simplices of an edgelet subdivision, and ``edge'' terminology to refer to $1$-simplices of the simplicial structures on each $T_i$ that were already given (in our review of fold axes in Section~\ref{SectionFoldAxisReview}).
\end{definition}

\begin{definition}[Component free splitting units]
\label{DefCompFSU}
For any fold subpath $T_I \mapsto\cdots\mapsto T_J$ in our given axis, to say that $T_I$ and $T_J$ \emph{differ by $<1$ component free splitting unit} means that there exists a collapse--expand diagram as denoted in Definition~\ref{DefCollExp}, such that in the $R$-row there there exists a pullback sequence $\beta_i \subset R_i$ ($i \in [I,\ldots,J]$) having constant component complexity, equivalently (by Lemma~\ref{LemmaMonotonicityOfComplexity}) each foldable map $R_i \mapsto R_j$ (for $i < j \in [I,\ldots,J]$) induces a bijection $\pi_0(\beta_i/\Gamma) \approx \pi_0(\beta_j/\Gamma)$. In this context, after further equivariant subdivisions we may assume that $\beta_i$ is an edgelet subcomplex of $R_i$ for each $i \in [I,\ldots,J]$; and since $\beta_i$ is nondegenerate, it is a union of~edgelets. 

More generally, the \emph{number of component free splitting units between $T_I$ and $T_J$} is an integer $\CU=\CU_{IJ} \ge 0$ equal to the maximum length of a sequence $i(0) < \cdots < i(\CU)$ in $[I,\ldots,J]$ such that if $1 \le u \le \CU$ then $T_{i(u-1)}$ and $T_{i(u)}$ do \emph{not} differ by $< 1$ component free splitting unit along the axis. 

Assuming that $T_I$ and $T_J$ differ by $<1$ free splitting unit, any collapse--expand diagram and pullback sequence which witnesses that fact can be restricted to the fold subsequence $T_i \mapsto\cdots\mapsto T_j$ for any $i \le j \in [I,\ldots,J]$, and those restrictions witness that $T_i,T_j$ also differ by $<1$ free splitting unit. It follows that the phrase ``$T_I$ and $T_J$ differ by $<1$ component free splitting unit'' is equivalent to the equation $\CU(T_I,T_J) = 0$. Also, the complementary phrase ``$T_I$ and $T_J$ do not differ by $<1$ component free splitting unit'' is equivalent to the inequality $\CU(T_I,T_J) \ge 1$. Combining this with Definition~\ref{DefCollExp}, for all $i \le j \in [I,\ldots,J]$ we have implications $\CU(T_I,T_J)=0 \implies \CU(T_i,T_j)=0$ and $\CU(T_i,T_j) \ge 1 \implies \CU(T_I,T_J) \ge 1$. 
\end{definition}

Here is the main feature of free splitting units that we shall be applying:

\begin{description}
\item[\protect{\cite[Corollary 5.5]{\RelFSHypTag}}:]
Along any fold subpath $T_I \mapsto \cdots \mapsto T_J$, component free splitting units are quasicomparable to distance: for any $i < j \in [I,\ldots,J]$ we have 
$$\frac{1}{\nu} \CU_{ij} - \xi \le d_\FS(T_i,T_j) \le \nu \CU_{ij} + \xi
$$ 
with quasicomparability constants $\nu=\nu(\Gamma;\A) \ge 1$ and $\xi=\xi(\Gamma;\A) \ge 0$. \qed
\end{description}

\emph{Remark.} The ``ordinary'' complexity $C(\beta)$ of \RelFSHyp\ is the sum of ``component'' complexity $C_1(\beta)$ plus three more uniformly bounded and non-negative terms $C_i(\beta)$ ($i=2,3,4$) measuring various other aspects of complexity of $\beta$. Corollary 5.5 of \RelFSHyp\ is derived from a corresponding result about ordinary complexity; uniform boundedness of the non-negative quantity $C(\beta)-C_1(\beta)$ is of key importance in that derivation.

\medskip

We next record a simple ``diagram chasing'' result in any collapse--expand diagram:

\begin{lemma}[Tiles throughout a collapse--expand diagram]
\label{LemmaTilesInMain}
Consider a collapse--expand diagram over fold subpath $T_I \mapsto\cdots\mapsto T_J$ as denoted in Diagram~\ref{DefCollExp}. For each edge $\theta \subset T_I$ such that $\theta \not\subset \mathscr T_I$ there is a unique function which associates to each free splitting $U$ in the diagram a nontrivial path $\theta(U) \subset U$ called the \emph{$\theta$-tile in $U$}, such that $\theta(T_I)=\theta$, and such that the following properties hold (for~$I \le i \le j \le J$):
\begin{enumerate}
\item\label{ItemTileInTRow} 
\emph{\textbf{In the $T$-row:}} 
The foldable map $T_i \mapsto T_j$ takes $\theta(T_i)$ homeomorphically to $\theta(T_j)$.
\item\label{ItemTileBetweenTandS} 
\emph{\textbf{Between the $T$ and $S$ rows:}}
The collapse map $T_i \mapsto S_i$ takes $\theta(T_i)$ onto $\theta(S_i)$.
\item\label{ItemTileInSRow} 
\emph{\textbf{In the $S$-row:}}
The foldable map $S_i \mapsto S_j$ takes $\theta(S_i)$ homemorphically to $\theta(S_j)$. Also, $\theta(S_{I})$ is contained in a natural edge of $S_{I}$. 
\item\label{ItemTileBetweenRAndS} 
\emph{\textbf{Between the $R$ and $S$ rows:}} The $\theta$-tile $\theta(R_i)$ is the lift of $\theta(S_i)$ with respect to the collapse map $R_i \mapsto S_i$ (thus $\theta(R_i)$ begins and ends with edgelets of $R_i \setminus \mathscr R_i$).
\item\label{ItemTileInRRow} 
\emph{\textbf{In the $R$ row:}}
The map $R_i \mapsto R_j$ takes $\theta(R_i)$ homeomorphically to $\theta(R_j)$. Also, $\theta(R_{I})$ is contained in a natural edge of $R_{I}$.
\end{enumerate}
\end{lemma}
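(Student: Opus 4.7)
The plan is a diagram chase. I would define the tiles by explicit propagation of $\theta$ through the diagram: set $\theta(T_i) := f^I_i(\theta) \subset T_i$ for $i \ge I$; define $\theta(S_i) \subset S_i$ to be the image of $\theta(T_i)$ under the vertical collapse $T_i \to S_i$; and define $\theta(R_i) \subset R_i$ to be the lift of $\theta(S_i)$ through the vertical collapse $R_i \to S_i$ in the sense of Definition~\ref{DefinitionLiftingPaths}. Uniqueness of this tile assignment is built into the construction, since demanding items~\pref{ItemTileBetweenTandS} and~\pref{ItemTileBetweenRAndS} forces this propagation once $\theta(T_I)=\theta$ is fixed.

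Items~\pref{ItemTileBetweenTandS} and~\pref{ItemTileBetweenRAndS} are then by definition. The homeomorphism claims in items~\pref{ItemTileInTRow}, \pref{ItemTileInSRow}, and~\pref{ItemTileInRRow} all follow from foldability: a foldable map of free splittings is locally injective and restricts to a homeomorphism on any embedded nonbacktracking path, and, once edgelet subdivisions are chosen making the diagram simplicial, such a map carries each edgelet to a single edgelet. Commutativity of the vertical squares with the horizontal foldable maps, combined with uniqueness of the lift from Definition~\ref{DefinitionLiftingPaths}, then gives $f^i_j(\theta(S_i))=\theta(S_j)$ and $f^i_j(\theta(R_i))=\theta(R_j)$. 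Nontriviality of $\theta(S_i)$, needed for $\theta(R_i)$ to be well-defined as a lift, follows from condition~\pref{ItemCombRectPullback} of Definition~\ref{DefCombingRectangle}: the pullback equation $\mathscr T_I=(f^I_i)^*(\mathscr T_i)$ combined with $\theta \not\subset \mathscr T_I$ yields an edgelet $e \subset \theta$ such that $f^I_i(e) \not\subset \mathscr T_i$, so $\theta(S_i)$ contains the nontrivial image of that edgelet.

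The least formal part is to verify that $\theta(S_I)$ lies in a natural edge of $S_I$ and $\theta(R_I)$ lies in a natural edge of $R_I$. For the $S$ claim: $T_I$ sits on the fold axis and therefore its given simplicial structure coincides with its natural structure, so $\theta$ is itself a natural edge of $T_I$; any component of $\mathscr T_I$ that meets the interior of $\theta$ but not its endpoints is bounded on either side by subdivision vertices of $\theta$ having valence~$2$ and trivial stabilizer in $T_I$, so such a component has no branches outside $\theta$ and projects to a valence-$2$, trivial-stabilizer vertex of $S_I$, which is not natural; hence the interior of $\theta(S_I)$ contains no natural vertex of $S_I$. A parallel analysis, using the pullback and fiber-product properties~\pref{ItemFiberProduct} of the combing rectangle to control the structure of those components of $\mathscr R_I$ that meet the lift $\theta(R_I)$ in its interior, handles the $R$ claim.
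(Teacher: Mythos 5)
Your overall architecture — define $\theta(T_i)$ by pushing forward in the $T$-row, $\theta(S_i)$ by collapsing, $\theta(R_i)$ by lifting — matches the paper's approach, and the nontriviality argument via the pullback relation $\mathscr T_I = (f^I_i)^*(\mathscr T_i)$ is correct. But there is a genuine error in how you justify the homeomorphism claims.

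You assert that a foldable map ``is locally injective and restricts to a homeomorphism on any embedded nonbacktracking path.'' This is false. The definition of foldability only requires that each point lie in the interior of \emph{some} arc on which the map restricts injectively; the paradigm non-trivial foldable map is a fold, which is precisely \emph{not} locally injective at the fold vertex, and which collapses the turn formed by the two edges being folded. What is true, and what the paper uses, is the weaker statement that a foldable map is injective on any path contained in a \emph{natural} edge (at a valence-two, trivial-stabilizer interior vertex, foldability forces the unique turn there to map nondegenerately). The proof then needs the observation — which you record for $\theta(T_I)$, $\theta(S_I)$, $\theta(R_I)$ as part of item~\pref{ItemTileInSRow} and~\pref{ItemTileInRRow}, but do not feed back into the injectivity argument — that these paths lie in natural edges of $T_I$, $S_I$, $R_I$ respectively. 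Injectivity of $f^i_j$ on $\theta(T_i) = f^I_i(\theta)$ then follows by factoring $f^I_j = f^i_j \circ f^I_i$ and noting that both $f^I_i$ and $f^I_j$ are injective on $\theta$; an analogous argument handles the $S$ and $R$ rows. As written, your justification would equally ``prove'' that a fold is injective on an illegal turn, which it is not.

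A secondary gap: for the first sentence of item~\pref{ItemTileInRRow}, you invoke ``commutativity of the vertical squares... combined with uniqueness of the lift,'' but commutativity only shows that the image of $\theta(R_i)$ under the $R$-row map projects down to $\theta(S_j)$. To conclude that this image actually \emph{is} the lift $\theta(R_j)$ you must additionally verify that it begins and ends with edgelets of $R_j \setminus \mathscr R_j$; the paper does this using the pullback relation among the $\mathscr R_i$'s. That step is missing from your write-up.
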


\begin{proof} In the $T$-row we define $\theta(T_i)$ inductively: we already have $\theta(T_I)=\theta$; and $\theta(T_i)$ is the image of $\theta(T_{i-1})$ under the fold map $T_{i-1} \mapsto T_i$. Since all maps in the $T$-row are foldable, and since $\theta(T_I)$ is a subset of some natural edge $E \subset T_i$, the map $T_I \mapsto T_i$ takes $\theta(T_I)$ homeomorphically onto $\theta(T_i)$. Each path $\theta(T_i)$ is therefore nontrivial; item~\pref{ItemTileInTRow} also follows.

We turn to items~\pref{ItemTileBetweenTandS} and~\pref{ItemTileInSRow}. Since $\theta = \theta(T_I) \not \subset \mathscr T_I$, its image $\theta(S_I) \subset S_I$ is a nontrivial path contained in a natural edge of $S_I$, namely the image of $E$ under the collapse map $T_I \mapsto S_I$. Defining $\theta(S_i)$ inductively to be the image of $\theta(S_{i-1})$ under the foldable map $S_{i-1} \mapsto S_i$, the same argument as in the $T$ row proves item~\pref{ItemTileInSRow} and shows that each $\theta(S_i)$ is nontrivial. Using commutativity of the diagram, together with the fact that $\theta$-tiles within the $S$ and $T$ rows are mapped homeomorphically amongst themselves by the foldable maps in those rows, it follows that each collapse map $T_i \mapsto S_i$ takes $\theta(T_i)$ onto $\theta(S_i)$, proving item~\pref{ItemTileBetweenTandS}.

We may use the first sentence of item~\pref{ItemTileBetweenRAndS} (in conjunction with Definition~\ref{DefinitionLiftingPaths}) to define the path~$\theta(R_i)$. Nontriviality of $\theta(R_i)$ follows from nontriviality of $\theta(S_i)$, and the second sentence follows from Definition~\ref{DefinitionLiftingPaths}.

The second sentence of item~\pref{ItemTileInRRow} follows from the second sentence of item~\pref{ItemTileInSRow}, using the consequence of Definition~\ref{DefinitionLiftingPaths} that the collapse map $R_{i} \mapsto S_{i}$ takes the natural edges of $R_{i}$ not contained in $\mathscr R_{i}$ bijectively to the natural edges of $S_{i}$. Regarding the first sentence of~\pref{ItemTileInRRow}, for each $i \in [I,\ldots,J]$ we have the commutativity relation saying that the compositions $R_I \mapsto S_I \mapsto S_i$ and $R_I \mapsto R_i \mapsto S_i$ are the same, hence $\theta(R_i)$ maps onto $\theta(S_i)$. Furthermore, since $\mathscr R_I$ is the pullback of $\mathscr R_i$, and since $\theta(R_I)$ begins and ends with edgelets of $R_I \setminus \mathscr R_I$, it follows that $\theta(R_i)$ begins and ends with edgelets of $R_i \setminus \mathscr R_i$. But these properties characterize the lift of $\theta(S_i)$ through the collapse map $R_i \mapsto S_i$, hence $\theta(R_i)$ is the lift of $\theta(S_i)$.
\end{proof}

\subsubsection{A quantitative quasigeodesic axis property}
\label{SectionQuantBiinfinite}
In this section we complete the proof of the quasigeodesic axis property, deriving it quickly from Lemma~\ref{LemmaQAPQuant} just below, which expresses a quantitative version of the lower bound needed for the quasigeodesic axis property. The lemma also contains information about translation lengths that will be applied later in Section~\ref{SectionLowerBound}, to produce the uniform lower bound on positive translation lengths needed for Theorem~A. 

For purposes of application in Section~\ref{SectionLowerBound}, the statement of the lemma is formulated so that it does not require the Notation Overview established at the beginning of Section~\ref{SectionSecondConstraint}. 



\begin{lemma}[Quasigeodesic Axis Property (quantitative version)] 
\label{LemmaQAPQuant} 
Consider any $\phi \in \Out(\F;\A)$, any $\phi$-invariant free factor system~$\F$ \relA, and any EG-aperiodic fold axis for $\phi$ with respect to~$\F$. If~$\kappa \ge 1$ is a PF-exponent and if $\omega\ge 1$ is a filling exponent for the fold axis then, denoting $\mu = 3\kappa+\omega$, and using the constants $\nu=\nu(\Gamma;\A)$ and $\xi = \xi(\Gamma;\A)$ from \cite[Corollary 5.5]{\RelFSHypTag}, the following hold:
\begin{enumerate}
\item
For any $I \in \Z$ and any integer $m \ge 1$, 
\begin{enumerate}
\item\label{ItemFSULowerBoundForPhi}
 Along the fold axis between $T_{I}$ and \hbox{$T_{I + m \mu p} = T_{I} \cdot \phi^{m \mu}$}, the number of component free splitting units is $\ge m$.
\item\label{ItemDistLowerBoundForPhi}
The distance in $\FS(\Gamma;\A)$ between $T_{I}$ and~$T_{I} \cdot \phi^{m\mu}$ is $\ge \frac{m}{\nu} \, - \, \xi$. 
\end{enumerate}
\item\label{ItemSTLLowerBound}
The stable translation length $\tau_\phi$ satisfies the lower bound $\ds\tau_\phi \ge  \frac{1}{\nu \mu} = \frac{1}{\nu(3\kappa + \omega)}$.
\end{enumerate}
\end{lemma}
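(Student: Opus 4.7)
Items (1b) and (2) will follow quickly once (1a) is established. For (1b), the lower bound $d_\FS(T_i,T_j) \ge \tfrac{1}{\nu}\CU_{ij} - \xi$ from \cite[Corollary 5.5]{\RelFSHypTag} (quoted just after Definition~\ref{DefCompFSU}), combined with (1a), yields $d_\FS(T_I, T_{I+m\mu p}) \ge m/\nu - \xi$ directly. For (2), I would use the periodicity identity $T_I \cdot \phi^{m\mu} = T_{I+m\mu p}$ from Section~\ref{SectionFoldAxisReview}, together with the Fekete-type subadditivity guaranteeing that $\tau_\phi = \lim_n d_\FS(T_I, T_I \cdot \phi^n)/n$ exists and is realized by any subsequence; restricting to $n = m\mu$ gives
$$\tau_\phi \;=\; \lim_{m\to\infty} \frac{d_\FS(T_I,\, T_I \cdot \phi^{m\mu})}{m\mu} \;\ge\; \lim_{m\to\infty} \frac{m/\nu - \xi}{m\mu} \;=\; \frac{1}{\nu\mu}.$$

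\textbf{Reducing (1a) to a single-step claim.} To prove $\CU_{I, I+m\mu p} \ge m$, I would exhibit the length-$m$ counting sequence $i(k) = I + k\mu p$ for $k = 0, 1, \ldots, m$. The $\Phi$-twisted simplicial isomorphisms $h^j_{j-p}$ realize a shift-by-$p$ symmetry of the axis that pushes any collapse--expand diagram with its pullback sequence to another such diagram. Consequently, whether the subpath $T_J \mapsto\cdots\mapsto T_{J+\mu p}$ supports a collapse--expand diagram with a constant-component-complexity pullback sequence is independent of~$J$. Hence the task reduces to the following \emph{single-step claim}: for some (equivalently, every) $J \in \Z$, the free splittings $T_J$ and $T_{J+\mu p}$ do \emph{not} differ by $<1$ component free splitting unit.

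\textbf{The single-step claim.} I would argue by contradiction, supposing that a collapse--expand diagram over $T_J \mapsto\cdots\mapsto T_{J+\mu p}$ (as in Definition~\ref{DefCollExp}) admits a pullback sequence $\beta_i \subset R_i$ of constant component complexity $C_1(\beta_i) \equiv C$. Two inputs should then conflict. First, by Lemma~\ref{LemmaTrivialCollapse} applied to $\mathscr T_{J+\mu p}$ and pulled back $\kappa p$ steps, the subforest $\mathscr T_{J+(2\kappa+\omega)p}$ contains no edge of $T_{J+(2\kappa+\omega)p}$, so the $T$-row-to-$S$-row collapse is trivial on the initial $(2\kappa+\omega)p$ segment of the diagram. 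Second, starting from any edge $e \subset T_{J-\kappa p}$ and pushing forward $(4\kappa+\omega)p$ steps gives a tile in $T_{J+\mu p}$ which by Lemma~\ref{LemmaUniformCrossingFilling} contains $4^4$ nonoverlapping natural $\omega p$-subtiles, each filling $T_{J+\mu p}$. Diagram-chasing these filling subtiles via Lemma~\ref{LemmaTilesInMain}, while using the triviality of the initial $T$-row collapse to pin them down in the $S$-row and hence in the $R$-row, should force multiple $\Gamma$-orbits of components of $\beta_{J+\mu p}$ to be affected by folds of the $R$-row in a way incompatible with the constant-complexity hypothesis, via Lemma~\ref{LemmaMonotonicityOfComplexity}.

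\textbf{Main obstacle.} The delicate step is converting the tile-chasing sketch above into a clean combinatorial contradiction with constancy of $C_1(\beta_i)$: concretely, showing that the filling subtiles in $T_{J+\mu p}$ force, at some intermediate index $i$ in the $R$-row, a strict drop in the $\Gamma$-orbit-count of components of $\beta$. The precise accounting of why the budget $\mu = 3\kappa + \omega$ suffices (with the PF-exponent used twice to produce enough nonoverlapping filling subtiles and once more to render the $T$-row collapse initially trivial, and the filling exponent used once to guarantee filling) should emerge from this bookkeeping.
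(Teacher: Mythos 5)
Your treatment of items (1b), (2), and the reduction of (1a) to the single-step case $m=1$ is correct and matches the paper's route: (1a)$\Rightarrow$(1b) is [Corollary 5.5] of the prequel, (1b)$\Rightarrow$(2) follows by dividing by $m\mu$ and letting $m\to\infty$, and the general $m$ follows from $m=1$ by the definition of component free splitting units (exhibiting a length-$m$ counting sequence, as you say; your periodicity observation is an alternative but unnecessary way to reduce to a single $J$, since the $m=1$ claim is proved for all $J$ at once).

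However, your sketch of the $m=1$ single-step claim has a genuine gap in the indexing, which stems from applying the Uniform Crossing--Filling Property (Lemma~\ref{LemmaUniformCrossingFilling}) at the wrong location. You apply it with $\ell = J+\mu p$ (the \emph{endpoint} of the fold subpath) and $m=4$, with the tile originating at an edge of $T_{J-\kappa p}$. This fails on two counts. First, the tile-chasing machinery (Lemma~\ref{LemmaTilesInMain}) requires $\theta$ to be an edge of the \emph{initial} free splitting $T_J$ of the collapse--expand diagram; an edge of $T_{J-\kappa p}$ lies outside the diagram and cannot be pushed through the $S$- and $R$-rows. Second, and more importantly, the filling subtiles you obtain fill $T_{J+\mu p}$, but at that endpoint the $T$-to-$S$ collapse need not be trivial, so the \emph{Persistence of Filling and Crossing under Collapse} property (which is what lets you pin down filling structure in the $S$-row) is unavailable there. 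The paper instead starts $\theta$ at the initial index $I_2 = I - \mu p$ of the diagram and applies Lemma~\ref{LemmaUniformCrossingFilling} at the \emph{intermediate} index $I_1 = I - \kappa p$ with $m=2$ (so $I_1 - I_2 = (2\kappa+\omega)p$, a valid exponent window), yielding $16$ filling subtiles precisely in the segment where Lemma~\ref{LemmaTrivialCollapse} guarantees trivial $T_i\to S_i$ collapses. Your budget accounting for $\mu = 3\kappa+\omega$ ("PF twice, once more for triviality, filling once") is also off: the actual split is $\kappa$ for the trivial-collapse buffer at the top of the subpath, and $2\kappa+\omega$ to produce $4^2$ filling $\omega p$-subtiles via two PF-doublings.

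Finally, the step you flag as "the delicate step" — converting the tile-chase into a contradiction with constancy of component complexity — is precisely where the real work lies, and it is not carried out. The paper's resolution hinges on a structural fact you do not invoke: by Lemma~\ref{LemmaTilesInMain}~\pref{ItemTileInRRow}, the pulled-back tile $\theta(R_{I_2})$ lies inside a single natural edge of $R_{I_2}$, which has trivial stabilizer. Choosing two of the sixteen crossing subpaths $\mu_2,\mu_4 \subset \theta(R_{I_1})$ separated by non-$\beta$ edgelets, pulling back to get components $b_2,b_4$ of $\beta_{I_2}$ inside that natural edge, and invoking trivial edge stabilizers forces $b_2,b_4$ into distinct $\Gamma$-orbits; but both map to components of $\beta_{I_1}$ containing edgelets in a single orbit, contradicting the bijection on $\pi_0(\beta_i/\Gamma)$ required by Lemma~\ref{LemmaMonotonicityOfComplexity}. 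Without this "single natural edge with trivial stabilizer" mechanism, there is no evident way to produce components of $\beta_{I_2}$ in distinct orbits, and the orbit-count contradiction does not close.
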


Before embarking on the proof of this lemma, we first apply it:

\begin{proof}[Proof of the Quasigeodesic Axis Property (nonquantitative version):] For the fold axis established in the Notation Overview established at the beginning of Section~\ref{SectionSecondConstraint}, we obtain a PF exponent $\kappa$ from the paragraph of Section~\ref{SectionPFExponentOnAxis} surrounding Definition~\ref{DefPFExponent}. Also, we obtain a filling exponent $\omega$ from Lemma~\ref{LemmaUniformFilling} (which uses Constraint~\#2 on the value of $\Omega$). Using these exponents in combination with Lemma~\ref{LemmaQAPQuant}~\pref{ItemDistLowerBoundForPhi}, with the fact that $d(T_{i-1},T_i) \le 2$ for all $i$, and with the fact that $\phi$ acts isometrically on the $\phi$-orbit of~$T_0$, we obtain
$$\frac{1}{\nu} \abs{m-n} \, - \, \xi \, \le \, d(T_0 \phi^m, T_0 \phi^n) \, \le \, 2 \, p \abs{m-n} \quad\text{for all $m,n \in \Z$.}
$$
This proves that $\phi$ acts loxodromically on $\FS(\Gamma;\A)$, and so the fold axis is quasigeodesic. 
\end{proof}

\begin{proof}[Proof of the Quasigeodesic Axis Property (quantitative version)] \quad 
The implication \pref{ItemFSULowerBoundForPhi}$\implies$\pref{ItemDistLowerBoundForPhi} comes directly from \cite[Corollary 5.5]{\RelFSHypTag} cited earlier. Next, by applying~\pref{ItemDistLowerBoundForPhi} with $I=0$ it follows that
$$\frac{1}{m} d(T_0,T_0 \cdot (\phi^{\mu})^m) \ge \frac{1}{\nu} - \frac{\xi}{m}
$$
Letting $m \to \infty$ we have $\ds \tau_{\phi^\mu} \ge \frac{1}{\nu}$ and so $\ds\tau_{\phi} = \frac{1}{\mu} \, \tau_{\phi^\mu} \ge \frac{1}{\nu\mu}$, thus proving~\pref{ItemDistLowerBoundForPhi}$\implies$\pref{ItemSTLLowerBound}.

\smallskip

Next we reduce the general case of item~\pref{ItemFSULowerBoundForPhi} to its special case where $m=1$, by considering the following $m$-term foldable factorization of the map $f^I_{I+m\mu p} \from T_I \to T_{I + m \mu p}$:
$$T_I \xrightarrow{f^I_{I+\mu p}} T_{I + \mu p} \xrightarrow{}   \cdots \xrightarrow{} T_{I + (j-1) \mu p} \xrightarrow{f^{I + (j-1) \mu p}_{I + j \mu p}} T_{I + j \mu p}  \xrightarrow{} \cdots \xrightarrow{} T_{I + (m-1) \mu p} \xrightarrow{f^{I + (m-1) \mu p}_{I + m \mu p}}  T_{I + m \mu p}
$$
Applying the special case to each arrow in this factorization, there is $\ge 1$ component free splitting unit between $T_{I+(j-1)\mu p}$ and $T_{I+ j\mu p}$ for each $1 \le j \le m$. From Definition~\ref{DefCompFSU} it then follows that there are $\ge m$ component free splitting units between $T_I$ and $T_{I+m\mu p}$, which proves the general~case.

\smallskip

It remains to prove item~\pref{ItemFSULowerBoundForPhi} in the special case $m=1$. Arguing by contradiction, suppose that there exists $I \in \Z$ such that along the axis between $T_{I - \mu p}$ and $T_I$ there is $<1$ component free splitting unit (here we are shifting the value of $I$ by~one unit of $\mu p$). By applying Definition~\ref{DefCompFSU} we obtain a collapse expand diagram over the portion of the axis between $T_{I-\mu p}=T_{I - (3\kappa + \omega) p}$ and $T_I$, as depicted in Figure~\ref{FigureCollapseMuPee}, such that along the $R$-row of the diagram there is a pullback sequence $\beta_i \subset R_i$ of constant component complexity $\abs{\pi_0(\beta_i/\Gamma)}$. Each foldable map $R_i \mapsto R_j$ in the $R$-row therefore induces a bijection $\pi_0(\beta_i / \Gamma) \approx \pi_0(\beta_j / \Gamma)$ (for $i < j \in [I_2,\ldots,I_0]$). We may assume, after further subdivisions, that each $\beta_i$ is a union of edgelets of $R_i$.
\begin{figure}[h]
\label{FigureITwoOneZero}
$$\xymatrix{
R_{I_2} \ar[r] \ar[d]_{[\mathscr R_{I_2}]}   
        & \cdots \ar[r]  
        &R_{I_1} \ar[r] \ar[d]_{[\mathscr R_{I_1}]} 
        & \cdots \ar[r]     
        & R_{I_0}  \ar[d]_{[\mathscr R_{I_0}]}   \\
S_{I_2} \ar[r]                                     
	& \cdots \ar[r]              
	& S_{I_1} \ar[r]                           
	& \cdots \ar[r]     
        & S_{I_0} & \\
T_{I_2} \ar[u]^{[\mathscr T_{I_2}]}  \ar[r]
	& \cdots \ar[r] 
	& T_{I_1} \ar[u]^{[\mathscr T_{I_1}]}  \ar[r] 
	& \cdots \ar[r] 
	& T_{I_0} \ar[u]^{[\mathscr T_{I_0}]} 
}$$
\caption{Letting $\mu=3\kappa+\omega$, assuming $<1$ free splitting unit between $T_{I-\mu p}$ and $T_I$, and setting \, $I=I_0$, \, $I_1=I-\kappa p$, \, and $I_2 = I - \mu p = I_1 - (2\kappa+\omega)p$, \, there is a collapse expand diagram as depicted with a pullback sequence $\beta_i \subset R_i$ of constant component complexity.}
\label{FigureCollapseMuPee}
\end{figure}

Since $I_1 = I_0 - \kappa p$, we can apply the \emph{Trivial Collapse Property}, Lemma~\ref{LemmaTrivialCollapse}, to conclude that for each $i \in [I_2,I_1] = [I_1 - (2\kappa + \omega)p,I_1]$ the collapse map $q_i \from T_i \to S_i$ can be equivariantly tightened, relative to the vertex set of $T_i$, to obtain an equivariant homeomorphism $\hat q_i \from T_i \to S_i$, and the following hold:

\begin{description}
\item[Persistence of Filling and Crossing under Collapse:] For every $i \in [I_2,I_1]$ and every path $\alpha \subset T_i$ the following hold:
\begin{enumerate}
\item\label{ItemCollapsedPathNice} $\alpha$ fills $T_i$ if and only if $q_i(\alpha)=\hat q_i(\alpha)$ fills $S_i$.
\item\label{ItemCollapsedEdgeNice}
For every natural edge \hbox{$E \subset T_i$,} the path $\alpha$ has an interior crossing of $E$ if and only if $q_i(\alpha)$ has an interior crossing of $q_i(E)=\hat q_i(E)$.
\end{enumerate}
\end{description}

Consider now an arbitrary edge $\theta$ of $T_{I_2}$ and its associated $\theta$-tiles throughout the diagram as defined for all $i \in [I_2,\ldots,I_0]$ in Lemma~\ref{LemmaTilesInMain}, denoted $\theta(T_i) \subset T_i$, $\theta(S_i) \subset S_i$, $\theta(R_i) \subset R_i$, and so $\theta(R_{I_2})$ is contained in some natural edge $E \subset R_{I_2}$. Since $I_2 = I_1 - (2\kappa+\omega) p$, we can apply Lemma~\ref{LemmaUniformCrossingFilling}, the \emph{Uniform Crossing--Filling Property} of Section~\ref{SectionUnifFillProps}, using the values $\ell = I_1$ and $m=2$; from the conclusion of that lemma it follows that the tile $\theta(T_{I_1})$ has $4^2=16$ non-overlapping paths each of which fills~$T_{I_1}$. Applying \emph{Persistence of Filling and Crossing under Collapse}, the tile $\theta(S_{I_1})$ also has 16 non-overlapping paths each of which fills $S_{I_1}$. Lifting to $R_{I_1}$, we conclude that the tile $\theta(R_{I_1})$ has 16 non-overlapping subpaths each of which has an interior crossing of a natural edge in every orbit of natural edges of $R_{I_1}$. What we need from the latter conclusion is only 5 of those 16 subpaths, and we need only edgelet crossings: there is a subdivision into edgelet subpaths
$$\nu_0 \, \mu_1 \, \nu_1 \, \mu_2 \, \nu_2 \, \mu_3 \, \nu_3 \, \mu_4 \, \nu_4 \, \mu_5 \, \nu_5 = \theta(R_{I_1})
$$
such that for each $i=1,2,3,4,5$, the path $\mu_i$ crosses some edgelet in every orbit of edgelets of~$R_{I_1}$; the subpaths $\nu_i$ are allowed to be trivial. By pulling back to the tile $\theta(R_{I_2})$ we obtain a subdivision into edgelet paths
$$\nu'_0 \, \mu'_1 \, \nu'_1 \, \mu'_2 \, \nu'_2 \, \mu'_3 \, \nu'_3 \, \mu'_4 \, \nu'_4 \, \mu'_5 \, \nu'_5 = \theta(R_{I_2})  \subset E
$$
Choose any edgelet $\eta \subset \beta_{I_1}$. Choose edgelets $\eta_2 \subset \mu_2$ and $\eta_4 \subset \mu_4$ in the orbit of $\eta$. Pull back to get edgelets $\eta'_2 \subset \mu'_2 \subset \beta_{I_2}$ and $\eta'_4 \subset \mu'_4 \subset \beta_{I_2}$ that map to $\eta_2$ and $\eta_4$ respectively. Let $b_2,b_4$ denote the components of $\beta_{I_2}$ containing $\eta'_2,\eta'_4$ respectively. Since each of $\mu_1,\mu_3,\mu_5$ contains an edgelet of $R_{I_I} \setminus \beta_{I_I}$, it follows that that each of $\mu'_1,\mu'_3,\mu'_5$ contains an edgelet of $R_{I_2} \setminus \beta_{I_2}$. From this it follows that $b_2,b_4$ are both contained in the interior of the natural edge $E$ and that they are \emph{distinct} components of $\beta_{I_2}$. Since the stabilizer of $E$ with respect to the action of $\Gamma$ on $R_{I_2}$ is trivial, it follows that $b_2$ and $b_4$ are in distinct orbits of components of $R_{I_2}$. However, the images of $b_2,b_4$ under the foldable map $R_{I_2} \mapsto R_{I_1}$ both contain edgelets in the orbit of $\eta$, and so $b_2,b_4$ map to components of $\beta_{I_1}$ that are contained in the same orbit of components of $\beta_{I_1}$. This contradicts the fact that the induced map $\pi_0(\beta_{I_2}/\Gamma) \mapsto \pi_0(\beta_{I_1} / \Gamma)$ is a bijection, thus completing the proof of the quantitative version of~\pref{ItemFSULowerBoundForPhi}.
\end{proof}

We extract from the above proof the following statement that will be useful in Section~\ref{SectionEnablingProjection}:

\begin{lemma}[Uniform Lifting--Crossing Property] 
\label{LemmaUniformLiftingCrossing}
Fix a PF-exponent $\kappa$ and a filling exponent $\omega$ for the fold axis, and let $\mu = 3 \kappa + \omega$. Consider any $I \in \Z$ and any collapse expand diagram over the fold subpath going from $T_{I_2}$ through $T_{I_1}$ to $T_{I_0}$, where $I_2 = I - \mu p$ and $I_1 = I - \kappa p$ and $I_0=I$ (see Figure~\ref{FigureCollapseMuPee}). Letting $\theta \subset T_{I_2}$ be any edge, with tiles throughout the diagram denoted as in Lemma~\ref{LemmaTilesInMain}, the tile $\theta(R_{I_1})$ contains sixteen non-overlapping subpaths each of which has an interior crossing of a natural edge in every natural edge orbit of $R_{I_1}$.
\qed\end{lemma}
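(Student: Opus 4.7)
The plan is to extract the argument already embedded in the proof of Lemma~\ref{LemmaQAPQuant}~\pref{ItemFSULowerBoundForPhi}, where exactly these four facts were combined, though there the conclusion was immediately fed into a contradiction rather than being stated as a standalone property.

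First, since $I_1 - I_2 = (2\kappa + \omega)p$, I would apply the \emph{Uniform Crossing--Filling Property} (Lemma~\ref{LemmaUniformCrossingFilling}) with $\ell = I_1$ and $m = 2$. With $m\kappa + \omega = 2\kappa + \omega$, this lemma says that every $(2\kappa+\omega)p$-tile in $T_{I_1}$ contains $4^2 = 16$ non-overlapping natural $\omega p$-tiles each of which fills $T_{I_1}$. The tile $\theta(T_{I_1}) = f^{I_2}_{I_1}(\theta)$ (see Lemma~\ref{LemmaTilesInMain}~\pref{ItemTileInTRow}) is exactly such a tile, so it contains sixteen non-overlapping filling subpaths inside $T_{I_1}$.

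Second, since $I_1 = I_0 - \kappa p$ and the subforests $\mathscr T_i$ form a pullback sequence terminating at the proper subforest $\mathscr T_{I_0} \subset T_{I_0}$, I would invoke the \emph{Trivial Collapse Property} (Lemma~\ref{LemmaTrivialCollapse}) at index $i = I_1$. This produces an equivariant homeomorphism $\hat q_{I_1} \from T_{I_1} \to S_{I_1}$ obtained by equivariantly tightening $q_{I_1}$, and the invariance principle for filling paths from the remark following Definition~\ref{DefinitionFillingPaths} (i.e.\ the \emph{Persistence of Filling and Crossing under Collapse} observation) transports the sixteen filling subpaths of $\theta(T_{I_1})$ to sixteen non-overlapping filling subpaths of $\theta(S_{I_1}) = q_{I_1}(\theta(T_{I_1}))$, where the last identification uses Lemma~\ref{LemmaTilesInMain}~\pref{ItemTileBetweenTandS}.

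Finally, Lemma~\ref{LemmaTilesInMain}~\pref{ItemTileBetweenRAndS} characterizes $\theta(R_{I_1})$ as the lift of $\theta(S_{I_1})$ through the collapse map $R_{I_1} \xrightarrow{\<\mathscr R_{I_1}\>} S_{I_1}$. Each of the sixteen filling subpaths $\sigma \subset \theta(S_{I_1})$ therefore lifts uniquely (Definition~\ref{DefinitionLiftingPaths}) to a subpath $\tilde\sigma \subset \theta(R_{I_1})$, and the edgelet bijection $\E(R_{I_1} \setminus \mathscr R_{I_1}) \leftrightarrow \E(S_{I_1})$ from Definition~\ref{DefCombingRectangle}~\pref{ItemVUEdgeBijection} together with non-overlappingness of the sixteen $\sigma$'s guarantees that the sixteen lifts $\tilde\sigma$ remain non-overlapping in $R_{I_1}$. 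Since each $\sigma$ fills $S_{I_1}$, applying Definition~\ref{DefinitionFillingPaths} to the collapse map $R_{I_1} \to S_{I_1}$ yields immediately that $\tilde\sigma$ has an interior crossing of a natural edge in every orbit of natural edges of $R_{I_1}$, which is the desired conclusion. The only mild delicacy in the argument is confirming non-overlappingness of the lifts, and that follows from the edgelet bijection just cited; beyond the bookkeeping already present in the proof of Lemma~\ref{LemmaQAPQuant} there is no new obstacle.
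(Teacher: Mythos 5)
Your proof is correct and follows essentially the same route as the paper's (which is embedded inline in the proof of Lemma~\ref{LemmaQAPQuant}~\pref{ItemFSULowerBoundForPhi}): apply Lemma~\ref{LemmaUniformCrossingFilling} with $\ell = I_1$, $m=2$ to get sixteen non-overlapping filling subpaths of $\theta(T_{I_1})$; transport them to $\theta(S_{I_1})$ via the \emph{Trivial Collapse Property}; then lift to $\theta(R_{I_1})$, where Definition~\ref{DefinitionFillingPaths} applied to the collapse map $R_{I_1} \to S_{I_1}$ converts "fills $S_{I_1}$" into "has an interior crossing of every natural edge orbit of $R_{I_1}$." You supply slightly more explicit bookkeeping (the edgelet bijection for non-overlapping lifts, the identification of $\theta(R_{I_1})$ as the lift of $\theta(S_{I_1})$ via Lemma~\ref{LemmaTilesInMain}~\pref{ItemTileBetweenRAndS}), but the argument is the same.
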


\subsection{Finishing the proof: Projection diagrams and bounded cancellation}
\label{SectionEnablingProjection}

In this section we complete the proof of the implication \pref{ItemActLargeOrbit}$\implies$\pref{ItemFillingLamExists}  of Theorem~B. Besides applying many tools from previous sections, additional tools that we bring to bear in this section include: projection diagrams from \cite{\RelFSHypTag}; and bounded cancellation.

We continue with the Notation Overview established at the beginning of Section~\ref{SectionSecondConstraint}. 

Let $\Omega = \max\{\Omega_1,\Omega_2\}$ using the constants $\Omega_1 = 5$ as chosen in Section~\ref{SectionFirstConstraint}, and $\Omega_2 = \Theta$ from the \STOAT\ as chosen in Section~\ref{SectionSecondConstraint}. Given $\phi \in \Out(\Gamma;\A)$, we assume that all orbits of the action of $\phi$ on $\FS(\Gamma;\A)$ have diameter $\ge \Omega$. To review the proof so far, in Section~\ref{SectionFirstConstraint} we chose a maximal proper $\phi$-invariant free factor system~$\F$ \relA, and we used the constraint $\Omega \ge \Omega_1$ to produce a unique attracting lamination $\Lambda$ for $\phi$ \relA\ that is not supported by~$\F$, together with a fold axis for $\phi$ relative to~$\F$ with an EG-aperiodic train track map for its first return. In each free splitting $T_i$ along that axis, the lines of $T_i$ that realize generic leaves of $\Lambda$ are precisely those lines that are exhausted by tiles. In Section~\ref{SectionSecondConstraint} we used the constraint $\Omega \ge \Omega_2$ to prove that the fold axis is quasigeodesic.

Let $\kappa$ be any PF-exponent and $\omega$ any filling exponent of the fold axis; for example $\kappa$ could be chosen as in the paragraph surrounding Definition~\ref{DefPFExponent}; and $\omega$ as in Lemma~\ref{LemmaUniformFilling}. We also let $\mu = 3\kappa + \omega$, for purposes of applying Lemma~\ref{LemmaUniformLiftingCrossing}, the \emph{Uniform Lifting--Crossing Property}.

Our goal is to prove that $\Lambda$ fills $\Gamma$ \relA, by showing that a generic leaf $\ell \in \Lambda$ fills $\Gamma$ \relA\ (\cite[Lemma 4.11]{\LymanCTTag}, and see \cite[Lemma 4.6]{HandelMosher:RelComplexHypII}). We do this by applying Corollary~\ref{CorollaryFillingLine}: for an arbitrary Grushko free splitting $R$ of $\Gamma$ \relA, letting $\ell_R \subset R$ be the concrete line realizing~$\ell$, we must prove that the orbit of $\ell$ covers $R$, that is, $\Gamma \cdot \ell_R = R$. To put this another way, we must prove that $\ell_R$ crosses some translate of every edge of~$R$. 

In order to relate a general Grushko free splitting $R$ of $\Gamma$ \relA\ to the free splittings $T_i$ in the given axis for $\phi$, we shall apply the methods of projection diagrams that were introduced in \cite{\RelFSHypTag} and were used there to define projection maps to fold paths for purposes of verifying the Masur--Minsky axioms.

Let $\FS^{(0)}(\Gamma;\A)$ denote the $0$-skeleton of the simplicial complex $\FS(\Gamma;\A)$. 

\begin{definition}[Projection Diagrams and Projection Functions] 
\label{DefProjections}
(\cite[Sections 4.3, 5.2]{\RelFSHypTag}) \hfill\break Consider a (finite) fold path $T_I \mapsto\cdots\mapsto T_J$ in $\FS(\Gamma;\A)$. For any free splitting $R$ of $\Gamma$ \relA\ and any integer~$\Delta \in [I,\ldots,J]$, a \emph{projection diagram from $R$ to $T_I \mapsto\cdots\mapsto T_J$ of depth $\Delta$} is a diagram of the form
$$\xymatrix{
R_{I} \ar[r] \ar[d] & \cdots \ar[r] & 
	R_i \ar[r] \ar[d] & \cdots \ar[r] &
	R_{\Delta} \ar[rrr] \ar[d] 
	&&&  R \\
S_{I} \ar[r]          & \cdots \ar[r] & 
	S_i \ar[r] & \cdots \ar[r] &
	S_{\Delta}  \\
T_{I} \ar[r] \ar[u] & \cdots \ar[r] &
	T_i \ar[r] \ar[u] & \cdots \ar[r] 
	& T_{\Delta} \ar[r] \ar[u]  
	& \cdots \ar[r] & T_J \\
}$$
such that the bottom row is the given fold path, the $R$ and $S$ rows are both foldable sequences (see Section~\ref{SectionSTOATTerms}), and the two rectangles depicted are combing rectangles (see Definition~\ref{DefCombingRectangle}) and hence those two rectangles taken together form a collapse expand rectangle (see Section~\ref{SectionSecondConstraint}).

Associated to the fold path $T_I \mapsto\cdots\mapsto T_J$ is its \emph{projection function} 
$$\Delta \from \FS^{(0)}(\Gamma;\A) \to [I,\ldots,J]
$$ 
which assigns to each free splitting $R \in \FS^{(0)}(\Gamma;\A)$ the maximum value $\Delta(R)$ of the depths $\Delta$ of all projection diagrams from $R$ to $T_I \mapsto\cdots\mapsto T_J$. A \emph{maximal depth} projection diagram from $R$ to $T_I \mapsto\cdots\mapsto T_J$ is one of depth $\Delta = \Delta(R)$.
\end{definition}

In the context of Definition~\ref{DefProjections}, we shall silently use the fact that for $\Delta \le \Delta' \in [I,\ldots,J]$, if there exists a projection diagram of depth $\Delta'$ then there exists one of depth $\Delta$: remove $S_{\Delta+1},\ldots,S_{\Delta'}$ and all incident arrows from the diagram of depth $\Delta$; and compose the sequence of maps 
$$R_{\Delta} \mapsto \ldots \mapsto R_{\Delta'} \mapsto R
$$ 
from the depth $\Delta'$ diagram to form the single map $R_{\Delta} \mapsto R$ needed for the depth $\Delta$ diagram.

\smallskip

In \cite{\RelFSHypTag}, the way that hyperbolicity of $\FS(\Gamma;\A)$ was proved was to show that the collection of fold paths and their projection functions satisfy the Masur--Minsky axioms with respect to constants $A=A(\Gamma;\A)$, $a=a(\Gamma;\A)$, $b=b(\Gamma;\A)$, and $c=c(\Gamma;\A)$. We shall need a further property that follows from the Masur--Minsky axioms, as shown in \cite{\RelFSHypTag}. Here is the statement of that property in the context of Definition~\ref{DefProjections}:

\begin{description}
\item[Quasi-Closest Point Property \protect{\cite[Proposition 5.6]{\RelFSHypTag}}] For any finite fold path $T_I \mapsto\cdots\mapsto T_J$ and any $R \in \FS(\Gamma;\A)$ with projection $\Delta(R) \in [I,\ldots,J]$, choosing $M \in [I,\ldots,J]$ so that \hbox{$D = d(R,T_{M})$} minimizes the distances $d(R,T_i)$ over all $i \in [I,\ldots,J]$, it follows that 
$$\diam\bigl\{ T_i \suchthat i \in [\Delta(R),\ldots,M] \bigr\} \le K \log D + C
$$ 
with constants $K=K(\Gamma;\A)$, $C=C(\Gamma;\A) \ge 0$ depending only $A,a,b,c$.
\end{description}

For the rest of this section we fix $R \in \FS(\Gamma;\A)$. Let $M_R \in \Z$ be chosen so that $D_R=d(R,T_{M_R})$ minimizes the distances $d(R,T_i)$ over all $i \in \Z$, hence also $D_R$ minimizes distances over all $i$ in any integer interval containing $M_R$. 

\smallskip

Projection diagrams are only defined from $R$ to \emph{finite} fold paths; in particular they are not defined to an entire fold axis. In order to effectively apply projection diagrams along our fold axis, depending on $R$ we need to carefully choose indices $I \le J$ for the endpoints of a fold subpath $T_I \mapsto \cdots \mapsto T_J$ to which we shall project $R$. Our first strategy is to choose $I,J$ so that we obtain some kind of ``global control'' over the projection of $R$ to the subpath. First we constrain $I$ and $J$ so that $I \le M_R \le J$; it follows $D_R$ minimizes $d(R,T_i)$ over all $i \in [I,\ldots,J]$. Our next constraint is a certain upper bound $I_0$ for $I$ and a lower bound $J_0$ for~$J$ which we may choose as follows, using that the fold axis is a bi-infinite quasigeodesic: 
\begin{enumerate}
\item\label{ItemConstrainedMin}
$I_0 \le M_R \le J_0$
\item\label{ItemWideSubpath}
If $i \in \Z - [I_0,\ldots,J_0]$ then $d(T_i,T_{M_R}) > K \log D_R + C$. 
\end{enumerate}
By combining \pref{ItemConstrainedMin} and \pref{ItemWideSubpath} with the \emph{Quasi-Closest Point Property} we obtain the desired global control on the projection of $R$:
\begin{description}
\item[Restricted Projection Property:] If $I \le I_0 \le J_0 \le J$ then, letting $\Delta_{IJ}(R) \in [I,\ldots,J]$ denote the projection of $R$ to $T_I \mapsto\cdots\mapsto T_J$, we have $I_0 \le \Delta_{IJ}(R) \le J_0$.
\end{description}

\paragraph{Further remarks: On ``Quasi-Closest'' versus ``Almost Closest'':} For most of the long lifetime of the composition of this work, our proof of the implication \pref{ItemActLargeOrbit}$\implies$\pref{ItemFillingLamExists} of Theorem~B had applied the ``Almost Closest Point Property'', a stronger version of the \emph{Quasi-Closest Point Property} in which the upper bound $K \log d + C$ is replaced by a constant~$C$. The way that stronger property had been applied was to choose $I_0 \le M_R \le J_0$ so as to have a sufficiently large but fixed radius $M_R - I_0 = J_0 - M_R$, so large that we could deduce the \emph{Restricted Projection Property} from the ``Almost Closest Point Property'' and from a certain constraint on $\Omega$. Only in a very late draft did we realize that this choice was impossible, because the ``Almost Closest Point Property'' is generally false (see remarks found at the end of \cite[Section 5.1]{\RelFSHypTag}). But two further realizations saved the proof: that the ``Almost Closest Point'' property could be replaced by the \emph{Quasi-Closest Point Property}; and the value $\Omega_2$ of the second constraint on $\Omega$ could be chosen so as to force the \emph{Quasigeodesic Axis Property} to hold. These, taken together, allowed us to recover the \emph{Restricted Projection Property} by being \emph{very much more careful} about the choice of the axis subpath $T_{I_2} \mapsto \cdots\mapsto T_\Delta$ to which $R$ is projected, in particular by making a more realistic allowance for the positions of $I_0$ and $J_0$ that allows the distance $d(T_{I_0},T_{J_0})$ to enlarge as the free splitting $R$ retreats further from the fold axis.

\bigskip

The next step of our strategy is to choose an integer interval that contains $[I_0,\ldots,J_0]$ and that extends far enough to allow application of Lemma~\ref{LemmaUniformLiftingCrossing}, the \emph{Uniform Lifting--Crossing Property} of Section~\ref{SectionQuantBiinfinite}, (there is no need to extend any further to the right). We use again the notation from Section~\ref{SectionQuantBiinfinite}, namely $\mu = 3\kappa+\omega$, and $I_1 = I_0 - \kappa p$, and $I_2 = I_0 - \mu p = I_1 - (2 \kappa + \omega) p$. Working with the fold subpath \hbox{$T_{I_2} \mapsto\cdots\mapsto T_{J_0}$}, let $\Delta = \Delta_{I_2 J_0}(R)$ denote the projection of $R$ to that subpath. By applying the \emph{Restricted Projection Property} we have $I_0 \le \Delta \le J_0$ and so we can depict a maximal depth projection diagram as shown in Figure~\ref{FigureMainProjDia}, which we refer to as the \emph{Main Projection Diagram}.

\begin{figure}
$$\xymatrix{
R_{I_2} \ar[r] \ar[d]_{[\mathscr R_{I_2}]} & \cdots \ar[r] &
	R_{I_1} \ar[r] \ar[d]_{[\mathscr R_{I_1}]} & \cdots \ar[r] & 
	R_{I_0} \ar[r] \ar[d]_{[\mathscr R_{I_0}]} \ar@/^2pc/[rrrrr]^{g_{I_0}}  & \cdots \ar[r] &
	R_{\Delta} \ar[rrr] \ar[d]_{[\mathscr R_\Delta]} 
	& & & R \\
S_{I_2} \ar[r] & \cdots \ar[r] &
	S_{I_1} \ar[r] & \cdots \ar[r] & 
	S_{I_0} \ar[r] & \cdots \ar[r] &
	S_{\Delta}  \\
T_{I_2} \ar[r] \ar[u]^{[\mathscr T_{I_2}]} & \cdots \ar[r] &
	T_{I_1} \ar[r] \ar[u]^{[\mathscr T_{I_1}]} & \cdots \ar[r] &
	T_{I_0} \ar[r] \ar[u]^{[\mathscr T_{I_0}]} & \cdots \ar[r] 
	& T_{\Delta} \ar[r] \ar[u]^{[\mathscr T_\Delta]}  
	& \cdots \ar[r] & T_{J_0} \\
}$$
\caption{The Main Projection Diagram. This is a projection diagram from $R$ to $T_{I_2} \mapsto\cdots\mapsto T_{J_0}$ of maximal depth $\Delta$ satisfying $I_0 \le \Delta \le J_0$ (by the \emph{Restricted Projection Property}). The portion of this diagram between column $I_2$ and column $I_0$ is a collapse expand diagram to which Lemma~\ref{LemmaUniformLiftingCrossing} may be applied. The diagram also depicts the map $g_{I_0}$, which is one of the foldable maps $g_i \from R_i \to R$ obtained by composing arrows in the $R$-row.}
\label{FigureMainProjDia}
\end{figure}

Next we state facts about realizations of generic leaves of $\Lambda$ in the \emph{Main Projection Diagram} that parallel facts about $\theta$-tiles described in Lemma~\ref{LemmaTilesInMain}.


\begin{lemma}[Generic leaves throughout the \emph{Main Projection Diagram}]
\label{LemmaLeavesInMain}
Any generic leaf $\ell \in \Lambda$ is realized as a concrete line in every free splitting of the \emph{Main Projection Diagram}. Furthermore, the following hold for all $I_2 \le i \le j \le I_0$: 
\begin{enumerate}
\item\label{ItemWithinT}
\emph{\textbf{In the $T$ row:}} 
The foldable map $T_i \mapsto T_j$ takes $\ell(T_i)$ homeomorphically to $\ell(T_j)$. 
\item\label{ItemLineTtoS}
\emph{\textbf{Between the $T$ and $S$ rows:}}
The collapse map $T_i \to S_i$ takes $\ell(T_i)$ onto $\ell(S_i)$. 
\item\label{ItemWithinS} 
\emph{\textbf{In the $S$-row:}} The foldable map $S_i \mapsto S_j$ takes $\ell(S_i)$ homeomorphically to $\ell(S_j)$.
\item\label{ItemLineRFromS}
\emph{\textbf{Between the $S$ and $R$ rows:}}
The collapse map $R_i \to S_i$ takes $\ell(R_i)$ onto $\ell(S_i)$.
\item\label{ItemToEndOfR}
\emph{\textbf{In the $R$-row:}} For each $I_2 \le i \le I_0$, letting $C$ be a bounded cancellation constant for $g_i \from R_i \to R$, we have $\ell(R) \subset g_i(\ell(R_i)) \subset N_C(\ell(R))$.
\end{enumerate}
\end{lemma}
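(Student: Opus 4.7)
The plan is to prove the lemma in three stages, using Lemma/Definition~\ref{LemmaDefBirecurrentRealization} (concrete realizations and bounded cancellation for lines), Conclusion~(2c)(B) (tile exhaustion of generic leaves along the fold axis), and the Trivial Collapse Property (Lemma~\ref{LemmaTrivialCollapse}), applied to the collapse maps $T_k \to S_k$ in the relevant indexing range determined by the PF-exponent $\kappa$ and the indices $I_2 = I_0 - \mu p$ and $I_1 = I_0 - \kappa p$.

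First, I would strengthen property~(1) to hold for all $i \le j$ in $[I_2,J_0]$: by Conclusion~(2c)(B), $\ell(T_i)$ is exhausted by iteration tiles $f^{i-kp}_i(e)$ where $e \subset T_{i-kp}$ is an edge, and the composition $f^{i-kp}_j = f^i_j \circ f^{i-kp}_i$ sends $e$ to a tile in $T_j$ which is non-backtracking by the EG-aperiodic train-track property of the first return maps $F_l$. Hence $f^i_j$ is injective on each iteration tile, so on every finite subpath of $\ell(T_i)$, and therefore on all of $\ell(T_i)$; the image is a bi-infinite non-backtracking path exhausted by tiles that themselves exhaust $\ell(T_j)$ by Conclusion~(2c)(B) applied at $j$, so the image equals $\ell(T_j)$.

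Next, I would establish existence of concrete line realizations of $\ell$ in every free splitting of the Main Projection Diagram. In each $T_k$ we have $\FFS{T_k} = \FF$ and $\ell$ is not supported by $\FF$ (since $\Lambda$ itself is not, and $\ell$ is generic), so Case~1 of Lemma/Definition~\ref{LemmaDefBirecurrentRealization} yields a line $\ell(T_k)$. In each $R_k$ and in $R$, foldability of the $R$-row into $R$ forces $\FFS{R_k} = \FFSR = \A$; since every element of $\A$ is atomic we have $\bdy_\A(\Gamma;\A) = \bdyinf(\Gamma;\A)$, so Case~1 again applies. In each $S_k$ with $k \in [I_2,\Delta]$, I would choose an edge $e \subset \ell(T_{I_2})$ with some edgelet $\epsilon \not\subset \mathscr T_{I_2}$ (such $\epsilon$ exists by the Trivial Collapse Property at $I_2$); using the strengthened property~(1) and the pullback relation $\mathscr T_{I_2} = (f^{I_2}_k)^*(\mathscr T_k)$, the edgelet $f^{I_2}_k(\epsilon) \subset \ell(T_k)$ lies outside $\mathscr T_k$, so $q_k(\ell(T_k))$ is a bi-infinite line in $S_k$; by birecurrence of $\ell$ and the bounded cancellation characterization in Lemma/Definition~\ref{LemmaDefBirecurrentRealization}, this line must equal $\ell(S_k)$.

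Finally, for the five commutation properties with $i \le j$ in $[I_2,I_0]$: property~(1) is the special case of its generalization; properties~(2) and~(4) are the ``collapse map'' clause of Lemma/Definition~\ref{LemmaDefBirecurrentRealization} applied to $T_i \to S_i$ and $R_i \to S_i$ respectively; property~(5) is Bounded Cancellation for Lines applied to the foldable map $g_i \from R_i \to R$; and property~(3) follows from~(1) together with commutativity of the combing rectangle of Definition~\ref{DefCombingRectangle} (which, combined with~(2), gives $(S_i \to S_j)(\ell(S_i)) = \ell(S_j)$), with injectivity on $\ell(S_i)$ obtained via the equivariant homeomorphisms $\hat q_k \from T_k \to S_k$ afforded by the Trivial Collapse Property that conjugate the $S$-row maps to the $T$-row maps. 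The main technical obstacle is the strengthening of~(1): one must simultaneously invoke the tile exhaustion of generic leaves and the non-backtracking property of tiles within an EG-aperiodic train track axis to establish both injectivity of $f^i_j$ on $\ell(T_i)$ and equality of the image with $\ell(T_j)$.
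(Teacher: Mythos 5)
Your overall strategy matches the paper's: you invoke Conclusion~(2c) from Section~\ref{SectionFoldAxisReview}, tile exhaustion, Lemma/Definition~\ref{LemmaDefBirecurrentRealization}, and the Trivial Collapse Property in essentially the same places. But two steps are glossed over in a way that leaves real gaps.

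First, the realization $\ell(S_k)$ and item~(2). The collapse-map clause of Lemma/Definition~\ref{LemmaDefBirecurrentRealization} (``if in addition $f$ is a collapse map then $f(\ell_S)=\ell_T$'') requires the \emph{source} of $f$ to be a Grushko free splitting rel~$\A$. You apply it to $q_k \from T_k \to S_k$, but $T_k$ is Grushko rel~$\FF$, not rel~$\A$, so the lemma does not apply as written. One repair is to re-run Lemma/Def~\ref{LemmaDefBirecurrentRealization} with $\A$ replaced by~$\FF$ and then invoke contravariance of the Dowdall--Taylor correspondence (Lemma~\ref{LemmaRelBdyFunctor}, Step~3) to identify the two resulting realizations in $S_k$; the cleaner fix --- the one the paper uses --- is to precompose with a collapse map $V \to T_k$ from a Grushko $V$ rel~$\A$ (as supplied by Conclusion~(2c)) and apply the lemma to the composed collapse $V \to S_k$, whose source \emph{is} Grushko rel~$\A$. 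Knowing $q_k(\ell(T_k))$ is not a point then follows from $\ell(T_k) \not\subset \mathscr T_k$, which you can get either by pushing forward an uncollapsed edgelet as you suggest, or more directly since $\ell(T_k)$ contains a $\kappa$-tile crossing a translate of every edge while $\mathscr T_k$ is proper.

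Second, injectivity of the $S$-row maps on $\ell(S_i)$. You assert that the homeomorphisms $\hat q_k$ ``conjugate the $S$-row maps to the $T$-row maps,'' so that injectivity of $h^i_j$ on $\ell(S_i)$ follows from injectivity of $f^i_j$ on $\ell(T_i)$. But $\hat q_k$ is obtained by tightening $q_k$ relative to the vertex set of $T_k$; while $q_k(\alpha)=\hat q_k(\alpha)$ as subsets for every path $\alpha$, there is no reason the pointwise identity $\hat q_j \circ f^i_j = h^i_j \circ \hat q_i$ should hold, so the conjugation is not literal and cannot be used to transport injectivity directly. What does work is the paper's finer combinatorial argument: $q_i$ induces an edgelet bijection between $\ell(T_i) \setminus \mathscr T_i$ and $\ell(S_i)$; by the pullback relation $\mathscr T_i = (f^i_j)^*(\mathscr T_j)$ together with item~(1), the map $f^i_j$ restricts to a bijection of edgelets from $\ell(T_i) \setminus \mathscr T_i$ to $\ell(T_j) \setminus \mathscr T_j$; and commutativity of the square then forces $h^i_j$ to restrict to an edgelet bijection $\ell(S_i) \to \ell(S_j)$. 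Your $\hat q_k$-picture is the right intuition, but it has to be cashed out through this edgelet correspondence rather than through a literal conjugation.
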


\begin{proof} We start by proving the opening sentence of the lemma, and item~\pref{ItemLineTtoS}, and item~\pref{ItemLineRFromS}. In the $T$~row which is a fold subpath of the axis, we may apply Conclusion~(2c) of Section~\ref{SectionFoldAxisReview}: for any choice of collapse map $V \mapsto T_i$ defined on any Grushko free splitting $V$ of $\Gamma$ \relA, the line $\ell(V)$ is mapped onto the concrete realization $\ell(T_i)$, and the latter is a bi-infinite line that is exhausted by iteration tiles. Since $\ell(T_i)$ contains a $\kappa$-tile of $T_i$, and since each $\kappa$-tile crosses a translate of every edge (by Definition~\ref{DefPFExponent}), it follows that $\ell(T_i) \not\subset \mathscr T_i$. In the $S$~row, the image of $\ell(T_i)$ under the collapse map $T_i \mapsto S_i$ is therefore \emph{not} a point. That image is equal to the image of $\ell(V)$ under the composed collapse map $V \mapsto T_i \mapsto S_i$. Since $\ell(V)$ is birecurrent, by application of Lemma~\ref{LemmaDefBirecurrentRealization} that image is a concrete line in $S_i$ equal to the realization $\ell(S_i)$; this proves~\pref{ItemLineTtoS}. 

In the $R$ row, since the free splitting $R$ itself is a Grushko free splitting \relA, the realization $\ell(R)$ is a concrete line. Also, by equivariance of each map $g_i \from R_i \to R$ it follows that each $R_i$ is also a Grushko free splitting \relA. The line $\ell$ is therefore realized as a concrete line $\ell(R_i) \subset R_i$, and~\pref{ItemLineRFromS} holds again~by Lemma~\ref{LemmaDefBirecurrentRealization}. 

We next prove~\pref{ItemWithinT}. By foldability along the axis, it follows that the map $T_i \mapsto T_j$ restricts to an injection on each edgelet tile in $\ell(T_i)$.  Using that $\ell(T_i)$ is exhausted by edgelet iteration tiles of~$T_i$, each the image of an edgelet somewhere earlier in the fold axis, it follows that the map $T_i \mapsto T_j$ restricts to an injection on all of $\ell(T_i)$, and the image line in $T_j$ is clearly $\ell(T_j)$. 

To prove item~\pref{ItemWithinS}, in the \emph{Main Projection Diagram} consider the commuting square with $S_i,S_j,T_i,T_j$ at the corners. We use the edgelet subdivisions under which the subforests $\mathscr T_i \subset T_i$ and $\mathscr T_j \subset T_j$ are subcomplexes and all four maps in that square are simplicial. Let $h^i_j \from S_i \to S_j$ and $f^i_j \from T_i \to T_j$ denote the foldable maps in the $S$-row and $T$ rows. It suffices to show for that any two distinct $1$-simplices $e \ne e' \subset \ell(S_i)$ we have $h^i_j(e) \ne h^i_j(e')$. Lifting $e,e'$ we get distinct $1$-simplices $\tilde e \ne \tilde e' \subset \ell(T_i) \setminus \mathscr T_i$, and by~\pref{ItemWithinT} we have distinct $1$-simplices $f^i_j(\ti e) \ne f^i_j(\ti e') \subset \ell(T_j) \setminus \mathscr T_j$, which therefore project to distinct $1$-simplices in $S_j$; the latter, by commutativity, are identified with $h^i_j(e)$, $h^i_j(e')$ respectively.

Item~\pref{ItemToEndOfR} follows from Lemma~\ref{LemmaDefBirecurrentRealization}.
\end{proof}

We turn now to the proof, using bounded cancellation, that $\ell(R)$ crosses a translate of every edgelet of $R$. Fix any edge $\theta \in T_{I_2}$, and consider the $\theta$-tiles throughout the portion of the \emph{Main Projection Diagram} between columns $I_2$ and $I_0$. By applying Lemma~\ref{LemmaUniformLiftingCrossing}, the \emph{Uniform Lifting--Crossing Property}, we conclude that $\theta(R_{I_1})$ contains sixteen nonoverlapping subpaths each of which has an interior crossing of a representative of every natural edge orbit of $R_{I_1}$. We need only three of those sixteen subpaths, using which we obtain a decomposition of the form
$$\theta(R_{I_1}) = \alpha_0 \, \beta_1 \, \alpha_1 \, \beta_2 \, \alpha_2 \, \beta_3 \, \alpha_3
$$
in which the $\alpha$'s are possibly trivial subpaths, and each subpath $\beta_j$ ($1 \le j \le 3$) has an interior crossing of a representative of every natural edge orbit of $R_{I_1}$. We will need only the weaker property that each $\beta_j$ crosses a representative of every edgelet orbit of $R_j$.
 
By Conclusion~\pref{ItemTileInRRow} of Lemma~\ref{LemmaTilesInMain}, $\theta(R_{I_2})$ is contained in a natural edge of $R_{I_2}$, and so the foldable map $g_{I_2} \from R_{I_2} \to R$ restricts to an injection on $\theta(R_{I_2})$. Also by that same conclusion, the map $R_{I_2} \mapsto R_{I_1}$ takes $\theta(R_{I_2})$ homeomorphically to $\theta(R_{I_1})$. Since $g_{I_2}$ factors as $R_{I_2} \mapsto R_{I_1} \xrightarrow{g_{I_1}} R$, it follows that the map $R_{I_1} \mapsto R$ restricts to an injection on $\theta(R_{I_1})$. Denoting the image of this injection as $\theta(R)$, we obtain a decomposition
$$\theta(R) = A_0 \, B_1 \, A_1 \, B_2 \, A_2 \, B_3 \, A_3 
$$
where $A_i$, $B_i$ are the images of the $\alpha_i$, $\beta_i$ respectively. Since each $\beta_i$ crosses a representative of every edgelet orbit of $R_{I_1}$, it follows that each $B_i$ crosses a representative of every edgelet orbit of~$R$.

Consider a generic leaf $\ell \in \Lambda$. Since its realization $\ell(T_{I_2})$ crosses a translate of every edge of $T_{I_2}$, we may rechoose $\ell$ in its orbit so that in $T_{I_2}$, the line $\ell(T_{I_2})$ crosses the edge $\theta=\theta(T_{I_2})$. Combining Lemma~\ref{LemmaLeavesInMain}~\pref{ItemWithinT} with Lemma~\ref{LemmaTilesInMain}~\pref{ItemTileInTRow} it follows that in $T_{I_1}$, the line $\ell(T_{I_1})$ crosses the tile $\theta(T_{I_1})$. Combining Lemma~\ref{LemmaLeavesInMain}~\pref{ItemLineTtoS} with Lemma~\ref{LemmaTilesInMain}~\pref{ItemTileBetweenTandS} it follows in $S_{I_1}$, the line $\ell(S_{I_1})$ crosses the tile $\theta(S_{I_1})$. Combining Lemma~\ref{LemmaLeavesInMain}~\pref{ItemLineRFromS} with Lemma~\ref{LemmaTilesInMain}~\pref{ItemTileBetweenRAndS} it follows that in $R_{I_1}$, the line $\ell(R_{I_1})$ crosses the tile $\theta(R_{I_1})$.

We shall finish the proof by applying bounded cancellation of the foldable map $g_{I_1} \from R_{I_1} \mapsto R$, using the concrete lines $\ell(R_{I_1})$ in $R_{I_1}$ and $\ell(R)$ in $R$. Since $R_{I_1}$ and $R$ are both Grushko free splittings \relA, one can use the form of bounded cancellation stated in \cite[Lemma 4.14~(3)]{\RelFSHypTwoTag}: using the edgelet subdivisions of $R_{I_1}$ and $R$ with respect to which the map $g_{I_1}$ is simplicial, and assigning length~$1$ to all edgelets so that $g_{I_1}$ restricts to an isometry from each edgelet of $R_{I_1}$ to an edgelet of $R$, it follows that the map $g_{I_1}$ has bounded cancellation constant 
$$C = \Length(R_{I_1}) - \Length(R) = \#\text{edges}(R_{I_1}) - \#\text{edges}(R)
$$
and that
$$g_{I_1}(\ell(R_{I_1})) \subset N_C(\ell(R))
$$
Restricting to the subpath $\theta(R_{I_1}) \subset \ell(R_{I_1})$ we therefore have
$$(\#) \qquad \underbrace{g_{I_1}(\theta(R_{I_1}))}_{\theta(R)} \subset N_C(\ell(R)) \qquad\hphantom{(\#)}
$$

Consider the decomposition $\theta(R) = \nu_- \, \nu_0 \, \nu_+$ where $\nu_0 = \theta(R) \intersect \ell(R)$ is the maximal common subpath of $\theta(R)$ and $\ell(R)$. The initial path $\nu_-$ meets the line $\ell(R)$ only in the terminal endpoint of $\nu_-$, and so the initial endpoint of $\nu_-$ achieves the maximum distance to $\ell(R)$ amongst all points on $\nu_-$, that maximal distance being equal to $\Length(\nu_-)$; applying $(\#)$ it follows that $\Length(\nu_-) \le C$. By a similar argument we have $\Length(\nu_+) \le C$. Since $\beta_1$ crosses a translate of every edge of $R_{I_1}$, it follows that 
$$\Length(A_0 \, B_1) = \Length(\alpha_0 \, \beta_1) \ge C
$$
and similarly
$$\Length(B_3 \, A_3) = \Length(\beta_3 \, \alpha_3) \ge C
$$
thus $\nu_-$ is a subpath of $A_0 B_1$, and $\nu_+$ is a subpath of $B_3 A_3$. It then follows that $A_1  B_2 A_2$ is a subpath of $\nu_0$ and hence of $\ell(R)$. Since $B_2$ crosses a translate of every edgelet of $R$, so does $\ell(R)$. This completes the proof that $\Lambda$ fills $\Gamma$ \relA, and hence the proof of the implication \pref{ItemActLargeOrbit}$\implies$\pref{ItemFillingLamExists} of Theorem~B.

\section{The lower bound in Theorem A.}
\label{SectionLowerBound}

Consider an outer automorphism $\phi \in \Out(\Gamma;\A)$ which has a filling attracting lamination \relA. Having proved Theorem~B (in Part II \cite{\RelFSHypTwoTag} and in previous sections here in Part III), this is equivalent to saying that $\phi$ acts loxodromically on $\FS(\Gamma;\A)$, with stable translation length~$\tau_\phi > 0$. In this section we find a constant $A = A(\Gamma;\A) > 0$ independent of $\phi$ such that $\tau_\phi \ge A$.

The proof was outlined in the introduction, and a more detailed outline is found in Section~\ref{SectionFindingBound}. The proof applies tools from Section~\ref{SectionNoFillingLam}, namely PF-exponents (Definition~\ref{DefPFExponent}) and filling exponents (Definition~\ref{DefFillingExponent}), and Lemma~\ref{LemmaQAPQuant}~\pref{ItemSTLLowerBound} which gives a positive lower bound for $\tau_\phi$ expressed as a function of a PF-exponent $\kappa$ and a filling exponent~$\omega$ of an EG-aperiodic train track axis for $\phi$, the form of that function depending only on $\corank(\Gamma;\A)$ and $\abs{\A}$. This reduces the problem to finding uniform values of $\kappa$ and $\omega$, depending only on $\corank(\Gamma;\A)$ and~$\abs{\A}$. We describe such values for the special class of EG-aperiodic train track axes obtained by suspending a train track representative defined on a natural free splitting. For such axes, a uniform PF-exponent is described in Proposition~\ref{PropUniformPF} proved in Section~\ref{PropUniformPF}, and a uniform filling exponent is described in Proposition~\ref{PropUniformFilling} proved in Section~\ref{PropUniformFilling}. Proposition~\ref{PropNaturalAxis} produces the axes that we need for these applications.

\subsection{Finding the lower bound.} 
\label{SectionFindingBound}
Here is the description of the strong class of EG-aperiodic fold axes that will be used in the proof:
\begin{proposition} 
\label{PropNaturalAxis}
For any $\phi \in \Out(\Gamma;\A)$ acting loxodromically on $\FS(\Gamma;\A)$, and for any maximal, proper, $\phi$-invariant free factor system~$\F$ rel~$\A$, in the relative free splitting complex $\FS(\Gamma;\A)$ there exists an EG-aperiodic fold axis for $\phi$ such that $T_0$ is a natural, Grushko free splitting with respect to~$\F$.
\end{proposition}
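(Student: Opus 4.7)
The plan is to combine two constructions already sketched in the preceding discussion: first produce an EG-aperiodic train track representative of $\phi$ rel~$\FF$ whose underlying Grushko free splitting is natural, and then suspend that representative to a fold axis in $\FS(\Gamma;\FF) \hookrightarrow \FS(\Gamma;\A)$ by appealing to the general axis-building lemma of Part~II.

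The first step is to show that $\phi$ admits at least \emph{some} EG-aperiodic train track representative rel~$\FF$. Because $\phi$ acts loxodromically on $\FS(\Gamma;\A)$, Theorem~B gives a filling attracting lamination of $\phi$ rel~$\A$. The hypothesis that $\FF$ is maximal proper and $\phi$-invariant then forces $\phi$ to have a unique attracting lamination rel~$\A$ not carried by~$\FF$, and by the dichotomy of \cite[Proposition 4.24]{\RelFSHypTwoTag}, the loxodromic case excludes the bounded-orbit alternative and supplies an EG-aperiodic train track representative of $\phi$ rel~$\FF$.

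The second step, where the real work lies, is to upgrade this representative to one defined on a \emph{natural} Grushko free splitting $T$ rel~$\FF$. This is achieved by the Lipschitz-energy minimization method of Francaviglia--Martino \cite{FrancavigliaMartino:TrainTracks}, or by the Bestvina--Handel-style variant that will be developed in Section~\ref{SectionNaturalTTRep}: one minimizes the Lipschitz stretch factor of a candidate representative over the outer space of $\Gamma$ rel~$\FF$, and shows that any minimizer can be taken to be an EG-aperiodic train track map whose underlying free splitting carries no valence-two vertex with trivial stabilizer (any such vertex can be eliminated by amalgamating the two incident edges into a single natural edge without changing the train-track or EG-aperiodic properties). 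The outcome is a train track representative $F \from T \to T$ with $T$ natural and Grushko rel~$\FF$.

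Finally we suspend $F$ into a fold axis. Apply \cite[Lemma 4.22]{\RelFSHypTwoTag}, which starting from any EG-irreducible train track representative of $\phi$ rel~$\FF$ produces a bi-infinite fold sequence in $\FS(\Gamma;\FF)$ of some period $p$ whose first return map is precisely $F \from T \to T$ and whose zero-th term is $T$. Composing with the natural simplicial inclusion $\FS(\Gamma;\FF) \hookrightarrow \FS(\Gamma;\A)$ (which is defined because $\A \sqsubset \FF$), we obtain an EG-aperiodic fold axis for $\phi$ in $\FS(\Gamma;\A)$ with $T_0 = T$ a natural Grushko free splitting rel~$\FF$, as required. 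The main obstacle is the second step: while the \emph{existence} of an EG-aperiodic train track rep is already in hand, rigidifying it to live on a natural free splitting while preserving both the EG-aperiodic and train-track properties is what forces us to invoke the full energy-minimization machinery of Section~\ref{SectionNaturalTTRep}.
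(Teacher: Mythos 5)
Your proposal takes essentially the same route as the paper: invoke the Francaviglia--Martino result (or the Bestvina--Handel-style alternative of Section~\ref{SectionNaturalTTRep}) to produce a train track representative on a natural Grushko free splitting rel~$\FF$, establish EG-aperiodicity from the existence of a filling attracting lamination, and then suspend to a fold axis via the construction in Part~II. One organizational difference worth noting: you frame step two as ``upgrading'' a pre-existing representative to a natural one, but the Francaviglia--Martino / Bestvina--Handel machinery constructs a natural representative from scratch by minimizing the stretch factor over outer space, not by modifying a given representative. The paper's proof instead applies \cite[Theorem 8.24]{FrancavigliaMartino:TrainTracks} directly to get an irreducible natural representative, and \emph{then} uses the filling lamination (via Conclusion~(2c) and \cite[Theorem 4.17]{\RelFSHypTwoTag}) to see that representative must be EG-aperiodic; your step~1 plays the same logical role but appears earlier.

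One further caution about the parenthetical in your step~2: ``any such vertex can be eliminated by amalgamating the two incident edges into a single natural edge without changing the train-track or EG-aperiodic properties'' is an oversimplification that would be wrong as stated. If some vertex of $T$ maps under $F$ to the valence-two vertex $v$ you wish to eliminate, naive amalgamation destroys the property that $F$ takes vertices to vertices. This is exactly why the Bestvina--Handel construction (as adapted in Section~\ref{SectionNaturalTTRep} using \cite[Lemma 3.5]{\LymanRTTTag}) performs a valence-two \emph{homotopy} --- collapsing one incident edgelet $e_j$ and stretching the other $e_i$ over $e_i \cup e_j$ before tightening --- precisely so that after the homotopy no vertex maps to $v$, and then tracks the Perron--Frobenius eigenvalue to ensure minimality is preserved. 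Since you defer to Section~\ref{SectionNaturalTTRep} for the actual argument, the overall proof is sound, but the parenthetical should not be read as the mechanism.
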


\begin{proof} First we apply the following:

\begin{theorem}[\protect{\cite[Theorem 8.24]{FrancavigliaMartino:TrainTracks}}]
\label{TheoremNatIrrTT}
There exists an irreducible train track representative $f \from T \to T$ with respect to~$\F$ where $T$ is a natural Grushko free splitting relative to~$\F$.
\end{theorem}
\noindent
See Section~\ref{SectionNaturalTTRep} for a further discussion of this theorem, including: some details of translation between the language of \cite{FrancavigliaMartino:TrainTracks} and our language here, in order to aid the citation; and a different proof of the theorem, adapting the original construction for $\Out(F_n)$ found in \cite{\BHTag}.

To finish the proof, by applying Conclusion (2c) of Section~\ref{SectionFoldAxisReview} it follows that $\Lambda$ --- the unique filling lamination for $\phi$ \relA\ --- is identified with the unique filling lamination for $\phi$ rel~$\F$, under any collapse map to $T$ defined on a Grushko free splitting \relA. Applying relative train track methods as summarized in \cite[Theorem 4.17]{\RelFSHypTwoTag}, it follows that the train track representative $f \from T \to T$ given in Theorem~\ref{TheoremNatIrrTT} is EG-aperiodic. Applying \cite[Proposition 4.22]{\RelFSHypTwoTag} we obtain an EG-aperiodic fold axis for $\phi$ with respect to~$\F$ such that $T=T_0$ and such that its first return map is $f \from T \to T$.
\end{proof}

We next state propositions regarding a uniform PF-exponent (Definition~\ref{DefPFExponent}) and a uniform filling exponent (Definition~\ref{DefFillingExponent}):

\begin{proposition}[Uniform PF exponents for natural iteration tiles]
\label{PropUniformPF}
There exist an integer $\kappa_0 = \kappa_0(\Gamma;\A) \ge 1$ such that for any $\phi \in \Out(\Gamma;\A)$ that acts loxodromically on $\FS(\Gamma;\A)$, for any maximal, proper, $\phi$-invariant free factor system~$\F$ rel~$\A$, and for any EG-aperiodic fold axis of $\phi$ relative to~$\F$ such that $T_0$ is a natural free splitting, $\kappa_0$ is a PF exponent.
\end{proposition}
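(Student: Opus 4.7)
The plan is to use naturality of $T_0$ to uniformly bound the number of edge orbits, apply Wielandt's theorem together with an elementary iteration to produce a PF-exponent for the first return map $F_0 \from T_0 \to T_0$, and then extend to $F_l$ for all $l \in \mathbb Z$ using the periodic structure of the axis. The first two steps give a clean uniform PF-exponent for~$F_0$; the third step is where the main technical issues arise.

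\textbf{Bounding edges and applying Wielandt.} Since $T_0$ is a natural Grushko free splitting relative to~$\FF$, the quotient graph of groups has $|\FF|$ vertices of nontrivial stabilizer (one per element of~$\FF$) and some number $V_t$ of trivial-stabilizer vertices, each of valence $\ge 3$. Combining the Euler identity $|\FF| + V_t - E = 1 - \corank(\FF)$ with the valence constraint $3V_t \le 2E$ yields
\[
E \le 3\bigl(|\FF| + \corank(\FF)\bigr) - 3 \le 3\KR(\Gamma;\A) - 3 =: E_0(\Gamma;\A),
\]
using $|\FF| + \corank(\FF) \le \KR(\Gamma;\A) = |\A| + \corank(\A)$. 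Hence the transition matrix $\M$ of $F_0$ is an $n \times n$ primitive nonnegative integer matrix with $n \le E_0$, primitivity being the EG-aperiodic hypothesis. Wielandt's bound gives $\M^N$ entrywise positive and, since entries are positive integers, entrywise $\ge 1$, for $N = (n-1)^2 + 1$. A short induction on $k$ then yields $(\M^{kN})_{ij} \ge n^{k-1}$, so for $n \ge 2$ we obtain $\M^{3N}$ with all entries $\ge n^2 \ge 4$; for $n = 1$, $\lambda > 1$ forces $\M = (m)$ with $m \ge 2$, whence $\M^{3} \ge 8$. In either case $\kappa_{00} := 3\bigl((E_0 - 1)^2 + 1\bigr)$ is a PF-exponent for~$F_0$, depending only on~$\Gamma$ and~$\A$.

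\textbf{Extending to all $F_l$ and main obstacle.} The conjugation $F_l = h^l_{l-p} \circ F_{l-p} \circ h^{l-p}_l$ via twisted simplicial isomorphisms shows that $F_l$ and $F_{l-p}$ share the same transition matrix under consistent edge labeling, so it suffices to handle $l \in \{0, 1, \ldots, p-1\}$; the case $l = 0$ is already done. For $0 < l < p$ the delicate point is that partial folds may introduce edge subdivisions, so $\M_l$ could have size larger than $E_0$. I intend to factor $f^{l-\kappa p}_l = f^0_l \circ f^{l - \kappa p}_0$: once $\kappa$ exceeds $\kappa_{00}$ by a small fixed amount, the inner image in $T_0$ contains an $F_0^{\kappa_{00}}$-tile, which crosses $\ge 4$ translates of every edge of $T_0$; pushing forward through the foldable surjection $f^0_l$ should then yield $\ge 4$ crossings of translates of every edge of $T_l$, since distinct $\Gamma$-translates of edges in $T_0$ map to distinct translates in~$T_l$ (edges have trivial stabilizer). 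The main obstacle will be making this push-forward rigorous in the presence of subdivisions, since an edge of $T_l$ need not be the image of an entire edge of $T_0$; the cleanest resolution is likely to track the natural edge structure on each $T_l$ (uniformly bounded by the same computation as in Step~1) and to use Lemma~\ref{LemmaUniformCrossing} as a bridge from natural crossings back to ordinary crossings, at the cost of a modest further enlargement of~$\kappa_0$.
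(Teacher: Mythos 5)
Your Steps 1 and 2 match the paper's: bound the number of natural edge orbits of $T_0$ via the Euler characteristic of the quotient graph, then derive a uniform primitivity exponent for the transition matrix of $F_0$. The paper carries out the second step by finiteness of the set of Boolean $m\times m$ matrices with $m$ bounded, whereas you invoke Wielandt's bound $(n-1)^2 + 1$ followed by cubing; both are correct, and the Wielandt route has the minor advantage of an explicit formula. (A small aside: in the valence count you can also charge each nontrivial-stabilizer vertex its valence $\ge 1$, which sharpens the bound to $E \le 2\abs\FF + 3\corank(\FF) - 3$, the quantity the paper quotes; your weaker $3\KR(\Gamma;\A) - 3$ is still uniform, so nothing is lost.)

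The ``main obstacle'' you flag at Step 3, however, is not in fact an obstacle, and the detour you propose through natural edge structure and Lemma~\ref{LemmaUniformCrossing} is unnecessary. You never need to control the size of the transition matrix $\M_l$ of $F_l$ for $0 < l < p$, and you never apply a primitivity bound to it. The whole issue dissolves once you recall that transition matrices multiply along foldable compositions: if $g \circ f$ is foldable then $\M(g\circ f) = \M g \cdot \M f$, because no cancellation occurs inside an image path $g(f(e))$. Once $\kappa$ is a PF-exponent for $F_0$, the transition matrix of $f^{-\kappa p}_0$ is entrywise $\ge 4$ (conjugating away the simplicial isomorphism $h^0_{-\kappa p}$ from the defining factorization of $F_0^\kappa$). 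The paper then factors, for $0 < j < p$,
$$F_j^{\kappa+1} \from T_j \xrightarrow{\;h^j_{j-(\kappa+1)p}\;} T_{j-(\kappa+1)p} \xrightarrow{\;f^{j-(\kappa+1)p}_{-\kappa p}\;} T_{-\kappa p} \xrightarrow{\;f^{-\kappa p}_{0}\;} T_0 \xrightarrow{\;f^{0}_{j}\;} T_j$$
and observes that the resulting product of four transition matrices is entrywise $\ge 4$: the third factor from the right is entrywise $\ge 4$, the rightmost is a permutation matrix, and the two remaining factors are nonnegative integer matrices with, respectively, no zero column (each edge maps to a nontrivial path) and no zero row (the final foldable map is surjective). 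Hence $\kappa_0 = \kappa + 1$ works, independently of how large $\M_l$ is. Your ``distinct translates of edges map to distinct translates'' reasoning is gesturing at the same phenomenon, but the algebraic identity $\M(g\circ f) = \M g \cdot \M f$ is its precise form and immediately removes the subdivision worry, with no appeal to Lemma~\ref{LemmaUniformCrossing} or to natural edges of $T_l$.
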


\begin{proposition}[Uniform filling exponents for natural iteration tiles] 
\label{PropUniformFilling}
There exist an integer $\omega_0 = \omega_0(\Gamma;\A) \ge 1$ such that for any $\phi \in \Out(\Gamma;\A)$ that acts loxodromically on $\FS(\Gamma;\A)$, for any maximal, proper, $\phi$-invariant free factor system~$\F$ rel~$\A$, and for any EG-aperiodic fold axis of $\phi$ relative to~$\F$ such that $T_0$ is a natural free splitting, $\omega_0$ is a filling exponent.
\end{proposition}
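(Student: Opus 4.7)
The strategy mirrors the proof of the \STOAT~(Section~\ref{SectionProofStrongTOA}) but tracks the evolution of filling supports under \emph{iteration} of the train track map rather than along a fold sequence; Lemma~\ref{LemmaNestingOfFillingSupport} here plays the role that Lemma~\ref{LemmaFoldFillingProtoforest} played there. Let $\kappa_0 = \kappa_0(\Gamma;\A)$ be the uniform PF-exponent from Proposition~\ref{PropUniformPF} and set $F = F_0^{\kappa_0}$. After passing to a uniformly bounded power of $F$ (absorbed into the final constant), principal-vertex theory for EG-aperiodic train tracks provides a fixed natural vertex $v \in T_0$ with two fixed directions forming a legal turn; let $E_1, E_2$ be the natural edges at $v$ in these directions, set $P = \bar E_2 \cdot E_1$, and let $\mu_m = F^m(P)$. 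Because the two directions at $v$ are fixed, the tiles nest as $\mu_m \subset \mu_{m+1}$, and $L = \bigcup_m \mu_m$ is a bi-infinite line in $T_0$ exhausted by iteration tiles of $F_0$. By Theorem~\ref{TheoremLymanTTLams} this $L$ realizes a generic leaf of $\Lambda_\FF$, which by Conclusion~(2c) of Section~\ref{SectionFoldAxisReview} simultaneously realizes a generic leaf $\ell$ of the filling attracting lamination $\Lambda$ rel~$\A$.

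Next I apply Lemma~\ref{LemmaNestingOfFillingSupport} to the nested sequence $\mu_m$. Since $\kappa_0$ is a PF-exponent, $\beta(\mu_m;T_0) = T_0$ for every $m \ge 1$, so items~\pref{ItemFourRelations}--\pref{ItemFourEquations} of that lemma yield a non-decreasing integer sequence $\KR(\mu_m) \in [0, \KR(\Gamma;\A)]$ whose stabilization forces the filling subtrees $\beta^\Fill_\Id(\mu_m)$ to stabilize as well. Strict increases occur at most $\KR(\Gamma;\A)$ times, producing an $M \le M_0 := \KR(\Gamma;\A) + 1$ with $\beta^\Fill_\Id(\mu_m) = \beta^\Fill_\Id(\mu_M)$ for all $m \ge M$, and consequently $L \subseteq \beta^\Fill_\Id(\mu_M)$. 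The core step is to show $\Fm(\mu_M) = \Gamma$. Suppose otherwise; then $\beta^\Fill(\mu_M)$ has more than one protocomponent. This protoforest satisfies the Tame Stabilizer Hypothesis (as observed after Definition~\ref{DefinitionFillingForest}), so the Protoforest Blowup Lemma~\ref{LemmaExpansionConstruction} produces a nontrivial expansion $q \from U \to T_0$ with $U$ a free splitting of $\Gamma$ rel~$\A$. Inspection of that construction shows that at each vertex $w \in T_0$ where multiple protocomponents meet, the distinct protocomponents attach to distinct vertices of the fiber $q^{-1}(w)$; since the incoming and outgoing directions of $L$ at any $w \in L$ both lie in the single protocomponent $\beta^\Fill_\Id(\mu_M)$, the lift $\ell_U$ enters and exits $q^{-1}(w)$ at the same attachment point and so avoids every new edge of $q^{-1}(w)$ joining distinct protocomponent attachments. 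By $\Gamma$-translation $\Gamma \cdot \ell_U$ misses all such new edges, so $\Gamma \cdot \ell_U \subsetneq U$; but $\ell$ fills $\Gamma$ rel~$\A$, so by Corollary~\ref{CorollaryFillingLine} $\ell_U$ fills~$U$, a contradiction. Hence $\Fm(\mu_M) = \Gamma$, and combining with the interior-crossing condition from Lemma~\ref{LemmaUniformCrossing}, Proposition~\ref{PropFillingPath} gives that $\mu_M = F^M(P)$ fills~$T_0$.

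To extend from the two-edge path $P$ to arbitrary single edges, and from $T_0$ to all $T_l$ along the axis, elementary aperiodicity counting (bounded only in terms of the number of edge and turn orbits in $T_0$, hence only in terms of $\Gamma$ and $\A$) yields a constant $N_1 = N_1(\Gamma;\A)$ such that for every edge $e \subset T_0$ the tile $F^{N_1}(e)$ contains a $\Gamma$-translate $\gamma P$; then $F^{M+N_1}(e) \supseteq \Phi^M(\gamma) F^M(P)$ fills $T_0$ by the ``As a consequence'' clause of Lemma~\ref{LemmaNestingOfFillingSupport}. Propagation to positions $T_l$ with $l$ a multiple of the period $p$ uses the twisted simplicial isomorphisms $h^l_0$ from Section~\ref{SectionFoldAxisReview}, which preserve filling; for intermediate $l$ one factors the foldable map $f^{l-\omega p}_l$ through the nearest $T_{kp}$ and applies Lemma~\ref{LemmaFoldFillingProtoforest}. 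Setting $\omega_0 = (M_0 + N_1)\kappa_0$ (after absorbing the bounded power used in the first paragraph) yields the desired uniform filling exponent. The main technical obstacle I anticipate is the fiber-structure argument in the second paragraph, which requires a careful inspection of the construction in the proof of Lemma~\ref{LemmaExpansionConstruction}; the uniform aperiodicity bound and the axis propagation should be routine once that core step is in hand.
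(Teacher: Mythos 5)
Your proposal follows the paper's broad architecture: a uniform PF-exponent gives an iterate $F = F_0^{\kappa_0}$ whose tiles can be nested, Lemma~\ref{LemmaNestingOfFillingSupport} gives monotonicity of the filling data, boundedness of Kurosh rank forces a plateau, and then the filling lamination forces the support to be full. But there is a genuine gap at the crucial stabilization step, and the paper spends most of its proof precisely closing that gap.

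You assert that because the Kurosh rank sequence $\KR(\mu_m)$ is nondecreasing and bounded, ``strict increases occur at most $\KR(\Gamma;\A)$ times, producing an $M \le \KR(\Gamma;\A)+1$ with $\beta^\Fill_\Id(\mu_m) = \beta^\Fill_\Id(\mu_M)$ for all $m \ge M$.'' That inference is unjustified. Monotonicity and boundedness do give you a single plateau step $\KR(\mu_M) = \KR(\mu_{M+1})$ at some $M \le \KR(\Gamma;\A)$, and Lemma~\ref{LemmaNestingOfFillingSupport}~\pref{ItemFourEquations} then forces $\beta^\Fill_\Id(\mu_M) = \beta^\Fill_\Id(\mu_{M+1})$ for that one step. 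But nothing in Lemma~\ref{LemmaNestingOfFillingSupport} rules out a subsequent strict jump $\KR(\mu_{M+1}) < \KR(\mu_{M+2})$: plateaus can be interrupted, and the index at which the sequence \emph{finally} stabilizes is not a priori bounded in terms of $\Gamma$ and $\A$. Consequently you have not established $L \subseteq \beta^\Fill_\Id(\mu_M)$, which is what feeds your protoforest-blowup contradiction; that contradiction argument has nowhere to stand. The paper closes exactly this gap by proving, from the single-step plateau (its item~(iii)) together with the $\Psi$-twisted equivariance of $G = F^\kappa$, that $G(\beta^\Fill_\Id(\eta_M)) = \beta^\Fill_\Id(\eta_M)$ (its item~(iv)): one shows $G$ carries $\eta_M$-connections to $\eta_{M+1}$-connections, hence $\Psi(\Stab\beta^\Over_\Id(\eta_M)) \subgroup \Fm(\eta_M)$; then uniqueness of the minimal free factor and equality of Kurosh ranks give $\Psi(\Fm(\eta_M)) = \Fm(\eta_M)$, from which $G$-invariance of $\beta^\Fill_\Id(\eta_M)$ follows. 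Only after that can one run the induction $\eta_m = G^{m-M}(\eta_M) \subset \beta^\Fill_\Id(\eta_M)$ for all $m \ge M$ and conclude $\ell \subset \beta^\Fill_\Id(\eta_M)$. You would need to supply an argument of this kind, and it is not a routine detail; it is the heart of the proof.

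Two secondary comments. First, your principal-vertex construction of the seed path $P$ and the auxiliary exponent $N_1$ are avoidable: the paper simply fixes an arbitrary edge $E$, uses that $\kappa_0$ is a PF-exponent to see $G(E)$ has an interior crossing of some translate of $E$, and then replaces $G$ by $\gamma G$ so that $G(E)$ has an interior crossing of $E$ itself, giving the nested sequence $\eta_m = G^m(E)$ directly for each edge orbit. This removes the need to justify that $N_1$ is uniform (which would itself require an argument: aperiodicity bounds iteration counts for edge crossings, but bounding the iteration needed to realize a \emph{specific taken turn} in a tile is a different and less obvious claim). Second, once $\ell \subset \beta^\Fill_\Id(\mu_M)$ is in hand, the paper's conclusion is considerably shorter than your blowup contradiction: since $\Fm(\eta_M) = \Stab(\beta^\Fill_\Id(\eta_M))$ is a free factor whose associated free factor system $\FF[\eta_M]$ supports $\ell$, and $\Lambda$ fills $\Gamma$ rel~$\A$, it follows immediately that $\FF[\eta_M] = \{[\Gamma]\}$. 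Your blowup route could likely be made to work, but it duplicates machinery already contained in Proposition~\ref{PropFillingPath} and Corollary~\ref{CorollaryFillingLine}, and it still depends on the containment you have not established.
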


The proofs of these two propositions are found in Sections~\ref{SectionUniformPF} and~\ref{SectionUniformFilling} respectively. Here we quickly apply them:

\begin{proof}[Proof of the lower bound for Theorem A]
Using the constants $\kappa_0(\Gamma;\A)\ge 1$ and $\omega_0(\Gamma;\A) \ge 1$ from Propositions~\ref{PropUniformPF} and~\ref{PropUniformFilling}, and the constant $\nu = \nu(\Gamma;\A) \ge 1$ from Proposition~\ref{LemmaQAPQuant} (which ultimately derives from \cite[Corollary 5.5]{\RelFSHypTag}), by applying that proposition we obtain the inequality $\ds \tau_\phi \ge \frac{1}{\nu (3 \kappa_0 + \omega_0)}$. 
\end{proof}


\subsection{Finding a uniform PF exponent: Proof of Proposition \ref{PropUniformPF}}  
\label{SectionUniformPF}


First we show that if $\kappa$ is a PF exponent for for $F_0 \from T_0 \to T_0$ then $\kappa+1$ is a PF exponent for each $F_j \from T_j \to T_j$. By periodicity, $\kappa$ is also a PF-exponent for every $F_{ip} \from T_{ip} \to T_{ip}$ ($i \in \Z$). Using the notation of Section~\ref{SectionFoldAxisReview}, we have a factorization $F^\kappa_{ip} = f^{(i-\kappa)p}_{ip} \circ h^{ip}_{(i-\kappa)p}$. Recalling that the $h$-maps are all simplicial isomorphisms, the transition matrix of the map $f^{(i-\kappa)p}_{ip}$ has all entries $\ge 4$. Next we show that $\kappa+1$ is a PF exponent for every $F_j$ ($j \in \Z$). By periodicity it suffices to show this for $0 < j < p$. Noting that $j-(\kappa+1)p \le -\kappa p$, consider the map $F_j^{\kappa+1} \from T_j \to T_j$ which factors as 
$$F_j^{\kappa+1} \from T_j \xrightarrow{h^j_{j-(\kappa+1)p}} T_{j-(\kappa+1)p} \xrightarrow {f^{j-(\kappa+1)p}_{-\kappa p}} T_{-\kappa p} \xrightarrow{f^{-\kappa p}_0} T_0 \xrightarrow{f^0_{j}} T_j
$$
Since the transition matrix for the map $f^{-\kappa p}_0$ has all entries $\ge 4$, it follows that the transition matrix for $F_j^{\kappa+1}$ has all entries $\ge 4$.

Using that $T_0$ is a natural free splitting, what remains is to prove that there is a uniform PF-exponent $\kappa_1 = \kappa_1(\Gamma;\A)$ for all EG-aperiodic train track maps $F \from T \to T$ defined on natural free splittings $T$ of $\Gamma$ \relA, for then we can take $\kappa_0 = \kappa_1 + 1$. The number of natural edge orbits of $T$ is bounded above by $m_1 = 3 \corank(\A) + 2 \abs{\A} - 3$, and so transition matrix $M$ for~$F$ is an $m \times m$ matrix of non-negative integers with $m \le m_1$. Consider the $m \times m$ Boolean matrix $\wh M$ that is obtained from $M$ by replacing every positive entry with a~$1$. For each $i \ge 1$, note that the Boolean power $\wh M^i$ is obtained from the ordinary power $M^i$ in a similar fashion, by replacing every positive entry with a~$1$. There are only finitely many $m \times m$ Boolean matrices such that $m \le m_1$, and so amongst those that have a Boolean power with all positive entries, there is a uniform exponent $\kappa_2=\kappa_2(\Gamma;\A)$ achieving that power. The matrix $M^{\kappa_2}$ therefore has all positive entries. It follows that every entry of $M^{3\kappa_2}$ is $\ge 4$, hence $\kappa_1 = 3 \kappa_2$ is a PF-exponent for $F$.

\subsection{Finding a uniform filling exponent: Proof of Proposition \ref{PropUniformFilling}}  
\label{SectionUniformFilling}

As in the proof of Proposition~\ref{PropUniformPF}, first we show that if $\omega$ is a filling exponent for $F_0$ then $\omega+1$ is a filling exponent for each $F_j \from T_j \to T_j$. And again, we may assume $0<j<p$, hence $j-(\omega+1)p \le - \omega p$, and we consider the factorization of $F_j^{\omega+1} \from T_j \to T_j$ depicted in the top line of the following diagram:
$$\xymatrix{
F^{\omega+1}_j : T_j  \ar[rr]^{h^j_{j-(\omega+1)p}} &&
	T_{j-(\omega+1)p} \ar[rr]^{f^{j-(\omega+1)p}_{-\omega p}} &&
	T_{-\omega p} \ar[rr]^{f^{-\omega p}_{0}} &&
	T_{0} \ar[r]^{f^{0}_{j}} &
	T_{j} \\
	&& 
	{\vphantom{E^{2^x}}} E=\eta \ar[rr]^{h^j_{j-(\omega+1)p}} \ar@{^{(}->}[u] && 
	{\vphantom{E^{2^x}}} \eta' \ar[rr]^{f^{-\omega p}_{0}} \ar@{^{(}->}[u] && 
	{\vphantom{E^{2^x}}} \eta'' \ar[r]^{f^{0}_{j}} \ar@{^{(}->}[u] & 
	{\vphantom{E^{2^x}}} \eta''' \ar@{^{(}->}[u]
}$$
In this diagram we choose an arbitrary edge of $T_j$, whose image under the simplicial isomoprhism $h^j_{j-(\omega+1)p}$ is an edge of $T_{j-(\omega+1)p}$ denoted $E = \eta$, with images $\eta',\eta'',\eta'''$ (along the foldable path from $T_{j-(\omega+1)p}$ to $T_j$) as depicted in the bottom line of the diagram. What we have to prove is that $\eta'''$ fills $T_j$. Each map takes vertices to vertices, and so $\eta'$ is a nontrivial path in $T_{-\omega p}$ which crosses at least one whole edge $E'$ of $T_{-\omega p}$. Since $\omega$ is a filling exponent for $F$, in $T_0$ it follows that the path $f^{-\omega p}_0(E')$ fills $T_0$, but that is a subpath of $\eta''$ and so $\eta''$ itself fills $T_0$. Then, applying Lemma~\ref{LemmaFoldFillingProtoforest}, it follows that the path $\eta''' = f^0_j(\eta'')$ fills $T_j$.

\medskip

What remains is to find a uniform constant $\omega=\omega(\Gamma;\A)$ which is a filling exponent for the first return map $F_0 \from T_0 \to T_0$ of the axis. Henceforth we drop the subscript $0$ and write this map as $F \from T \to T$. Since $T$ is a natural free splitting, applying Proposition~\ref{PropUniformPF} we obtain a uniform PF exponent $\kappa = \kappa(\Gamma;\A)$ for $F$. The map $G=F^\kappa \from T \to T$ is an EG-aperiodic train track representative of $\phi^\kappa$, it has filling exponent~$1$, and the outer automorphisms $\phi$ and $\phi^\kappa \in \Out(\Gamma;\A)$ share the same filling lamination $\Lambda$. It suffices to show that the uniform constant $\KR(\Gamma;\A) = \abs{\A} + \corank(\A)$ is a filling exponent for $G$, because then the uniform constant $\omega = \kappa \cdot \KR(\Gamma;\A)$ is a filling exponent for~$F$. 

Fix an arbitrary edge $E \subset T$. Since $G$ has filling exponent $1$, the tile $G(E)$ has four crossings of translates of $E$, and hence $G(E)$ has an interior crossing of some translate of $E$. After replacing~$G$ with $\gamma \cdot G$ for an appropriately chosen $\gamma \in \Gamma$, we may assume that $G(E)$ has an interior crossing of $E$~itself. Iterating $G$, we obtain the following monotonically nested sequence of iteration tiles 
$$\underbrace{\vphantom{T(E)}E}_{\eta_0} \subset \underbrace{G(E)}_{\eta_1} \subset \underbrace{G^{2}(E)}_{\eta_2} \subset \underbrace{G^{3}(E)}_{\eta_3} \subset \cdots \subset \underbrace{G^{m}(E)}_{\eta_m} \subset \cdots
$$
such that each tile in this sequence has an interior crossing of every previous tile. For later use we choose $\Psi \in \Aut(\Gamma;\A)$ representing $\phi^\kappa$ so that $G=F^\kappa$ is $\Psi$-twisted equivariant.

For the remainder of the proof we shall show, using $\mu = \KR(\Gamma;\A)$, that the tile $\eta_\mu$ fills $T$. Since $\eta_\mu = G^\mu(E)=F^{\kappa\mu}(E)$, this shows that that the uniform constant $\omega=\kappa\mu=\kappa \cdot \KR(\Gamma;\A)$ is a filling exponent for $F$.

\medskip

Since each tile $\eta_m$ has an interior crossing of a translate of every edge of $T$, the covering forest of $\eta_m$ is the full tree $\beta(\eta_m)=T$ (\cite[Definition 5.6]{\RelFSHypTwoTag}). Recall from Definition~\ref{DefinitionFineFFS}, from Lemma~\ref{LemmaCheckDeltaConnected} and from Definition~\ref{DefinitionFillingForest} the following objects associated to $\eta_m$: its filling protoforest $\beta^\Fill(\eta_m)$; the unique protocomponent $\beta^\Fill_\Id(\eta_m)$ of $\beta^\Fill(\eta_m)$ that contains~$\eta_m$; the associated filling support $\Fm(\eta_m) = \Stab(\beta^\Fill_\Id(\eta_m))$ which is a free factor of $\Gamma$ \relA, and which has Kurosh rank \relA\ denoted $\Krank(\eta_m) = \Krank(\Fm(\eta_m))$; and the associated free factor system denoted $\F[\eta_m]=\F[\eta_m;T]$. By applying Lemma~\ref{LemmaNestingOfFillingSupport} to the monotonic sequence of paths above, these objects may all be arranged in the matrix with monotonic rows depicted in Figure~\ref{FigureMatrix}.

\begin{figure}
\fbox{
\centerline{
$\begin{matrix}
\hphantom{0 \,\, \le \,\, } \beta^\Fill_\Id(\eta_0) & \subseteq         & \beta^\Fill_\Id(\eta_1) 
	& \subseteq        & \beta^\Fill_\Id(\eta_2) & 
	\subseteq  & \cdots  & \subseteq & 
	\beta^\Fill_\Id(\eta_m) & \subseteq  \,\, \cdots \hphantom{\,\, \le \abs{\A} + \corank(\A)}
\\ \\
\hphantom{0 \,\, \le \,\, } \beta^\Fill(\eta_0) & \preceq         & \beta^\Fill(\eta_1) 
	& \preceq        & \beta^\Fill(\eta_2) & 
	\preceq  & \cdots  & \preceq & 
	\beta^\Fill(\eta_m) & \preceq  \,\, \cdots \hphantom{\,\, \le \abs{\A} + \corank(\A)}
\\ \\
\hphantom{0 \,\, \le \,\, } \Fm(\eta_0)              & \sqsubseteq & \Fm(\eta_1) 
	& \sqsubseteq &     \Fm(\eta_2)        & 
	\sqsubseteq  & \cdots  & \sqsubseteq & 
	\Fm(\eta_m) &\sqsubseteq  \,\, \cdots \hphantom{\,\, \le \abs{\A} + \corank(\A)}
\\ \\
0 \,\, \le \,\, \KR(\eta_0)         & \le             & \KR(\eta_1) 
	&      \le       & \KR(\eta_2)        & 
	\le             & \cdots &      \le       & 
	\KR(\eta_m) & \le \,\, \cdots \,\, \le \underbrace{\abs{\A} + \corank(\A)}_{\KR(\Gamma;\A)}
\end{matrix}
$
}
}
\caption{Associated to the monotonically nested tile sequence $\eta_0 \subset \ldots \subset \eta_m \subset \ldots$ is this matrix of monotonic relation sequences, such that in each column of relations \emph{one} equation holds if and only if \emph{all} equations hold (see Lemma~\ref{LemmaNestingOfFillingSupport}). In the final row, the upper bound $\KR(\Gamma;\A) = \abs{\A} + \corank(\A)$ on relative Kurosh ranks $\KR(\eta_i)=\KR(\Fm(\eta_i))$ is found in \cite[Lemma 2.9]{\RelFSHypTwoTag}.
}
\label{FigureMatrix}
\end{figure}

For each column of relations in Figure~\ref{FigureMatrix}, (i.e. for each $m \ge 0$), since $\beta(\eta_{m}) = \beta(\eta_{m+1}) = T$ we may apply Lemma~\ref{LemmaNestingOfFillingSupport} to conclude that the following two statements are equivalent:
\begin{description}
\item[(i)] One of these four equations holds:
\end{description}
$$\beta^\Fill_\Id(\eta_{m}) = \beta^\Fill_\Id(\eta_{m+1}) , \quad \beta^\Fill(\eta_{m}) = \beta^\Fill(\eta_{m+1}),\quad \Fm(\eta_{m}) = \Fm(\eta_{m+1}),\quad\KR(\eta_{m}) = \KR(\eta_{m+1})
$$

\begin{description}
\item[(ii)] \emph{All four} of those equations hold.
\end{description}

Applying monotonicity and boundedness of the Kurosh ranks in the fourth row of matrix in Figure~\ref{FigureMatrix}, it follows that there exists an integer $M$ with $1 \le M \le \KR(\Gamma;\A)$ such that $\KR(\eta_M)=\KR(\eta_{M+1})$. Applying equivalence of $(i)$ and $(ii)$ in the case $m=M$ we get the following statement:
\begin{description}
\item[(iii)] All relations in the matrix between columns $M$ and $M+1$ are equations:
\end{description}
$$\beta^\Fill_\Id(\eta_M) = \beta^\Fill_\Id(\eta_{M+1}) , \quad \beta^\Fill(\eta_{M}) = \beta^\Fill(\eta_{M+1}) , \quad  \Fm(\eta_{M}) = \Fm(\eta_{M+1}) , \quad  \KR(\eta_{M}) = \KR(\eta_{M+1}) 
$$
Using this statement we now prove:
\begin{description}
\item[(iv)] $G(\beta^\Fill_\Id(\eta_M)) = \beta^\Fill_\Id(\eta_M)$
\end{description}
To start the proof choose $\Psi \in \Aut(\Gamma;\A)$ representing $\phi^{\kappa}$ so that $G=F^{\kappa}$ is $\Psi$-twisted equivariant: 
$$G(\gamma \cdot x) = \Psi(\gamma) \cdot G(x) \, \text{for all} \, x \in S
$$
Knowing that $G$ restricts to a homeomorphism $\eta_M \mapsto \eta_{M+1}$, it follows for each $\gamma \in \Gamma$ that $G$ restricts to a homeomorphism $\gamma \cdot \eta_M \mapsto \Psi(\gamma) \cdot \eta_{M+1}$. And from this it follows that $G$ maps each $\eta_M$-connection of the form
$$(*)\qquad\qquad \eta_M, \quad g_1 \cdot \eta_M, \quad \ldots, \quad g_I \cdot \eta_M
$$
to an $\eta_{M+1}$ connection of the form
$$(**) \qquad\qquad \underbrace{\eta_{M+1}}_{G(\eta_M)}, \quad \underbrace{\Psi(g_1) \cdot \eta_{M+1}}_{G(g_1 \cdot \eta_M)}, \quad\ldots, \quad \underbrace{\Psi(g_I) \cdot \eta_{M+1}}_{G(g_I \cdot \eta_M)}
$$
We use this to prove that for any edge $e \subset G(\beta^\Over_\Id(\eta_M))$ we have $e \subset \beta^\Over_\Id(\eta_{M+1})$. Choosing any $e' \subset \eta_M \subset \eta_{M+1} = G(\eta_M)$, it suffices to construct an $\eta_{M+1}$ connection from $e'$ to $e$.  
Since $e \subset G(\beta^\Over_\Id(\eta_M))$, we may choose an edge $e'' \subset \beta^\Over_\Id(\eta_M)$ such that $e \subset G(e'')$. By definition of~$\beta^\Over_\Id(\eta_M)$, there is an $\eta_M$-connection from $e'$ to $e''$ as shown in $(*)$ above; in particular $e'' \subset g_I \cdot \eta_M$. Its $G$ image, shown in $(**)$, is the desired $\eta_{M+1}$-connection from $e' \subset \eta_{M+1}$ to $e \subset G(e'') \subset G(g_I \cdot \eta_M)$. We have proved that
$$G(\beta^\Over_\Id(\eta_M)) \subset \beta^\Over_\Id(\eta_{M+1}) 
$$

We now consider the stabilizer subgroup $\Stab(\beta^\Over_\Id(\eta_M))$ and its $\Psi$ image $\Psi(\Stab(\beta^\Over_\Id(\eta_M)))$. For all $g \in \Stab(\beta^\Over_\Id(\eta_M))$ we have
$$\Psi(g) \cdot G(\beta^\Over_\Id(\eta_M)) = G(g \cdot \beta^\Over_\Id(\eta_M)) = G(\beta^\Over_\Id(\eta_M))
$$
The subgroup $\Psi(\Stab(\beta^\Over_\Id(\eta_M)))$ therefore stabilizes the tree $G(\beta^\Over_\Id(\eta_M))$. Since $G(\beta^\Over_\Id(\eta_M))$ is a subtree of $\beta^\Over_\Id(\eta_{M+1})$, and since $\beta^\Over_\Id(\eta_{M+1})$ is a protocomponent of the protoforest $\beta^\Over(\eta_{M+1})$, it follows that the subgroup $\Psi(\Stab(\beta^\Over_\Id(\eta_M)))$ stabilizes $\beta^\Over_\Id(\eta_{M+1})$:
$$\Psi(\Stab(\beta^\Over_\Id(\eta_M))) \subgroup \Stab(\beta^\Over_\Id(\eta_{M+1})) \subgroup \Fm(\eta_{M+1})=\Fm(\eta_M)
$$
where the last equation $\Fm(\eta_{M+1})=\Fm(\eta_M)$ comes from item (iii) above.

By definition $\Fm(\eta_M)$ is the minimal free factor of $\Gamma$ \relA\ containing $\Stab(\beta^\Over_\Id(\eta_M))$. Since $\Psi \in \Aut(\Gamma;\A)$, it follows that the minimum free factor of $\Gamma$ \relA\ containing $\Psi(\Stab(\beta^\Over_\Id(\eta_M)))$ is $\Psi(\Fm(\eta_M))$. So we have two free factors of $\Gamma$ \relA\ of equal Kurosh ranks, namely $\Fm(\eta_M)$ and $\Psi(\Fm(\eta_M))$, each containing the subgroup $\Psi(\Stab(\beta^\Over_\Id(\eta_M)))$; and it is known that one of these two, namely $\Psi(\Fm(\eta_M))$, is the unique minimal such free factor of $\Gamma$ \relA. These two free factors are therefore equal:
$$\Psi(\Fm(\eta_M))=\Fm(\eta_M)
$$
It follows that
\begin{align*}
G(\beta^\Fill_\Id(\eta_M)) &= G(\Fm(\eta_M) \cdot \eta_M) = \Psi(\Fm(\eta_M)) \cdot G(\eta_M) \\
&= \Fm(\eta_M) \cdot \eta_{M+1} = \Fm(\eta_{M+1}) \cdot \eta_{M+1} = \beta^\Fill_\Id(\eta_{M+1}) \\
&= \beta^\Fill_\Id(\eta_M)
\end{align*}
where, again, the last equation $\beta^\Fill_\Id(\eta_{M+1}) = \beta^\Fill_\Id(\eta_M)$ comes from item~(iii). This completes the proof of~(iv).

Next we prove:
\begin{description}
\item[(v)] $\Fm(\eta_M) = \Gamma$, hence the tile $\eta_M$ fills $S$.
\end{description}
Since $G(\beta^\Fill_\Id(\eta_M)) = \beta^\Fill_\Id(\eta_M)$, starting from $\eta_M \subset \beta^\Fill_\Id(\eta_M)$ and using that $G(\eta_m)=\eta_{m+1}$ for all $m$ it follows by induction that $\eta_{m} \subset \beta^\Fill_\Id(\eta_M)$ for all $m \ge M$. The generic leaf $\ell = \bigcup_{m \ge 0} \eta_{m}$ is therefore contained in $\beta^\Fill_\Id(\eta_M)$. Since $\Fm(\eta_M) = \Stab(\beta^\Fill_\Id(\eta_M))$, and since $\F[\eta_M]$ is the smallest free factor system having $[\Fm(\eta_M)]$ as a component, it follows that $\ell$ is supported by $\F[\eta_M]$. But $\ell$ is a generic leaf of the attracting lamination $\Lambda$, and hence $\Lambda$ is supported by $\F[\eta_M]$. Since $\Lambda$ fills $\Gamma$ \relA, it follows that $\F[\eta_M]=\{[\Gamma]\}$, equivalently $\Fm(\eta_M) = \Gamma$, equivalently $\eta_M$ fills~$S$.

Since $\eta_M \subset \eta_\mu$, and since $\eta_M$ fills~$T$, it follows from Proposition~\ref{LemmaNestingOfFillingSupport} that $\eta_\mu$ also fills~$T$.

\subsection{Finding a natural train track representative}
\label{SectionNaturalTTRep}
In this section we discuss the following result:
\begin{theorem*} \cite[Theorem 8.24]{FrancavigliaMartino:TrainTracks} For any $\phi \in \Out(\Gamma;\A)$ and a maximal, $\phi$-invariant, proper free factor system~$\F$ of $\phi$ \relA, there exists an irreducible train track representative $f \from T \to T$ of $\phi$ with respect to~$\F$ defined on a natural free splitting~$T$. 
\end{theorem*}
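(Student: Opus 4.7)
The plan is to adapt the original Bestvina--Handel construction from \BH\ to the relative setting of $\Out(\Gamma;\A)$, rather than following the Lipschitz semi-metric approach of \cite{FrancavigliaMartino:TrainTracks}. I would begin by setting up the class of candidate topological representatives: $\Phi$-twisted equivariant maps $f \from T \to T$ taking vertices to vertices, where $T$ is a natural Grushko free splitting of $\Gamma$ rel~$\FF$ and $\Phi \in \Aut(\Gamma;\FF)$ represents~$\phi$. Associated to any such $f$ is a non-negative integer transition matrix $\M(f)$ indexed by the finite set of $\Gamma$-orbits of edges of~$T$.

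The first step is a reduction to irreducibility, which uses the maximality of $\FF$ in an essential way. If $\M(f)$ were reducible, then $T$ would contain a proper nonempty $\phi$-invariant subgraph $\sigma$, obtained as the union of an invariant subset of edge orbits together with the vertex orbits they support. Its associated free factor system $\FF[\sigma]$ (in the sense of Section~\ref{SectionSTOATTerms}) is then $\phi$-invariant rel~$\A$ and, after possibly enlarging $\sigma$ to saturate it with vertex stabilizer orbits, satisfies $\FF \sqsubset \FF[\sigma]$ with $\FF[\sigma]$ proper---contradicting the maximality of~$\FF$ among $\phi$-invariant non-filling free factor systems. Hence after collapsing any invariant subforests whose stabilizers are already in~$\FF$, the matrix $\M(f)$ is irreducible, with a Perron--Frobenius eigenvalue $\lambda(f) \ge 1$.

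Next I would run the classical Bestvina--Handel minimization procedure over the countable set of candidate representatives. Following \BH, define a complexity by the lexicographic pair $\bigl( \lambda(f), \#\text{edges of } T \bigr)$. Apply the standard moves---subdivision, folding at illegal turns, collapsing invariant subforests, and valence-$\le 2$ tightening---each of which either strictly decreases $\lambda(f)$ or keeps $\lambda(f)$ fixed while strictly decreasing the edge count. Since both components of the complexity take values in well-ordered sets (the edge count is a bounded positive integer, and at each fixed edge count $\lambda$ ranges over a discrete set of algebraic numbers bounded below by $1$), the procedure terminates. The terminal map admits no illegal turn, since an illegal turn would allow a further fold that strictly lowers the complexity; hence the terminal map is a train track, and the irreducibility reduction of the previous paragraph guarantees $\M(f)$ remains irreducible throughout and at the terminal stage.

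The main obstacle will be verifying that each Bestvina--Handel move respects the relative structure---specifically, that folds, subdivisions, and invariant-forest collapses all produce new Grushko free splittings rel~$\FF$ whose vertex stabilizer orbits agree with~$\FF$ and whose edge stabilizers remain trivial. This requires careful bookkeeping for folds involving edges incident to vertices with nontrivial $\FF$-stabilizers: one must check that identifying such edges does not inadvertently identify two vertices with distinct $\FF$-conjugate stabilizers, which would corrupt the free splitting structure. A final valence-$\le 2$ tightening, which preserves both the train track property and the irreducibility of $\M(f)$, ensures the resulting free splitting is natural, yielding the desired irreducible train track representative of~$\phi$ with respect to~$\FF$ on a natural free splitting.
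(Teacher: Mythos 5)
Your proposal takes essentially the same route as the paper's ``Another proof'' paragraph---adapting Bestvina--Handel's minimization procedure to the relative setting via \LymanRTT\ tools, rather than going through the Lipschitz semi-metric machinery of \cite{FrancavigliaMartino:TrainTracks}. But there is a genuine logical error in your ``reduction to irreducibility'' step, and a structural issue with your setup.

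The irreducibility argument is wrong as stated. If $\M(f)$ is reducible, you get a proper $\phi$-invariant subgraph $\sigma \subset T$. You claim that after saturation $\FF \sqsubset \FF[\sigma]$ with $\FF[\sigma]$ proper, contradicting maximality of~$\FF$. But $\sqsubset$ is a non-strict partial order and nothing forbids $\FF[\sigma] = \FF$---indeed this is exactly what happens if every component of~$\sigma$ has trivial stabilizer---and in that case there is no contradiction with maximality at all. The correct use of maximality is the opposite of what you wrote: maximality \emph{forces} every component of $\sigma$ to have trivial stabilizer (otherwise $\FF[\sigma]$ would be a \emph{strictly larger} proper $\phi$-invariant free factor system), and that is precisely what makes the collapse $T \xrightarrow{\<\sigma\>} T'$ stay within Grushko free splittings rel~$\FF$. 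So reducibility is not contradicted---it is \emph{repaired} by collapsing, and you iterate until irreducible.

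Your setup also can't work as written: you restrict your candidate class to representatives on \emph{natural} Grushko free splittings from the outset, but the Bestvina--Handel moves you then invoke (in particular subdivision) immediately leave this class. The paper resolves this by running the minimization over arbitrary Grushko free splittings rel~$\FF$ and only at the very end reducing the count of valence-$2$ trivial-stabilizer vertices via \cite[Lemma~3.5]{\LymanRTTTag}, using minimality of $\lambda$ to conclude that the valence-$2$ homotopy does not change $\lambda$. Your closing sentence gestures at such a tightening but does not supply the argument that the eigenvalue and the train track property survive it, which is the substantive point of that final step. Finally, your termination argument for the minimization (``$\lambda$ ranges over a discrete set of algebraic numbers'') is not a proof; the paper instead cites the existence of a minimal-$\lambda$ representative in $\C^{\text{irr}}_\phi$ and the fact that any such representative is a train track, via \cite[Theorem~1.7]{\BHTag} for $F_n$ and \cite[Theorem~3.2]{\LymanRTTTag} in general.
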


\paragraph{Translating the theorem.} The statement above is not an exact statement of \cite[Theorem 8.24]{FrancavigliaMartino:TrainTracks}; it is instead a significant translation of that theorem into our language. Some remarks are therefore in order to aid the translation. In the notation of \cite{FrancavigliaMartino:TrainTracks}, consider a group equipped with a free factorization $G = G_1 * \cdots * G_p * F_k$; we use the notations $\Gamma = G$ and $\F = \{[G_i]\}_{i=1}^p$. Our group $\Aut(\Gamma;\F)$ is identified with the group denoted $\Aut(G,\mathcal O)$ in \cite[Definition 3.2]{FrancavigliaMartino:TrainTracks} (the ``$\Out$'' notation is rarely used in \cite{FrancavigliaMartino:TrainTracks}). Note that $\mathcal O$ represents the unprojectivized outer space of $\Gamma$ rel~$\F$; projectivized outer space $P\mathcal O$ may be identified with the open subset of the relative free splitting complex $\FS(\Gamma;\F)$ represented by Grushko free splittings of $\Gamma$~rel~$\F$. Every free splitting in \cite{FrancavigliaMartino:TrainTracks} is a natural free splitting in our language: see properties (C0)--(C3) in \cite[Section 3]{FrancavigliaMartino:TrainTracks}, particularly (C0) which is equivalent to naturality. Our concept of a ``$\Phi$-twisted equivariant'' map $f \from T \to T$ is generally referred to in \cite{FrancavigliaMartino:TrainTracks} as a map ``representing~$\Phi$'' (see \cite[Definition 8.1]{FrancavigliaMartino:TrainTracks} and the preceding paragraph).  Given $\phi \in \Out(\Gamma;\F)$, for any representative $\Phi \in \Aut(\Gamma;\F)$ of $\phi$, the statement that $\F$ is a maximal $\phi$-invariant free factor system is equivalent to the the statement in \cite{FrancavigliaMartino:TrainTracks} that $\Phi$ is irreducible. The exact statement of Theorem 8.24 is this (using our notation for $\Aut(\Gamma;\F) = \Aut(G,\O)$):
\begin{theorem*}\cite[Theorem 8.24]{FrancavigliaMartino:TrainTracks}
Let $\Phi$ be an irreducible element of $\Aut(\Gamma;\F)$. Then there exists a simplicial optimal train track map representing $\Phi$.
\end{theorem*}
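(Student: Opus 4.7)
The plan is to adapt the Bestvina--Handel algorithm to the relative setting. Choose any representative $\Phi \in \Aut(\Gamma;\FF)$ of $\phi$. I would set up the space $\mathcal R$ of \emph{candidates} consisting of pairs $(T,f)$ where $T$ is a natural Grushko free splitting of $\Gamma$ rel~$\FF$ and $f \from T \to T$ is a $\Phi$-twisted equivariant PL map that sends vertices to vertices and restricts to an immersion on each edge (so that the transition matrix $\M(f)$ is well-defined). To see that $\mathcal R$ is nonempty, start with any natural Grushko free splitting $T$ of $\Gamma$ rel~$\FF$, define $f$ on vertices using the fact that $\Phi$ permutes the conjugacy classes of vertex stabilizers, extend equivariantly over edges, and tighten to an immersion on each edge.

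Because $\FF$ is maximal $\phi$-invariant proper rel~$\A$, any such $f$ has irreducible transition matrix, so $\lambda(f) \ge 1$ is a well-defined Perron--Frobenius eigenvalue. Assign to $(T,f) \in \mathcal R$ the lexicographic complexity $\bigl(\lambda(f), n(T)\bigr)$, where $n(T)$ counts the $\Gamma$-orbits of natural edges of $T$. The core of the proof is a finite list of elementary moves on $\mathcal R$: equivariant subdivision (followed by collapsing any valence-two vertex with trivial stabilizer to preserve naturality); Stallings folds at pairs of edges whose initial segments $f$ identifies; and collapses of $f$-invariant natural subforests.

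The key lemma to be established in this setting is the relative Bestvina--Handel reduction: if $f$ is not a train track --- equivalently, some iterate $f^k \restrict e$ fails to be an immersion for some edge $e \subset T$ and some $k \ge 1$ --- then there is an \emph{illegal turn} at a vertex $v \in T$ that is realized as a turn in some iterate $f^j(e')$, and a Stallings fold along that illegal turn (followed by tightening) produces a new candidate $(T',f')$ with either $\lambda(f') < \lambda(f)$, or $\lambda(f') = \lambda(f)$ and $n(T') < n(T)$. Throughout the descent, irreducibility of the transition matrix is maintained by the maximality of $\FF$: any $f$-invariant natural subforest produced along the way would give rise to a $\phi$-invariant free factor system rel~$\A$ strictly between $\A$ and $\{[\Gamma]\}$ and strictly refining $\FF$, contradicting maximality of $\FF$; such subforests therefore do not arise, and the fold moves keep $\M(f)$ irreducible.

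The main obstacle will be showing that the descent terminates, i.e.\ that the infimum of the complexity is achieved in $\mathcal R$. For this one notes that the size of $\M(f)$ is bounded above by $m_1 = 3\corank(\FF) + 2\abs{\FF} - 3$, so $\lambda(f)$ ranges over a discrete set of Perron numbers arising from finitely many families of irreducible non-negative integer matrices of bounded size; hence the infimum of $\lambda$ is achieved, and once $\lambda$ is minimized only finitely many fold and (un)subdivision moves are possible before repetition. At any local minimum $f$ must be a train track, for otherwise the lemma above yields a strictly smaller complexity. The resulting $f \from T \to T$ is thus an irreducible train track representative of $\phi$ on a natural free splitting rel~$\FF$, as required.
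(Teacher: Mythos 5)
Your proposal tries to run the Bestvina--Handel descent entirely inside the class of \emph{natural} Grushko free splittings, using the lexicographic complexity $(\lambda(f), n(T))$. This is not the paper's route, and the descent as written has two real gaps. First, the claimed dichotomy for a fold at an illegal turn --- ``either $\lambda(f') < \lambda(f)$, or $\lambda(f') = \lambda(f)$ and $n(T') < n(T)$'' --- is not what Bestvina--Handel (or Lyman) establish, and you do not prove it. A Stallings fold can create valence-two vertices and change the edge count in either direction after tightening; there is no a priori reason the natural-edge count drops when $\lambda$ is held fixed, and ``collapsing any valence-two vertex with trivial stabilizer to preserve naturality'' is exactly the move whose effect on $\lambda$ needs a real argument. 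In particular, the paper's post-processing step uses the Perron--Frobenius eigenvector $\vec w$ to \emph{choose} which of the two edgelets at a valence-two vertex to homotope across (the one with the smaller weight $w_j \le w_i$); Lyman's Lemma~3.5 then guarantees $\lambda$ does not increase. That PF-weighted choice is a load-bearing idea with no counterpart in your sketch. Second, your assertion that maximality of $\FF$ forces the transition matrix to be irreducible is false as stated: an $f$-invariant proper subforest all of whose components have trivial stabilizer does not contradict maximality of $\FF$ (collapsing it returns a Grushko free splitting rel~$\FF$ with the same elliptic system). Such subforests genuinely arise, and the paper handles them by collapsing pretrivial and invariant subforests to pass to a smaller representative, not by ruling them out.

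The paper's proof is cleaner because it separates the two issues you are trying to solve simultaneously. It first works in the larger class $\C^{\mathrm{irr}}_\phi$ of irreducible topological representatives on \emph{arbitrary} Grushko free splittings rel~$\FF$ (where folds and collapses live naturally and the discreteness/well-orderedness of PF eigenvalues applies cleanly), and cites \cite[Theorem~1.7]{\BHTag} and \cite[Theorem~3.2]{\LymanRTTTag} to obtain a representative with minimal $\lambda$, which is automatically a train track. Only then, within the subclass $\C^{\mathrm{min}}_\phi$, does it run a secondary descent on $\abs{S}$ (the count of trivially-stabilized valence-two vertex orbits), using the PF-eigenvector-guided valence-two homotopy to drop $\abs{S}$ while keeping $\lambda$ minimal. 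If you want to salvage your approach, you would need to (i) correct the irreducibility claim by allowing collapses of trivially-stabilized invariant subforests as a move, (ii) establish a genuine complexity that strictly decreases under folds-plus-naturalization, which in practice means importing the PF-weight comparison, and (iii) prove termination in the form of the well-ordering of achievable $\lambda$ values for matrices of bounded size, rather than the informal ``finitely many moves before repetition.'' At that point you will essentially have reconstructed the two-stage argument.
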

\noindent
Here are a few more remarks on translation. Regarding equivalence of two definitions of train track maps, see the discussion in \cite[Section 4.3.3]{\RelFSHypTwoTag}. Let $f \from T \to T$ be the ``simplicial, optimal train track map'' of the conclusion. The meaning of ``simplicial'' in this statement is not the one we use --- a map in the category of simplicial complexes --- but instead is simply the statement that vertices go to vertices (see the paragraph preceding Theorem 8.24). ``Optimality'' is a property that we do not need. But we do need for $f \from T \to T$ to be irreducible. If not then there is a proper subforest $\tau \subset T$ invariant under $\Gamma$ and $f$, with a corresponding collapse map $T \xrightarrow{\<\tau\>} T'$; by maximality of~$\F$, each component of $\tau$ has trivial stabilizer, and so $T'$ is still a Grushko free splitting of $\Gamma$ rel~$\F$. Under the collapse map $T \to T'$, the map $f$ on $T$ induces a map on $T'$, and by tightening we obtain a train track representative $f' \from T' \to T'$ of $\phi$. Proceeding by induction on the number of edge orbits, we obtain an irreducible train track representative of~$\phi$.

\paragraph{Another proof of the Theorem.}
Consider any $\phi \in \Out(\Gamma;\A)$ and any maximal, $\phi$-invariant, non-filling free factor system~$\F$ of $\phi$ \relA. We may think of $\phi$ as an irreducible element of the relative outer automorphism group $\Out(\Gamma;\F)$. For the foundational special case of $\Out(F_n)$ --- where $\Gamma=F_n$ and $\F=\emptyset$ --- an algorithmic construction of an irreducible natural train track representative can be found in \cite[Section 1]{\BHTag}. We give a straightforward adaptation of that construction to our present circumstances, aided by applying tools of \cite{\LymanRTTTag}. 

We review concepts of topological representatives needed for this proof (see \cite[Section 4.3.1]{\RelFSHypTwoTag}). Consider the class $\C^{\text{irr}}_\phi$ of irreducible topological representatives $F \from T \to T$ of $\phi$ defined on a Grushko free splitting $T$ of $\Gamma$ rel~$\F$. To see that this class is nonempty, start from an arbitrary topological representative $G \from U \to U$ defined on a Grushko free splitting rel~$\F$. From irreducibility of $\phi$ it follows that for every $\Gamma$-invariant subforest of $U$ that is either $G$-pretrivial or $G$-invariant, each component of that subforest has trivial stabilizer. So by repeatedly collapsing such forests one eventually arrives at a Grushko free splitting rel~$\F$ on which $G$ induces a topological representative having no pretrivial or $F$-invariant subforests, and any such representative is irreducible. 

Each $F \in \C^{\text{irr}}_\phi$ is $\Phi$-twisted equivariant with respect to some $\Phi \in \Aut(\Gamma;\A)$ representing~$\phi$. Also, $F$ takes each vertex to a vertex and each edge to a nontrivial edge path without backtracking. There is an $n \times n$ transition matrix $M$ for~$F$, meaning that for some enumerated set of edgelet orbit representatives $e_1,\ldots,e_n \subset S$, each matrix entry $M_{ij}$ counts the number of times that $F(e_j)$ crosses translates of $e_i$ (for $1 \le i,j \le n$). Irreducibility of $F$ means that for each $i,j$ there exists $k \ge 1$ such that $M^k_{ij} \ne 0$. We let $\lambda \ge 1$ denote the Perron-Frobenius eigenvalue, and we let $\vec w$ denote a Perron-Frobenius column eigenvector of $M$, and so $\sum_j M_{ij} w_j = \lambda w_i$. 

If there exists $F \in \C^{\text{irr}}_\phi$ which is NEG --- meaning that $\lambda=1$ --- then $F$ is automatically a train track representative of $\phi$. In this case $F$ is also a PL homeomorphism, and so we can replace the given simplicial structure on $S$ by a natural simplicial structure, completing the proof. 

We may therefore assume that $\lambda > 1$ for every~$F$ --- that is, every $F \in \C^{\text{irr}}$ is~EG. Amongst all members of $\C^{\text{irr}}_\phi$ there exists one for which $\lambda$ is minimal, and furthermore any member with minimal $\lambda$ is a train track representative: for $\Out(F_n)$ see \cite[Theorem 1.7]{\BHTag} and its proof; and for $\Out(\Gamma;\A)$ see \cite[Theorem 3.2]{\LymanRTTTag} and its proof.

Consider the subclass $\C^{\text{min}}_\phi \subset \C^{\text{irr}}_\phi$ of representatives $F \from S \to S$ for which $\lambda$ is minimal, so every member of $\C^{\text{min}}_\phi$ is a train track representative of~$\phi$. Let $\abs{S}$ denote the number of orbits of valence~$2$ vertices of $S$ with trivial stabilizer, and note that if $\abs{S}=0$ then $S$ is a natural free splitting. It therefore suffices to choose any $F \from S \to S$ in the class $\C^{\text{min}}_\phi$ and to prove, using concepts of ``valence~$2$ homotopy'', that if $\abs{S} > 0$ then there is another member $F' \from S' \to S'$ of $\C^{\text{min}}_\phi$ such that $\abs{S'} < \abs{S}$. 

Using the assumption $\abs{S} > 0$, choose $v \in S$ to be a vertex of valence~$2$ with trivial stabilizer. The two edgelets of $S$ incident to $v$ lie in the same natural edge of $S$ and they lie in different edgelet orbits. We may assume those two edgelets are the chosen representatives of their orbits, denoted $e_i$ and $e_j$. By permutating the notation we may assume $w_i \ge w_j$. Let $u$ be the vertex of $e_j$ opposite $v$. Since edge stabilizers of $S$ are trivial, the vertex $v$ is ``nonproblematic'' in the sense of \cite[Remark 3.6]{\LymanRTTTag} (strictly speaking, the projection of $v$ to the quotient graph of groups $S/\Gamma$ is nonproblematic). We may therefore apply \cite[Lemma 3.5]{\LymanRTTTag} to obtain a new irreducible topological representative $F_1 \from S_1 \to S_1$ by the operation of ``valence-two homotopy of $v$ across $e_j$''. From the conclusion of that lemma, the Perron-Frobenius eigenvalue of $F_1$ satisfies $\lambda_1 \le \lambda$, and by minimality of $\lambda$ it then follows that $\lambda_1=\lambda$. Also, the number $\abs{S_1}$ of vertex orbits of $S_1$ is strictly less than the number $\abs{S}$ of vertex orbits of $S$.


\section{Conjectures and partial results for $\CFFS(\Gamma;\A)$}
\label{SectionFFAandB}

As stated in the \emph{Overview}, Theorems~A and~B, regarding elements $\phi \in \Out(\Gamma;\A)$ acting on the relative free splitting complex $\FS(\Gamma;\A)$, may be viewed as quantitative refinements of a qualitative classification: $\phi$ acts loxodromically on $\FS(\Gamma;\A)$ if and only if $\phi$ has a filling laminations; otherwise $\phi$ acts elliptically. 

Regarding the action of $\phi \in \Out(\Gamma;\A)$ on the complex of relative free factor systems $\CFFS(\Gamma;\A)$ --- equivalently, on~$\mathcal F(\Gamma;\A)$ (Proposition~6.3 of Part I) --- the qualitative classification is given in the following theorem of Guirardel and Horbez (for earlier special cases see the \emph{Overview}):

\begin{theorem}[\protect{\cite[Theorem 4.1]{GuirardelHorbez:SubgroupClassification}}]
\label{TheoremGH}
Assuming $(\Gamma;\A)$ is nonsporadic, $\phi \in \Out(\Gamma;\A)$ acts loxodromically on $\mathcal F(\Gamma;\A)$ if and only if $\phi$ is fully irreducible \relA; otherwise $\phi$ is elliptic. \qed
\end{theorem}
\noindent
The ``sporadic'' cases that are ruled out in this theorem are a subset of the ``exceptional'' cases defined in Section 2.5 of Part~I: to say that $(\Gamma;\A)$ is sporadic means that $\abs{\A}=2$ and $\corank(\Gamma;\A)=0$, or $\abs{\A}=1$ and $\corank(\Gamma;\A)=1$. For a brief comments regarding the exceptional, nonsporadic cases, see the paragraphs just preceding Section~\ref{SectionFFUpperBound}. 


\medskip

Here in Section~\ref{SectionFFAandB} we consider the problem of adding quantitative details to Theorem~\ref{TheoremGH}, in a manner analogous to Theorems~A and~B. We do not know proofs of all of the resulting statements, but we offer some partial results, and certain conjectures regarding the remainder. The proofs of Theorems~A and~B break naturally into six pieces: two bounds in Theorem~A; and a cycle of four implications in Theorem~B. We formulate $\CFFS(\Gamma;\A)$ analogues of these six pieces, starting with two bounds, with positive constants $A^\CFFS=A^\CFFS(\Gamma;\A)$ and $B^\CFFS=B^\CFFS(\Gamma;\A)$ that are to be determined:
\begin{description}
\item[Section~\ref{SectionFFUpperBound}: Upper bound] (\emph{proved}) \,\, If $\phi$ is fully irreducible \relA\ then $\tau^\CFFS_\phi \le B^\CFFS\log(\lambda_\phi)$, where \hbox{$\lambda_\phi > 0$} is the expansion factor of the unique attracting lamination of $\phi$.
\item[Section~\ref{SectionFFLowerBound}: Lower bound] (\emph{conjectured}) If $\phi \in \Out(\Gamma;\A)$ is fully irreducible \relA\ then \hbox{$\tau^\CFFS_\phi \ge A^\CFFS$\!.}
\end{description}
The remaining pieces form a cycle of four implications (1)$\implies$(2)$\implies$(3)$\implies$(4)$\implies$(1) amongst four statements, with a positive constant $\Omega^\CFFS=\Omega^\CFFS(\Gamma;\A)$ to be determined for purposes of the final implication (4)$\implies$(1):
\begin{enumerate}
\item $\phi$ is fully irreducible \relA.
\item $\phi$ acts loxodromically on $\CFFS(\Gamma;\A)$.
\item Every $\phi$-orbit on $\CFFS(\Gamma;\A)$ has infinite diameter.
\item Every $\phi$-orbit on $\CFFS(\Gamma;\A)$ has diameter $\ge \Omega^\CFFS$.
\end{enumerate}
The equivalence (1)$\iff$(2) is already known from Theorem~\ref{TheoremGH} above; if the full cycle of implications was known then equivalence of all four statements would follow. Note that, just as occurred for Theorems~A and~B, the implication (1)$\implies$(2) is also a logical consequence of the conjectured \emph{Lower Bound}, and the implications (2)$\implies$(3)$\implies$(4) are trivial. All that remains of the cycle of implications is the following:
\begin{description}
\item[Section~\ref{SectionFFLargeOrbits}: The implication (4)$\implies$(1)] (\emph{conjectured in general; proved if $\Gamma=F_n$}) \hfill\break There exists a constant $\Omega^\CFFS(\Gamma;\A) > 0$ such that if every orbit of $\phi$ acting on $\CFFS(\Gamma;\A)$ has diameter $\ge \Omega^\CFFS(\Gamma;\A)$ then $\phi$ is fully irreduclble.
\end{description}
As will be seen in Section~\ref{SectionFFLargeOrbits}, the proof for the case $\Gamma=F_n$ uses details of relative train track theory that are known for $\Out(F_n)$ (and applicable to $\Out(F_n;\A)$) but not known in general.


\smallskip
The only nonsporadic, exceptional case is $\abs{\A}=3$ and $\corank(\Gamma;\A)=0$, which is discussed separately near the end of Section 6.1 of Part I, with the conclusion that in this case $\CFFS(\Gamma;\A)$ is equivariantly quasi-isometric to $\FS(\Gamma;\A)$. In this case there is also an easy relative train track argument to show that $\phi$ is fully irreducible if and only if it has a filling lamination (just as occurs when $\A=\emptyset$, $\corank(\A)=2$, i.e.\ the case $\Out(F_2)$). It follows that the two bounds and the four implications above are all known for $\CFFS(\Gamma;\A)$ in this one nonsporadic, exceptional case. 

Thus, for the remainder of Section~\ref{SectionFFAandB}, we may silently assume that $(\Gamma;\A)$ is non-exceptional.

\subsection{Translation lengths in $\CFFS(\Gamma;\A)$: Proof of an upper bound.} 
\label{SectionFFUpperBound}

\begin{proposition}
\label{PropUpperBoundFF}
There exists $B^\CFFS = B^\CFFS(\Gamma;\A) > 0$ such that for any fully irreducible $\phi \in \Out(\Gamma;\A)$ we have 
$$\tau^\CFFS_\phi \le B^\CFFS \log(\lambda_\phi)
$$
\end{proposition}

\begin{proof} Recall from \cite[Section~6.2]{\RelFSHypTag} the equivariant projection set map $\Pi \from \FS(\Gamma;\A) \mapsto \CFFS(\Gamma;\A)$ and associated concepts, motivated by methods of Kapovich and Rafi \cite{KapovichRafi:HypImpliesHyp}. This map $\Pi$ associates, to each $0$-simplex $[T] \in \FS(\Gamma;\A)$ represented by a free splitting $T$ of $\Gamma$ \relA, the set of $0$-simplices $\Pi[T] \subset \CFFS(\Gamma;\A)$ representing free factor systems of $\Gamma$ \relA\ that are visible in~$T$. The map~$\Pi$ satisfies the ``inverted equivariance property'' $\Pi([T] \cdot \phi) = \phi^\inv \cdot \Pi[T]$ for each $\phi \in \Out(\Gamma;\A)$, and it maps the $0$-skeleton of $\FS(\Gamma;\A)$ surjectively to the $0$-skeleton of $\CFFS(\Gamma;\A)$ in the sense that the subsets $\Pi[T]$ cover the $0$-skeleton of $\CFFS(\Gamma;\A)$. A \emph{projection map} $\pi$ is a function from the $0$-skeleton of $\FS(\Gamma;\A)$ to the $0$-skeleton of $\CFFS(\Gamma;\A)$ satisfying $\pi[T] \in \Pi[T]$; any projection map is $4$-Lipschitz. 

Choose any free factor system~$\B$ of $\Gamma$ \relA\ representing a $0$-simplex of $\CFFS(\Gamma;\A)$, and consider its orbit $\B_i = \phi^i(\B)$. Applying surjectivity of the map~$\Pi$, we can choose a free splitting $T$ of $\Gamma$ \relA\ so that $\B \in \Pi(T)$. Consider the orbit $T_i = T_i \cdot \phi^i$; from inverted equivariance it follows $\B_i \in \Pi[T_{-i}]$. We may define a projection map $\pi$ by choosing $\pi[T] \in \Pi[T]$ arbitrarily for $[T] \not\in \{[T_i]\}_{i \in \Z}$ and defining $\pi[T_i]=\B_{-i} = \phi^{-i} \cdot \B$. It follows that
$$d_\CFFS(\B_i,\B_j) \le 4 \, d_\FS\bigl([T_{-i}],[T_{-j}]\bigr)
$$
Theorem~A gives us a constant $B=B(\Gamma;\A)$ and an upper bound $\tau_\phi \le B \log(\lambda_\phi)$, and it follows that $\tau^\CFFS_\phi \le 4 B \log(\lambda_\phi)$. Setting $B^\CFFS = 4B$, the proof is done.
\end{proof}

\subsection{Orbits of large diameter in $\CFFS(\Gamma;\A)$: A conjecture, proved for \hbox{$\Gamma=F_n$.}} 
\label{SectionFFLargeOrbits}

The conjectural analogue for $\CFFS(\Gamma;\A)$ of the implication (4)$\implies$(1) of Theorem~B is the following statement, which we shall prove in the ``classical'' case $\Gamma=F_n$:

\begin{conjecture}
\label{ConjectureFourImpliesOne}
For any group $\Gamma$ and any free factor system~$\A$ of $\Gamma$ there exists a constant $\Omega^\CFFS = \Omega^\CFFS(\Gamma;\A) > 0$ such that for all $\phi \in \Out(\Gamma;\A)$, if every orbit of $\phi$ has diameter $\ge \Omega^\CFFS$ then $\phi$ is fully irreducible \relA.
\end{conjecture}

\begin{theorem} 
\label{TheoremClassical}
In the ``classical'' case $\Gamma = F_n$, Conjecture~\ref{ConjectureFourImpliesOne} is true.
\end{theorem}

The proof of this theorem takes up the rest of Section~\ref{SectionFFLargeOrbits}. For a while we express the proof in the general language of $\Out(\Gamma;\A)$, switching to $\Out(F_n;\A)$ when we need concepts of \emph{geometric strata}\footnote{Regarding geometric strata in the general setting of $\Out(\Gamma;\A)$, see comments in the introduction of \cite{\LymanCTTag}.} of relative train track maps that are not known in the general case.

The proof is structured with several reduction steps, in which a certain special case is settled by exhibiting one of three alternatives: a proof that $\phi$ is fully irreducible \relA; or an orbit the diameter of which has a specific upper bound; or a contradiction to an earlier reduction step. The complementary hypothesis of that special case may then be adopted as a further assumption which holds for the rest of the proof, 

\medskip\noindent
\textbf{Special Case \ref{ReductionIrreducible}:} If $\phi \in \Out(\Gamma;\A)$ is reducible rel~$\A$ then there is a non-filling $\phi$-invariant free factor system of $\Gamma$ \relA\ that is distinct from~$\A$, which gives us a diameter~$0$ orbit of the action of $\phi$ on $\CFFS(\Gamma;\A)$. 

\medskip
\noindent
We may therefore assume:

\begin{reduction}[Irreducibility] 
\label{ReductionIrreducible}
$\phi$ is irreducible \relA. \qed
\end{reduction}

We now fix a \emph{(forward) rotationless} power $\phi^k$ ($k \ge 1$); it follows that for every free factor system $\B$ of $\Gamma$ \relA, $\B$ is $\phi$-periodic if and only if $\B$ is fixed by $\phi^k$ (see \cite[Definition 3.13, Lemma 3.30, and Lemma 4.43]{\recognitionTag} for $\Gamma=F_n$; and for the general case see \cite[Section 5]{\LymanCTTag}). 

Choose $\B$ to be a maximal, nonfull, $\phi^k$-invariant free factor system \relA. There is a function $\iota \from \A \mapsto \B$ which is well-defined by the requirement that for each $[A] \in \A$ and $[B] \in \B$, the equation $\iota[A]=[B]$ holds if and only if $A$ is conjugate to a subgroup of~$B$; well-definedness of this function uses mutual malnormality of~$\B$ (see \Subgroups(I) Lemma 2.1). Clearly $\A \sqsubset \iota(\A) \sqsubset \B$, and so $\iota(\A)$ is a free factor system \relA. Since $\A$ is $\phi$-invariant, it follows that $\iota(\A)$ is $\phi$-invariant. If $\iota(\A) \ne \A$ then $\iota(\A)$ witnesses that $\phi$ is reducible \relA, contradicting Reduction~\ref{ReductionIrreducible}. It follows that $\iota(\A)=\A$. To put this another way, $\A$ is a~\emph{subset} of~$\B$. 

\medskip\noindent
\textbf{Special Case \ref{ReductionStrictInc}:} If $\A=\B$ then, by maximality of~$\B$, $\phi$ is fully irreducible \relA. 

\medskip
\noindent
We may therefore assume: 

\begin{reduction}[Strict Inclusion] 
\label{ReductionStrictInc}
$\A$ is a proper subset of~$\B$. \qed
\end{reduction}

Using $\phi$-invariance of $\A$ we may conclude:

\medskip\noindent
\textbf{Special Case \ref{ReductionNoninvariance}:} If there exists a nonempty, $\phi$-invariant subset $\C \subset \B-\A$ then $\A \union \C$ is a nonfilling, $\phi$-invariant free factor system \relA\ that is not equal to $\A$, contradicting the \emph{Irreducibility Assumption}. 

\medskip
\noindent
We may therefore assume: 

\begin{reduction}[Non-Invariance] 
\label{ReductionNoninvariance}
No nonempty subset of $\B-\A$ is invariant under $\phi$. To put it another way, $\A$ is the maximal $\phi$-invariant subset of $\B$. 
\qed
\end{reduction}
Applying Reduction~\ref{ReductionNoninvariance}, the action of $\phi$ on the set of conjugacy classes of nonatomic free factors induces a partition of the set $\B-\A$ into \emph{maximal partial $\phi$-orbits} each of which is a sequence of the form $[B_1],\ldots,[B_J]$ ($J \ge 1$) such that $\phi^\inv[B_1] \not\in \B$, and $\phi[B_j]=[B_{j+1}]$ for $1 \le j < J$, and $\phi[B_J] \not\in\B$. It follows that ``$\phi$-saturation of $\B-\A$'' --- meaning the set $\bigcup_{i \in \Z} \phi^i(\B-\A)$ --- must contain at least one additional element per maximal partial orbit in $\B-\A$. Bringing the atomic free factors $\A$ into the picture, the set \hbox{$\{[B]  \union \A\suchthat [B] \in \B-\A\}$} is a subset of the \hbox{$0$-skeleton} of $\CFFS(\Gamma;\A)$, this subset has a corresponding partition into maximal partial $\phi$-orbits, and its $\phi$-saturation contains at least one additional element per maximal partial orbit.

Our strategy in the remainder of the proof is to carry out further reductions, eventually leading to a case where the $\phi$-saturation of $\B-\A$ is a single finite $\phi$-orbit obtained by adding just \emph{one more element} to the set $\B-\A$; see below, particularly item~\pref{FactOrbitKPlusOne} under the heading \emph{Further facts about geometric strata}. The description of this one orbit will be sufficiently explicit that we obtain a concrete bound on the diameter of the corresponding orbit in the $0$-skeleton of $\CFFS(\Gamma;\A)$.

\medskip\noindent
\textbf{Special Case \ref{ReductionFillingLam}:} If no attracting lamination of $\phi$ fills $\Gamma$ rel~$\A$ then we may apply the implication (4)$\implies$(1) of Theorem B, and so with the constant $\Omega$ from that theorem it folllows that the action of $\phi$ on the relative free splitting complex $\FS(\Gamma;\A)$ has an orbit $\{[T] \cdot \phi^i\}$ of diameter $\le \Omega$. By the same argument as in the proof of Proposition~\ref{PropUpperBoundFF} we obtain a $4$-Lipschitz map $\pi$ taking the $\FS(\Gamma;\A)$ orbit $\{[T]\phi^i\}_{i \in \Z}$ to the set $\{\phi^{-i} \cdot \pi[T]\}_{i \in \Z} = \{\phi^i \cdot \pi[T]\}_{i \in \Z}$, and the latter set is an orbit of $\phi$ in $\CFFS(\Gamma;\A)$ having diameter bounded above by $4 \, \Omega$.

\smallskip
\noindent
We may therefore assume:

\begin{reduction}[A filling lamination] 
\label{ReductionFillingLam}
$\phi$ has an attracting lamination $\Lambda$ that fills~$\Gamma$ \relA. Furthermore, $\phi(\Lambda)=\Lambda$. \qed
\end{reduction}
\noindent
To prove the ``Furthermore'' statement, the function that associates to each attracting lamination of $\phi$ its free factor support is a $\phi$-equivariant injection to the set of nonatomic free factor systems of $\Gamma$ rel~$\A$. Since the free factor support \relA\ of $\Lambda$ is $\Gamma$, which is $\phi$-invariant, it follows that $\Lambda$ is $\phi$-invariant. 

\medskip
 
We now specialize to the case $\Gamma=F_n$. This allows us to work with relative train track representatives defined on \emph{marked graphs} --- as opposed to \emph{marked graphs-of-groups} as in \cite{\LymanRTTTag,\LymanCTTag}, and to \emph{Bass-Serre trees} as elsewhere in this work --- and to use concepts of geometric strata. 

We assume familiarity with the special class of relative train track representatives known as CT's, short for ``completely split relative train track representatives''. For full details see \cite{\recognitionTag}, or see \Subgroups\ Part I Section~1 for a thorough but terse review. Recall in particular that every element of $\Out(F_n)$ has a positive power that is rotationless, and every rotationless element of $\Out(F_n)$ is represented by a CT \cite[Section 4.5]{\recognitionTag}. 

Let $\psi = \phi^k \in \Out(F_n;\A) \subgroup \Out(F_n)$ be a rotationless power of $\phi$ ($k \ge 2$), and so $\Lambda$ is also a filling attracting lamination of $\psi$. Let $f \from G \to G$ be a CT representative of $\psi$ defined on a marked graph $G$ equipped with filtration $\emptyset = G_0 \subset G_1 \subset \cdots \subset G_R=G$, such that the free factor system $\B$ is represented by a filtration element that (by maximality of $\B$) we may take to be $G_{R-1}$, and such that the free factor system $\A$ is represented by some $G_a$ with $0 \le a < R-1$. From Reduction~\ref{ReductionStrictInc} we know that $\A = [G_a]$ is a subset of $\B = [G_{R-1}]$ and that $\B-\A$ is nonempty. It follows that $G_a$ is a union of components of $G_{R-1}$ and that $G_{R-1}-G_a$ is a nonempty union of components of $G_{R-1}$, and we get a corresponding pair of decompositions
$$(*) \qquad\qquad \underbrace{G_{R-1}}_{\B} = \bigl(\underbrace{G_{R-1} - G_a}_{\B-\A}\bigr) \,  \sqcup \underbrace{G_a}_{\A} \,\,   \qquad\qquad \hphantom{(*)}
$$
Since $\Lambda$ fills $F_n$ relative to~$\A=[G_a]$, it follows that the top stratum $H_R = G \setminus G_{R-1}$ is the EG-aperiodic stratum that is associated to~$\Lambda$. 

\subparagraph{Geometric models.} (\Subgroups\ Part I Section 2) To say that the EG stratum $H_R$ is \emph{geometric} means that it has a \emph{geometric model} (relative to~$f$), as defined in \Subgroups(I) Section 2.1.3. Because we are in the situation where $H_R$ is the top stratum, a geometric model is the same thing as a \emph{weak geometric model} as defined in \Subgroups(I) Section 2.1.2. At risk of confusion, we shall follow the latter definition but drop the ``weak'' adjective. 

The description of a geometric model comes in two parts: \emph{static data}; and \emph{dynamic data}. 

The static data starts with a compact, connected surface $S$ with nonempty boundary and one special boundary component denoted $\bdy_0 S$; the full component decomposition of $\bdy S$ is denoted 
$$\bdy S = \bdy_0 S \union\cdots\union \bdy_M S \quad M \ge 0
$$ 
Next, for each $1 \le m \le M$ one is given a homotopically nontrivial map $\alpha_m \from \bdy_m S \to G_{R-1}$, and using these as attaching maps one forms a quotient map
$$q \from S \sqcup G_{R-1} \to Y
$$
and so we may therefore regard $Y$ as a $2$-complex. Next, one is given an extension of the restricted embedding $q \restrict G_{R-1} \hookrightarrow Y$ to an embedding $G \hookrightarrow Y$ such that $G \intersect \bdy_0 S$ is a single point~$p$. Finally, one is given a closed, locally injective path $\rho \from [0,1] \to G \hookrightarrow Y$ based at $p$ that is homotopic rel~$p$ to a locally injective path that goes exactly once around the circle $\bdy_0 S$. It follows that there exists a deformation retraction $d \from Y \to G$ such that $d \restrict \bdy_0 S$ is a parameterization of $\rho$. We may~regard $Y$ as a ``marked $2$-complex'' by using the deformation retraction $d \from Y \to G$ to identify $\pi_1(Y,p) \approx \pi_1(G,p) \approx F_n$. The restricted map $d \from \bdy_0 S \to G$ induces an injection $\pi_1 (\bdy_0 S,p) \mapsto \pi_1(G,p) \approx F_n$ well defined up to conjugacy in $F_n$; let $[\bdy_0 S]$ denote the corresponding subgroup conjugacy class, and so $[\bdy_0 S] = [\<\rho\>]$ where $\<\rho\>$ denotes the infinite cyclic subgroup of $\pi_1(G,p) \approx F_n$ generated by path homotopy class of the closed path~$\rho$.

The dynamic data of the geometric model consists of a pseudo-Anosov homeomorphism $F \from S \to S$ fixing the point $p$ and preserving orientation on $\bdy_0 S$, together with a homotopy equivalence $h \from Y \to Y$, such that the following diagram is homotopy commutative rel~$p$:
$$\xymatrix{
S \ar[r]^q \ar[d]_{F}   &  Y \ar[r]^d \ar[d]^h    & G \ar[d]^{f}         \\
S \ar[r]^q                    & Y \ar[r]^d    & G
} \qquad \qquad\hphantom{(*)}
$$
The path $\rho$ is therefore a \emph{Nielsen path} of~$f$, meaning that $f \circ \rho$ is homotopic rel endpoints to $\rho$. 


The existence of the pseudo-Anosov homeomorphism $F$ puts a useful constraint on $S$ which we note here for later use:

\begin{fact}
\label{FactSurfaceConstraint}
In any geometric model for $H_R$, the surface $S$ is not a sphere with $\le 3$ holes.
\end{fact}
\noindent
We also know that $S$ is not a projective plane with $\le 2$ holes, but we will not need that fact.

\subparagraph{The nonattracting subgroup system.} (\Subgroups\ Part III Section 1) Continuing with the rotationless power $\psi = \phi^k \in \Out(\Gamma;\A)$ and its filling attracting lamination $\Lambda$ rel~$\A$, we next describe the nonattracting subgroup system $\Ana(\Lambda,\psi)$, expressing it in terms of the choice of CT representative $f \from G \to G$ of $\psi$ that was made earlier;
as reviewed a bit later, $\Ana(\Lambda,\psi)$ is well-defined independent of that choice. 

Like a free factor system, $\Ana(\Lambda,\psi)$ is a finite set each of whose elements is a conjugacy class of subgroups of~$\Gamma$, as follows:

\begin{fact}[\Subgroups\ Part III, Definition 1.2, ``The case of a top stratum''] 
\label{FactAnaDescribed} \quad\\
The nonattracting subgroup system $\Ana(\Lambda,\psi)$ is described in two cases: 
\begin{description}
\item[The nongeometric case.] $H_R$ is \emph{not} geometric $\iff$ there does \emph{not} exist a closed indivisible Nielsen path based at an interior point of $H_R$ $\iff$ $\Ana(\Lambda,\psi)$~is a free factor system of~$F_n$. In this case we have
$$\Ana(\Lambda,\psi) = [G_{R-1}] = \B 
$$
\item[The geometric case.] $H_R$ is geometric $\iff$ there exists a closed indivisible Nielsen path $\rho$ based at a point in the interior of $H_R$ $\iff$ $\Ana(\Lambda,\psi)$ is \emph{not} a free factor system of $F_n$. In this case, using a geometric model for $H_R$ as described above, we have
$$\Ana(\Lambda,\psi) = \{[\bdy_0 S]\} \union [G_{R-1}]  =  \{[\bdy_0 S]\} \union \B  
$$
\end{description}
\end{fact}

\medskip\noindent
\textbf{Special Case \ref{ReductionNielsen}:} If $H_R$ is nongeometric then $\B = \Ana$ and so, by combining $\phi(\A)=\A$ and $\phi(\Ana)=\Ana$, it follows that $\phi(\B-\A) = \phi(\Ana-\A) = \Ana-A = \B-\A$, which contradicts Reductions~\ref{ReductionStrictInc} and~\ref{ReductionNoninvariance}.

\medskip
We may therefore assume: 
\begin{reduction}[Geometricity]
\label{ReductionNielsen}
$H_R$ is geometric.
\end{reduction}

\paragraph{The complementary subgraph $L$.} By \Subgroups\ Part III, Corollary 1.9 (2), the nonattracting subgroup system $\Ana(\Lambda;\psi)$ is well-defined independently of the choice of CT representative $f \from G \to G$. From here onwards we abbreviate $\Ana=\Ana(\Lambda;\psi)$. 

In $Y$ the \emph{complementary subgraph} $L \subset Y$ is defined as 
$$L = Y - \text{interior}(S) = \bdy_0 S \sqcup G_{R-1}
$$
It follows that $\bdy_0 S$ is a component of $L$, and so $G_{R-1}$ is the union of its remaining components. It also follows that $q$ induces a map of pairs 
$$q \from (S,\bdy S) \to (Y,L)
$$ 
having the property that $q(\bdy_m S) \subset L_{k(m)}$. 
\begin{enumerate}
\item\label{ItemGRMinusOne}
 (\Subgroups\ Part I, Fact 2.3) \, The filtration element $G_{R-1}$ has no contractible components, and so $L$ has no contractible components.
\end{enumerate}
Combining this with the open decomposition $G_{R-1} = (G_{R-1} - G_a) \sqcup G_a$ given earlier in $(*)$, the component decomposition of $L$ can be denoted as follows:
\begin{align*}
(**) \qquad\qquad L \, &= \underbrace{L_0}_{\bdy_0 S} \, \sqcup \, \underbrace{(L_1 \sqcup \cdots \sqcup L_I)}_{G_{R-1}-G_a} \, \sqcup \, \underbrace{(L_{I+1} \sqcup\cdots\sqcup L_{I+K})}_{G_a} 
\end{align*}
\begin{enumeratecontinue}
\item (\Subgroups\ Part I, Lemma 2.7)
\begin{enumerate}
\item\label{ItemLComponents}
For each $k=0,\ldots,I+K$ the inclusion $L_k \hookrightarrow Y$ induces an injection $\pi_1 L_k \hookrightarrow \pi_1(Y) \approx F_n$. Let $[\pi_1 L_k]$ denote the conjugacy class of the image of this injection. 
\item\label{ItemLDistinct}
For any $k \ne l \in \{0,\ldots,I+K\}$ we have $[\pi_1 L_k] \ne [\pi_1 L_l]$. 
\item The restricted quotient map $S \xrightarrow{q} Y$ induces an injection $\pi_1 S \hookrightarrow \pi_1(Y) \approx F_n$. Let $[S]$ denote the conjugacy class of the image of this injection.
\end{enumerate}
\end{enumeratecontinue}
Putting this together with $(*)$ and $(**)$ we obtain a bijection between the set of connected components of $L$ and the nonattracting subgroup system $\Ana$, and we may denote this bijection as follows, using notations $B_i = \pi_1 L_i \subgroup F_n$ for $0 \le i \le I$ and $A_i = \pi_1 L_{I+i} \subgroup F_n$ for $0 \le i \le K$:
$$\Ana \, = \, \{\underbrace{[\pi_1 L_0]}_{[B_0]}\} \, \union \, \underbrace{\bigl\{\underbrace{[\pi_1 L_1]}_{[B_1]},\ldots,\underbrace{[\pi_1 L_I]}_{[B_I]}\bigr\}}_{\B-\A} \, \union \, \underbrace{\{\underbrace{[\pi_1 L_{I+1}]}_{[A_1]}, \ldots, \underbrace{[\pi_1 L_{I+K}]}_{[A_K]}\}}_{\A}
$$
\begin{enumeratecontinue}
\item\label{ItemLamNatural}
 (\Subgroups\ Part III, Corollary 1.9 (3)) For any $\theta \in \Out(\Gamma;\A)$, \, $\theta(\Lambda)$ is an attracting lamination of $\theta\psi\theta^\inv$, and 
$$\theta(\Ana) = \theta(\Ana(\Lambda,\psi)) = \Ana(\theta(\Lambda),\theta \psi \theta^\inv)
$$
\end{enumeratecontinue}
We next draw a conclusion regarding $\phi$ itself, as opposed to its rotationless power $\psi=\phi^k$:
\begin{enumeratecontinue}
\item\label{ItemPhiAnaLambdaPsi}
$\phi(\Ana)=\Ana$.
\end{enumeratecontinue}
This follows from item~\pref{ItemLamNatural} together with Reduction~\ref{ReductionFillingLam} and a short computation:
$$\phi(\Ana(\Lambda,\psi)) = \Ana(\phi(\Lambda),\phi \phi^k \phi^\inv)) = \Ana(\Lambda,\phi^k) = \Ana(\Lambda,\psi)
$$

\bigskip

Knowing that both $\A$ and $\Ana$ are $\phi$-invariant, it follows that the set
$$\Ana - \A = \{[B_0]\} \union \underbrace{\{[B_1],\ldots,[B_I]\}}_{\B-\A}  \qquad (I \ge 1)
$$ 
is also $\phi$-invariant. It follows that $\B-\A$ is a single partial orbit of cardinality $I \ge 1$, and that its $\phi$-saturation is obtained by adding a single element $[B_0]$ to form a single $\phi$-orbit $\Ana-A$ of cardinality $I+1 \ge 2$. After permuting the index set $\{1,\ldots,I\}$, this orbit may be written a  
$$\Ana-\A=\{[B_0],\underbrace{[B_1],\ldots,[B_I]}_{\B-\A}\} \quad\text{with}\quad \phi[B_j]=[B_{j+1}] \quad \text{for all $j$ modulo $I+1$}
$$
Knowing that $B_0$ has rank~$1$, and knowing also that $\{[B_1]\} \union \A$ is a $0$-simplex of $\CFFS(F_n;\A)$ --- i.e.\ a non-full, non-atomic free factor system of $F_n$ rel~$\A$ --- it follows that these properties are inherited around the whole $\phi$-orbit $\{[B_0],\ldots,[B_I]\}$.
\begin{enumeratecontinue}
\item \label{FactOrbitKPlusOne}
For each $0 \le j \le I$ the subgroup $B_j$ has rank~$1$, and $ \{[B_j]\} \union \A$ is a $0$-simplex of $\CFFS(F_n;\A)$; it follows that $B_j$ is a rank~$1$ free factor of a cofactor of~$A$. These vertices thus form a $\phi$-orbit of the $0$-skeleton of $\CFFS(F_n;\A)$ of cardinality $I+1 \ge 2$:
$$\{[B_0]\}\union \A   \,\, , \,\,  \{[B_1]\}\union\A  \,\, , \,\ldots\, , \,\, \{[B_I]\}\union \A 
$$
\end{enumeratecontinue}
\noindent

\medskip\noindent
\textbf{Special Case \ref{ReductionOrbitWithTwo}:} 
Suppose that the cardinality of the orbit $\{[B_0],\ldots,[B_I]\}$ satisfies $I+1 \ge 3$. Consider any two indices $i \ne j \in \{0,\ldots,I\}$ and any other index $m \in \{0,\ldots,I\} - \{i,j\}$. We~have $[B_{i-m}]$, $[B_{j-m}] \in \B$ (with subscripts modulo $I+1$), and so $\{[B_{i-m}],[B_{j-m}]\}  \union \A \sqsubset \B$ and hence $\{[B_{i-m}],[B_{j-m}]\} \union \A$ is a vertex of $\CFFS(F_n;\A)$. The set
$$\{[B_i],[B_j]\}  \union \A = \phi^m(\{[B_{i-m}],[B_{j-m}]\} \union \A)
$$
is therefore a vertex of $\CFFS(F_n;\A)$, and we obtain a length $2$ edge path in $\CFFS(F_n;\A)$:
$$\{[B_i]\}  \union \A \sqsubset \{[B_i],[B_j]\} \union \A  \sqsupset  \{B_j]\}  \union \A 
$$
The distance between $\{[\B_i]\} \union \A$ and $\{[B_j]\} \union \A$ is therefore $\le 2$. Since $i,j$ are arbitrary, this proves that the whole orbit under consideration has diameter~$\le 2$.

\medskip
We may therefore assume:
\begin{reduction}[Orbit of Two]
\label{ReductionOrbitWithTwo} The orbit $\Ana-\A$ has cardinality $I+1=2$, and so
$$\Ana \, = \, \{\underbrace{[\pi_1 L_0]}_{[B_0]}\} \, \union \, \underbrace{\bigl\{\underbrace{[\pi_1 L_1]}_{[B_1]}\bigr\}}_{\B-\A} \, \union \, \underbrace{\{\underbrace{[\pi_1 L_{2}]}_{[A_1]}, \ldots, \underbrace{[\pi_1 L_{K+1}]}_{[A_K]}\}}_{\A}
$$
The vertices $\{[B_0]\} \union \A$ and $\{[B_1]\} \union \A$ in $\CFFS(\Gamma;\A)$ form a $\phi$-orbit of cardinality~$2$.
\end{reduction}

\paragraph{Remaining goal:} We shall describe an edge path in $\CFFS(F_n;\A)$ of length~$\le 4$ between the two vertices in Reduction~\ref{ReductionOrbitWithTwo}, thus giving us a $\phi$-orbit of diameter~$\le 4$ in $\CFFS(F_n;\A)$. For this we need a few more pieces from the theory of geometric strata, in particular ``free boundary circles''; each of $L_0$ and $L_1$ will turn out to be free boundary circles, and we will use this fact together with a topological construction on $S$ to produce the desired edge path.

\paragraph{Dynamic data for $\phi$.} Recall that the dynamic data of the geometric model for $\psi = \phi^k$ is expressed primarily as a certain homotopy commutative diagram. Knowing from item~\pref{ItemPhiAnaLambdaPsi} that $\phi[\Ana]=\Ana$, we can apply the results cited in item~\pref{ItemHEPairs} just below, with conclusions regarding a homotopy commutative diagram for $\phi$ itself: 
\begin{enumeratecontinue}
\item\label{ItemHEPairs} (\Subgroups\ Part I, Prop. 2.20 and Lemma 2.21) 
There exists a homotopy equivalence of pairs $\Phi_Y \from (Y,L) \to (Y,L)$ and a homeomorphism $\Phi_S \from S \to S$ such that the following diagram is homotopy commutative in the category of pairs: 
$$\xymatrix{
(S,\bdy S) \ar[r]^q \ar[d]_{\Phi_S}   &  (Y,L) \ar[d]^{\Phi_Y}         \\
(S,\bdy S) \ar[r]^q                    & (Y,L)
}
$$
\end{enumeratecontinue}
Using~\pref{ItemHEPairs} we next derive the following:
\begin{enumeratecontinue}
\item\label{ItemIndexPermutation}
After re-indexing the components of $\bdy S$ using a permutation of the index set $\{0,1,\ldots,M\}$ that fixes the index $0$, for each $i \in \{0,1\}$ the boundary circle $\bdy_i S$ is the unique component of $\bdy S$ such that $q(\bdy_i S) \subset L_i$, and furthermore
$$\Phi_S(\bdy_0 S) = \bdy_1 S \qquad\text{and}\qquad \Phi_S(\bdy_1 S) = \bdy_0 S
$$
\end{enumeratecontinue}
To derive~\pref{ItemIndexPermutation}, consider the component sets 
$$\pi_0 (\bdy S) = \{\bdy_0 S,\ldots,\bdy_M S\} \quad\text{and}\quad \pi_0 L = \{L_0,L_1, \ldots, L_{K+1}\}
$$
Applying~\pref{ItemHEPairs}, the restriction of $\Phi_S$ to $\bdy S$ induces a permutation $\Phi_{S*} \from \pi_0 (\bdy S) \to \pi_0 (\bdy S)$, and the restriction of $\Phi_Y$ to $L$ induces a permutation $\Phi_{Y*} \from \pi_0 L \to \pi_0 L$. Also, the map \hbox{$q \from \bdy S \to L$} induces a component map $q_* \from \pi_0 (\partial S) \to \pi_0 L$ given by $q_*(\bdy_m S) = L_{k(m)}$, and $q_*$ is equivariant with respect to those two permutations, meaning that $q_* \circ \Phi_{S*}(\bdy_m S) = \Phi_{Y*} \circ q_*(\bdy_m S)$ for each $m=0,\ldots,M$. Also, we know from Reduction~\ref{ReductionOrbitWithTwo} that $\phi[B_0]=[B_1]$ and that $\phi[B_1]=[B_0]$, and it follows that $\Phi_{Y*}(L_0) = L_1$ and that $\Phi_{Y*}(L_1)=L_0$. We also know already, from the definition of the complementary subgraph $L$, that $\bdy_0 S$ is the \emph{unique} component of $\bdy S$ such that \hbox{$q_*(\bdy_0 S) = L_0$.} It follows that $\Phi_S(\bdy_0 S)$ is the unique component of $\bdy S$ whose $q_*$-image is $\Phi_{Y*}(L_0)=L_1$, and after an index permutation as stated in \pref{ItemIndexPermutation} we may denote $\bdy_1 S = \Phi_S(\bdy_0 S)$, and so $q_*(\bdy_1 S) = L_1$ (we do \emph{not} know at this stage that $q(\bdy_1 S) = L_1$). The two equations of \pref{ItemIndexPermutation} then follow immediately.

\subparagraph{Free boundary circles.} (\Subgroups\ Part 1 Section 2.6) To say that $\bdy_m S$ is a \emph{free boundary circle} means that the attaching map $\alpha_m \from \bdy_m S \to L_{k(m)}$ is a homotopy equivalence, and that $L_{k(m')} \ne L_{k(m)}$ for all $m' \ne m \in \{0,\ldots,M\}$. It follows, from the definition of the complementary subgraph $L$, that $\bdy_0 S$ is a free boundary circle. Applying~\pref{ItemHEPairs}, the condition that $\bdy_m S$ be a free boundary circle is invariant under the permutation $\Phi_{S*} \from \pi_0(\bdy S) \to \pi_0(\bdy S)$, and therefore $\bdy_1 S$ is also a free boundary circle.

A free boundary circle $\bdy_m S$ is said to be \emph{strongly free} if $\alpha_m \from \bdy_m S \to L_{k(m)}$ is a homeomorphism. If this holds then we can use $\alpha_m$ to identify $\bdy_m S$ with $L_{k(m)}$, and it follows that there is an open neighborhood $U_m \subset Y$ of $\bdy_m S$ such that $U_m$ is a surface-with-boundary and $\bdy U_m = \bdy_m S$.

Clearly $\bdy_0 S$ is strongly free in $Y$. In the general theory of geometric models presented in \Subgroups\ Part I, a free boundary circle need not be strongly free. Nevertheless we may assume that $\bdy_1 S$ is also strongly free in $Y$, by making the following small adjustment to~$Y$.\footnote{There is a relative train track argument which uses that $H_R$ is the top stratum to prove that $\bdy_1 S$ is strongly free in $Y$, but the reduction argument we present is simpler.} Since the finite graph $L_1$ is homotopy equivalent to a circle, there exists a unique subgraph $L'_1 \subset L_1$ which is homeomorphic to a circle, and $L_1$ deformation retracts to $L'_1$. We may therefore homotope the attaching map $\alpha_1$ to a homeomorphism $\alpha'_1 \from \bdy_1 S \to L'_1$, and we may then delete the subset $L_1-L'_1$, resulting in a new 2-complex $Y'$. Once this is done, there exist arbitrarily small open neighborhoods $U \subset Y$ of $L_1$ and $U' \subset Y'$ of $L'_1$, and a homotopy equivalence $Y \to Y'$, and a homotopy inverse $Y' \to Y$, such that the homotopy from the composition $Y \to Y' \to Y$ to the identity on $Y$ is stationary on $Y-U$, and the homotopy from the composition $Y' \to Y \to Y'$ to the identity on $Y'$ is stationary on $Y'-U'$. Clearly both of $\bdy_0 S$ and $\bdy_1 S$ are strongly free in~$Y'$. Replacing $Y$ with $Y'$, for the rest of the argument we may therefore assume that $\bdy_0 S$ and $\bdy_1 S$ are both strongly free in~$Y$.

\paragraph{Conclusion of the proof.} In $S$ consider the subset $\hat\bdy S = \bdy S - (\bdy_0 S \union \bdy_1 S) = \bigcup_{m=2}^M \bdy_m S$; and in $L \subset Y$ consider the subset $\hat L = L - (L_0 \union L_1) = \bigcup_{j=2}^{I+K} L_j$. Because $\bdy_0 S$ and $\bdy_1 S$ are both strongly free in $Y$, we may regard $Y$ as the quotient of the disjoint union $S \sqcup \hat L$ obtained by attaching $\hat\bdy S$ to $\hat L$ using the attaching map 
$$\hat\alpha = \alpha_2 \sqcup \cdots \sqcup \alpha_M \from \hat\bdy S \to \hat L
$$
Let $\hat q \from S \sqcup \hat L \to Y$ denote the corresponding quotient map.

A \emph{subgraph of $S$ rel $\hat \bdy S$} is a finite embedded graph $\Gamma \subset S$ such that $\hat\bdy S \subset \Gamma$, and such that for each $i=0,1$, either $\Gamma$ contains $\bdy_i S$ or is disjoint from $\bdy_i S$. Consider any subgraph $\Gamma$ of $S$ rel $\hat\bdy S$. Associated to $\Gamma$ is a \emph{subgraph of $Y$ rel $\hat L$}, namely $\hat\Gamma = q(\Gamma \union \hat L)$. To say that $\Gamma$ is a \emph{spine of $S$ rel~$\hat\bdy S$} means that $\Gamma$ is a strong deformation retract of $S$; this is equivalent to saying $\hat\Gamma$ being a strong deformation retract of $Y$ which we express by saying that $\hat\Gamma$ is a \emph{spine of $Y$ rel~$\hat L$}; this is also equivalent to the complement $\Gamma^c = S-\Gamma = Y - \hat\Gamma$ satisfying one of the following: 
\begin{itemize}
\item $\Gamma^c$ consists of \emph{two} half-open annuli, one containing $\bdy_0 S$ and the other containing $\bdy_1 S$;~\emph{or}
\item There exist $i \ne j \in \{0,1\}$ such that $\Gamma^c$ is a \emph{single} half-open annulus containing $\bdy_i S$ but~not~$\bdy_{j} S$.
\end{itemize}
More generally, given a subgraph $\Gamma$ of $S$ rel~$\hat\bdy S$, to say that $\Gamma$ is a \emph{subspine of $S$ rel~$\hat\bdy S$} means that $\Gamma$ is a subgraph of some spine of $S$ rel~$\hat\bdy S$; this holds if and only if $\hat\Gamma$ is a subgraph of some spine of $Y$ rel~$\hat L$. In general if $\Gamma$ is a subspine rel~$\hat\bdy S$ then $\hat\Gamma$ represents a free factor system of $F_n$ rel~$\A$ that we denote $[\hat\Gamma]$. Here are some examples:
\begin{itemize}
\item $[\hat\Gamma]$ is filling if and only if $\Gamma$ is a spine rel~$\hat \bdy S$; 
\item $[\hat\Gamma]=\A$ if and only if $\Gamma$ deformation retracts to $\hat\bdy S$.
\item For $i \ne j \in \{0,1\}$, $\Gamma_i = \bdy S - \bdy_j S = \bdy_i S \union \hat\bdy S$ is a subspine rel~$\hat\bdy S$, and the following hold:
\begin{itemize}
\item $[\hat\Gamma_i] = \{[B_i]\} \union \A$.
\item $\Gamma_i$ is not a spine of $S$ rel~$\hat\bdy S$, and so $[\hat\Gamma_i]$ is non-filling.
\end{itemize}
\end{itemize}
Thus $[\hat\Gamma_0]$ and $[\hat\Gamma_1]$ form the $\phi$-orbit of cardinality~$2$ in the $0$-skeleton of $\CFFS(\Gamma;\A)$, that we have been considering since Reduction~\ref{ReductionOrbitWithTwo}. 

We shall interpolate between $\Gamma_0$ and $\Gamma_1$ using three more subspines of $S$ rel~$\hat\bdy S$, none of which is a spine rel~$\hat\bdy S$ nor deformation retracts to $\hat\bdy S$, and with inclusion relations as follows:
$$\Gamma_0 \subset \Delta_0 \supset \Theta \subset \Delta_1 \supset \Gamma_1
$$
From this interpolation we obtain the following edge path of length $4$ in $\CFFS(F_n;\A)$, and so we will be done:
$$\{[B_0]\}  \union \A= [\hat\Gamma_0] \, \sqsubset \, [\hat\Delta_0] \, \sqsupset \, [\hat\Theta] \, \sqsubset \, [\hat\Delta_1] \, \sqsupset \,  [\hat\Gamma_1] =  \{[B_1]\} \union \A 
$$
To define $\Theta$, choose $\theta \subset S$ to be either a nonseparating simple closed curve in the interior of $S$ or a properly embedded nonseparating arc in $S$ with $\bdy\theta \subset \hat\bdy S$; and then let $\Theta = \hat\bdy S \union \theta$. In more detail:
\begin{itemize}
\item If $S$ is nonorientable, choose $\theta$ to be an orientation reversing simple closed curve.
\item If $S$ is orientable of genus $\ge 1$ choose $\theta$ to be a nonseparating simple close curve.
\item If $S$ is orientable of genus $0$ then we know from Fact~\ref{FactSurfaceConstraint} that $\bdy S$ has $\ge 4$ components and so we can choose $\alpha$ to be a properly embedded arc whose two endpoints lie on two distinct components of $\hat\bdy S = \bdy S - (\bdy_0 S \union \bdy_1 S)$. 
\end{itemize}
It is clear from this construction that $\Theta$ does not deformation retract to $\hat\bdy S$, and also that the connected complement $S - \theta$ contains each of $\bdy_0 S$ and $\bdy_1 S$ and is not a half-open annulus, and so $\Theta$ is not a spine rel~$\hat\bdy S$. Thus $[\hat\Theta]$ is indeed a vertex of $\CFFS(\F_n;\A)$.

Next let $\Delta_i = \Theta \union \bdy_i S$ ($i=0,1$). Again clearly $\Delta_i$ does not deformation retract to $\hat\bdy S$. Also, the surface $S-\Delta_i$ is connected and it is not homeomorphic to a half-open annulus, for the following reasons: the surface $S-\Delta_i$ does have exactly one boundary component, namely $\bdy_j S$ with $j \ne i \in \{0,1\}$; but the surface $S - (\Delta_i \union \bdy_j S)$ is not homeomorphic to an open annulus, because it has $\ge 3$ topological ends: one end compactified by $\bdy_0 S$; another end compactified by $\bdy_1 S$; and a third end compactified by the component of $\hat\bdy S \union \theta$ that contains $\theta$. It is clear from this construction that $\Delta_i \subset \Theta$ and $\Delta_i \subset \Gamma_i$, and we are done.

\subsection{Translation lengths in $\CFFS(\Gamma;\A)$: A conjectured lower bound.}
\label{SectionFFLowerBound}

The following conjecture is an analogue for $\CFFS(\Gamma;\A)$ of the lower bound in Theorem~$A$ for $\FS(\Gamma;\A)$:

\begin{conjecture}
\label{ConjectureFFTransLowerBound}
There exists a constant $A^\CFFS = A^\CFFS(\Gamma;\A) > 0$ such that for all $\phi \in \Out(\Gamma;\A)$, if $\phi$ is fully irreducible rel~$\A$ then the stable translation length of $\phi$ acting on $\CFFS(\Gamma;\A)$ is $\ge A^\CFFS$.
\end{conjecture}

Here is an idea for attacking this conjecture, in a manner that is analogous to the proof of the lower bound of Theorem~A. Our proof of the latter applied ``free splitting units'' along fold axes in $\FS(\Gamma;\A)$. One way to think of this situation is that when a fold path is parameterized by counting folds, free splitting units give that fold path the structure of a reparameterized quasigeodesic with uniform constants. We already know, from Theorem 6.8 at the very end of Part~I, that the image of a fold path under the projection map $\pi \from \FS(\Gamma;\A) \to \CFFS(\Gamma;\A)$ is a reparameterized quasigeodesic in $\CFFS(\Gamma;\A)$ with uniform constants. But is there a nice formula for such a reparameterization, a combinatorial formula expressed in terms of some kind of ``free factor units''? More to the point, is there such a formula that could be applied to fold axes to find the lower bound in Conjecture~\ref{ConjectureFFTransLowerBound}?

For this purpose, here is a proposed combinatorial formulation of ``free factor units'', expressed using the projection set map $\Pi \from \FS(\Gamma;\A) \to \CFFS(\Gamma;\A)$. 
Given a pair of Grushko free splittings $S,T$ rel~$\A$, let us say that there is $\ge 1$ free factor unit between $S$ and $T$ if the sets $\Pi(S), \Pi(T) \subset \CFFS(\Gamma;\A)$ are disjoint; equivalently, there does not exist a nonfull, nonatomic free factor system $\F$~\relA\ which is visible in both $S$ and $T$. Consider next a pair of nonatomic, nonfull free factor systems $\B,\C$ rel~$\A$. Let us say that there are $\ge K$ free factor units between $\B$ and $\C$ if for every foldable map $f \from S \to T$ between a pair of Grushko free splittings $S$ and $T$ rel~$\A$ and for every Stallings fold factorization of $f$, if $\B$ is visible in $S$ and $\C$ is visible in $T$ then the given fold path has a subdivision into a concatenation of $\ge K$ fold subpaths such that between the beginning and the end of each such subpath there is $\ge 1$ free factor unit. 

\begin{description}
\item[Question:] Using the formulation proposed above, is the number of free factor units between $\B$ and~$\C$ quasicomparable to the distance between $\B$ and $\C$ in $\CFFS(\Gamma;\A)$?
\end{description}

Even if the answer to this question is \emph{yes}, there will still be some work needed in order to determine how it might be applied to Conjecture~\ref{ConjectureFFTransLowerBound}, perhaps by some analogue of the work done in Section~\ref{SectionLowerBound} where the lower bound of Theorem~A was proved by applying free splitting units.

On the other hand, the answer to this question could be \emph{no} in various scenarios, for instance if there existed a sequence of examples where the number of free factor units between $\B$ and $\C$ is unbounded but the distance between $\B$ and $\C$ in $\CFFS(\Gamma;\A)$ is bounded. In such a situation one might still hope that while the formulation given above for the property \emph{$\ge 1$ free factor unit} is evidently too weak, perhaps there is a stronger but still useful combinatorial formulation of that property.

\bibliographystyle{amsalpha} 


\bibliography{/Users/Lee/Dropbox/Handel_Lyman_Mosher_shared/Lee_bibtex_file/mosher.bib} 

\end{document}